\documentclass[12pt]{amsart}

\addtolength{\oddsidemargin}{-.7in}
\addtolength{\evensidemargin}{-.7in}
\addtolength{\textwidth}{1in}
\addtolength{\textheight}{0.3in}
\usepackage[foot]{amsaddr}
\usepackage[all]{xy}

%%%%%%%%%%%%%%%%%%%% Text italic %%%%%%%%%%%%%%%%%%%%%%%%%%%%
\theoremstyle{plain}
\newtheorem{thm}{Theorem}[section]
\newtheorem{theorem}[thm]{Theorem}

\newtheorem{lemma}[thm]{Lemma}
\newtheorem{corollary}[thm]{Corollary}
\newtheorem{proposition}[thm]{Proposition}
\newtheorem*{cor4.18}{Corollary \ref{cor4.18}}

%\renewcommand{\thecor}{\arabic{subsection}.\arabic{thm}}
%%%%%%%%%%%%%%%%%%%% Text roman %%%%%%%%%%%%%%%%%%%%%%%%%%%%%
\theoremstyle{definition}
\newtheorem{remark}[thm]{Remark}

\newtheorem{notation}[thm]{Notation}
\newtheorem{definition}[thm]{Definition}

\newtheorem{assumption}[thm]{Assumption}

\newtheorem{question}[thm]{Question}

\newtheorem*{acknowledgements}{Acknowledgements}
\numberwithin{equation}{section}
%%%%%%%% Special symbols %%%%%%%%%%%%%%%%%%%%%%%%%%%%%%%
% Skriptbuchstaben

\newcommand{\sB}{{\mathcal B}}
\newcommand{\sC}{{\mathcal C}}

\newcommand{\sM}{{\mathcal M}}

\newcommand{\sP}{{\mathcal P}}

\newcommand{\sT}{{\mathcal T}}

% Sonderbuchstaben mit Doppellinie

\newcommand{\C}{{\mathbb C}}

\newcommand{\BP}{{\mathbb P}}

%Sonstiges
\newcommand{\rd}{{\rm d}}

\def\Gr{\mathop{\rm Gr}\nolimits}

\def\Sym{\mathop{\rm Sym}\nolimits}

\def\Ker{\mathop{\rm Ker}\nolimits}

\begin{document}

\title[Moduli map of second fundamental forms]{Moduli map of second fundamental forms on a nonsingular intersection of two quadrics}
\author[Yewon Jeong]{Yewon Jeong}

\address{Department of Mathematical Sciences, KAIST, Yuseong-gu, Daejeon 305-701, Korea}
\email{jyw57@kaist.ac.kr}

\begin{abstract} 
In \cite{GH}, Griffiths and Harris asked whether a projective complex submanifold of codimension two is determined by the moduli of its second fundamental forms. More precisely, given a nonsingular subvariety $X^n \subset {\mathbb P}^{n+2}$, the second fundamental form $II_{X,x}$ at a point $x \in X$ is a pencil of quadrics on $T_x(X)$, defining a rational map $\mu^X$ from $X$ to a suitable moduli space of pencils of quadrics  on a complex vector space of dimension $n$. The question raised by Griffiths and Harris was whether the image of $\mu^X$ determines $X$. We study this question when $X^n \subset {\mathbb P}^{n+2}$ is a nonsingular intersection of two quadric hypersurfaces of dimension $n >4$. In this case, the second fundamental form $II_{X,x}$ at a general point $x \in X$ is a nonsingular pencil of quadrics. 
Firstly, we prove that  the moduli map $\mu^X$ is dominant over the moduli of nonsingular pencils of quadrics. This gives a negative answer to Griffiths-Harris's question. To remedy the situation, we consider a refined  version  $\widetilde\mu^X$ of the moduli map $\mu^X$, which takes into account the infinitesimal information of $\mu^X$.  Our main result is an affirmative answer in terms of the refined moduli map: 
we prove that the image of  $\widetilde\mu^X$ determines $X$, among  nonsingular intersections of two quadrics.  

\end{abstract}

\maketitle

%%%%%%%%%%%%%%%%%%%%%%%%%%%%%%%%%%%%%%%%%%%%%%%%%%%%%%%%%%%%%%% 1. Intro %%%%%%%%%%%%%%%%%%
\section{Introduction}\label{intro}

Given a nondegenerate projective variety (or a complex submanifold) $X \subset \BP^N$ in the complex projective space, the (projective) second fundamental form  $II_{X,x}$ at a nonsingular point $x \in X$ is one of basic projective invariants of $X$. When $c$ is the codimension of $X \subset \BP^N$, the second fundamental form $II_{X,x}$ is a linear system of quadrics of projective dimension at most $c-1$ on the tangent space $T_x(X).$
When $X$ is a hypersurface in $\BP^N$, the second fundamental form at general $x$ is the system generated by a single quadratic form on $T_x(X)$, which is nondegenerate
unless $X$ is ruled in a special way. So for hypersurfaces, the pointwise second fundamental form itself
has no interesting information. When $X$ has codimension 2 in $\BP^N$, the second fundamental form at a point $x$ is a pencil of quadrics on $T_x(X)$. Pencils of quadrics
on a given vector space have nontrivial moduli. So already in codimension 2, the second fundamental form has nontrivial pointwise information.\\

To make it more precise, denote by $\sM_{n}^{\rm PQ}$  the moduli of (nonsingular) pencils of quadrics on a vector space of dimension $n$. (See Definition \ref{n.4} for a precise definition.) Then for a submanifold $X \subset \BP^{n+2}$ of dimension $n\geq3$, there is the moduli map $\mu^X: X^o \to \sM_n^{\rm PQ}$ of second fundamental forms defined on a Zariski open subset $X^o \subset X$ and it is natural  to study the projective geometry of $X$ in terms of the map $\mu^X$. In \cite{GH} p.451, Griffiths and Harris raised the following question.

\begin{question}\label{q.GH} Let $M, M'$ be two submanifolds of codimension 2 in $\BP^{n+2}$ with the
maps $\mu^M: M \to \sM^{\rm PQ}_n$ and $\mu^{M'}: M' \to \sM^{\rm PQ}_n$ arising from the moduli of their second fundamental forms.
Suppose there exists a biholomorphic map $f: M \to M'$ satisfying $\mu^M = \mu^{M'} \circ f$. Does $f$ come from a projective automorphism of $\BP^{n+2}$? \end{question}

 Although this is a very natural question, it seems that there has not been much study on it. In this article, we study this question when the submanifold $X \subset \BP^{n+2}$ is  defined as the intersection of two quadrics. In this simple case, we find that Question \ref{q.GH} has a negative answer:
%\newpage
\begin{theorem}\label{t.1} Let $X \subset \BP^{n+2}$ be a nonsingular intersection of two quadric hypersurfaces with $n\geq3$. Then the morphism $\mu^X: X^o \to \sM^{\rm PQ}_n$ is dominant.
In particular, given any two nonsingular varieties $X, X' \subset \BP^{n+2}$ defined as the intersections of two quadric hypersurfaces, we can always find a biholomorphic map $f:M \to M'$ between some Euclidean open subsets $M \subset X$ and $M' \subset X'$ such that $\mu^M = \mu^{M'} \circ f.$ \end{theorem}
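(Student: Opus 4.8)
The plan is to compute the second fundamental form of $X = Q_0 \cap Q_1$ explicitly at a general point and show that, as $x$ varies, the associated pencil of quadrics on $T_x(X)$ sweeps out a Zariski-dense subset of $\sM_n^{\mathrm{PQ}}$. Since $\dim X = n$ and $\dim \sM_n^{\mathrm{PQ}}$ is finite (the moduli of nonsingular pencils of quadrics on an $n$-dimensional space has dimension $n$, corresponding to the $n$ cross-ratios of the branch points of the pencil, or equivalently the conjugacy class data of the Segre symbol with distinct roots), a dimension count alone will not suffice; I need to verify that the differential of $\mu^X$ is generically of maximal rank, or directly exhibit enough points in the image.

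First I would fix coordinates so that $Q_0 = \{\sum x_i^2 = 0\}$ and $Q_1 = \{\sum \lambda_i x_i^2 = 0\}$ with the $\lambda_i$ distinct (this is the normal form for a nonsingular intersection of two quadrics), and pick a general point $x \in X$. The tangent space $T_x(X) \subset \C^{n+3}$ is the common kernel of the two gradient linear forms $dQ_0|_x$ and $dQ_1|_x$; the second fundamental form $II_{X,x}$ is then the pencil spanned by the restrictions of the Hessians of $Q_0$ and $Q_1$ (which are the constant diagonal forms $\mathrm{diag}(1,\dots,1)$ and $\mathrm{diag}(\lambda_0,\dots,\lambda_{n+2})$) to this $n$-dimensional subspace, taken modulo the conormal directions — more precisely $II_{X,x}$ is the image of $(\mathrm{Hess}\,Q_0, \mathrm{Hess}\,Q_1)$ in $\mathrm{Sym}^2 T_x(X)^* \otimes N_{X/\BP}{}_{,x}$, and after choosing a splitting it is the pencil generated by the two restricted quadratic forms. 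The key computation is to identify the Segre symbol / the cross-ratios of this restricted pencil as a function of $x$, and to show this function is dominant onto the moduli space.

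The main obstacle — and the technical heart of the argument — is controlling how the eigenvalue configuration of the restricted pencil depends on $x$. Restricting the diagonal pencil $\{I, \mathrm{diag}(\lambda_i)\}$ to the codimension-two subspace $T_x(X)$ does not simply delete two eigenvalues; the moduli of the restricted pencil are genuinely new functions of the point $x$ on the quadric intersection. I expect the cleanest route is to compute the discriminant of the restricted pencil, i.e. the binary form $\det\bigl((s\,\mathrm{Hess}\,Q_0 + t\,\mathrm{Hess}\,Q_1)|_{T_x(X)}\bigr)$ in $(s:t)$, in terms of the $\lambda_i$ and the coordinates $x_i$, using the formula for the determinant of a symmetric form restricted to the orthogonal complement of two vectors (a bordered-determinant / Schur-complement identity). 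This exhibits the $n$ branch points of $\mu^X(x)$ as explicit algebraic functions of $x$, and one then checks — by differentiating at a sufficiently general $x$, or by a direct specialization argument — that these functions are algebraically independent, so that the image of $\mu^X$ has dimension $n = \dim \sM_n^{\mathrm{PQ}}$ and is therefore dense. Once $\mu^X$ is shown to be dominant, the second assertion is immediate: for general $X, X'$ the images $\overline{\mu^X(X^o)}$ and $\overline{\mu^{X'}(X'^o)}$ are both all of $\sM_n^{\mathrm{PQ}}$, so by generic smoothness one can pick a point $p \in \sM_n^{\mathrm{PQ}}$ over which both maps are smooth submersions, choose preimages and small Euclidean neighbourhoods, and use the local fibration structure (shrinking further so both maps become projections in suitable charts) to build the required biholomorphism $f : M \to M'$ with $\mu^M = \mu^{M'} \circ f$; no projective automorphism need relate $M$ and $M'$, which is exactly the negative answer to Question \ref{q.GH}.
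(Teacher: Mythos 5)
Your plan is essentially the route the paper takes: restrict the two diagonal Hessians to $T_x(X)$, compute the discriminant binary form $\det\bigl(s\varphi_1|_{T_x(X)}-t\varphi_2|_{T_x(X)}\bigr)$ explicitly, and show the resulting map $X\to\BP(\sB_n)$ is dominant, whence $\mu^X$ is dominant via the identification of $\sM^{\rm PQ}_n$ with the moduli of binary forms. The step you defer ("one then checks \dots that these functions are algebraically independent") is exactly the content of the paper's Propositions \ref{p.5.1} and \ref{p5}: the coefficients of the discriminant turn out to be an \emph{invertible linear} function of $(x_1^2,\dots,x_{n+1}^2)$, so the discriminant map is not merely dominant but finite and surjective, being the composition of the squaring map with a linear isomorphism. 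That finiteness is then what makes the second assertion immediate (local inverses of $\theta$ give $f$ directly); your alternative via generic smoothness and matching the $3$-dimensional fibers in product charts also works but needs the extra choice of a fiberwise identification. One slip to correct: $\dim\sM^{\rm PQ}_n=n-3$, not $n$ (it is $\BP(\sB_n)^o/{\rm SL}(2,\C)$, i.e.\ $n-3$ cross-ratios), so the concluding dimension count should be phrased as dominance of $X\to\BP(\sB_n)$ followed by the quotient map, rather than "image of dimension $n$ equals $\dim\sM^{\rm PQ}_n$."
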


Since there are many choices of $X$ and $X'$ that are not biregular to each other, Theorem \ref{t.1}(=Theorem \ref{theoremA}) gives a negative example for Question \ref{q.GH}. This leads to a  natural problem: how to reformulate Question \ref{q.GH} to have an affirmative answer? In other words, what additional information other than the image of the map $\mu^M$ is needed to determine $M$ up to projective transformation?\\

The result of \cite{HM} gives a partial answer. When $X \subset \BP^{n+2}$ is  a nonsingular intersection of two quadrics  with $n\geq3$, the base locus of the second fundamental form $II_{X,x}$ at $x \in X$ is precisely the VMRT $\sC_x \subset \BP T_x(X)$ consisting of tangent directions of lines on $X$ passing through $x$. So Cartan-Fubini type extension theorem in \cite{HM} gives the following result.

\begin{theorem}\label{t.CF}
For two nonsingular varieties $X, X' \subset \BP^{n+2}$ ($n>3$) defined as the intersections of two quadric hypersurfaces, suppose there exists a biholomorphic map $f:M \to M'$ between connected Euclidean open subsets $M \subset X$ and $M' \subset X'$ such that ${\rm d}_x f: T_x(M) \to T_{f(x)}(M')$ for each $x \in M$ sends the base locus $\sC_x$ of $II_{X,x}$ to the base locus
  $\sC_{f(x)}$ of $II_{X',f(x)}$.
 Then $f$ comes from a projective automorphism of $\BP^{n+2}$. \end{theorem}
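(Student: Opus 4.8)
The statement should follow fairly directly from the Cartan--Fubini type extension theorem of \cite{HM} once the geometric data are matched up, so the plan has three steps: first, reinterpret the hypothesis on $f$ in terms of varieties of minimal rational tangents (VMRT); second, verify that $X$ and $X'$ fall within the scope of Cartan--Fubini and conclude that $f$ extends to a biregular morphism $F\colon X\to X'$; third, upgrade $F$ to a projective linear transformation. For the first step I would use the identification recalled in the paragraph preceding the statement: for a nonsingular $X=Q_1\cap Q_2\subset\BP^{n+2}$ the base locus $\sC_x\subset\BP T_x(X)$ of the pencil $II_{X,x}$ is exactly the VMRT at $x$, the minimal rational curves of $X$ being lines. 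Indeed $T_x(X)=T_x(Q_1)\cap T_x(Q_2)$, and a direction $v$ is tangent to a line on $X$ through $x$ precisely when $Q_1(v)=Q_2(v)=0$, which cuts out the base locus of $II_{X,x}$ in $\BP T_x(X)\cong\BP^{n-1}$. Hence $\sC_x$ is, for general $x$, a nonsingular complete intersection of two quadrics in $\BP^{n-1}$ of dimension $n-3$; for $n>3$ it is positive-dimensional, irreducible (smooth complete intersection of dimension $\ge 1$, hence connected by Lefschetz), of degree $4$, and therefore not a linear subspace of $\BP T_x(X)$. With this identification, the assumption that $\rd_x f$ sends $\sC_x$ to $\sC_{f(x)}$ for each $x$ in the connected open set $M$ says exactly that $f$ is a biholomorphism between germs of the minimal-rational-curve structures that preserves the VMRT.

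For the second step, note that both $X$ and $X'$ are Fano of index $n-1>0$, covered by lines, and of Picard number one with $\0_X(1)$ generating $\Pic(X)$ (Grothendieck--Lefschetz, since $n\ge 3$). Together with the nonlinearity of the VMRT at a general point established above, these are precisely the hypotheses under which the Cartan--Fubini extension theorem of \cite{HM} applies, and it yields a biregular morphism $F\colon X\to X'$ with $F|_M=f$ (here connectedness of $M$ is what guarantees that the local extension glues to a single global one).

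For the last step I would show $F$ is linear. Since $F$ is an isomorphism and $\Pic(X)=\Z\,\0_X(1)$, $\Pic(X')=\Z\,\0_{X'}(1)$ with both generators ample, we get $F^*\0_{X'}(1)\cong\0_X(1)$, so $F^*$ is a linear isomorphism $H^0(X',\0_{X'}(1))\to H^0(X,\0_X(1))$. Being a complete intersection of two quadrics, $X$ has $H^0(\BP^{n+2},\sI_X(1))=H^1(\BP^{n+2},\sI_X(1))=0$ (from the Koszul resolution $0\to\0(-4)\to\0(-2)^{\oplus 2}\to\sI_X\to 0$ twisted by $\0(1)$), so the restriction $H^0(\BP^{n+2},\0(1))\to H^0(X,\0_X(1))$ is an isomorphism, and likewise for $X'$. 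Therefore $F^*$ induces a linear automorphism of $H^0(\BP^{n+2},\0(1))$, i.e.\ an element $g\in{\rm PGL}_{n+3}$ with $g|_X=F$, hence $g|_M=f$, which is the desired conclusion.

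The only genuinely nontrivial ingredient is the Cartan--Fubini machinery of \cite{HM}; on the side of the varieties at hand, the point that needs care is the positive-dimensionality and nonlinearity of the VMRT at a general point, and this is exactly where the hypothesis $n>3$ enters — for $n=3$ the VMRT $\sC_x$ is a finite set of directions, the extension principle no longer applies, and one expects a genuinely different situation. The identification of $\sC_x$ with the base locus of $II_{X,x}$ is the other essential input, but that has been recorded above and is immediate from the defining quadrics.
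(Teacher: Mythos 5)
Your proposal is correct and follows essentially the same route the paper intends: the paper proves nothing beyond citing Proposition \ref{p.4} for the identification $\sC_x=\mathrm{VMRT}_x$ and then invoking the Cartan--Fubini extension theorem of \cite{HM}, and your three steps (VMRT identification, verification of the Picard-number-one/positive-dimensional-nonlinear-VMRT hypotheses, and the upgrade of the biregular extension to a linear automorphism via linear normality of the intersection of two quadrics) are exactly the details being suppressed. The only point worth stating slightly more precisely is that the nondegeneracy hypothesis in \cite{HM} is finiteness of the Gauss map of $\sC_x$ at a general point, which for a smooth nonlinear projective variety such as your $(n-3)$-dimensional intersection of two quadrics is automatic.
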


 The condition that ${\rm d}_x f: T_x(M) \to T_{f(x)}(M')$ sends the base locus $\sC_x$ of $II_{X,x}$ to the base locus $\sC_{f(x)}$ of $II_{X',f(x)}$  implies $\mu^M = \mu^{M'} \circ f.$ In this sense, Theorem \ref{t.CF} provides a condition strengthening  that of Question \ref{q.GH}, which gives  an affirmative answer. What is unsatisfactory about the condition in Theorem \ref{t.CF} is that it is not formulated in terms of pointwise invariants of $X$.  So the natural problem is to replace it by conditions formulated in terms of pointwise invariants. \\

We will resolve this problem in the following way.
In the setting of Theorem \ref{t.1}, at a general point $x \in X$, the kernel of the derivative d$_x\mu^X$ is a three-dimensional vector subspace in $T_x(X)$, to be  denoted by $\sP_x$. We consider the pair $(II_{X,x},\sP_x)$ at each general point $x\in X$ and define the refined moduli map $\widetilde{\mu}^X: X^{\rm reg}\to \widetilde{\sM}_n^{\rm PQ}$ where we denote by $\widetilde{\sM}_n^{\rm PQ}$ the moduli of pairs of a pencil of quadrics on a vector space of dimension $n$ and a three-dimensional vector subspace in the vector space. This map assigns the projective equivalence class of the pair $(II_{X,x},\sP_x)$ to $x\in X$ and $X^{\rm reg}$ is a dense open subset of $X$ on which the map is well-defined. Then $\widetilde{\mu}^X$ is formulated in terms of pointwise invariants of $X$. We will prove that this information is enough to recognize $X$:

\begin{theorem}\label{t.3}
Let $X, X'$ be two nonsingular varieties in $\BP^{n+2}$ ($n>4$), each of them defined as an intersection of two quadric hypersurfaces. Let $\widetilde{\mu}^X: X^{\rm reg} \to \widetilde{\sM}_n^{\rm PQ}$ and $\widetilde{\mu}^{X'}: (X')^{\rm reg} \to \widetilde{\sM}_n^{\rm PQ}$ be their refined moduli maps of second fundamental forms.  Suppose there exists a biholomorphic map $f:M \to M'$ between connected Euclidean open subsets $M \subset X^{\rm reg}$ and $M' \subset (X')^{\rm reg}$ such that $\widetilde{\mu}^X|_M = \widetilde{\mu}^{X'}|_{M'} \circ f$. Then $f$ comes from a projective automorphism of $\BP^{n+2}$.
\end{theorem}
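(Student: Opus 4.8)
The plan is to reduce Theorem \ref{t.3} to the Cartan--Fubini type statement of Theorem \ref{t.CF}, by proving that the hypothesis $\widetilde{\mu}^X|_M=\widetilde{\mu}^{X'}|_{M'}\circ f$ forces the differential $\rd_xf\colon T_x(M)\to T_{f(x)}(M')$ to carry the base locus $\sC_x$ of $II_{X,x}$ onto the base locus $\sC_{f(x)}$ of $II_{X',f(x)}$. A preliminary, purely algebraic, observation rephrases this condition. At a general point $x$, the base locus $\sC_x\subset\BP T_x(X)=\BP^{n-1}$ is a nonsingular complete intersection of two quadrics, of dimension $n-3\geq1$; from its Koszul resolution twisted by $\sO(2)$, together with $H^0(\BP^{n-1},\sO(-2))=H^1(\BP^{n-1},\sO(-2))=0$, one gets $h^0(\sI_{\sC_x}(2))=2$, so $II_{X,x}$ is exactly the pencil of all quadratic forms on $T_x(X)$ vanishing on $\sC_x$. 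Hence ``$\rd_xf(\sC_x)=\sC_{f(x)}$'' is \emph{equivalent} to ``$\rd_xf$ maps the pencil $II_{X,x}$ projectively onto the pencil $II_{X',f(x)}$'', and it is the latter that I would establish.

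The second step isolates the first-order content of the hypothesis. Forgetting the $3$-plane, $\widetilde{\mu}^X|_M=\widetilde{\mu}^{X'}|_{M'}\circ f$ gives $\mu^X|_M=\mu^{X'}|_{M'}\circ f$, hence $\rd_x\mu^X=\rd_{f(x)}\mu^{X'}\circ\rd_xf$, and therefore $\rd_xf(\sP_x)=\sP_{f(x)}$ (both three-dimensional, by Theorem \ref{t.1}). On the other hand, equality of the \emph{refined} moduli maps says precisely that for each $x$ there exists \emph{some} linear isomorphism $\varphi_x\colon T_x(M)\to T_{f(x)}(M')$ carrying the pencil $II_{X,x}$ onto $II_{X',f(x)}$ and $\sP_x$ onto $\sP_{f(x)}$. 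Thus $\Theta_x:=\varphi_x^{-1}\circ\rd_xf$ is a linear automorphism of $T_x(M)$ preserving $\sP_x$, and the whole problem reduces to proving that $\Theta_x$ lies in $\mathrm{Aut}(II_{X,x})$, i.e.\ that $\Theta_x$ preserves the pencil $II_{X,x}$ as well.

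This is a rigidity statement about the refined second fundamental form at a general point, and the way I would prove it is by an explicit computation. One puts the pencil of quadrics through $X$ in simultaneous diagonal form, writes $X=\{\sum_{i=0}^{n+2}y_i^2=\sum_{i=0}^{n+2}a_iy_i^2=0\}$ with distinct $a_i$, and at a general $x$ computes the two generators of $II_{X,x}$ as quadratic forms on $T_x(X)$, the kernel $\sP_x=\ker\rd_x\mu^X$, and the image of $\rd_x\mu^X$ in the tangent space to the moduli. Differentiating $\widetilde{\mu}^X|_M=\widetilde{\mu}^{X'}|_{M'}\circ f$ at $x$ and transporting the identity through $\varphi_x$ should yield, in addition to $\Theta_x(\sP_x)=\sP_x$, the constraint that $\Theta_x$ intertwines the first-order variation of $II_{X,\bullet}$ at $x$ modulo $\fgl(T_x(X))$; the point of the $3$-plane $\sP_x$ is exactly that it decouples the fibre directions of $\mu^X$ from the moduli directions, so that this constraint can be solved to force $\Theta_x$ into $\mathrm{Aut}(II_{X,x})$. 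Granting this, $\rd_xf$ maps $II_{X,x}$ onto $II_{X',f(x)}$ on a dense open subset of $M$, hence $\rd_xf(\sC_x)=\sC_{f(x)}$ there, and Theorem \ref{t.CF} (applied, if necessary, on a connected open subset and then extended by analytic continuation) shows that $f$ comes from a projective automorphism of $\BP^{n+2}$.

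The main obstacle is the rigidity step of the third paragraph. The cohomological reduction of the first step and the non-degeneracy of the diagonal model at a general point are comfortable for every $n\geq3$, so the stronger hypothesis $n>4$ of Theorem \ref{t.3} (as opposed to $n>3$ in Theorem \ref{t.CF}) must be what makes the $\Theta_x$-constraint solvable: I expect that for $n=4$ the variety $\sC_x$ is only a curve and $\BP\sP_x$ is a plane occupying most of $\BP^{3}=\BP T_x(X)$, so that the refined datum no longer pins $\rd_xf$ down, whereas for $n>4$ there is room enough for $\sP_x$ to play its separating role. Carrying out the explicit determination of $\sP_x$ and of the linear symmetries of the pair $(II_{X,x},\sP_x)$, and verifying that the first-order variation genuinely kills the symmetries that move the pencil, is the technical core of the argument.
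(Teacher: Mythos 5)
Your reduction to Theorem \ref{t.CF} is architecturally plausible (and step 1, the Koszul argument showing $II_{X,x}$ is exactly the pencil through $\sC_x$, is fine), but the entire content of the theorem is concentrated in your third paragraph, and there you only assert that differentiating $\widetilde{\mu}^X|_M=\widetilde{\mu}^{X'}|_{M'}\circ f$ ``should yield'' a constraint forcing $\Theta_x=\varphi_x^{-1}\circ\rd_xf$ into $\mathrm{Aut}(II_{X,x})$. This is not established, and it is not clear it can be established by first-order data at a single point: the pointwise pair $(II_{X,x},\sP_x)$ does not distinguish $X$ from $X'$ at all (since $\mu^X$ is dominant and $\sP_x$ is poised by $II_{X,x}$ for \emph{every} such $X$, by Theorem \ref{t.71}, one can match the refined datum at a point between any two such varieties), so whatever pins $\rd_xf$ down must come from comparing the refined data at \emph{many} points, not from the jet at one. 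Your sketch never explains how the ``first-order variation of $II_{X,\bullet}$ modulo $\fgl$'' is to be computed or why the stabilizer of $\sP_x$ intersected with that constraint lands inside $\mathrm{Aut}(II_{X,x})$; your heuristic for the role of $n>4$ is also not where that hypothesis actually bites. As written, the proof has a genuine gap at its only nontrivial step.

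For comparison, the paper does not route through Theorem \ref{t.CF} at all. It first shows (Theorem \ref{t.inj}) that $\rd_x\widetilde{\mu}^X$ is injective at general $x$ --- this is where $n>4$ enters, via the rank of an explicit $5\times5$ matrix --- so that after shrinking, $f=(\widetilde{\mu}^{X'}|_{M'})^{-1}\circ\widetilde{\mu}^X|_M$ and the two refined moduli images coincide as sets. The key computation (Lemmas \ref{l.2}--\ref{l.4} and \ref{l.7}) identifies, for $x'$ ranging over a fiber of $\mu^X$, the poised plane $\sP_{x'}$ with an explicit point $\bigl[v_1^{x'}:\cdots:v_n^{x'}\bigr]\in\BP^{n-1}$ whose coordinates are $-\prod_{j\neq i}(\alpha_i-\alpha_j)\big/\prod_{j}(\alpha_i-\lambda_j)$; equating these expressions for $X$ and $X'$ at sufficiently many points of a positive-dimensional locus in a fiber (this is the second place $n>4$ is used, in the dimension count for the correspondence $C$) and inverting a Vandermonde matrix forces the eigenvalue sets $\{\lambda_j\}$ and $\{\lambda_j'\}$ to coincide, hence $X\cong X'$ projectively. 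So the mechanism is a global comparison along fibers of $\mu^X$, not a pointwise rigidity of $\rd_xf$. If you want to salvage your route, you would have to actually carry out the explicit determination of $\rd_xf=(\rd_{f(x)}\widetilde{\mu}^{X'})^{-1}\circ\rd_x\widetilde{\mu}^X$ and verify it maps the pencil to the pencil; nothing in your current write-up does this.
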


Theorem \ref{t.3}(=Theorem \ref{theoremB}) says that Question \ref{q.GH} has an affirmative answer for nonsingular intersections of two quadrics with $n>4$ if we replace $\mu^X$ by $\widetilde\mu^X$. The restriction $n>4$ in Theorem \ref{t.3} seems to be fairly strict because $X$ and $\widetilde{\sM}_n^{\rm PQ}$ have the same dimension for $n=4$; if $n\leq3$, then both of ${\sM}_n^{\rm PQ}$ and $\widetilde{\sM}_n^{\rm PQ}$ are trivial, so $X$ can not be characterized by $\widetilde\mu^X$.\\ 

In the course of proving Theorem \ref{t.3}, we obtain the following result on the derivative of the refined moduli map:

\begin{theorem}
Let $X \subset \BP^{n+2}$ be a nonsingular intersection of two quadric hypersurfaces with $n>4$. 
Let $\widetilde{\mu}^X: X^{\rm reg} \to \widetilde{\sM}_n^{\rm PQ}$ be the refined moduli map 
of second fundamental forms on $X$. Denote by $X^{\rm good}$ the subset $\{x\in X^{\rm reg} \mid \Ker(d_x\widetilde{\mu}^X)=0\}\subset X^{\rm reg}$. Then $X^{\rm good}$ is nonempty.
\end{theorem}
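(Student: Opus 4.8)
The plan is to produce an explicit point $x \in X^{\rm reg}$ at which $d_x\widetilde\mu^X$ is injective, by reducing the question to a finite-dimensional linear-algebra computation on a convenient normal form of the intersection of two quadrics. First I would recall the standard diagonalization: a general nonsingular intersection of two quadrics $X = Q_0 \cap Q_1 \subset \BP^{n+2}$ can be written with $Q_0 = \sum_{i=0}^{n+2} z_i^2$ and $Q_1 = \sum_{i=0}^{n+2} \lambda_i z_i^2$ for distinct scalars $\lambda_i$, and at a chosen general point $x$ one computes the second fundamental form $II_{X,x}$ and the VMRT $\sC_x$ explicitly; the pencil $II_{X,x}$ is then a nonsingular pencil of quadrics on $T_x(X) \cong \C^n$ whose $n$ eigenvalues are built out of the $\lambda_i$. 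The subspace $\sP_x = \Ker(d_x\mu^X)$ is three-dimensional by the discussion preceding Theorem \ref{t.3}, and by \cite{HM} together with the discussion there, $\sP_x$ is recoverable from the local geometry; I would identify $\sP_x$ in these coordinates as well.

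Next I would set up the derivative $d_x\widetilde\mu^X\colon T_x(X) \to T_{\widetilde\mu^X(x)}\widetilde{\sM}_n^{\rm PQ}$ concretely. Moving $x$ along a tangent vector $v \in T_x(X)$ varies both the pencil $II_{X,x}$ and the subspace $\sP_x$; differentiating the Gauss-type/second-fundamental-form data of $X=Q_0\cap Q_1$ gives $\tfrac{d}{dt}II_{X,\gamma(t)}$ and $\tfrac{d}{dt}\sP_{\gamma(t)}$ as explicit expressions linear in $v$ and in the defining quadrics, modulo the infinitesimal action of the relevant linear group (the group preserving the moduli data, which is what quotients $\C^n$-level data down to $\widetilde{\sM}_n^{\rm PQ}$). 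Injectivity of $d_x\widetilde\mu^X$ is then the statement that the only $v$ for which this combined variation is absorbed by the group action is $v=0$; since we already know $\Ker d_x\mu^X = \sP_x$, it suffices to show that the refinement by $\sP_x$ kills the remaining three directions, i.e. that $v\mapsto \tfrac{d}{dt}\sP_{\gamma(t)} \bmod (\text{group})$ is injective on $\sP_x$. This is a $3$-dimensional problem: compute how the second-order osculation (the jet that determines $\sP$) varies as $x$ moves in the three "bad" directions, and check the resulting $3\times(\dim \widetilde{\sM}_n^{\rm PQ} - \dim \sM_n^{\rm PQ})$ matrix has rank $3$ — for which the hypothesis $n>4$ gives enough room in the target.

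Having reduced to this, I would carry it out as follows: (i) fix the diagonal normal form and a general base point $x$, write $II_{X,x}$, $\sC_x$, and $\sP_x$ in adapted coordinates; (ii) parametrize a curve $\gamma(t)\subset X$ with $\gamma(0)=x$, $\gamma'(0)=v$ using the quadric equations, and expand to the order needed to read off the pair $(II_{X,\gamma(t)},\sP_{\gamma(t)})$; (iii) extract $d_x\widetilde\mu^X(v)$ as an element of the quotient tangent space, and show that $d_x\widetilde\mu^X(v)=0$ forces first $v\in\sP_x$ (already known, from $d_x\mu^X$) and then, via the $\sP$-component, $v=0$; (iv) conclude that the locus where this holds is Zariski open and, being nonempty by the computation at this one general $x$, that $X^{\rm good}\neq\emptyset$. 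The main obstacle is step (ii)–(iii): organizing the second-fundamental-form variation and the variation of $\sP_x$ so that the group action is cleanly quotiented out, and in particular showing that on $\sP_x$ the variation of $\sP$ is not itself tangent to a group orbit. I expect that a judicious choice of general $x$ (avoiding the finitely many special strata, e.g. points where the eigenvalue pattern of $II_{X,x}$ degenerates) makes this a determinant-nonvanishing check, and that $n>4$ is exactly what guarantees the determinant is not forced to vanish for dimension reasons.
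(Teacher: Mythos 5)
Your overall strategy does coincide with the paper's: use the diagonal normal form, observe that $\Ker(d_x\widetilde\mu^X)\subset\Ker(d_x\mu^X)=\sP_x$ so that injectivity reduces to showing $d_x\widetilde\mu^X$ has rank $3$ on the three-dimensional space $\sP_x$, and verify this at one sufficiently general point by a determinant computation in which $n>4$ enters. But as written, every substantive step is deferred to ``I would compute,'' and those computations are where the entire content of the theorem lies; so there is a genuine gap between the plan and a proof. Concretely, three things are missing. First, the explicit identification of $\sP_x$: the paper proves (Theorem \ref{t.71}, via Lemmas \ref{l.2}--\ref{l.4}) that $\sP_x$ is poised by $II_{X,x}$, i.e. spanned by $\mathbf u,\alpha(\mathbf u),\alpha^2(\mathbf u)$ for an explicit vector $\mathbf u$ whose coordinates in a standard basis for $(\varphi_1|_{W_x},\varphi_2|_{W_x})$ are given by $(v_i^{(0)})^2=-\prod_{j\neq i}(\alpha_i-\alpha_j)\big/\prod_{k}(\alpha_i-\lambda_k)$; this rests on the simultaneous diagonalization of Lemma \ref{l.2} and the partial-fraction identities of Proposition \ref{l.1}, none of which is routine. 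Second, the variation of $\sP_{x'}$ as $x'$ moves along the fiber of $\mu^X$: the quotient by the group action is not handled by a direct Lie-algebra argument but by transporting everything to $\BP(W_x)$ via the birational map of Proposition \ref{pp3} --- which itself requires $II_{X,x}$ to admit only the $2^{n-1}$ reflection automorphisms, i.e. condition $(iii)$ of Definition \ref{d.regular} --- and then differentiating the explicit dependence on the $\lambda_j$ along the three infinitesimal generators of the ${\rm SL}(2,\C)$-action (Lemmas \ref{l.7} and \ref{l.6}).

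Third, the final rank check is not the $3\times(\dim\widetilde\sM^{\rm PQ}_n-\dim\sM^{\rm PQ}_n)$ matrix you describe. Because the image is read off in $\BP(W_x)$, one must show that the four vectors $\mathbf w^{(0)},\mathbf w^{(1)},\mathbf w^{(2)},\mathbf v^x$ span a four-dimensional subspace of $W_x\cong\C^n$, and the paper does this by enlarging to the five vectors $\mathbf w^{(0)},\alpha(\mathbf w^{(0)}),\alpha^2(\mathbf w^{(0)}),\mathbf v^x,\alpha(\mathbf v^x)$ and proving their independence via a $5\times 5$ Vandermonde-type determinant together with a genericity argument for the vector $\mathbf w^x$ (Lemmas \ref{l.8}--\ref{l.9.1}). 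This is the precise place where $n>4$ is used --- one needs five independent vectors in $\C^n$ --- rather than a soft ``room in the target'' count. Your proposal correctly locates the reduction but gives no indication of how any of these three computations would be carried out, and they do not follow formally from the setup.
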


To prove Theorem \ref{t.3}, we use a special property of $\sP_x$, which is worth highlighting. 
For a pencil of quadrics $\Phi$ on a vector space $W$ of dimension $n$, we introduce a special class of three-dimensional subspaces of $W$, namely, those `poised by $\Phi$' (See Section 4 for a precise definition). This is  an invariant property and leads to
 a natural correspondence at the level of moduli spaces: 

\begin{cor4.18}\label{intcor}
For a nonsingular pencil of quadrics $\Phi$ on a vector space $W$ of dimension $n>3$, let $S_{\Phi}$ be the set of three-dimensional subspaces poised by $\Phi$. Then there is a correspondence 
$S:=\{([\Phi],[\{\Phi\}\times S_{\Phi}])\in\sM^{\rm PQ}_n\times\widetilde\sM^{\rm PQ}_n\mid [\Phi]\in\sM^{\rm PQ}_n\}$ between $\sM^{\rm PQ}_n$ and $\widetilde\sM^{\rm PQ}_n$. Denote by $p_1$ and $p_2$ the natural projections: 
 \begin{displaymath}
    \xymatrix{
      &  S \ar[dl]_{p_1} \ar[dr]^{p_2}       &   \\
        \sM^{\rm PQ}_n  &  & \widetilde\sM^{\rm PQ}_n }
\end{displaymath}
Then $p_2$ is an embedding and $p_1$ is a submersion having fibers birational to $\BP^{n-1}$.
\end{cor4.18}

A key step in the proof of Theorem \ref{t.3} is to show that 
 the subspace $\sP_x \subset T_x(X)$ is poised by $II_{X,x}$. (See Theorem \ref{t.71}).
 
 \medskip
 It is natural to ask to what extent our results can be generalized to other submanifolds of codimension 2
  in projective space. Do Theorems \ref{t.1} and \ref{t.3} hold, if we replace $X$ by a smooth complete intersection of codimension 2 (or more generally, by a nondegenerate submanifold of codimension 2) in $\BP^{n+2}$?   The  methods we have used do not seem to generalize easily to these general cases. 

\begin{acknowledgements}
This study has been done under the guidance of Jun-Muk Hwang. I would like to thank him for many inspiring discussions and suggestions throughout the research period. I would like to thank Sijong Kwak for his consideration and encouragement. I am grateful to Gary R. Jensen and Emilio Musso for discussions that helped me learn projective differential geometry. A special thank goes to Qifeng Li for many detailed comments on the first draft of the paper. The author is supported by National Researcher Program 2010-0020413 of NRF.

\end{acknowledgements}

\tableofcontents

\begin{notation}
Throughout this paper, we denote by $\BP(V)$ or simply $\BP V$ the projectivization of a complex vector space $V$. We write $[\mathbf v]$ for the point in $\BP V$ corresponding to the line in $V$ generated by a nonzero vector $\mathbf v \in V$. For a linear subspace (resp. a subset) $P$ of $\BP V$, we denote by $\widehat{P}$ the corresponding vector subspace (resp. the affine cone) in $V$. 
\end{notation}

%%%%%%%%%%%%%%%%%%%%%%%%%%%%%%%%%%%%%%%%%%%%%% Section2. preliminaries %%%%%%%%%%%%%%%%

\section{Preliminaries}\label{moduli}
We refer to the references \cite{Reid} and \cite{AvLa}. 

\begin{notation}\label{n.1.1}
Let $W$ be a complex vector space of dimension $n\geq3$ and let $W^*$ be the dual vector space of $W$. We regard the symmetric product $\Sym^2(W^*)$ as the space of quadratic forms (i.e. homogeneous polynomials of degree two) on $W$. For each $\varphi\in\Sym^2(W^*)$, denote by $B_{\varphi}$ the unique symmetric bilinear form on $W$ such that $B_{\varphi}(\mathbf u,\mathbf u)=\varphi(\mathbf u)$ for every $\mathbf u\in W$. In other words, for $\mathbf u,\mathbf v \in W$,
\begin{center}
$B_{\varphi}(\mathbf u,\mathbf v):=\big(\varphi(\mathbf u+\mathbf v)-\varphi(\mathbf u)-\varphi(\mathbf v)\big)/2\,\in\,\C$. 
\end{center}
%\end{notation}
%\begin{notation}\label{n.1.2}
For $\varphi\in\Sym^2(W^*)$ and $\mathbf u\in W$, we write 
\begin{displaymath}
\mathbf u^{\perp_{\varphi}}:=\{\mathbf v\in W\mid B_{\varphi}(\mathbf u,\mathbf v)=0\} \subset W
\end{displaymath}
for the orthogonal space of $\mathbf u$ in $W$ with respect to the symmetric bilinear form $B_{\varphi}$. For $[\mathbf u] \in \BP (W)$, let $[\mathbf u]^{\perp_{\varphi}}$ indicate $\mathbf u^{\perp_{\varphi}} \subset W$. 
\end{notation}

\begin{definition}\label{d.1.3}
For $\varphi\in\Sym^2(W^*)\setminus\{0\}$, let
\begin{center}
Sing$(\varphi):=\{\mathbf u\in W\mid \mathbf u^{\perp_{\varphi}}=W\} \subset W.$
\end{center}
We say $\varphi$ is $\textit{degenerate}$ if Sing$(\varphi)$ is nontrivial.
\end{definition}

\begin{notation}\label{n.1.4}
If we fix a basis $\{\mathbf w_1, \mathbf w_2, \ldots, \mathbf w_n\}$ of $W$, then each $\varphi\in\Sym^2(W^*)$ corresponds to a symmetric $n\times n$ matrix $\mathbf A^{\varphi}=(\mathbf A^{\varphi}_{ij})$ where
\begin{displaymath}
\mathbf A^{\varphi}_{ij}:=B_{\varphi}(\mathbf w_i,\mathbf w_j).
\end{displaymath}
We denote by  
\begin{center}
$\det(\varphi)$ 
\end{center}
the determinant of $\mathbf A^{\varphi}$. Note that both of $\mathbf A^{\varphi}$ and $\det(\varphi)$ depend on the choice of basis on $W$.
\end{notation}

\begin{proposition}\label{p.1.5}
Let $\varphi\in\Sym^2(W^*)$. With a basis $\{\mathbf w_1, \mathbf w_2, \ldots, \mathbf w_n\}$ of $W$, 
%as in Notation \ref{n.1.4}, 
\begin{displaymath}
B_{\varphi}(\mathbf u,\mathbf v)=\sum_{i,j} u_iB_{\varphi}(\mathbf w_i,\mathbf w_j)v_j=\sum_{i,j} u_i\mathbf A^{\varphi}_{ij}v_j
\end{displaymath}
where $\mathbf u=\sum u_i\mathbf w_i\in W$ and $\mathbf v=\sum v_i\mathbf w_i\in W$ with $u_i, v_i \in \C$.
In particular, $\varphi$ is degenerate if and only if $\det(\varphi)=\det(\mathbf A^{\varphi})=0$.    
\end{proposition}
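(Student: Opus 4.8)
The plan is to obtain the displayed formula by a direct expansion using bilinearity, and then to read the degeneracy criterion off it. First I would recall that $B_{\varphi}$, being the polarization of $\varphi$ as set up in Notation \ref{n.1.1}, is a symmetric \emph{bilinear} form on $W$; this bilinearity is the only nontrivial input and it is either taken as known or checked by a one-line computation from the polarization identity. Granting it, for $\mathbf u=\sum_i u_i\mathbf w_i$ and $\mathbf v=\sum_j v_j\mathbf w_j$ I would expand $B_{\varphi}(\mathbf u,\mathbf v)=\sum_{i,j}u_iv_j\,B_{\varphi}(\mathbf w_i,\mathbf w_j)$ and then substitute $B_{\varphi}(\mathbf w_i,\mathbf w_j)=\mathbf A^{\varphi}_{ij}$ from Notation \ref{n.1.4}; this yields both expressions in the statement.

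For the ``in particular'' clause I would simply unwind the definitions in matrix terms. By Definition \ref{d.1.3}, $\varphi$ is degenerate precisely when there is a nonzero $\mathbf u$ with $\mathbf u^{\perp_{\varphi}}=W$, i.e. (by Notation \ref{n.1.1}) with $B_{\varphi}(\mathbf u,\mathbf v)=0$ for all $\mathbf v\in W$. Using the formula just established and letting $\mathbf v$ run through the basis $\mathbf w_1,\dots,\mathbf w_n$, this says exactly that the nonzero vector $(u_1,\dots,u_n)$ lies in the kernel of $\mathbf A^{\varphi}$ (using symmetry of $\mathbf A^{\varphi}$ to identify left and right kernels). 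Such a vector exists if and only if $\mathbf A^{\varphi}$ is singular, i.e. $\det(\varphi)=\det(\mathbf A^{\varphi})=0$.

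There is no real obstacle here: the statement is a routine translation between the coordinate-free and the matrix descriptions of a quadratic form. The only point demanding a little care is bookkeeping — keeping straight, as Notation \ref{n.1.4} already warns, that $\mathbf A^{\varphi}$ and $\det(\varphi)$ depend on the chosen basis while the property of being degenerate does not, so that the equivalence in the last line is genuinely basis-independent even though each side of it is phrased using the basis.
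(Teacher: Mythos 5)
Your proposal is correct and matches the paper's approach: the paper's entire proof is the observation that, since $B_{\varphi}$ is bilinear, the statement follows from the definitions, and your expansion plus the kernel/determinant translation is just that argument written out in full.
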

\begin{proof}
Since $B_{\varphi}$ is bilinear, this proposition is given by definitions.
\end{proof}

\begin{definition}\label{d.1.5}
A line in $\BP(\Sym^2(W^*))$ is called a \it pencil of quadratic forms \rm on $W$ or a \it  pencil of quadrics \rm on $W$. 
\end{definition}

\begin{notation}
Let $D \subset \BP(\Sym^2(W^*))$ be the hypersurface consisting of degenerate quadratic forms on $W$, i.e., $D$ is defined by the determinant polynomial of degree $n$ on $\Sym^2(W^*)$. 
\end{notation}

\begin{remark}
The hypersurface $D \subset \BP(\Sym^2(W^*))$ is irreducible and reduced. So a general line in $\BP(\Sym^2(W^*))$ intersects $D$ at $n$ distinct points.
\end{remark}

\begin{definition}\label{d.nonsingular}
A pencil $\Phi \subset \BP(\Sym^2(W^*))$ is $nondegenerate$ if general elements in $\Phi$ are nondegenerate. A nondegenerate pencil $\Phi$ is $nonsingular$ if $\Phi$ intersects $D$ at $n$ distinct points.  
\end{definition}

\begin{definition}\label{d.discrim}
For a pencil $\Phi \subset \BP(\Sym^2(W^*))$, let $\varphi_1,\varphi_2 \in \Sym^2(W^*)$ be two linearly independent quadratic forms in $\widehat{\Phi}\in\Gr(2,\Sym^2(W^*))$. With a basis $\{\mathbf w_1, \mathbf w_2, \ldots, \mathbf w_n\}$ of $W$, the homogeneous polynomial 
\begin{center}
$\det(s\varphi_1-t\varphi_2)$
\end{center}
in $s$ and $t$ is called the $discriminant$ of $\Phi$ with respect to $\varphi_1$ and $\varphi_2$.  
\end{definition}

\begin{remark}
In Definition \ref{d.discrim}, the discriminant polynomial $\det(s\varphi_1-t\varphi_2)$ does not depend on the choice of basis of $W$ up to multiplication by nonzero constants, see $(i)$ of Proposition \ref{p.2}. 
\end{remark}

\begin{proposition}\label{p.3}
With notations in Definition \ref{d.discrim}, the discriminant $\det(s \varphi_1-t\varphi_2)$ is not identically zero if and only if the pencil $\Phi$ is nondegenerate. When the discriminant polynomial is not identically zero, it has no multiple root (or multiple linear factor) if and only if the pencil $\Phi$ is nonsingular. 
%(So we call it the discriminant of $\Phi$.)
\end{proposition}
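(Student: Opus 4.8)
The plan is to reduce everything to linear algebra via the matrix description in Notation \ref{n.1.4} and then translate between the geometry of the pencil inside $\BP(\Sym^2(W^*))$ and the root structure of a binary form. Fix a basis of $W$ and write $\mathbf A_1 = \mathbf A^{\varphi_1}$, $\mathbf A_2 = \mathbf A^{\varphi_2}$ for the associated symmetric matrices; then the discriminant is the binary form $\Delta(s,t) = \det(s\mathbf A_1 - t\mathbf A_2)$, a homogeneous polynomial of degree $n$ in $(s,t)$. First I would observe that a point $[s_0\varphi_1 - t_0\varphi_2] \in \Phi$ (with $(s_0,t_0) \neq (0,0)$) lies on the hypersurface $D$ of degenerate forms if and only if $\det(s_0\mathbf A_1 - t_0\mathbf A_2) = 0$, i.e. if and only if $(s_0:t_0)$ is a root of $\Delta$; this is immediate from Proposition \ref{p.1.5} (a quadratic form is degenerate iff its matrix is singular). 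Since $\widehat\Phi = \langle \varphi_1,\varphi_2\rangle$ and every element of $\Phi$ is of the form $[s_0\varphi_1 - t_0\varphi_2]$, the zero scheme of $\Delta$ on $\BP^1$ corresponds exactly to the scheme-theoretic intersection $\Phi \cap D$ under the isomorphism $\BP^1 \cong \Phi$ given by $(s_0:t_0) \mapsto [s_0\varphi_1 - t_0\varphi_2]$.

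Granting this dictionary, the first assertion follows quickly. By Definition \ref{d.nonsingular}, $\Phi$ is nondegenerate iff a general element of $\Phi$ is nondegenerate, i.e. iff $\Delta(s_0,t_0) \neq 0$ for general $(s_0:t_0) \in \BP^1$, which (as $\Delta$ is a polynomial) is equivalent to $\Delta$ not being the zero polynomial. For the second assertion, suppose $\Delta \not\equiv 0$. Since $\deg\Delta = n$, the form $\Delta$ has $n$ roots counted with multiplicity in $\BP^1$, and it has $n$ \emph{distinct} roots iff it has no repeated linear factor. Under the correspondence above, the $n$ roots of $\Delta$ (with multiplicity) match the points of $\Phi \cap D$ (with their scheme structure), so $\Delta$ has $n$ distinct roots iff $\Phi$ meets $D$ in $n$ distinct points, which is precisely the defining condition for $\Phi$ to be nonsingular in Definition \ref{d.nonsingular}. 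Combining the two directions gives the claim.

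One technical point I would want to address carefully: the change-of-basis independence. Changing the basis of $W$ replaces $\mathbf A_i$ by $P^{\top}\mathbf A_i P$ for an invertible $P$, so $\Delta(s,t)$ gets multiplied by the nonzero constant $(\det P)^2$; likewise replacing $(\varphi_1,\varphi_2)$ by another basis $(\varphi_1',\varphi_2')$ of $\widehat\Phi$ applies an invertible linear substitution to $(s,t)$, which permutes the roots in $\BP^1$ without changing their number or multiplicities. So "no multiple root," "$\Delta \not\equiv 0$," and "$n$ distinct roots" are all well-defined conditions on the pencil $\Phi$, independent of the choices — this is essentially the content of Proposition \ref{p.2}(i), which I may invoke.

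The main obstacle, such as it is, is not conceptual but bookkeeping: one must make sure that the multiplicity of a root of $\Delta$ really coincides with the intersection multiplicity of $\Phi$ with $D$ at the corresponding point, so that "no multiple root of $\Delta$" and "$\Phi \cap D$ consists of $n$ reduced points" are literally the same statement. This is clear because $D$ is cut out by the single determinant polynomial and restricting that polynomial to the line $\Phi$ gives exactly $\Delta$ (up to the harmless constant above), so the local intersection multiplicities are the vanishing orders of $\Delta$. Everything else is a direct application of the definitions.
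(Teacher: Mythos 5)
Your proposal is correct and follows essentially the same route as the paper: identify each element of the pencil with a point $(s_0:t_0)$, note via Proposition \ref{p.1.5} that degeneracy of $s_0\varphi_1-t_0\varphi_2$ is exactly the vanishing of $\det(s_0\varphi_1-t_0\varphi_2)$, and read off both equivalences from Definition \ref{d.nonsingular}. The extra care you take about basis-independence and matching root multiplicities with the points of $\Phi\cap D$ is sound but not needed beyond what Proposition \ref{p.2}(i) already provides.
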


\begin{proof}
Each element of $\widehat{\Phi}\in\Gr(2,\Sym^2(W^*))$ is expressed in $c_1\varphi_1-c_2\varphi_2$ for some $c_1,c_2\in\C$ and the vanishing of $\det(c_1\varphi_1-c_2\varphi_2)$ means that $c_1\varphi_1-c_2\varphi_2$ is degenerate. Hence, the discriminant polynomial $\det(s \varphi_1-t\varphi_2)$ is not identically zero if and only if general elements in $\Phi$ are nondegenerate. Moreover, $\Phi$ is nonsingular if and only if $\det(s \varphi_1-t\varphi_2)$ is not identically zero and has $n$ distinct roots in $\BP(\C^2)$. 
\end{proof}

\begin{proposition}\label{p.1.6}
Let $\Phi \subset \BP(\Sym^2(W^*))$ be a nonsingular pencil of quadratic forms on $W$ and let $\varphi_1,\varphi_2 \in \Sym^2(W^*)$ be two linearly independent nondegenerate quadratic forms in $\widehat{\Phi}\in\Gr(2,\Sym^2(W^*))$. Then\\ 
(i) the $n$ degenerate elements in $\Phi$ are 
\begin{center}
$ [\varphi_1-\tau_1\varphi_2]$, $[\varphi_1-\tau_2\varphi_2], \ldots , [\varphi_1-\tau_n\varphi_2]$ $\in\Phi$
\end{center}
for some distinct nonzero numbers $\tau_1, \tau_2, \ldots, \tau_n \in \C$;\\
(ii) the roots of the discriminant $\det(s\varphi_1-t\varphi_2)$ of $\Phi$ with respect to $\varphi_1$ and $\varphi_2$ are 
\begin{center}
$[1:\tau_1], [1:\tau_2], \ldots, [1:\tau_n]$ $\in \BP(\C^2)$;
\end{center}
(iii) there is a basis $\{\mathbf w_1', \mathbf w_2', \ldots, \mathbf w_n'\}$ of $W$ such that
\begin{center}
$\varphi_1(\sum z_i \mathbf w_i')=\sum \tau_iz_i^2$ and $\varphi_2(\sum z_i \mathbf w_i')=\sum z_i^2$.
\end{center}
for $\sum z_i \mathbf w_i'\in W$ with $z_i\in \C$. Note that each $\mathbf w_i'$ in (iii) generates \rm{Sing}$(\varphi_1-\tau_i\varphi_2)$.
\end{proposition}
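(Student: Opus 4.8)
The plan is to reduce everything to the spectral theory of a single self-adjoint operator, using nondegeneracy of $\varphi_2$ to linearize the pencil. First I would introduce the endomorphism $T \in \End(W)$ uniquely determined by $B_{\varphi_1}(\mathbf u, \mathbf v) = B_{\varphi_2}(T\mathbf u, \mathbf v)$ for all $\mathbf u, \mathbf v \in W$ (this exists because $B_{\varphi_2}$ is nondegenerate), and note that symmetry of $B_{\varphi_1}$ and $B_{\varphi_2}$ makes $T$ self-adjoint with respect to $B_{\varphi_2}$, i.e. $B_{\varphi_2}(T\mathbf u, \mathbf v) = B_{\varphi_2}(\mathbf u, T\mathbf v)$. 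In a fixed basis with Gram matrices $\mathbf A^{\varphi_1}, \mathbf A^{\varphi_2}$, the operator $T$ has matrix $(\mathbf A^{\varphi_2})^{-1}\mathbf A^{\varphi_1}$, so its characteristic polynomial equals $\det(\mathbf A^{\varphi_2})^{-1}\det(\tau\mathbf A^{\varphi_2} - \mathbf A^{\varphi_1})$, which up to the nonzero scalar $\det(\mathbf A^{\varphi_2})^{-1}$ and sign is the discriminant $\det(s\varphi_1 - t\varphi_2)$ restricted to the chart $[s:t]=[1:\tau]$. Since $\varphi_1$ and $\varphi_2$ are nondegenerate, neither $[0:1]$ nor $[1:0]$ is a root of the discriminant, so every root has the form $[1:\tau_i]$ with $\tau_i \neq 0$; and these roots are distinct exactly because $\Phi$ is nonsingular, by Proposition \ref{p.3}. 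Together with the fact that a member $c_1\varphi_1 - c_2\varphi_2$ is degenerate iff $[c_1:c_2]$ is a root, this gives (i) and (ii).

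For (iii), I would use that $T$ now has $n$ distinct eigenvalues $\tau_1,\dots,\tau_n$, hence is diagonalizable; choosing eigenvectors $\mathbf w_1',\dots,\mathbf w_n'$ gives a basis of $W$. The standard self-adjointness computation $\tau_i B_{\varphi_2}(\mathbf w_i',\mathbf w_j') = B_{\varphi_2}(T\mathbf w_i',\mathbf w_j') = B_{\varphi_2}(\mathbf w_i',T\mathbf w_j') = \tau_j B_{\varphi_2}(\mathbf w_i',\mathbf w_j')$ forces $B_{\varphi_2}(\mathbf w_i',\mathbf w_j')=0$ for $i\neq j$; and $B_{\varphi_2}(\mathbf w_i',\mathbf w_i')\neq 0$, since otherwise $\mathbf w_i'$ would be $B_{\varphi_2}$-orthogonal to the entire basis, contradicting nondegeneracy of $\varphi_2$. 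Working over $\C$, I can rescale each $\mathbf w_i'$ so that $B_{\varphi_2}(\mathbf w_i',\mathbf w_i')=1$, whence $\varphi_2(\sum z_i\mathbf w_i')=\sum z_i^2$; and then $B_{\varphi_1}(\mathbf w_i',\mathbf w_j') = B_{\varphi_2}(T\mathbf w_i',\mathbf w_j') = \tau_i B_{\varphi_2}(\mathbf w_i',\mathbf w_j') = \tau_i\delta_{ij}$, so $\varphi_1(\sum z_i\mathbf w_i')=\sum\tau_i z_i^2$, which is (iii).

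For the closing remark, from the two diagonal forms one computes $B_{\varphi_1-\tau_i\varphi_2}(\mathbf w_i',\mathbf w_j') = (\tau_j-\tau_i)\delta_{ij} = 0$ for all $j$, so $\mathbf w_i' \in {\rm Sing}(\varphi_1-\tau_i\varphi_2)$; since $[1:\tau_i]$ is a simple root of the discriminant the form $\varphi_1-\tau_i\varphi_2$ has rank $n-1$, so its singular locus is one-dimensional and therefore equals $\C\,\mathbf w_i'$. I expect the only genuinely delicate point to be the bookkeeping that identifies the characteristic polynomial of $T$ with the discriminant — keeping track of the scalar, the sign, and the affine chart $[s:t]=[1:\tau]$ — together with the invocation of nonsingularity (through Proposition \ref{p.3}) to guarantee $n$ distinct eigenvalues, which is what makes $T$ diagonalizable; the rest is the classical simultaneous-diagonalization argument for a self-adjoint operator, valid here over $\C$ because the square roots needed in the rescaling always exist.
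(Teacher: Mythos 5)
Your proof is correct, but it is organized around a different device than the paper's. The paper argues directly on the level of the quadratic forms: for (i)--(ii) it observes that every degenerate member of $\Phi$ is uniquely of the form $\varphi_1-\tau\varphi_2$ with $\tau\neq 0$ and invokes nonsingularity; for (iii) it chooses a singular vector $\mathbf w_i'$ of each $\varphi_1-\tau_i\varphi_2$, normalized so that $\varphi_2(\mathbf w_i')=1$, and derives the pairwise orthogonality $B_{\varphi_1}(\mathbf w_i',\mathbf w_j')=B_{\varphi_2}(\mathbf w_i',\mathbf w_j')=0$ from the fact that $\varphi_1-\tau_i\varphi_2$ and $\varphi_1-\tau_j\varphi_2$ span $\widehat\Phi$, then checks linear independence by contradiction. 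You instead linearize the pencil via the $B_{\varphi_2}$-self-adjoint operator $T=(\mathbf A^{\varphi_2})^{-1}\mathbf A^{\varphi_1}$ and run the spectral argument; the two constructions produce the same basis, since ${\rm Sing}(\varphi_1-\tau_i\varphi_2)$ is exactly the $\tau_i$-eigenspace of $T$, and your orthogonality computation is the self-adjointness identity in place of the paper's pencil-spanning trick. What your route buys is a cleaner logical order at one delicate point: linear independence of the $\mathbf w_i'$ comes for free from distinct eigenvalues, and only afterwards do you need $B_{\varphi_2}(\mathbf w_i',\mathbf w_i')\neq 0$, which then follows from nondegeneracy of $\varphi_2$; the paper's proof, by contrast, tacitly assumes at the outset that a singular vector of $\varphi_1-\tau_i\varphi_2$ can be normalized to $\varphi_2(\mathbf w_i')=1$, i.e.\ is non-isotropic for $\varphi_2$, before the orthogonality and independence that would justify this have been established. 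Your identification of the characteristic polynomial of $T$ with the discriminant (up to the scalar $\det(\mathbf A^{\varphi_2})^{-1}$ and a sign, in the chart $[s:t]=[1:\tau]$) is the right bookkeeping, and it also gives the closing remark at once: a simple root forces a one-dimensional eigenspace, which is ${\rm Sing}(\varphi_1-\tau_i\varphi_2)$.
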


\begin{proof}
Since $\varphi_1$ and $\varphi_2$ are nondegenerate and linearly independent, any degenerate element in $\Phi$ is represented by 
\begin{center}
$\varphi_1-\tau\varphi_2\in\Sym^2(W^*)$ 
\end{center}
for some nonzero number $\tau\in\C$ uniquely. Note that $\varphi_1-\tau\varphi_2$ and $\varphi_1-\tau'\varphi_2$ are linearly independent if $\tau$ and $\tau'$ are distinct numbers. Hence, $(i)$ follows from the nonsingularity of $\Phi$. Since the degeneracy of each $\varphi_1-\tau_i\varphi_2$ is equivalent to the vanishing of $\det(\varphi_1-\tau_i\varphi_2)$, the roots of $\det(s\varphi_1-t\varphi_2)$ are exactly 
\begin{center}
$[1:\tau_1], [1:\tau_2], \ldots, [1:\tau_n]$ $\in \BP(\C^2)$.
\end{center}

To see $(iii)$, let $\mathbf w_i'\in W$ be a nonzero vector satisfying two conditions: 
\begin{center}
$\mathbf w_i'\in\textrm{Sing}(\varphi_1-\tau_i\varphi_2)$ and $\varphi_2(\mathbf w_i')=1$. 
\end{center}
Then $\mathbf w_i'$ and $\mathbf w_j'$ with $i\neq j$ satisfy
\begin{equation}\label{e.1.6.1}
B_{\varphi_1-\tau_i\varphi_2}(\mathbf w_i',\mathbf w_j')=B_{\varphi_1-\tau_j\varphi_2}(\mathbf w_i',\mathbf w_j')=0.
\end{equation}
Since $\varphi_1-\tau_i\varphi_2$ and $\varphi_1-\tau_j\varphi_2$ with $i\neq j$ span $\widehat{\Phi}$, (\ref{e.1.6.1}) implies 
\begin{equation}\label{e.1.6.2}
B_{\varphi_1}(\mathbf w_i',\mathbf w_j')=B_{\varphi_2}(\mathbf w_i',\mathbf w_j')=0.
\end{equation}
Here $\mathbf w_1', \mathbf w_2', \ldots, \mathbf w_n'$ are linearly independent and form a basis of $W$. To obtain a contradiction, suppose 
\begin{center}
$\mathbf w_1'=c_2\mathbf w_2'+c_3\mathbf w_3'+\cdots+c_n\mathbf w_n'$ 
\end{center}
for some $c_2, \ldots, c_n \in \C$. Then $B_{\varphi_2}(\mathbf w_1',\mathbf w_1')=B_{\varphi_2}(\mathbf w_1',\sum_{j> 1} c_j\mathbf w_j')=0$ by (\ref{e.1.6.2}), but this is contrary to the choice of $\mathbf w_1'$. Therefore, $\mathbf w_1', \mathbf w_2', \ldots, \mathbf w_n'$ form a basis of $W$ and satisfy
\begin{center}
$\varphi_1(\sum z_i \mathbf w_i')=\sum \tau_iz_i^2$ and $\varphi_2(\sum z_i \mathbf w_i')=\sum z_i^2$
\end{center}
by the choice of them.
\end{proof}

\begin{definition}\label{d.standard}
We call a basis $\{\mathbf w_1', \mathbf w_2', \ldots, \mathbf w_n'\}$ of $W$ as in $(iii)$ of Proposition \ref{p.1.6} a $standard$ $basis$ of $W$ with respect to the pair $(\varphi_1,\varphi_2)$, or simply a $standard$ $basis$ for $(\varphi_1,\varphi_2)$. 
\end{definition}

\begin{corollary}\label{p.1.6.1}
With notations in Proposition \ref{p.1.6}, a nonsingular pencil $\Phi\subset \BP(\Sym^2(W^*))$ gives a decomposition $W=W_1\oplus W_2\oplus\cdots\oplus W_n$ where $W_i=$ \rm {Sing}$(\varphi_1-\tau_i\varphi_2)\in\Gr(1,W)$. \it In particular, such a  decomposition is independent of the choice of quadratic forms $\varphi_1,\varphi_2\in\widehat\Phi$.
\end{corollary}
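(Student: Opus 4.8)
The plan is to read everything off Proposition \ref{p.1.6}. First I would fix two linearly independent nondegenerate quadratic forms $\varphi_1,\varphi_2\in\widehat\Phi$; these exist because $\Phi$ is nondegenerate, so by Proposition \ref{p.3} only finitely many (in fact, by nonsingularity, exactly $n$) members of the line $\Phi$ are degenerate, and the remaining members certainly span $\widehat\Phi$. By part (iii) of Proposition \ref{p.1.6} there is a standard basis $\{\mathbf w_1',\dots,\mathbf w_n'\}$ for $(\varphi_1,\varphi_2)$, and in this basis the symmetric matrix of $\varphi_1-\tau_i\varphi_2$ is the diagonal matrix $\mathrm{diag}(\tau_1-\tau_i,\dots,\tau_n-\tau_i)$. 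Since $\tau_1,\dots,\tau_n$ are distinct, this matrix has rank exactly $n-1$, so $\mathrm{Sing}(\varphi_1-\tau_i\varphi_2)=\C\,\mathbf w_i'$ is one-dimensional; in particular $W_i:=\mathrm{Sing}(\varphi_1-\tau_i\varphi_2)\in\Gr(1,W)$. As $\mathbf w_1',\dots,\mathbf w_n'$ form a basis of $W$, this gives the direct sum decomposition $W=W_1\oplus\cdots\oplus W_n$.

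For the independence statement, the key point is that the set of degenerate members of $\Phi$ is an intrinsic finite subset of the projective line $\Phi\subset\BP(\Sym^2 W^*)$, namely $\Phi\cap D$, defined without reference to any choice of $\varphi_1,\varphi_2$. For each point $[\psi]\in\Phi\cap D$, the subspace $\mathrm{Sing}(\psi)\subset W$ depends only on $[\psi]$, since rescaling $\psi$ does not change $\mathrm{Sing}$. Now if $(\varphi_1,\varphi_2)$ and $(\varphi_1',\varphi_2')$ are two bases of $\widehat\Phi$, then by part (i) of Proposition \ref{p.1.6} the families $\{[\varphi_1-\tau_i\varphi_2]\}_{i}$ and $\{[\varphi_1'-\tau_i'\varphi_2']\}_{i}$ are both equal to $\Phi\cap D$ as unordered sets of $n$ points, so up to reindexing they produce the same collection of one-dimensional subspaces $W_i$, hence the same decomposition of $W$.

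I do not expect a real obstacle here; the only thing to be careful about is the precise meaning of "independent of the choice": relabeling the roots $\tau_i$ permutes the summands, so the intrinsic object is the unordered direct sum decomposition (equivalently, the $n$-point subset $\{W_1,\dots,W_n\}\subset\Gr(1,W)$ together with the fact that these lines are in general position). Once this is fixed, both assertions follow immediately from Proposition \ref{p.1.6} and the fact, recorded after Definition \ref{d.nonsingular}, that a nonsingular pencil meets $D$ in exactly $n$ points.
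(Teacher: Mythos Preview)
Your proof is correct and follows exactly the approach the paper intends: the corollary is stated without a separate proof in the paper, as it is read off directly from Proposition~\ref{p.1.6}, especially the final remark there that each $\mathbf w_i'$ generates $\mathrm{Sing}(\varphi_1-\tau_i\varphi_2)$. Your explication of the independence part via the intrinsic set $\Phi\cap D$ is precisely the reasoning implicit in the paper's formulation.
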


\begin{definition}\label{n.1.7}
Let GL$(W)$ be the general linear group on $W$. Then there is a natural GL$(W)$-action on the space $\Sym^2(W^*)$ of quadratic forms on $W$. For $T\in\textrm{GL}(W)$, $\varphi\in\Sym^2(W^*)$, and $\mathbf u\in W$,
\begin{center}
$T.\varphi(\mathbf u):=T^*(\varphi)(\mathbf u)=\varphi(T(\mathbf u))$. 
\end{center}
This action induces an PGL$(W)$-action on the space $\Gr(1,\BP(\Sym^2(W^*)))$ of pencils of quadratic forms on $W$. We say $\Phi_1, \Phi_2 \in \Gr(1,\BP(\Sym^2(W^*)))$ are \textit{projectively equivalent} if 
\begin{center}
$\Phi_2=T^*(\Phi_1):=\{[T^*(\varphi)]\in\BP(\Sym^2(W^*))\mid[\varphi]\in\Phi_1\}$ 
\end{center}
for some $T\in$ GL$(W)$.\end{definition}

\begin{definition}\label{n.4}
A pencil $\Phi\in\Gr(1,\BP(\Sym^2(W^*)))$ is nonsingular in the sense of Definition \ref{d.nonsingular} if and only if $\Phi$ is stable with respect to the PGL$(W)$-action on $\Gr(1,\BP(\Sym^2(W^*)))$ in the sense of geometric invariant theory. (This is part of Theorem 3.1 in \cite{AvLa}.) Thus we write
\begin{center}
$\Gr(1,\BP(\Sym^2(W^*)))^{\rm s}\subset\Gr(1,\BP(\Sym^2(W^*)))$
\end{center}
for the Zariski open subset consisting of nonsingular pencils of quadratic forms. The orbit space
\begin{center}
$\Gr(1,\BP(\Sym^2(W^*)))^{\rm s}\big/{{\rm PGL}(W)}$
\end{center}
can be regarded as a Zariski open subset in the GIT quotient of $\Gr(1,\BP(\Sym^2(W^*)))$ modulo the reductive group PGL$(W)$ \cite{AvLa}. Let
\begin{center}
$\sM^{\rm PQ}_n:=\Gr(1,\BP(\Sym^2(W^*)))^{\rm s}\big/{{\rm PGL}(W)}$. 
\end{center}

If we consider another $n$-dimensional vector space $W'$, any isomorphism $h:W'\to W$ between vector spaces induces an isomorphism $h^*$ between the orbit spaces
\begin{center}
$h^*:\Gr(1,\BP(\Sym^2(W^*)))^{\rm s}\big/{{\rm PGL}(W)}\to\Gr(1,\BP(\Sym^2(W'^*)))^{\rm s}\big/{{\rm PGL}(W')}$
\end{center}
and this isomorphism $h^*$ does not depend on the choice of $h$. Hence $\sM^{\rm PQ}_n$ is defined independently of the choice of $W$ and called the \it moduli of nonsingular pencils of quadratic forms \rm on a complex vector space of dimension $n$. 
\end{definition}

\begin{notation}\label{n.1.8}
We write 
\begin{center}
$[\Phi]\in\sM^{\rm PQ}_n$ 
\end{center}
for the isomorphism class of a nonsingular pencil $\Phi\in\Gr(1,\BP(\Sym^2(W^*)))^{\rm s}$.  
\end{notation}

\begin{remark}
A quadric hypersurface $Q$ in $\BP (W)$ corresponds to an element in $\BP(\Sym^2(W^*))$ and the intersection of two quadric hypersurfaces in $\BP (W)$ corresponds to an element in $\Gr(1,\BP(\Sym^2(W^*)))$. 
\end{remark}

\begin{notation}\label{n.2}
Given an intersection $Y$ of two quadric hypersurfaces in $\BP (W)$, we denote by $\Phi_Y$ the pencil of quadratic forms vanishing on $Y$.
\end{notation} 

The following proposition is part of Proposition 2.1 in \cite{Reid}.

\begin{proposition}\label{p.1}
Let $Y \subset \BP (W)$ be the intersection of two quadric hypersurfaces in $\BP (W)$. Then Y is nonsingular if and only if its pencil $\Phi_Y \subset \BP(\Sym^2(W^*))$ is nonsingular in the sense of Definition \ref{d.nonsingular}. 
\end{proposition}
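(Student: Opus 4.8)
The plan is to reduce the statement to the computations already set up in Section 2, essentially Proposition \ref{p.3} together with the Jacobian criterion for the base locus of a pencil of quadrics. Fix linearly independent $\varphi_1,\varphi_2 \in \Sym^2(W^*)$ spanning $\widehat{\Phi_Y}$, so that $Y = \{\varphi_1 = \varphi_2 = 0\} \subset \BP(W)$. First I would observe that if the discriminant $\det(s\varphi_1 - t\varphi_2)$ vanishes identically, then by Proposition \ref{p.3} the pencil $\Phi_Y$ is degenerate: every member of the pencil is a singular quadric. In that case a classical argument (or a direct Bertini-type dimension count) shows that the quadrics in $\Phi_Y$ share a common singular point, or more precisely that $Y$ acquires a singular point; hence the forward implication ``$Y$ nonsingular $\Rightarrow \Phi_Y$ nondegenerate'' holds. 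So throughout the rest we may assume $\Phi_Y$ is nondegenerate and must compare ``$Y$ nonsingular'' with ``the discriminant has $n$ distinct roots'', which by Proposition \ref{p.3} is the nonsingularity of $\Phi_Y$.

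Next, for the nondegenerate case, I would bring in the standard basis of Proposition \ref{p.1.6}: choosing $\varphi_1,\varphi_2$ both nondegenerate (possible since $\Phi_Y$ is nondegenerate), there is a basis $\{\mathbf w_1',\dots,\mathbf w_n'\}$ of $W$ and distinct nonzero $\tau_1,\dots,\tau_n$ — or rather, if $\Phi_Y$ is merely nondegenerate but not nonsingular, one gets a Weierstrass-type normal form with possibly repeated $\tau_i$ and Jordan blocks. The cleanest route is: in coordinates $z_1,\dots,z_n$ dual to a basis adapted to the pencil, a point $[\mathbf z] \in Y$ is a singular point of $Y$ iff the two gradient vectors $\nabla\varphi_1(\mathbf z)$ and $\nabla\varphi_2(\mathbf z)$ are linearly dependent, i.e. iff there exist $(s:t)$ with $\nabla(s\varphi_1 - t\varphi_2)(\mathbf z) = 0$, i.e. iff $\mathbf z \in \operatorname{Sing}(s\varphi_1 - t\varphi_2)$ for some $(s:t)$ with $(s\varphi_1 - t\varphi_2)(\mathbf z) = 0$. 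So: $Y$ is singular at $[\mathbf z]$ iff $\mathbf z$ lies on some singular member of the pencil and also lies in the vertex of that member.

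Now I would run the two directions. If $\Phi_Y$ is nonsingular, each singular member $\varphi_1 - \tau_i\varphi_2$ has one-dimensional vertex $\operatorname{Sing}(\varphi_1 - \tau_i\varphi_2) = \C\mathbf w_i'$ by Corollary \ref{p.1.6.1}, and $\varphi_2(\mathbf w_i') = 1 \neq 0$, so $\mathbf w_i' \notin$ every member of the pencil; hence no point of $\BP(W)$ lies simultaneously in a vertex and in the corresponding singular quadric, so $Y$ is smooth. Conversely, if $\Phi_Y$ is nondegenerate but has a repeated root, I would show the vertex of the corresponding (possibly generalized) singular member meets the quadric: in the block-normal form, a repeated root $\tau$ forces the existence of a vector $\mathbf v$ with $\mathbf v \in \operatorname{Sing}(\varphi_1 - \tau\varphi_2)$ and $(\varphi_1 - \tau\varphi_2)(\mathbf v) = 0$ and $\varphi_2(\mathbf v) = 0$ — hence $[\mathbf v] \in Y$ is a singular point of $Y$. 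The main obstacle, and the step needing the most care, is exactly this converse: handling the case of repeated roots/Jordan blocks in the pencil's normal form and checking that every such degeneration actually produces a singular point of $Y$ (rather than just a singular member of the pencil). This is where I would either invoke the Weierstrass normal form for pencils of quadratic forms from \cite{Reid} or \cite{AvLa}, or argue directly that a non-reduced point of the discriminant divisor corresponds to the vertex of a singular member lying on that member. Since Proposition \ref{p.1} is quoted as ``part of Proposition 2.1 in \cite{Reid}'', the expected proof is short: cite the normal form and perform the Jacobian check in each of the two directions above.
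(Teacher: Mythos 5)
The paper offers no proof of Proposition \ref{p.1}: it is quoted as part of Proposition 2.1 of \cite{Reid}, so there is no in-paper argument to measure yours against. Your strategy --- the Jacobian criterion, identifying the singular points of $Y$ with the points of $Y$ lying in ${\rm Sing}(s\varphi_1-t\varphi_2)$ for some $(s:t)$ --- is the standard route and is sound. The direction ``$\Phi_Y$ nonsingular $\Rightarrow$ $Y$ nonsingular'' is complete as you state it: by Proposition \ref{p.1.6} each of the $n$ degenerate members has one-dimensional vertex spanned by a vector $\mathbf w_i'$ with $\varphi_2(\mathbf w_i')=1$, so no vertex point lies on $Y$, and the nondegenerate members have empty singular locus.

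The converse, however, rests on two claims you assert but do not prove, and these are the substance of the proposition. (1) If $\det(s\varphi_1-t\varphi_2)\equiv 0$: the clean argument is Bertini --- the generic member of the pencil is smooth away from the base locus $Y$, so if every member is degenerate, the nonempty vertex of a generic member must be contained in $Y$, and any point of that vertex is then a singular point of $Y$ by the Jacobian criterion. (2) If the discriminant is not identically zero but has a multiple root $\tau$ (take $\varphi_2$ nondegenerate, as you do): when the corank of $\varphi_1-\tau\varphi_2$ is at least $2$, the kernel carries a nonzero $\varphi_2$-isotropic vector because every quadratic form on a complex vector space of dimension at least $2$ has one; when the corank is $1$ with kernel $\C\mathbf v$, the root having multiplicity at least $2$ means $0$ is an eigenvalue of $(\mathbf A^{\varphi_2})^{-1}\mathbf A^{\varphi_1-\tau\varphi_2}$ with algebraic multiplicity at least $2$ but geometric multiplicity $1$, so a Jordan chain gives $\mathbf u$ with $\mathbf A^{\varphi_1-\tau\varphi_2}\mathbf u=\mathbf A^{\varphi_2}\mathbf v$, whence $\mathbf v^{t}\mathbf A^{\varphi_2}\mathbf v=\mathbf v^{t}\mathbf A^{\varphi_1-\tau\varphi_2}\mathbf u=\big(\mathbf A^{\varphi_1-\tau\varphi_2}\mathbf v\big)^{t}\mathbf u=0$ by symmetry. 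In either case one obtains $[\mathbf v]\in Y$ with $\nabla(\varphi_1-\tau\varphi_2)(\mathbf v)=0$, hence a singular point of $Y$. With these two computations inserted your outline is a complete proof; as written, the crux of the ``only if'' direction is still deferred to the normal form of \cite{Reid} rather than established.
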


Proposition \ref{p.1} implies the following theorem.
\begin{theorem}\label{t.1.9}
The moduli of nonsingular intersections of two quadric hypersurfaces in $\BP^{n-1}$ is equivalent to the moduli of nonsingular pencils of quadratic forms on a complex vector space of dimension $n$.
\end{theorem}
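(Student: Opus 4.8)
The plan is to derive Theorem \ref{t.1.9} directly from Proposition \ref{p.1} together with the GIT description of $\sM^{\rm PQ}_n$ recorded in Definition \ref{n.4}, Theorem \ref{theoremB}. First I would make precise what "the moduli of nonsingular intersections of two quadric hypersurfaces in $\BP^{n-1}$" means: fix an $n$-dimensional vector space $W$ with $\BP(W) = \BP^{n-1}$; a pair of two distinct quadric hypersurfaces with assigned base locus is nothing but a pencil, so the parameter space is $\Gr(1,\BP(\Sym^2(W^*)))$, and the intrinsic notion of "intersection of two quadrics" only remembers the base scheme, which is recovered from the pencil $\Phi_Y$ by Notation \ref{n.2}. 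Two such intersections $Y, Y' \subset \BP(W)$ are projectively equivalent exactly when some $g \in \mathrm{PGL}(W)$ carries $Y$ to $Y'$, and since $g$ then carries the ideal of $Y$ to that of $Y'$, this holds if and only if $\Phi_{Y'} = g^*(\Phi_Y)$, i.e. $\Phi_Y$ and $\Phi_{Y'}$ are projectively equivalent pencils in the sense of Definition \ref{n.1.7}. (One should note here that $Y$ does not quite determine $\Phi_Y$ set-theoretically in pathological low-codimension situations, but for $Y$ a nonsingular complete intersection of two quadrics with $n \geq 3$ the homogeneous ideal in degree $2$ is exactly $\widehat{\Phi_Y}$, so the correspondence $Y \leftrightarrow \Phi_Y$ is a bijection compatible with the $\mathrm{PGL}(W)$-actions.)

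Next I would assemble the two translations. By Proposition \ref{p.1}, $Y$ is nonsingular if and only if $\Phi_Y$ is nonsingular in the sense of Definition \ref{d.nonsingular}, and by the first sentence of Definition \ref{n.4} this is the same as $\Phi_Y$ being GIT-stable for the $\mathrm{PGL}(W)$-action on $\Gr(1,\BP(\Sym^2(W^*)))$. Thus the assignment $Y \mapsto \Phi_Y$ is a $\mathrm{PGL}(W)$-equivariant bijection from the set of nonsingular intersections of two quadrics in $\BP(W)$ onto $\Gr(1,\BP(\Sym^2(W^*)))^{\rm s}$, and passing to orbit spaces gives a bijection onto
\[
\Gr(1,\BP(\Sym^2(W^*)))^{\rm s}\big/{\mathrm{PGL}(W)} \;=\; \sM^{\rm PQ}_n .
\]
Finally, by the last paragraph of Definition \ref{n.4}, $\sM^{\rm PQ}_n$ is independent of the choice of $W$, so the resulting identification is canonical; this yields the asserted equivalence of moduli. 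If one wants the statement at the level of moduli \emph{spaces} (schemes) rather than just sets of isomorphism classes, the same argument upgrades: the family of all pencils over $\Gr(1,\BP(\Sym^2(W^*)))$ restricts to a $\mathrm{PGL}(W)$-linearized family over the stable locus, its relative base scheme is a flat family of nonsingular complete intersections of two quadrics, and the universal property of the GIT quotient identifies $\sM^{\rm PQ}_n$ with the coarse moduli space of such complete intersections.

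The only genuinely delicate point, and the one I would treat with care, is the reduction "a nonsingular intersection of two quadrics remembers its defining pencil" — i.e. that $\dim H^0(\BP^{n-1}, \sI_Y(2)) = 2$ with $H^0(\sI_Y(2)) = \widehat{\Phi_Y}$, so that projective equivalence of the subvarieties is literally projective equivalence of the pencils. This is standard (a smooth complete intersection of two quadrics of dimension $n-3 \geq 0$ is linearly normal and its quadratic forms cut it out scheme-theoretically, by Proposition 2.1 of \cite{Reid} or a direct Koszul computation), but it is what makes the bijection well-defined in both directions; everything else is bookkeeping that follows formally once Proposition \ref{p.1} and the GIT-stability statement of Definition \ref{n.4} are in hand.
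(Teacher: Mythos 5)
Your proposal is correct and follows exactly the route the paper intends: the paper offers no written proof beyond the remark that Proposition \ref{p.1} implies the theorem, and your argument simply makes explicit the $\mathrm{PGL}(W)$-equivariant bijection $Y \leftrightarrow \Phi_Y$ together with the passage to orbit spaces. The extra care you take with linear normality (that $Y$ recovers $\widehat{\Phi_Y}$ as its degree-two ideal) is a worthwhile detail the paper leaves implicit, but it is not a departure from the paper's approach.
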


On the other hand, there is another moduli space $\sM^{\rm BF}_n$, which is closely related to $\sM^{\rm PQ}_n$.

\begin{notation}\label{n.5}
We denote by $\sB_n$ the space of binary forms (i.e. homogeneous polynomials in two variables) of degree $n$ in $s$ and $t$. Then $\sB_n$ is regarded as a complex vector space of dimension $n+1$ with its standard basis $\{s^nt^0, s^{n-1}t^1, \ldots, s^0t^n\}$. 
\end{notation}

\begin{proposition}\label{p.2}
Let $\Phi\subset\BP(\Sym^2(W^*))$ be a nondegenerate pencil of quadratic forms on $W$. When $\varphi_1,\varphi_2\in\Sym^2(W^*)$ are linearly independent quadratic forms in  $\widehat\Phi\in\Gr(2,\Sym^2(W^*))$, let $[\sigma_1:\tau_1], [\sigma_2:\tau_2], \ldots, [\sigma_n:\tau_n]$ $\in \BP(\C^2)$ be the roots (that may not be distinct) of $\det(s \varphi_1-t\varphi_2)$, i.e., $\det(s \varphi_1-t\varphi_2)$ is a constant multiple of
\begin{center}
$(\tau_1s-\sigma_1t)(\tau_2s-\sigma_2t)\cdots(\tau_ns-\sigma_nt)\in\sB_n$.
\end{center}
Then the roots satisfy the following properties.\\
(i) They are independent of the choice of basis on $W$. Moreover, they are invariant under any isomorphism $h$ from another $n$-dimensional complex vector space $W'$ to $W$. More precisely, the discriminant of the pull-back $h^*(\Phi)$ of $\Phi$ with respect to $h^*(\varphi_1)$ and $h^*(\varphi_2)$ has the same roots of $\det(s \varphi_1-t\varphi_2)$.\\
(ii) They are not uniquely determined by $\Phi$. If we choose $\varphi_1',\varphi_2'\in\widehat\Phi$ instead of $\varphi_1$ and $\varphi_2$, each root $[\sigma_i:\tau_i]\in\BP(\C^2)$ turns into 
\begin{center}
$[c\tau_i+d\sigma_i:a\tau_i+b\sigma_i]\in\BP(\C^2)$ 
\end{center}
for $a,b,c,d\in \C$ such that
\begin{center}
$\varphi_1'=a\varphi_1+b\varphi_2$, $\varphi_2'=c\varphi_1+d\varphi_2$
\end{center}
(with $ad-bc\neq0$).
\end{proposition}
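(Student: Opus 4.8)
The plan is to analyze the effect of the two kinds of change — a change of basis on $W$, and a change of the spanning pair $(\varphi_1,\varphi_2)$ of $\widehat\Phi$ — separately, since the statement splits cleanly into these two cases.

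For part (i), fix two bases $\{\mathbf w_i\}$ and $\{\mathbf w_i''\}$ of $W$, related by $T\in\mathrm{GL}(W)$ with matrix $P$. If $\mathbf A^{\varphi}$ and $\mathbf B^{\varphi}$ denote the Gram matrices of $\varphi$ in the two bases, then $\mathbf B^{\varphi}=P^{t}\mathbf A^{\varphi}P$ by Proposition \ref{p.1.5}. Hence
\begin{displaymath}
\det(s\mathbf B^{\varphi_1}-t\mathbf B^{\varphi_2})=\det\!\big(P^{t}(s\mathbf A^{\varphi_1}-t\mathbf A^{\varphi_2})P\big)=(\det P)^2\,\det(s\mathbf A^{\varphi_1}-t\mathbf A^{\varphi_2}),
\end{displaymath}
so the discriminant changes only by the nonzero constant $(\det P)^2$, and its roots in $\BP(\C^2)$ are unchanged. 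For the invariance under an isomorphism $h:W'\to W$: choosing a basis of $W'$ and transporting it via $h$ to a basis of $W$, the Gram matrix of $h^*(\varphi_i)$ in the former equals the Gram matrix of $\varphi_i$ in the latter, so the two discriminant polynomials literally coincide; combined with the basis-independence just proved, this gives the claim. (Here one also uses that $h^*$ carries a spanning pair of $\widehat\Phi$ to a spanning pair of $\widehat{h^*\Phi}$, which is immediate since $h^*$ is linear and injective on $\Sym^2(W^*)$.)

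For part (ii), suppose $\varphi_1'=a\varphi_1+b\varphi_2$ and $\varphi_2'=c\varphi_1+d\varphi_2$ with $ad-bc\neq0$. Then, working with a fixed basis of $W$ and writing $M(s,t):=s\mathbf A^{\varphi_1}-t\mathbf A^{\varphi_2}$,
\begin{displaymath}
s\mathbf A^{\varphi_1'}-t\mathbf A^{\varphi_2'}=s(a\mathbf A^{\varphi_1}+b\mathbf A^{\varphi_2})-t(c\mathbf A^{\varphi_1}+d\mathbf A^{\varphi_2})=(as-ct)\mathbf A^{\varphi_1}+(bs-dt)\mathbf A^{\varphi_2}=M(as-ct,\,dt-bs).
\end{displaymath}
Thus $\det(s\varphi_1'-t\varphi_2')$ is, up to the constant coming from (i)-type normalizations already accounted for, the binary form $\det(s\varphi_1-t\varphi_2)$ precomposed with the linear substitution $(s,t)\mapsto(as-ct,\,dt-bs)$, whose matrix $\begin{pmatrix}a&-c\\-b&d\end{pmatrix}$ is invertible. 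A root $[\sigma_i:\tau_i]$ of the original — meaning $\tau_i s-\sigma_i t$ divides it — is carried to the point $[\sigma_i':\tau_i']$ for which $\tau_i(as-ct)-\sigma_i(dt-bs)=(a\tau_i+b\sigma_i)s-(c\tau_i+d\sigma_i)t$; that is, $[\sigma_i':\tau_i']=[c\tau_i+d\sigma_i:a\tau_i+b\sigma_i]$, exactly as asserted. I would present this as a short direct computation tracking linear factors rather than invoking any general machinery.

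The only mild subtlety — and the single point I would be careful about rather than a genuine obstacle — is bookkeeping the overall nonzero scalars: the discriminant is well-defined only up to $\C^{\times}$, so all identities above should be read in $\BP(\sB_n)$ (equivalently, as statements about root sets with multiplicity), and one should note that $ad-bc\neq0$ guarantees $\varphi_1',\varphi_2'$ remain linearly independent, so the substitution is indeed invertible and no root is lost or created. Everything else is routine linear algebra.
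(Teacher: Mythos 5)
Your proposal is correct and follows essentially the same route as the paper: part (i) via the $(\det P)^2$ transformation of the Gram matrix under change of basis, and part (ii) via the linear substitution in $(s,t)$ induced by the change of spanning pair (the paper writes the same identity in the inverse direction, $\det((ct+ds)\varphi_1'-(at+bs)\varphi_2')=\det((ad-bc)(s\varphi_1-t\varphi_2))$, but the computation and the resulting tracking of linear factors are identical). Your remarks on the projective normalization and on $ad-bc\neq 0$ are appropriate and consistent with the paper's treatment.
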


\begin{proof}
We write $s\varphi_1-t\varphi_2$ simply $\varphi$ and consider the symmetric matrix $\mathbf A^{\varphi}$. If we change the basis on $W$, $\mathbf A^{\varphi}$ is changed into $T\mathbf A^{\varphi}T^t$ for some $T\in{\rm GL}(n,\C)$. Since 
\begin{center}$\det(T\mathbf A^{\varphi}T^t)=\det(T)^2\det(s\varphi_1-t\varphi_2)$, 
\end{center}
the roots are invariant under the change of basis. The proof for the rest of $(i)$ is similar. 
And $(ii)$ follows from the fact
\begin{center}
$\det((ct+ds)\varphi_1'-(at+bs)\varphi_2')=\det((ad-bc)(s\varphi_1-t\varphi_2))$.
\end{center}
\end{proof}

\begin{definition}\label{n.61} 
From (ii) of Proposition \ref{p.2}, we define an action of the special linear group $\rm{SL}(2,\C)$ on $\BP(\sB_n)$. Firstly, we define the action on an element 
\begin{center}
$\psi=c_0s^nt^0+c_1s^{n-1}t^{1}+ \cdots +c_ns^{0}t^{n} \in \sB_n\quad(c_i\in\C)$. 
\end{center}
Let us consider a linear decomposition of $\psi$ 
\begin{displaymath}
(\tau_1s-\sigma_1t)(\tau_2s-\sigma_2t)\cdots(\tau_ns-\sigma_nt)
\end{displaymath}
with complex numbers $\sigma_i$ and $\tau_i$. For
\begin{displaymath}
\left[
\begin{array}{cc}
a&b\\
c&d
\end{array}\right] \in \rm{SL}(2,\C),
\end{displaymath}
define
\begin{displaymath}
\left[
\begin{array}{cc}
a&b\\
c&d
\end{array}\right].\,\psi
=
\left[
\begin{array}{cc}
a&b\\
c&d
\end{array}\right].\big((\tau_1s-\sigma_1t)(\tau_2s-\sigma_2t)\cdots(\tau_ns-\sigma_nt)\big)
\end{displaymath}
as the binary form
\begin{displaymath}
\big((a\tau_1+b\sigma_1)s-(c\tau_1+d\sigma_1)t\big)\cdots\big((a\tau_n+b\sigma_n)s-(c\tau_n+d\sigma_n)t\big).
\end{displaymath}
This action induces an $\rm{SL}(2,\C)$-action on $\BP(\sB_n)$. With respect to the action, the class $[\psi]\in\BP(\sB_n)$ is stable (resp. semi-stable) if and only if $\psi$ has no root of multiplicity $\geq n/2$ (resp. $> n/2$) by Proposition 4.1 in \cite{MFK}. 
\end{definition}

\begin{definition}\label{n.6}
Consider the Zariski open subset $\BP(\sB_n)^o\subset\BP(\sB_n)$ of binary forms with no multiple root. The set $\BP(\sB_n)^o$ is contained in the stable locus of $\BP(\sB_n)$ and has the orbit map 
\begin{center}
$q:\BP(\sB_n)^o \to \BP(\sB_n)^o\big/{\rm SL}(2,\C)$. 
\end{center}
Then the orbit space $\BP(\sB_n)^o\big/{\rm SL}(2,\C)$ can be regarded as a Zariski open subset in the GIT quotient of $\BP(\sB_n)$ modulo the reductive group SL$(2,\C)$. We define
\begin{center}
$\sM^{\rm BF}_n:=\BP(\sB_n)^o\big/{\rm SL}(2,\C)$
\end{center}
and call it the \it moduli of binary forms of degree $n$ with no multiple root. \rm 
As in Definition \ref{n.4}, we denote the isomorphism class of $[\psi]\in\BP(\sB_n)$ by $[[\psi]]\in\sM^{\rm BF}_n$.
\end{definition}

\begin{theorem}\label{caniso}
The discriminants of pencils of quadratic forms induce a canonical isomorphism between the moduli spaces $\sM^{\rm PQ}_n$ and $\sM^{\rm BF}_n$ denoted by $\widetilde{\rm D}:\sM^{\rm PQ}_n \to \sM^{\rm BF}_n$. More precisely, $\widetilde{\rm D}([\Phi])=[[\det(s\varphi_1-t\varphi_2)]]$ with notations in Definition \ref{d.discrim}. 
\end{theorem}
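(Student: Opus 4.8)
The plan is to show that the discriminant construction descends to a well-defined map on moduli spaces, is bijective, and is an isomorphism of varieties (or at least a biregular map on the relevant open sets). First I would set up the map before quotienting. Fix an $n$-dimensional $W$ and a basis, giving for each $\Phi \in \Gr(1,\BP(\Sym^2(W^*)))^{\rm s}$ with chosen spanning pair $(\varphi_1,\varphi_2)$ the binary form $\det(s\varphi_1 - t\varphi_2) \in \sB_n$, which by Proposition \ref{p.3} has no multiple root precisely because $\Phi$ is nonsingular. Thus $[\det(s\varphi_1-t\varphi_2)] \in \BP(\sB_n)^o$. By Proposition \ref{p.2}(ii), changing the spanning pair $(\varphi_1,\varphi_2) \mapsto (a\varphi_1+b\varphi_2,\, c\varphi_1+d\varphi_2)$ with $ad-bc \neq 0$ replaces the binary form by the result of applying the corresponding element of $\mathrm{SL}(2,\C)$ (after rescaling to determinant one, which only scales the binary form by a constant and so does not change its class in $\BP(\sB_n)$) — this is exactly the $\mathrm{SL}(2,\C)$-action of Definition \ref{n.61}. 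Hence the composite $\Gr(1,\BP(\Sym^2(W^*)))^{\rm s} \to \BP(\sB_n)^o \to \sM^{\rm BF}_n$ is independent of the choice of spanning pair.

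Next I would check invariance under $\mathrm{PGL}(W)$, so that the map factors through $\sM^{\rm PQ}_n$. For $T \in \mathrm{GL}(W)$ the pencil $T^*\Phi$ is spanned by $(T^*\varphi_1, T^*\varphi_2)$, whose matrices are $T \mathbf A^{\varphi_1} T^t$ and $T \mathbf A^{\varphi_2} T^t$; then $\det(sT\mathbf A^{\varphi_1}T^t - tT\mathbf A^{\varphi_2}T^t) = \det(T)^2 \det(s\varphi_1-t\varphi_2)$, so the binary form only scales by a nonzero constant and its class in $\BP(\sB_n)^o$, hence in $\sM^{\rm BF}_n$, is unchanged. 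Combining with the previous paragraph and Proposition \ref{p.2}(i) (independence of $W$ via isomorphisms $h$, which is compatible with the definition of $\sM^{\rm PQ}_n$ as being independent of $W$), we obtain a well-defined map $\widetilde{\rm D}: \sM^{\rm PQ}_n \to \sM^{\rm BF}_n$, $[\Phi] \mapsto [[\det(s\varphi_1-t\varphi_2)]]$.

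For bijectivity I would argue on both sides. Surjectivity: given $[\psi] \in \BP(\sB_n)^o$ with roots $[1:\tau_1],\dots,[1:\tau_n]$ distinct (after choosing a representative not vanishing at $[0:1]$, using the $\mathrm{SL}(2,\C)$-action), take $\varphi_2 = \sum z_i^2$ and $\varphi_1 = \sum \tau_i z_i^2$ in a basis of $W$; by Proposition \ref{p.1.6} and Corollary \ref{p.1.6.1} the pencil $\Phi$ they span is nonsingular with discriminant a constant multiple of $\psi$, so $\widetilde{\rm D}([\Phi]) = [[\psi]]$. Injectivity: if $\widetilde{\rm D}([\Phi]) = \widetilde{\rm D}([\Phi'])$, then after adjusting spanning pairs by $\mathrm{SL}(2,\C)$ and rescaling, the two pencils have literally the same roots $\tau_1,\dots,\tau_n$; applying Proposition \ref{p.1.6}(iii) to each gives standard bases bringing both pencils to the same normal form $(\sum \tau_i z_i^2,\ \sum z_i^2)$, so they are projectively equivalent via the change-of-basis map, i.e. $[\Phi] = [\Phi']$. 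Finally, to promote this bijection to an isomorphism I would note that $[\Phi] \mapsto [\det(s\varphi_1-t\varphi_2)]$ is a morphism on suitable local trivializations of the two $\mathrm{PGL}(W)$- resp. $\mathrm{SL}(2,\C)$-bundles (it is given by polynomial formulas in the entries of the matrices), hence descends to a morphism of the GIT quotients; since its inverse is likewise given by the explicit normal-form construction above, which is again polynomial/rational in the data, $\widetilde{\rm D}$ is an isomorphism of varieties.

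The main obstacle I expect is the last step: verifying cleanly that $\widetilde{\rm D}$ and its set-theoretic inverse are genuine morphisms of the quotient varieties, rather than merely a bijection of orbit sets. One must either invoke the universal property of the GIT (or categorical) quotient — the $\mathrm{PGL}(W)$-invariant morphism $\Gr(1,\BP(\Sym^2(W^*)))^{\rm s} \to \sM^{\rm BF}_n$ factors uniquely through the quotient — and then check the inverse is a morphism by a symmetric argument, or else exhibit both maps locally in coordinates. The ring-of-invariants viewpoint is probably cleanest: the coefficients of the discriminant binary form are, up to the $\mathrm{GL}(W)$-scaling factor $\det(T)^2$, invariant functions, and classical invariant theory (the Clebsch–Gordan/transvectant description, or simply the fact that both quotients are normal and the map is a bijective morphism between normal varieties in characteristic zero) forces the bijection to be biregular. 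I would present the argument via the universal property of the quotient to keep it short, relegating the coordinate check to a remark.
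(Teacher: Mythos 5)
Your proposal is correct, but it takes a different route from the paper on the one genuinely nontrivial point. The paper's proof is two sentences: it observes that Proposition \ref{p.2} gives a set-theoretic bijection between the two moduli spaces, and then cites Theorem 4.2 of \cite{AvLa} for the fact that this bijection is an isomorphism of varieties. You instead attempt a self-contained argument: well-definedness via Proposition \ref{p.2}(ii) (the change of spanning pair matching the ${\rm SL}(2,\C)$-action of Definition \ref{n.61}) and the ${\rm GL}(W)$-invariance $\det(T\mathbf A T^t)=\det(T)^2\det(\mathbf A)$; surjectivity and injectivity via the diagonal normal form of Proposition \ref{p.1.6}(iii); and the biregularity via the universal property of the GIT quotient together with either explicit coordinates or the fact that a bijective morphism between normal varieties in characteristic zero is an isomorphism. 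This is exactly the content the paper outsources to \cite{AvLa}, and your sketch of it is sound. Two minor cautions: in the surjectivity step, if some $\tau_i=0$ then $\varphi_1=\sum\tau_iz_i^2$ is degenerate, so you cannot literally invoke Proposition \ref{p.1.6} (which assumes both forms nondegenerate) without first applying a further ${\rm SL}(2,\C)$ move to push all roots away from $[1:0]$ and $[0:1]$; and the final "descends to a morphism of quotients in both directions" step is the part that genuinely requires the invariant-theoretic input you allude to, so if you want a complete proof rather than a sketch you would need to carry out the ring-of-invariants or Zariski-main-theorem argument in detail --- which is precisely why the paper defers to the reference. What your approach buys is independence from \cite{AvLa}; what the paper's buys is brevity.
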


\begin{proof}
Proposition \ref{p.2} implies that the two moduli spaces $\sM^{\rm PQ}_n$ and $\sM^{\rm BF}_n$ are in one to one correspondence induced by the discriminants of pencils. This correspondence is indeed an isomorphism between them by Theorem 4.2 of \cite{AvLa}.
\end{proof}

%---------------------------------------------------------Section3 Proof of Thm1-------
\section{Moduli map of second fundamental forms}\label{pf_thm1}

Let us recall the definitions in \cite{IvLa}, p.76-77. 
\begin{definition}\label{d.secondfu}
Let $V$ be a complex vector space of dimension $n+3$. Given an $n$-dimensional complex submanifold $M\subset\BP V$, its projective Gauss map 
\begin{center}
$\gamma:M\to\Gr(n+1,V)$ 
\end{center}
sends $m\in M$ to the affine tangent space $T_{\mathbf v}(\widehat M)\in\Gr(n+1,V)$ where $\mathbf v\in V$ is a nonzero vector in the line $\widehat{m}\subset V$. Then its derivative d$_m\gamma$ induces an element in $\Sym^2(T^*_m(M))\otimes N_m(M)$, which corresponds to a linear system of quadrics on $T_m(M)$. We call it the $(projective)$ $second$ $fundamental$ $form$ of $M$ at $m$ and denote it by 
\begin{center}
$II_{M,m}$. 
\end{center}
We write 
\begin{center}
$\sC_m\subset\BP T_m(M)$ 
\end{center}
for the base locus of $II_{M,m}$.
\end{definition}

The following is Proposition 3.3.2 of \cite{IvLa}.
\begin{proposition}\label{p.4}
Let $X\subset\BP^N$ be a nonsingular intersection of two quadric hypersurfaces. Then, at each point $x\in X$, the base locus $\sC_x\subset\BP T_x(X)$ of $II_{X,x}$ consists of tangent directions of lines on $X$ passing through $x$. 
\end{proposition}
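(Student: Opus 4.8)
The plan is to prove Proposition \ref{p.4} by a direct local computation of the Gauss map of $X = Q_1 \cap Q_2$, identifying the base locus of $II_{X,x}$ with the tangent cone of lines through $x$. First I would fix a point $x \in X$ and, after a projective change of coordinates on $\BP^N = \BP V$, arrange that $\mathbf v_0 \in V$ spans $\widehat x$ and that the affine tangent space $T_{\mathbf v_0}(\widehat X) = \widehat{\mathbb T}_x X$ is a fixed coordinate subspace; since $X$ is a complete intersection, $\widehat{\mathbb T}_x X = \widehat{\mathbb T}_x Q_1 \cap \widehat{\mathbb T}_x Q_2$ has the expected codimension $2$. Writing $q_1, q_2$ for the two quadratic forms spanning $\widehat{\Phi}_X$, I would expand each $q_i$ in an affine chart around $x$ as $q_i(\mathbf v_0 + \mathbf u) = \ell_i(\mathbf u) + \beta_i(\mathbf u) $ where $\ell_i = B_{q_i}(\mathbf v_0, \cdot)$ is linear and $\beta_i(\mathbf u) = q_i(\mathbf u)$ is the quadratic part; the tangent space condition is $\ell_1 = \ell_2 = 0$ restricted to $T_x X$.

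The core of the argument is to compute $II_{X,x}$ from this data. Using the standard description of the second fundamental form of a smooth variety via the second-order Taylor expansion of defining equations (as in \cite{IvLa}, Section 3.2), the second fundamental form of $X$ at $x$ is the linear system on $T_x X$ spanned by the restrictions $\beta_1|_{T_x X}$ and $\beta_2|_{T_x X}$, i.e. by the two quadratic forms $\mathbf u \mapsto q_i(\mathbf u)$ on the tangent space $T_x X \subset \widehat{\mathbb T}_x X / \widehat x$. Its base locus $\sC_x \subset \BP T_x X$ is then $\{[\mathbf u] : q_1(\mathbf u) = q_2(\mathbf u) = 0,\ \mathbf u \in \widehat{\mathbb T}_x X\}$. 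On the other hand, a line $L \subset \BP V$ through $x$ with direction $[\mathbf u]$ lies on $X$ if and only if $q_i(\mathbf v_0 + \lambda \mathbf u) \equiv 0$ in $\lambda$ for $i=1,2$; expanding, this is equivalent to the three conditions $q_i(\mathbf v_0) = 0$ (automatic since $x \in X$), $B_{q_i}(\mathbf v_0, \mathbf u) = 0$ (i.e. $\mathbf u \in \widehat{\mathbb T}_x Q_i$), and $q_i(\mathbf u) = 0$. Thus $L \subset X$ iff $\mathbf u \in \widehat{\mathbb T}_x X$ and $q_1(\mathbf u) = q_2(\mathbf u) = 0$, which is exactly the condition defining $\sC_x$. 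Hence the base locus of $II_{X,x}$ coincides with the set of tangent directions at $x$ of lines on $X$ through $x$.

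The one point requiring a little care — and the main obstacle — is the precise identification of $II_{X,x}$ with the span of $q_1|_{T_xX}, q_2|_{T_xX}$, which involves correctly normalizing the Gauss map computation so that the normal bundle summand does not interfere: one must check that the map $T_x X \to N_x X$ used to define $II_{X,x}$ sends $\mathbf u$ to the class determined by $(q_1(\mathbf u), q_2(\mathbf u))$, with no lower-order corrections surviving, which is where the choice of frame and the fact that $\ell_i|_{T_xX} = 0$ are used. Since $X$ is cut out by exactly two quadrics of codimension $2$ and is smooth, there is no subtlety about the linear system being spanned by fewer than two forms at a general point, though at special points $\sC_x$ may degenerate; but the set-theoretic identification of the base locus with the tangent directions of lines holds at every $x \in X$, which is all that is claimed. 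I would then simply cite \cite{IvLa}, Proposition 3.3.2, for the statement in this packaged form, since all the ingredients are standard there.
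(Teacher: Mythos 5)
Your argument is correct, and it is essentially the paper's own treatment: the paper simply cites Proposition 3.3.2 of the Ivey--Landsberg reference for this statement, and the explicit expansion $q_i(\mathbf v_0+\lambda\mathbf u)=q_i(\mathbf v_0)+2\lambda B_{q_i}(\mathbf v_0,\mathbf u)+\lambda^2 q_i(\mathbf u)$ that you use is exactly the computation the paper carries out right afterwards in equations (3.1)--(3.2). The one point worth emphasizing (which you implicitly use) is that the equality, rather than mere containment, of the base locus with the set of tangent directions of lines holds here precisely because the defining equations have degree two, so no higher fundamental forms enter.
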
  

\begin{notation}\label{n.7}
Let $X \subset \BP V$ be a nonsingular intersection of two quadric hypersurfaces in $\BP V$ where $V$ is a complex vector space of dimension $n+3$ with $n\geq3$. By Proposition \ref{p.1} and \ref{p.1.6}, we can choose nondegenerate quadratic forms $\varphi_1$ and $\varphi_2$ and a standard basis $\{\mathbf e_1, \mathbf e_2, \cdots, \mathbf e_{n+3}\}$ of $V$ such that $X$ is defined by
\begin{center}
$\varphi_1(\sum z_i\mathbf e_i)=\lambda_1z_1^2+\lambda_2z_2^2+\cdots+\lambda_{n+3}z_{n+3}^2=0$
\end{center} 
and
\begin{center}
$\varphi_2(\sum z_i\mathbf e_i)=z_1^2+z_2^2+\cdots+z_{n+3}^2=0$
\end{center}
with $n+3$ distinct nonzero numbers 
\begin{center}
$\lambda_1, \lambda_2, \ldots, \lambda_{n+3} \in \C$. 
\end{center}
For $x=[\sum x_i \mathbf e_i]\in X$, denote by $\varphi_1|_{T_x(X)}$ (resp. $\varphi_2|_{T_x(X)}$) the quadratic form on $T_x(X)$ induced by $\varphi_1$ (resp. $\varphi_2$). 
As long as we fix the basis $\{\mathbf e_1, \mathbf e_2, \cdots, \mathbf e_{n+3}\}$ of $V$, we denote $\,\sum z_i\mathbf e_i\in V$ (resp. $[\sum z_i\mathbf e_i]\in \BP V$) by $(z_1,z_2,\cdots,z_{n+3})\in V$ (resp. $[z_1:z_2:\cdots:z_{n+3}]\in\BP V$). 
\end{notation}

\begin{proposition}
With Notation \ref{n.7}, the base locus $\sC_x$ of $II_{X,x}$ at $x\in X$ is expressed as follows. Let $v=[v_1:v_2:\cdots:v_{n+3}]$ be another point in $\BP V$. Then the line $l_{x,v} \subset \BP V$ connecting the two points $x$ and $v$ is contained in $X$ if and only if 
\begin{center}
$\varphi_1(x+tv)=0$ and $\varphi_2(x+tv)=0$ for every $t\in\C$.
\end{center}
Since $x\in X$, the conditions above are equivalent to four equalities:
\begin{equation}\label{e.1}
x_1v_1+x_2v_2+\cdots+x_{n+3}v_{n+3}=0,\, \lambda_1x_1v_1+\lambda_2x_2v_2+\cdots+\lambda_{n+3}x_{n+3}v_{n+3}=0,
\end{equation}
\begin{equation}\label{e.2}
v_1^2+v_2^2+\cdots+v_{n+3}^2=0, \,\mathrm{and}\, \lambda_1v_1^2+\lambda_2v_2^2+\cdots+\lambda_{n+3}v_{n+3}^2=0.
\end{equation}
Geometrically (\ref{e.1}) means the line $l_{x,v}$ is tangent to $X$ at $x$. So two equations in (\ref{e.2}) (with (\ref{e.1})) define two quadric hypersurfaces in the projectivized tangent space $\BP T_x(X)$ at $x$. Their intersection is the base locus $\sC_x$ of $II_{X,x}$. 
\end{proposition}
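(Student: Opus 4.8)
The plan is to reduce the statement to elementary linear algebra on $V$ and then to quote Proposition~\ref{p.4}. I first record the standard remark that a line $\ell\subset\BP V$ is contained in a quadric hypersurface $\{[\psi]=0\}$, $\psi\in\Sym^2(V^*)$, precisely when $\psi$ vanishes identically on $\ell$. Writing $l_{x,v}$ as $\{[x+tv]\mid t\in\C\}\cup\{[v]\}$ and noting that $\psi(x+tv)$ is a polynomial of degree $\le 2$ in $t$, the vanishing $\psi(x+tv)=0$ for all $t\in\C$ already forces the leading coefficient $\psi(v)$ to vanish, hence covers the point $[v]$ as well. Applying this with $\psi=\varphi_1$ and $\psi=\varphi_2$ gives the first assertion: $l_{x,v}\subset X$ if and only if $\varphi_1(x+tv)=\varphi_2(x+tv)=0$ for all $t\in\C$.

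Next I would expand, for $i=1,2$,
\[
\varphi_i(x+tv)=\varphi_i(x)+2t\,B_{\varphi_i}(x,v)+t^2\varphi_i(v),
\]
using Notation~\ref{n.1.1}; this polynomial in $t$ vanishes identically if and only if its three coefficients $\varphi_i(x)$, $B_{\varphi_i}(x,v)$, $\varphi_i(v)$ all vanish. Since $x\in X$, the constant terms $\varphi_i(x)$ vanish automatically, and we are left with $B_{\varphi_i}(x,v)=0$ and $\varphi_i(v)=0$ for $i=1,2$. Substituting the diagonal normal forms of Notation~\ref{n.7} — so that $B_{\varphi_1}(x,v)=\sum_j\lambda_j x_j v_j$, $B_{\varphi_2}(x,v)=\sum_j x_j v_j$, $\varphi_1(v)=\sum_j\lambda_j v_j^2$, $\varphi_2(v)=\sum_j v_j^2$ — turns these four scalar conditions into exactly (\ref{e.1}) and (\ref{e.2}), which is the asserted reformulation.

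It remains to read the four equations geometrically. The affine tangent space of the quadric cone $\widehat{Q_i}=\{\varphi_i=0\}\subset V$ at the point $x$ is the hyperplane $\{w\in V\mid B_{\varphi_i}(x,w)=0\}$, and, $X$ being nonsingular (Proposition~\ref{p.1}), we have $T_x(\widehat X)=T_x(\widehat{Q_1})\cap T_x(\widehat{Q_2})$. Hence the pair of equations (\ref{e.1}) says precisely that $v\in T_x(\widehat X)$, which is exactly the condition that $l_{x,v}$ be tangent to $X$ at $x$; when moreover $[v]\neq[x]$, the direction of $v$ defines a point of $\BP T_x(X)$. Assuming (\ref{e.1}), the quadratic form $\varphi_i$ descends to a well-defined quadratic form $\varphi_i|_{T_x(X)}$ on $T_x(X)\cong T_x(\widehat X)/\C x$ (because $\varphi_i(x)=0$ and $B_{\varphi_i}(x,\cdot)$ annihilates $T_x(\widehat X)\subset T_x(\widehat{Q_i})$), and the pair (\ref{e.2}) then says exactly that $[v]$ lies on both quadrics $\{\varphi_1|_{T_x(X)}=0\}$ and $\{\varphi_2|_{T_x(X)}=0\}$ in $\BP T_x(X)$. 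Combining this with the first two steps, the intersection of these two quadrics is the set of those $[v]\in\BP T_x(X)$ with $l_{x,v}\subset X$, i.e.\ the set of tangent directions at $x$ of lines of $X$ through $x$; by Proposition~\ref{p.4} this set is $\sC_x$, which finishes the argument.

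I do not anticipate a genuine obstacle: the proof is a short computation once the expansion of $\varphi_i(x+tv)$ is written down. The two points deserving a careful word are the step ``$\varphi_i(x+tv)=0$ for all finite $t$'' $\Longrightarrow$ ``$l_{x,v}\subset X$'', settled by the degree count above, and the identifications $T_x(\widehat X)=T_x(\widehat{Q_1})\cap T_x(\widehat{Q_2})$ and $\varphi_i\mapsto\varphi_i|_{T_x(X)}$, which is exactly where the nonsingularity of $X$ (Proposition~\ref{p.1}) enters.
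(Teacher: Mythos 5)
Your proposal is correct and follows exactly the route the paper intends: the paper states this proposition without a separate proof, the statement itself being the derivation, and your expansion of $\varphi_i(x+tv)$ into constant, linear, and quadratic coefficients together with the identification $T_x(\widehat X)=T_x(\widehat{Q_1})\cap T_x(\widehat{Q_2})$ and the appeal to Proposition~\ref{p.4} is precisely the computation being summarized. The two points you flag as needing care (the degree count covering the point $[v]$, and the descent of $\varphi_i$ to $T_x(X)\cong T_x(\widehat X)/\C x$, which the paper itself justifies in Proposition~\ref{p.5.1.1}) are handled correctly.
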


\begin{definition}\label{d.theta}
Let us define the discriminant of second fundamental form $II_{X,x}$ at $x$. We define the orthogonal space $x^{\perp} \subset V$ of x as   
\begin{center}
$x^{\perp_{\varphi_1}}\cap x^{\perp_{\varphi_2}} \subset V$.
\end{center}
If we take an $n$-dimensional vector subspace $W_x \subset x^{\perp}$ complementary to the line $\widehat x \subset x^{\perp}$ (i.e. $x^{\perp}=W_x\oplus\widehat{x}$), then we can identify 
$W_x$ with the tangent space $T_x(X)$ at $x$. So we regard the second fundamental form $II_{X,x}$ as the linear subspace in $\BP(\Sym^2(W^*_x))$ generated by the restrictions $\varphi_1|_{W_x}$ and $\varphi_2|_{ W_x}$. Then, by $(i)$ of Proposition \ref{p.2}, the class 
\begin{center}
$[\det(s\varphi_1|_{W_x}-t\varphi_2|_{W_x})] \in \BP(\sB_n) \cup \{0\}$
\end{center} 
is well-defined and denoted by
\begin{center}
D$(II_{X,x},\varphi_1,\varphi_2)$.
\end{center}
We call it the \textit{discriminant} of $II_{X,x}$ (with respect to $\varphi_1$ and $\varphi_2$).
  \end{definition}

\begin{proposition}\label{p.5.1.1}
The discriminant \rm D$(II_{X,x},\varphi_1,\varphi_2)$ \it of $II_{X,x}$ does not depend on the choice of $W_x$. 
\end{proposition}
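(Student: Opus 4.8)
The plan is to show that if $W_x$ and $W_x'$ are two $n$-dimensional complements of $\widehat{x}$ inside $x^{\perp}$, then the restrictions $(\varphi_1|_{W_x},\varphi_2|_{W_x})$ and $(\varphi_1|_{W_x'},\varphi_2|_{W_x'})$ are related by an isomorphism of vector spaces, so that part (i) of Proposition \ref{p.2} applies and forces the two discriminants to agree in $\BP(\sB_n)\cup\{0\}$. The natural isomorphism to use is the restriction to $W_x$ of the quotient projection $\pi: x^{\perp}\to x^{\perp}/\widehat{x}$; since $W_x$ is complementary to $\widehat{x}$, the map $\pi|_{W_x}: W_x\to x^{\perp}/\widehat{x}$ is an isomorphism, and likewise for $W_x'$. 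Thus $h:=(\pi|_{W_x'})^{-1}\circ(\pi|_{W_x}): W_x\to W_x'$ is an isomorphism.

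First I would verify that $h$ intertwines the restricted quadratic forms, i.e. $h^*(\varphi_a|_{W_x'})=\varphi_a|_{W_x}$ for $a=1,2$. This amounts to the statement that the quadratic form $\varphi_a|_{x^{\perp}}$ descends to the quotient $x^{\perp}/\widehat{x}$: indeed $x\in\mathrm{Sing}(\varphi_a|_{x^{\perp}})$ because for every $\mathbf u\in x^{\perp}$ we have $B_{\varphi_a}(x,\mathbf u)=0$ by definition of $x^{\perp_{\varphi_a}}\supset x^{\perp}$, so $B_{\varphi_a}$ on $x^{\perp}$ factors through $x^{\perp}/\widehat{x}$. Both $W_x$ and $W_x'$ are identified via $\pi$ with this common quotient space carrying the descended forms $\bar\varphi_1,\bar\varphi_2$, and under these identifications $\varphi_a|_{W_x}$ and $\varphi_a|_{W_x'}$ both correspond to $\bar\varphi_a$; hence $h$ carries one pair to the other.

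Once this is established, part (i) of Proposition \ref{p.2} (invariance of the discriminant under isomorphisms between $n$-dimensional vector spaces, including the case where it is identically zero) gives that $\det(s\varphi_1|_{W_x}-t\varphi_2|_{W_x})$ and $\det(s\varphi_1|_{W_x'}-t\varphi_2|_{W_x'})$ have the same class in $\BP(\sB_n)\cup\{0\}$, which is exactly the assertion that $\mathrm{D}(II_{X,x},\varphi_1,\varphi_2)$ is independent of $W_x$. I do not expect any serious obstacle here; the one point requiring a little care is the degenerate case where the common value is $0$ (i.e. $II_{X,x}$ fails to be nonsingular, e.g. at special points $x$), but Definition \ref{d.theta} already allows the value $0$ and Proposition \ref{p.2}(i) is a statement about the discriminant polynomial as such, so the argument goes through uniformly. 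One should also note that the identification of $W_x$ with $T_x(X)$ used implicitly in Definition \ref{d.theta} is compatible with this quotient description, since $T_x(X)\cong x^{\perp}/\widehat{x}$ canonically; this makes the independence statement the expected one.
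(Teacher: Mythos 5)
Your proposal is correct and follows essentially the same route as the paper's proof: both pass to the quotient $x^{\perp}/\widehat{x}$, observe that $\varphi_1,\varphi_2$ descend there because $\widehat{x}$ is $B_{\varphi_i}$-orthogonal to all of $x^{\perp}$, identify the two complements with the quotient, and invoke Proposition \ref{p.2}(i). Your extra remark about the identically-zero case is a reasonable point of care but does not change the argument.
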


\begin{proof}
Consider the quotient space $x^{\perp}/\widehat{x}$. For a coset $\mathbf v+\widehat{x} \in x^{\perp}/\widehat{x}$, define 
\begin{center}
$\widetilde\varphi_1(\mathbf v+\widehat{x}):=\varphi_1(\mathbf v)$ and $\widetilde\varphi_2(\mathbf v+\widehat{x}):=\varphi_2(\mathbf v)$. 
\end{center}
Then $\widetilde\varphi_i$ is well-defined since $B_{\varphi_i}(\mathbf u,\mathbf v)=0$ for every $\mathbf u\in\widehat{x}$, $\mathbf v\in x^{\perp}$, and $i \in \{1,2\}$. So there is a canonical isomorphism 
\begin{center}
$h:W_x \to x^{\perp}/\widehat{x}$ 
\end{center}
that sends $\mathbf w\in W_x$ to $\mathbf w+\widehat{x} \in x^{\perp}/\widehat{x}$ and the pull-back of $\widetilde\varphi_i$ through $h$ is exactly $\varphi_i|_{W_x}$.
Hence, for another subspace $W_x'\subset x^{\perp}$ complementary to $\widehat x$, there is an isomorphism between $W_x'$ and $W_x$ such that the pull-backs of $\varphi_1|_{W_x}$ and $\varphi_2|_{W_x}$ are exactly $\varphi_1|_{W_x'}$ and $\varphi_2|_{W_x'}$ respectively. Then the discriminant D$(II_{X,x},\varphi_1,\varphi_2) \in \BP(\sB_n)$ is well-defined by $(i)$ of Proposition \ref{p.2}.
\end{proof}

\begin{lemma}\label{p55}
For any $x=[x_1:x_2:\cdots:x_{n+3}] \in X$, with Notation \ref{n.7}, at least three $x_i$ must be nonzero. When $x_{n+1},x_{n+2},x_{n+3}$ of $x$ are nonzero, we can take $W_x$ as $x^{\perp} \cap H$ where 
\begin{center}
$H=\{(v_1, v_2, \ldots, v_{n+3}) \in V \mid v_{n+1}=0\}$. 
\end{center}
Then there is a natural basis $\mathbf e_1', \mathbf e_2', \ldots, \mathbf e_{n}'\,$ of $W_x$ such that
\begin{displaymath}
\mathbf e_i'=\mathbf e_i-\frac{(\lambda_{n+3}-\lambda_i)x_i}{(\lambda_{n+3}-\lambda_{n+2})x_{n+2}}\mathbf e_{n+2}-\frac{(\lambda_{n+2}-\lambda_i)x_i}{(\lambda_{n+2}-\lambda_{n+3})x_{n+3}}\mathbf e_{n+3}  
\end{displaymath}
for $1\leq i \leq n$. As a result, 
\begin{equation}\label{e1}
\varphi_1|_{W_x}(\sum z_i\mathbf e_i')=\sum_{i=1}^n\Big(\lambda_{n+2}\frac{(\lambda_{n+3}-\lambda_i)^2x_i^2}{(\lambda_{n+3}-\lambda_{n+2})^2x_{n+2}^2}+\lambda_{n+3}\frac{(\lambda_{n+2}-\lambda_i)^2x_i^2}{(\lambda_{n+3}-\lambda_{n+2})^2x_{n+3}^2}+\lambda_i\Big)z_i^2
\end{equation}
\begin{flushright}
$\displaystyle+\sum_{1\leq i<j\leq n}\Big(\lambda_{n+2}\frac{(\lambda_{n+3}-\lambda_i)(\lambda_{n+3}-\lambda_j)x_ix_j}{(\lambda_{n+3}-\lambda_{n+2})^2x_{n+2}^2}+\lambda_{n+3}\frac{(\lambda_{n+2}-\lambda_i)(\lambda_{n+2}-\lambda_j)x_ix_j}{(\lambda_{n+3}-\lambda_{n+2})^2x_{n+3}^2}\Big)2z_iz_j$
\end{flushright}
and
\begin{equation}\label{e2}
\varphi_2|_{W_x}(\sum z_i\mathbf e_i')=\sum_{i=1}^n\Big(\frac{(\lambda_{n+3}-\lambda_i)^2x_i^2}{(\lambda_{n+3}-\lambda_{n+2})^2x_{n+2}^2}+\frac{(\lambda_{n+2}-\lambda_i)^2x_i^2}{(\lambda_{n+3}-\lambda_{n+2})^2x_{n+3}^2}+1\Big)z_i^2\qquad\qquad\quad
\end{equation}
\begin{flushright}
$\displaystyle+\sum_{1\leq i<j\leq n}\Big(\frac{(\lambda_{n+3}-\lambda_i)(\lambda_{n+3}-\lambda_j)x_ix_j}{(\lambda_{n+3}-\lambda_{n+2})^2x_{n+2}^2}+\frac{(\lambda_{n+2}-\lambda_i)(\lambda_{n+2}-\lambda_j)x_ix_j}{(\lambda_{n+3}-\lambda_{n+2})^2x_{n+3}^2}\Big)2z_iz_j$.
\end{flushright}
\end{lemma}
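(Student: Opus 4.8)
The plan is to verify all four assertions by direct linear algebra in the standard-basis coordinates fixed in Notation \ref{n.7}; there is no conceptual subtlety here, only careful bookkeeping. I would first show that at least three of the $x_i$ are nonzero. If $x$ had at most two nonzero coordinates, say $x_i,x_j$ with $i\neq j$, then the defining equations $\varphi_2(x)=0$ and $\varphi_1(x)=0$ would read $x_i^2+x_j^2=0$ and $\lambda_ix_i^2+\lambda_jx_j^2=0$; eliminating $x_j^2$ gives $(\lambda_i-\lambda_j)x_i^2=0$, impossible since the $\lambda$'s are distinct and $x\neq 0$, and the case of a single nonzero coordinate is immediate from $\varphi_2(x)=0$.

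Assuming now $x_{n+1},x_{n+2},x_{n+3}\neq 0$, I would next check that $W_x:=x^{\perp}\cap H$ is a legitimate choice of complement to $\widehat x$ inside $x^{\perp}$, as required in Definition \ref{d.theta}. Since $\varphi_1,\varphi_2$ are nondegenerate, $x^{\perp_{\varphi_1}}=\{v:\sum_i\lambda_ix_iv_i=0\}$ and $x^{\perp_{\varphi_2}}=\{v:\sum_ix_iv_i=0\}$ are hyperplanes in $V$, and they are distinct, because otherwise $(\lambda_ix_i)_i$ would be a scalar multiple of $(x_i)_i$, forcing the $\lambda_i$ to coincide on the (at least three) indices $i$ with $x_i\neq 0$. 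Hence $\dim x^{\perp}=n+1$, and $\widehat x\subset x^{\perp}$ because $x\in X$. Since $x_{n+1}\neq 0$ we have $x\notin H$, so $\widehat x\cap W_x=0$, whence $\widehat x\oplus W_x\subseteq x^{\perp}$ has dimension at least $n+1$; this forces $x^{\perp}=\widehat x\oplus W_x$ and $\dim W_x=n$, so $W_x$ may be identified with $T_x(X)$.

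It then remains to verify that $\mathbf e_1',\dots,\mathbf e_n'$ is a basis of $W_x$. Abbreviate $a_i=\tfrac{(\lambda_{n+3}-\lambda_i)x_i}{(\lambda_{n+3}-\lambda_{n+2})x_{n+2}}$ and $b_i=\tfrac{(\lambda_{n+2}-\lambda_i)x_i}{(\lambda_{n+2}-\lambda_{n+3})x_{n+3}}$, so that $\mathbf e_i'=\mathbf e_i-a_i\mathbf e_{n+2}-b_i\mathbf e_{n+3}$. Each $\mathbf e_i'$ lies in $H$ (its $(n+1)$-th coordinate vanishes), and the vectors are linearly independent because the $\mathbf e_i$-coefficient of $\mathbf e_i'$ is $1$ while its $\mathbf e_j$-coefficient for $j\leq n$, $j\neq i$, is $0$. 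To see $\mathbf e_i'\in x^{\perp}$, I would record the two one-line identities $a_ix_{n+2}+b_ix_{n+3}=x_i$ and $\lambda_{n+2}a_ix_{n+2}+\lambda_{n+3}b_ix_{n+3}=\lambda_ix_i$, which follow directly from the definitions of $a_i,b_i$; since $B_{\varphi_2}(x,\mathbf e_k)=x_k$ and $B_{\varphi_1}(x,\mathbf e_k)=\lambda_kx_k$, these identities say precisely that $B_{\varphi_2}(x,\mathbf e_i')=0$ and $B_{\varphi_1}(x,\mathbf e_i')=0$. As $\dim W_x=n$, the $n$ independent vectors $\mathbf e_1',\dots,\mathbf e_n'$ then form a basis.

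Finally, substituting $\sum_iz_i\mathbf e_i'=\sum_{i=1}^nz_i\mathbf e_i-\bigl(\sum_iz_ia_i\bigr)\mathbf e_{n+2}-\bigl(\sum_iz_ib_i\bigr)\mathbf e_{n+3}$ into $\varphi_2=\sum_kv_k^2$ and $\varphi_1=\sum_k\lambda_kv_k^2$ and expanding $\bigl(\sum_iz_ia_i\bigr)^2$ and $\bigl(\sum_iz_ib_i\bigr)^2$ yields, as coefficient of $z_i^2$, the quantities $1+a_i^2+b_i^2$ and $\lambda_i+\lambda_{n+2}a_i^2+\lambda_{n+3}b_i^2$, and as coefficient of $2z_iz_j$ the quantities $a_ia_j+b_ib_j$ and $\lambda_{n+2}a_ia_j+\lambda_{n+3}b_ib_j$; plugging in the values of $a_i$ and $b_i$ (and using $(\lambda_{n+2}-\lambda_{n+3})^2=(\lambda_{n+3}-\lambda_{n+2})^2$) turns these into the expressions in (\ref{e1}) and (\ref{e2}). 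The only real work is this last expansion, and the main point to watch is keeping the denominators $x_{n+2}^2$ and $x_{n+3}^2$ assigned to the correct terms; none of this is a genuine obstacle.
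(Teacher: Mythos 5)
Your proposal is correct and follows essentially the same route as the paper: a direct coordinate computation, ruling out fewer than three nonzero coordinates via the two defining equations, exhibiting the $\mathbf e_i'$ as an explicit basis of $x^{\perp}\cap H$, and expanding the restricted quadratic forms. The only differences are cosmetic — you verify $\mathbf e_i'\in x^{\perp}$ via the two identities $a_ix_{n+2}+b_ix_{n+3}=x_i$ and $\lambda_{n+2}a_ix_{n+2}+\lambda_{n+3}b_ix_{n+3}=\lambda_ix_i$ rather than solving the linear system for $v_{n+2},v_{n+3}$ as the paper does, and you add an explicit (and welcome) check that $W_x$ is complementary to $\widehat{x}$ in $x^{\perp}$.
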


\begin{proof}
If only one or two $x_i$ are nonzero, to make the sums $\sum x_i^2$ and $\sum \lambda_i x_i^2$ vanish, every $x_i$ must be zero since $\lambda_i$'s are distinct complex numbers. Thus, without loss of generality, we can assume $x_{n+1}x_{n+2}x_{n+3}\neq 0$ and take $W_x$ as in the statement.

\smallskip
On the other hand, $\mathbf v=(v_1, v_2, \ldots, v_{n+3}) \in V$ is contained in $x^{\perp}$ if and only if $\mathbf v$ satisfies  
$\sum x_iv_i=0$ and $\sum \lambda_i x_iv_i=0$.  
Equivalently $\mathbf v \in x^{\perp}$ if and only if
\begin{center}
$\displaystyle v_{n+2}=-\sum_{i=1}^{n+1}\frac{(\lambda_{n+3}-\lambda_i)x_i}{(\lambda_{n+3}-\lambda_{n+2})x_{n+2}}v_i$ and 
$\displaystyle v_{n+3}=-\sum_{i=1}^{n+1}\frac{(\lambda_{n+2}-\lambda_i)x_i}{(\lambda_{n+2}-\lambda_{n+3})x_{n+3}}v_i$
\end{center}
(as long as $x_{n+2}x_{n+3}\neq 0$). Let
\begin{displaymath}
\mathbf e_i':=\mathbf e_i-\frac{(\lambda_{n+3}-\lambda_i)x_i}{(\lambda_{n+3}-\lambda_{n+2})x_{n+2}}\mathbf e_{n+2}-\frac{(\lambda_{n+2}-\lambda_i)x_i}{(\lambda_{n+2}-\lambda_{n+3})x_{n+3}}\mathbf e_{n+3}  
\end{displaymath}
for $1\leq i \leq n$. Then $\mathbf e_1', \mathbf e_2', \ldots, \mathbf e_{n}'\,$ generate $W_x$ and give the formulas (\ref{e1}) and (\ref{e2}) in the statement.
\end{proof}

\begin{proposition}\label{p.5.1}
With Notation \ref{n.7}, the discriminant \rm D\it$(II_{X,x},\varphi_1,\varphi_2)$ at $x$ is represented by the binary form
\begin{equation}\label{e.p.5.1}
\frac{1}{t^2}\sum_{i=1}^{n+3}\lambda_i^2x_i^2\frac{\prod_{j=1}^{n+3}(\lambda_js-t)}{(\lambda_is-t)}\in\sB_n,
\end{equation}
which is equivalent to
\begin{equation}\label{t.1.1}
\sum_{i=1}^{n+1}\Big((\lambda_i-\lambda_{n+2})(\lambda_i-\lambda_{n+3})x_i^2\frac{\prod_{j=1}^{n+1}(\lambda_js-t)}{(\lambda_is-t)}\Big)\in\sB_n
\end{equation}
and equivalent to
\begin{equation}\label{e99}
\sum_{k=0}^n \Big( (-1)^{n-k} \big( \sum_{i=1}^{n+1}(\lambda_i-\lambda_{n+2})(\lambda_i-\lambda_{n+3}) x_i^2 \Gamma_i^k \big) s^kt^{n-k} \Big)
\end{equation}
where $\Gamma_i^0:=1$ and
\begin{displaymath}
\Gamma_i^k:=\sum_{\tiny{
\begin{array}{c}
1\leq j_1<\cdots<j_k\leq n+1\\
i\notin\{j_1,\cdots,j_k\}
\end{array}
}}\lambda_{j_1}\cdots\lambda_{j_k}\,\,
\end{displaymath}
for $1\leq k \leq n$.

\end{proposition}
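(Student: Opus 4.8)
The plan is to compute the discriminant $\det(s\varphi_1|_{W_x} - t\varphi_2|_{W_x})$ directly from the explicit matrices given in Lemma~\ref{p55}, but to avoid the intractable $n\times n$ determinant by using a cleaner model of the tangent space. Instead of working with the complement $W_x = x^\perp \cap H$ and the awkward basis $\{\mathbf e_i'\}$, I would work with the quotient description $x^\perp/\widehat x$ from Proposition~\ref{p.5.1.1}, on which $\widetilde\varphi_1, \widetilde\varphi_2$ are induced. The restriction of $\varphi_2 = \sum z_i^2$ to the hyperplane-pair $x^\perp = x^{\perp_{\varphi_1}}\cap x^{\perp_{\varphi_2}}$ inside $(V, \varphi_2)$ can be analyzed by the classical formula for the determinant of a quadratic form restricted to a codimension-two subspace cut out by linear forms: if $Q$ is nondegenerate on $V$ with Gram matrix $A$ and we restrict to $\{\ell_1 = \ell_2 = 0\}$, then (up to the chosen volume form) the discriminant of the restriction is a $2\times2$ determinant built from $\ell_i A^{-1}\ell_j^t$ and $\det A$. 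Applying this to the pencil $s\varphi_1 - t\varphi_2$ on $V$ — whose Gram matrix in the standard basis is $\mathrm{diag}(\lambda_1 s - t, \ldots, \lambda_{n+3}s-t)$, hence diagonal and trivially invertible — and to the two linear forms $\ell_1 = \sum x_i v_i$, $\ell_2 = \sum \lambda_i x_i v_i$ cutting out $x^\perp$, reduces everything to an explicit rational expression in $s,t,\lambda_i,x_i$.

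Concretely, the first step is to record that $\det(s\varphi_1 - t\varphi_2)$ on all of $V$ equals $\prod_{j=1}^{n+3}(\lambda_j s - t)$, and that the inverse Gram matrix is $\mathrm{diag}\big((\lambda_j s - t)^{-1}\big)$. The second step is to plug in: $\ell_1 A^{-1}\ell_1^t = \sum_i x_i^2/(\lambda_i s - t)$, $\ell_1 A^{-1}\ell_2^t = \sum_i \lambda_i x_i^2/(\lambda_i s - t)$, and $\ell_2 A^{-1}\ell_2^t = \sum_i \lambda_i^2 x_i^2/(\lambda_i s - t)$. The third step is to form the $2\times2$ determinant of these quantities and multiply by $\det A = \prod(\lambda_j s-t)$; one then has to check that the two "extra" rows/columns (coming from $x$ being in the kernel, since we pass to $x^\perp/\widehat x$ and $x$ itself is isotropic and orthogonal to everything) contribute the factor that accounts for the drop from $\BP^{n+2}$-dimension to $n$, i.e.\ explains the $\tfrac{1}{t^2}$ and the $\lambda_i^2$ normalization in \eqref{e.p.5.1}. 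Expanding the $2\times2$ determinant and clearing denominators gives a single sum of the shape $\sum_i c_i\, \prod_{j\ne i}(\lambda_j s - t)$, and matching constants yields \eqref{e.p.5.1}.

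The passage from \eqref{e.p.5.1} to \eqref{t.1.1} is a projective-equivalence manipulation on $\BP(\sB_n)$: one uses the two identities $\sum_i x_i^2 = 0$ and $\sum_i \lambda_i x_i^2 = 0$ (valid since $x\in X$) to rewrite $\sum \lambda_i^2 x_i^2 f(\lambda_i) = \sum_{i}(\lambda_i-\lambda_{n+2})(\lambda_i-\lambda_{n+3})x_i^2 f(\lambda_i) + (\text{combination of }\sum x_i^2 f(\lambda_i)\text{-type terms that telescope})$ for the relevant $f$, together with the fact that the two relations let us drop the $i = n+2, n+3$ terms and the superfluous factors $(\lambda_{n+2}s-t)(\lambda_{n+3}s-t)$, changing the representative only by a nonzero scalar. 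Then \eqref{e99} is obtained from \eqref{t.1.1} by the elementary-symmetric-function expansion $\prod_{j\ne i,\, 1\le j\le n+1}(\lambda_j s - t) = \sum_{k=0}^{n}(-1)^{n-k}\Gamma_i^k s^k t^{n-k}$, which is just the definition of $\Gamma_i^k$ and the binomial-type expansion of a product of linear forms; collecting the coefficient of $s^k t^{n-k}$ gives the stated formula.

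The main obstacle I anticipate is bookkeeping rather than conceptual: carefully tracking the normalizing scalar through the restriction-to-codimension-two determinant formula, and in particular justifying the exponent of $t$ and the appearance of $\lambda_i^2$ (equivalently, handling the degenerate directions — $x$ itself, which lies in $x^\perp$ and is $\varphi_i$-isotropic — so that the "honest" $n\times n$ discriminant on $T_x(X) \cong x^\perp/\widehat x$ is correctly extracted from the $(n+3)\times(n+3)$ data). A clean way around this is to verify the scalar at a single convenient point $x$ (or to check it matches the independently-derived Lemma~\ref{p55} formulas \eqref{e1}--\eqref{e2} in low degree), since all representatives here are only defined up to $\C^\ast$ anyway. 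The remaining steps are routine polynomial algebra using the two defining relations of $X$.
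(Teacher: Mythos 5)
Your reduction of \eqref{t.1.1} to \eqref{e99} (expansion of $\prod_{j\neq i}(\lambda_j s-t)$ into the elementary symmetric functions $\Gamma_i^k$) and your use of the relations $\sum x_i^2=\sum\lambda_i x_i^2=0$ to pass between \eqref{e.p.5.1} and \eqref{t.1.1} are both fine and match what the paper does. The problem is in the first and central step.

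The codimension-two bordered-determinant formula you invoke returns \emph{identically zero} here, and this is not a normalization issue that can be fixed by checking a scalar at one point. Writing $A=\mathrm{diag}(\lambda_j s-t)$, $p=\ell_1A^{-1}\ell_1^t=\sum_i x_i^2/(\lambda_i s-t)$, $q=\ell_1A^{-1}\ell_2^t=\sum_i \lambda_i x_i^2/(\lambda_i s-t)$, $r=\ell_2A^{-1}\ell_2^t=\sum_i \lambda_i^2x_i^2/(\lambda_i s-t)$, the defining relations of $X$ give $sq-tp=\sum_i x_i^2=0$ and $sr-tq=\sum_i\lambda_i x_i^2=0$, hence $q=(t/s)p$, $r=(t/s)^2p$ and $pr-q^2\equiv 0$. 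This is forced: every member of the pencil restricted to the $(n+1)$-dimensional space $x^\perp$ is degenerate with radical containing $\widehat x$, so the codimension-two discriminant you are computing is the zero form, not the second fundamental form's discriminant. The quantity you actually need is the discriminant of the induced pencil on the $n$-dimensional quotient $x^\perp/\widehat x$, and the target \eqref{e.p.5.1} is precisely $t^{-2}\det(A)\cdot r$ (equivalently $s^{-2}\det(A)\cdot p$ or $(st)^{-1}\det(A)\cdot q$) — a single bordered entry divided by $t^2$, not the $2\times2$ determinant. So your Step 3 must be replaced by a rank-one-radical version of the restriction formula (e.g.\ border with a third linear form cutting a complement of $\widehat x$ in $x^\perp$ and take a Schur complement, then prove the resulting expression is independent of that choice), and that replacement is exactly where the $t^{-2}$ and the $\lambda_i^2$ come from; none of this is supplied by your argument as written. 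For comparison, the paper avoids the issue entirely by fixing the explicit complement $W_x=x^\perp\cap\{v_{n+1}=0\}$ with the basis of Lemma~\ref{p55} and evaluating the $n\times n$ determinant by hand via row and column operations; your route would be shorter and more conceptual if the correct degenerate-restriction formula were stated and justified, but as it stands the key identity is missing and the one you propose in its place evaluates to $0$.
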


\begin{proof} 

Firstly, we show that (\ref{e.p.5.1}) equals to (\ref{t.1.1}). Let
\begin{displaymath}
f:=\sum_{i=1}^{n+3}\frac{\lambda_i^2x_i^2}{(\lambda_is-t)}
\end{displaymath}
in (\ref{e.p.5.1}). Note that the rest part $\prod_{j=1}^{n+3}(s-\lambda_jt)$ in (\ref{e.p.5.1}) does not depend on the index $i$. Since $x\in X$, it satisfies $\sum x_i^2=0$ and $\sum \lambda_ix_i^2=0$, which are equivalent to
\begin{equation}\label{e200}
x_{n+2}^2=-\sum_{i=1}^{n+1}\frac{(\lambda_{n+3}-\lambda_i)}{(\lambda_{n+3}-\lambda_{n+2})}x_i^2 \,\,\textrm{and}\,\, 
x_{n+3}^2=-\sum_{i=1}^{n+1}\frac{(\lambda_{n+2}-\lambda_i)}{(\lambda_{n+2}-\lambda_{n+3})}x_i^2.
\end{equation}
By (\ref{e200}),
\begin{center}
%\begin{split}
$\displaystyle f =\sum_{i=1}^{n+1}\Big(\frac{\lambda_i^2x_i^2}{(\lambda_is-t)}-\frac{(\lambda_{n+3}-\lambda_i)\lambda_{n+2}^2x_i^2}{(\lambda_{n+3}-\lambda_{n+2})(\lambda_{n+2}s-t)}+\frac{(\lambda_{n+2}-\lambda_i)\lambda_{n+3}^2x_i^2}{(\lambda_{n+3}-\lambda_{n+2})(\lambda_{n+3}s-t)}\Big)$\\
$\displaystyle =\sum_{i=1}^{n+1}x_i^2\Big(\frac{\lambda_i^2}{(\lambda_is-t)}-\frac{(\lambda_{n+2}\lambda_{n+3}-\lambda_i\lambda_{n+2}-\lambda_i\lambda_{n+3})t+\lambda_i\lambda_{n+2}\lambda_{n+3}s}{(\lambda_{n+2}s-t)(\lambda_{n+3}s-t)}\Big)$\\
$\displaystyle =\sum_{i=1}^{n+1}x_i^2\Big(\frac{(\lambda_i-\lambda_{n+2})(\lambda_i-\lambda_{n+3})t^2}{(\lambda_is-t)(\lambda_{n+2}s-t)(\lambda_{n+3}s-t)}\Big)$.
%\end{split}
\end{center}
This implies the equality between (\ref{e.p.5.1}) and (\ref{t.1.1}). Hence, it is enough to show that $D(II_{X,x},\varphi_1,\varphi_2)$ is represented by (\ref{t.1.1}) since (\ref{e99}) is a rearrangement of (\ref{t.1.1}).\\

We choose $W_x\subset x^{\perp}$ to describe $D(II_{X,x},\varphi_1,\varphi_2)$ first. Without lose of generality, suppose that $x_{n+1},x_{n+2},x_{n+3}$ of $x$ are nonzero and take $W_x$ 
as in Lemma \ref{p55}. 
%Recall the formulas (\ref{e1}) and (\ref{e2}). 
Let $\mathbf M$ be the symmetric matrix corresponding to the quadratic form $s\varphi_1|_{W_x}-t\varphi_2|_{W_x}$. Throughout the rest of this proof, we will directly show that $\det(\mathbf M)=\det(s \varphi_1|_{W_x}-t\varphi_2|_{W_x})$ is a constant multiple of (\ref{t.1.1}) by applying elementary row and column operations to $\mathbf M$ several times.\\ 

Each $(i,j)$-entry $\mathbf M_{ij}$ of $\mathbf M$ is defined as
\begin{center}
$s \varphi_1|_{W_x}(\mathbf e_i',\mathbf e_j')-t\varphi_2|_{W_x}(\mathbf e_i',\mathbf e_j')$. 
\end{center}
From (\ref{e1}) and (\ref{e2}), the $(i,j)$-entry $\mathbf M_{ij}$ with $i \neq j$ is equal to
\begin{displaymath}
(\lambda_{n+2}s-t)\frac{(\lambda_{n+3}-\lambda_i)(\lambda_{n+3}-\lambda_j)x_ix_j}{(\lambda_{n+3}-\lambda_{n+2})^2x_{n+2}^2}+(\lambda_{n+3}s-t)\frac{(\lambda_{n+2}-\lambda_i)(\lambda_{n+2}-\lambda_j)x_ix_j}{(\lambda_{n+3}-\lambda_{n+2})^2x_{n+3}^2}.
\end{displaymath}
If $i=j$, 
\begin{displaymath}
\mathbf M_{ii}=(\lambda_{n+2}s-t)\frac{(\lambda_{n+3}-\lambda_i)^2x_i^2}{(\lambda_{n+3}-\lambda_{n+2})^2x_{n+2}^2}+(\lambda_{n+3}s-t)\frac{(\lambda_{n+2}-\lambda_i)^2x_i^2}{(\lambda_{n+3}-\lambda_{n+2})^2x_{n+3}^2}+(\lambda_is-t).
\end{displaymath}
%\newpage
Thus $\mathbf M_{ij}$ is decomposed into three parts, i.e. $\mathbf M_{ij}=A_{ij}+B_{ij}+C_{ij}$ where 
\begin{equation}\label{e3}
A_{ij}:=(\lambda_{n+2}s-t)\frac{(\lambda_{n+3}-\lambda_i)(\lambda_{n+3}-\lambda_j)x_ix_j}{(\lambda_{n+3}-\lambda_{n+2})^2x_{n+2}^2},
\end{equation}
\begin{equation}\label{e5}
B_{ij}:=(\lambda_{n+3}s-t)\frac{(\lambda_{n+2}-\lambda_i)(\lambda_{n+2}-\lambda_j)x_ix_j}{(\lambda_{n+3}-\lambda_{n+2})^2x_{n+3}^2},
\end{equation}
and
\begin{center}
$C_{ii}:=(\lambda_is-t)$. ($C_{ij}=0$ if $i\neq j$.) 
\end{center}
Then
\begin{displaymath}
\mathbf M=\left[
\begin{array}{cccc}
A_{11}+B_{11}+C_{11}&A_{12}+B_{12}&\cdots&A_{1n}+B_{1n}\\
A_{21}+B_{21}&A_{22}+B_{22}+C_{22}&\cdots&A_{2n}+B_{2n}\\
\vdots&\vdots&\ddots&\vdots\\
A_{n1}+B_{n1}&A_{n2}+B_{n2}&\cdots&A_{nn}+B_{nn}+C_{nn}\\
\end{array}\right].
\end{displaymath}

Next purpose is omitting most of $A_{ij}$ and $B_{ij}$ terms by subtracting appropriate linear combination of the first and second rows from other rows. We will work under the assumption that at least two among $x_1, x_2, \ldots, x_n$ are nonzero. If all $x_i$ for $1\leq i\leq n$ (or except one) are zero, most of the terms in $M$ vanish and all computations become easier. Let us assume $x_1\neq0$ and $x_2\neq0$.\\  

We apply row operations to $M$ first. From (\ref{e3}), 
\begin{displaymath}
A_{ij}=\Big(\frac{(\lambda_{n+3}-\lambda_i)x_i}{(\lambda_{n+3}-\lambda_1)x_1}\Big)A_{1j}
\end{displaymath} 
for any $i,j$ and the number $\displaystyle\frac{(\lambda_{n+3}-\lambda_i)x_i}{(\lambda_{n+3}-\lambda_1)x_1}$ is independent of $j$. So we can remove all $A_{ij}$ for $i\geq2$ by subtracting constant multiple of the first row from other rows. The resulting matrix is 
\begin{displaymath}
\mathbf M':=\left[
\begin{array}{ccccc}
A_{11}+B_{11}+C_{11}&A_{12}+B_{12}&A_{13}+B_{13}&\cdots&A_{1n}+B_{1n}\\
B'_{21}+C'_{21}&B'_{22}+C_{22}&B'_{23}&\cdots&B'_{2n}\\
B'_{31}+C'_{31}&B'_{32}&B'_{33}+C_{33}&\cdots&B'_{3n}\\
\vdots&\vdots&\vdots&\ddots&\vdots\\
B'_{n1}+C'_{n1}&B'_{n2}&B'_{n3}&\cdots&B'_{nn}+C_{nn}\\
\end{array}\right]
\end{displaymath}
where
\begin{equation}\label{e4}
B_{ij}':=B_{ij}-\Big(\frac{(\lambda_{n+3}-\lambda_i)x_i}{(\lambda_{n+3}-\lambda_1)x_1}\Big)B_{1j}=(\lambda_{n+3}s-t)\frac{(\lambda_1-\lambda_i)(\lambda_{n+2}-\lambda_j)x_ix_j}{(\lambda_{n+3}-\lambda_{n+2})(\lambda_{n+3}-\lambda_1)x_{n+3}^2}
\end{equation}
and 
\begin{displaymath}
C'_{i1}:=-\Big(\frac{(\lambda_{n+3}-\lambda_i)x_i}{(\lambda_{n+3}-\lambda_1)x_1}\Big)(\lambda_1s-t)
\end{displaymath}
for $i\geq2$. Similarly, by the relations
\begin{displaymath}
B_{ij}'=\Big(\frac{(\lambda_1-\lambda_i)x_i}{(\lambda_1-\lambda_2)x_2}\Big)B_{2j}' 
\end{displaymath}
and
\begin{displaymath}
B_{1j}=\Big(\frac{(\lambda_{n+2}-\lambda_1)(\lambda_{n+3}-\lambda_1)x_1}{(\lambda_{n+3}-\lambda_{n+2})(\lambda_1-\lambda_2)x_2}\Big)B_{2j}' 
\end{displaymath}
\smallskip
from (\ref{e4}) and (\ref{e5}), we also remove $B_{ij}'$ (for $i \geq 3$) and $B_{1j}$ by subtracting constant multiple of the second row from other rows. The resulting matrix is  
\begin{displaymath}
\mathbf M'':=\left[
\begin{array}{cccccc}
A_{11}+C'_{11}&A_{12}+C'_{12}&A_{13}&A_{14}&\cdots&A_{1n}\\
B'_{21}+C'_{21}&B'_{22}+C_{22}&B'_{23}&B'_{24}&\cdots&B'_{2n}\\
C''_{31}&C''_{32}&C_{33}&0&\cdots&0\\
C''_{41}&C''_{42}&0&C_{44}&\cdots&0\\
\vdots&\vdots&\vdots&\vdots&\ddots&\vdots\\
C''_{n1}&C''_{n2}&0&0&\cdots&C_{nn}\\
\end{array}\right]
\end{displaymath}
where
\begin{equation}\label{e6}
C''_{i1}:=C'_{i1}-\Big(\frac{(\lambda_1-\lambda_i)x_i}{(\lambda_1-\lambda_2)x_2}\Big)C'_{21}=\frac{(\lambda_2-\lambda_i)x_i}{(\lambda_1-\lambda_2)x_1}(\lambda_1s-t),
\end{equation} 
\begin{displaymath}
C''_{i2}:=-\Big(\frac{(\lambda_1-\lambda_i)x_i}{(\lambda_1-\lambda_2)x_2}\Big)(\lambda_2s-t)
\end{displaymath}
for $i\geq3$,
\begin{equation}\label{e7} 
C'_{11}:=(\lambda_1s-t)-\Big(\frac{(\lambda_{n+2}-\lambda_1)(\lambda_{n+3}-\lambda_1)x_1}{(\lambda_{n+3}-\lambda_{n+2})(\lambda_1-\lambda_2)x_2}\Big)C'_{21}
\end{equation} 
\begin{displaymath}
=\frac{(\lambda_{n+3}-\lambda_1)(\lambda_{n+2}-\lambda_2)}{(\lambda_{n+3}-\lambda_{n+2})(\lambda_1-\lambda_2)}(\lambda_1s-t),
\end{displaymath}
and 
\begin{center}
$\displaystyle C'_{12}:=-\Big(\frac{(\lambda_{n+2}-\lambda_1)(\lambda_{n+3}-\lambda_1)x_1}{(\lambda_{n+3}-\lambda_{n+2})(\lambda_1-\lambda_2)x_2}\Big)(\lambda_2s-t)$.
\end{center}

Now we remove $A_{1j}$ and $B'_{2j}$ for $j\geq3$ by subtracting constant multiple of the $j$-th row from the first and second rows. The resulting matrix is
\begin{displaymath}
\mathbf M''':=\left[
\begin{array}{ccccccc}
P&Q&0&0&\cdots&0\\
R&S&0&0&\cdots&0\\
*&*&(\lambda_3s-t)&0&\cdots&0\\
*&*&0&(\lambda_4s-t)&\cdots&0\\
\vdots&\vdots&\vdots&\vdots&\ddots &\vdots\\
*&*&0&0&\cdots&(\lambda_ns-t)\\
\end{array}\right]
\end{displaymath}  
where
\begin{center}
$\displaystyle P:=(A_{11}+C'_{11})-\sum^{n}_{j=3}\frac{A_{1j}}{C_{jj}}C''_{j1}$,\\
\smallskip
$\displaystyle Q:=(A_{12}+C'_{12})-\sum^{n}_{j=3}\frac{A_{1j}}{C_{jj}}C''_{j2}$,\\
\smallskip
$\displaystyle R:=(B'_{21}+C'_{21})-\sum^{n}_{j=3}\frac{B'_{2j}}{C_{jj}}C''_{j1}$,\\
\end{center}
and
\begin{center}
$\displaystyle S:=(B'_{22}+C'_{22})-\sum^{n}_{j=3}\frac{B'_{2j}}{C_{jj}}C''_{j2}$.
\end{center} 
Then, by (\ref{e6}),
\begin{center}
$\displaystyle P=C'_{11}-\sum_{i=1}^n\frac{(\lambda_{n+3}-\lambda_1)(\lambda_{n+3}-\lambda_i)(\lambda_2-\lambda_i)(\lambda_1s-t)(\lambda_{n+2}s-t)x_i^2}{(\lambda_1-\lambda_2)(\lambda_{n+3}-\lambda_{n+2})^2(\lambda_is-t)x_{n+2}^2}$.
\end{center}
If we apply (\ref{e200}) to (\ref{e7}), then
\begin{displaymath}
\begin{split}
C'_{11} &= \frac{(\lambda_{n+3}-\lambda_1)(\lambda_{n+2}-\lambda_2)(\lambda_1s-t)}{(\lambda_1-\lambda_2)(\lambda_{n+3}-\lambda_{n+2})x_{n+2}^2}x_{n+2}^2\\
 &= -\sum_{i=1}^{n+1}\frac{(\lambda_{n+3}-\lambda_1)(\lambda_{n+3}-\lambda_i)(\lambda_{n+2}-\lambda_2)(\lambda_1s-t)}{(\lambda_1-\lambda_2)(\lambda_{n+3}-\lambda_{n+2})^2x_{n+2}^2}x_{i}^2.
\end{split}
\end{displaymath}
As a result,
\begin{center}
$\displaystyle P=-\sum_{i=1}^n\frac{(\lambda_{n+3}-\lambda_1)(\lambda_{n+3}-\lambda_i)(\lambda_{n+2}-\lambda_i)(\lambda_1s-t)(\lambda_2s-t)}{(\lambda_1-\lambda_2)(\lambda_{n+3}-\lambda_{n+2})^2(\lambda_is-t)x_{n+2}^2}x_i^2$\\
\smallskip
$\displaystyle \qquad -\frac{(\lambda_{n+3}-\lambda_1)(\lambda_{n+3}-\lambda_{n+1})(\lambda_{n+2}-\lambda_2)(\lambda_1s-t)}{(\lambda_1-\lambda_2)(\lambda_{n+3}-\lambda_{n+2})^2x_{n+2}^2}x_{n+1}^2$.
\end{center}
By similar computations, we obtain the following formulas:
\begin{center}
$\displaystyle P=\frac{-1}{(\lambda_{n+3}-\lambda_{n+2})^2x_{n+2}^2}\Big(\frac{(\lambda_{n+3}-\lambda_1)(\lambda_{n+3}-\lambda_{n+1})(\lambda_{n+2}-\lambda_2)(\lambda_1s-t)}{(\lambda_1-\lambda_2)}x_{n+1}^2+F\Big)$,\\
\medskip
$\displaystyle Q=\frac{x_1(x_2)^{-1}}{(\lambda_{n+3}-\lambda_{n+2})^2x_{n+2}^2}\Big(\frac{(\lambda_{n+3}-\lambda_1)(\lambda_{n+3}-\lambda_{n+1})(\lambda_{n+2}-\lambda_1)(\lambda_2s-t)}{(\lambda_1-\lambda_2)}x_{n+1}^2+F\Big)$,\\
\medskip
$\displaystyle R=\frac{-(x_1)^{-1}x_2}{(\lambda_{n+3}-\lambda_{n+2})x_{n+3}^2}\Big(\frac{(\lambda_{n+3}-\lambda_2)(\lambda_{n+2}-\lambda_{n+1})(\lambda_1s-t)}{(\lambda_{n+3}-\lambda_1)}x_{n+1}^2+G\Big)$,
\end{center}
and
\begin{center}
$\displaystyle S=\frac{1}{(\lambda_{n+3}-\lambda_{n+2})x_{n+3}^2}\Big({(\lambda_{n+2}-\lambda_{n+1})(\lambda_2s-t)}x_{n+1}^2+G\Big)$
\end{center}
where 
\begin{center}
$\displaystyle F=\sum_{i=1}^n\frac{(\lambda_{n+3}-\lambda_1)(\lambda_{n+3}-\lambda_i)(\lambda_{n+2}-\lambda_i)(\lambda_1s-t)(\lambda_2s-t)}{(\lambda_1-\lambda_2)(\lambda_is-t)}x_i^2$
\end{center}
and
\begin{center}
$\displaystyle G=\sum_{i=1}^n\frac{(\lambda_{n+3}-\lambda_i)(\lambda_{n+2}-\lambda_i)(\lambda_1s-t)(\lambda_2s-t)}{(\lambda_{n+3}-\lambda_1)(\lambda_is-t)}x_i^2$.
\end{center}

Since 
\begin{equation}\label{e20}
\det(\mathbf M)=\det(\mathbf M''')=(PS-QR)\prod_{j=3}^n(\lambda_js-t),
\end{equation}
we need to compute the term $(PS-QR)$.\\

On the other hand, let 
\begin{displaymath}
P':=(\lambda_{n+3}-\lambda_{n+2})^2x_{n+2}^2P,
\end{displaymath}
\begin{center} 
$\displaystyle Q':=\frac{(\lambda_{n+3}-\lambda_{n+2})^2x_2x_{n+2}^2}{x_1}Q$,\\ 
$\displaystyle R':=\frac{(\lambda_{n+3}-\lambda_{n+2})x_1x_{n+3}^2}{x_2}R$,\\ 
\end{center}
and
\begin{center}
$S':=(\lambda_{n+3}-\lambda_{n+2})x_{n+3}^2S$.  
\end{center}
Then
\begin{center}
$P'+Q'=-{(\lambda_{n+3}-\lambda_1)(\lambda_{n+3}-\lambda_{n+1})(\lambda_{n+2}s-t)}x_{n+1}^2$
\end{center}
and
\begin{center}
$R'+S'\displaystyle =-\frac{(\lambda_{1}-\lambda_2)(\lambda_{n+2}-\lambda_{n+1})(\lambda_{n+3}s-t)}{(\lambda_{n+3}-\lambda_1)}x_{n+1}^2$.
\end{center}
Since
\begin{equation}\label{e21}
P'S'-Q'R'=(P'+Q')S'-Q'(R'+S')={(\lambda_{n+3}-\lambda_{n+2})^3x_{n+2}^2x_{n+3}^2}(PS-QR),
\end{equation}\label{e22}
we compute $(P'+Q')S'-Q'(R'+S')$ instead of $(PS-QR)$. \\

By computing the coefficient of $x_i^2x_{n+1}^2$ in $(P'+Q')S'-Q'(R'+S')$,
\begin{equation}\label{e212}
%\medskip
\begin{split}
(&P'+Q')S'-Q'(R'+S')=\\
&-(\lambda_{n+3}-\lambda_{n+2})x_{n+1}^2\sum_{i=1}^{n+1}\Big((\lambda_{n+3}-\lambda_i)(\lambda_{n+2}-\lambda_i)\frac{(\lambda_1s-t)(\lambda_2s-t)(\lambda_{n+1}s-t)}{(\lambda_is-t)}x_i^2\Big).
\end{split}
\end{equation}
Then 
\begin{center}
$\det (\mathbf M)=\displaystyle\frac{-x_{n+1}^2}{(\lambda_{n+3}-\lambda_{n+2})^2x_{n+2}^2x_{n+3}^2}\sum_{i=1}^{n+1}\Big((\lambda_i-\lambda_{n+2})(\lambda_i-\lambda_{n+3})x_i^2\frac{\prod_{j=1}^{n+1}(\lambda_js-t)}{(\lambda_is-t)}\Big)$.
\end{center}
by (\ref{e20}), (\ref{e21}) and (\ref{e212}). Therefore, $\det(s \varphi_1|_{W_x}-t\varphi_2|_{W_x})=\det (\mathbf M)$ is a constant multiple of (\ref{t.1.1}) in the statement.
\end{proof}

\begin{proposition}\label{p5}
The binary form (\ref{t.1.1}) in Proposition \ref{p.5.1} is not identically zero for any $x\in X$. Hence, the map sending $x \in X$ to the discriminant \rm D\it$(II_{X,x}, \varphi_1, \varphi_2)\in \BP(\sB_n)$ defines a morphism 
\begin{center}
$\theta_{(\varphi_1,\varphi_2)}: X \to \BP(\sB_n)$ 
\end{center}
called the $discriminant$ $map$ on $X$ with respect to the pair $(\varphi_1,\varphi_2)$. The discriminant map $\theta_{(\varphi_1,\varphi_2)}$ satisfies the following properties.\\
(i) Let ${\rm sq}:\BP V \to\BP V$ be the morphism sending  $v=[v_1:v_2:\cdots:v_{n+3}] \in \BP V$ to $[v_1^2:v_2^2:\cdots:v_{n+3}^2] \in \BP V$. Then the image ${\rm sq}(X)$ is a linear subspace of codimension two in $\BP V$ and there is a linear isomorphism 
\begin{center}
$\theta'_{(\varphi_1,\varphi_2)}:{\rm sq}(X)\subset\BP V\to\BP(\sB_n)$ 
\end{center}
such that $\theta_{(\varphi_1,\varphi_2)}=\theta'_{(\varphi_1,\varphi_2)}\circ{\rm sq}|_X$. Therefore, $\theta_{(\varphi_1,\varphi_2)}$ is surjective as a set map.\\
(ii) For each class $[c_0s^0t^n+c_1s^1t^{n-1}+\cdots+c_ns^nt^0]\in\BP(\sB_n)$,
its inverse image 
\begin{center}
$\theta^{-1}_{(\varphi_1,\varphi_2)}([c_0s^0t^n+c_1s^1t^{n-1}+\cdots+c_ns^nt^0])\subset X$ 
\end{center}
through $\theta_{(\varphi_1,\varphi_2)}$ is the finite set
\begin{center}
$\{ [v_1:v_2:\cdots:v_{n+3}]\in \BP V\,|\,v_i^2=\frac{c_{0}\lambda_i^{n}+c_{1}\lambda_i^{n-1}+\cdots+c_{n}\lambda_i^{0}}{\prod_{j\neq i}(\lambda_i-\lambda_j)},\, 1\leq i \leq n+3\}$.
\end{center}
\end{proposition}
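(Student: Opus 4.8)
Here is the approach I would take. The key idea is to linearize everything through the squaring map, since Proposition \ref{p.5.1} already shows that $\theta_{(\varphi_1,\varphi_2)}(x)$ depends on $x=[x_1:\cdots:x_{n+3}]$ only through $(x_1^2,\dots,x_{n+3}^2)$, and linearly so. Concretely, I would consider the linear map $\Lambda:V\to\sB_{n+2}$ defined by $\Lambda(y_1,\dots,y_{n+3})=\sum_{i}\lambda_i^2\,y_i\prod_{j\neq i}(\lambda_j s-t)$, so that $\Lambda(x_1^2,\dots,x_{n+3}^2)=t^2\cdot(\ref{e.p.5.1})$. Pair this with the evaluation map $\sB_{n+2}\xrightarrow{\sim}\C^{n+3}$, $g\mapsto(g(1,\lambda_1),\dots,g(1,\lambda_{n+3}))$, which is an isomorphism because a binary form of degree $n+2$ vanishing at the $n+3$ distinct points $[1:\lambda_i]$ is identically zero. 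In these coordinates $\Lambda$ becomes the \emph{diagonal} map $y_i\mapsto\lambda_i^2\big(\prod_{j\neq i}(\lambda_j-\lambda_i)\big)y_i$, since at $[1:\lambda_k]$ only the $i=k$ summand of $\Lambda(y)$ survives. As the $\lambda_i$ are distinct and nonzero, $\Lambda$ is an isomorphism.

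Next I would identify the relevant subspaces. Writing $y_i=v_i^2$, the defining equations of $X$ say exactly that ${\rm sq}(X)\subset L:=\{[y]\in\BP V\mid\sum_i y_i=0,\ \sum_i\lambda_i y_i=0\}$, a codimension-two linear subspace; conversely any $[y]\in L$ admits a square-root preimage in $X$, so ${\rm sq}(X)=L$ and ${\rm sq}|_X:X\to L$ is surjective. A short computation identifies $\Lambda(\widehat L)$ with the subspace $t^2\sB_n\subset\sB_{n+2}$: one gets $\Lambda(y)(s,0)=s^{n+2}\big(\prod_j\lambda_j\big)\sum_i\lambda_i y_i$ and, when this vanishes, $\partial_t\Lambda(y)(s,0)=s^{n+1}\big(\prod_j\lambda_j\big)\sum_i y_i$, so $t^2\mid\Lambda(y)$ if and only if $[y]\in L$. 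Since $\dim\widehat L=n+1=\dim\sB_n$ and $\Lambda$ is injective, $\Lambda$ restricts to an isomorphism $\widehat L\xrightarrow{\sim}t^2\sB_n$; composing with division by $t^2$ yields the claimed linear isomorphism $\theta'_{(\varphi_1,\varphi_2)}:{\rm sq}(X)=L\to\BP(\sB_n)$, and $\theta_{(\varphi_1,\varphi_2)}=\theta'_{(\varphi_1,\varphi_2)}\circ{\rm sq}|_X$ by construction together with Proposition \ref{p.5.1}. Hence $\theta_{(\varphi_1,\varphi_2)}$ is surjective. It also follows that $(\ref{t.1.1})$ never vanishes identically on $X$: for $x\in X$ at least three $x_i$ are nonzero by Lemma \ref{p55}, so $(x_1^2,\dots,x_{n+3}^2)\neq 0$, hence $\Lambda$ sends it to a nonzero element of $t^2\sB_n$, i.e. $(\ref{e.p.5.1})$ is a nonzero binary form. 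Since its coefficients are quadratic polynomials in the $x_i$ with no common zero on $X$, the assignment $x\mapsto\mathrm D(II_{X,x},\varphi_1,\varphi_2)$ is a morphism.

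For part (ii), given $[\psi]\in\BP(\sB_n)$ with $\psi=\sum_{k=0}^n c_k s^k t^{n-k}$, the preimage $\theta^{-1}_{(\varphi_1,\varphi_2)}([\psi])$ equals $({\rm sq}|_X)^{-1}\big(\theta'^{-1}_{(\varphi_1,\varphi_2)}([\psi])\big)$, and $\theta'^{-1}_{(\varphi_1,\varphi_2)}([\psi])$ is the unique $[y]\in L$ with $\Lambda(y)\in\C^{\times}\cdot t^2\psi$. Evaluating both sides at $[1:\lambda_i]$ and using the diagonal form of $\Lambda$ gives $\lambda_i^2\big(\prod_{j\neq i}(\lambda_j-\lambda_i)\big)y_i=c\,\lambda_i^2\,\psi(1,\lambda_i)$ for a constant $c$ independent of $i$, hence $y_i=c\,\psi(1,\lambda_i)/\prod_{j\neq i}(\lambda_j-\lambda_i)$ with $\psi(1,\lambda_i)=c_0\lambda_i^{n}+c_1\lambda_i^{n-1}+\cdots+c_n$. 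Since $\prod_{j\neq i}(\lambda_j-\lambda_i)=(-1)^{n+2}\prod_{j\neq i}(\lambda_i-\lambda_j)$ with a sign uniform in $i$, pulling back through ${\rm sq}$ produces precisely the finite set displayed in the statement (finite because ${\rm sq}:\BP V\to\BP V$ is a finite morphism). I expect the only genuinely delicate step to be the computation of $\Lambda(y)$ and $\partial_t\Lambda(y)$ at $t=0$, which is what forces the two defining equations of $X$ to be equivalent to double vanishing of $\Lambda(y)$ and thereby makes $\theta'_{(\varphi_1,\varphi_2)}$ an isomorphism rather than merely an injection; once Proposition \ref{p.5.1} is available, the rest is formal.
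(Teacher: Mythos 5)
Your proposal is correct, and it establishes the same three conclusions by the same underlying strategy the paper uses --- namely, that Proposition \ref{p.5.1} exhibits the discriminant as a \emph{linear} function of $(x_1^2,\ldots,x_{n+3}^2)$, so everything reduces to inverting a linear map. The execution of the key invertibility step differs, though. The paper works with the reduced coordinates $(x_1^2,\ldots,x_{n+1}^2)$ obtained by eliminating $x_{n+2}^2,x_{n+3}^2$ via the two defining equations of $X$, reads off an explicit $(n+1)\times(n+1)$ matrix $\Lambda$ from the coefficient expansion (\ref{e99}), and proves invertibility by writing down the inverse matrix $\Lambda'$ (with entries $(-1)^n\lambda_i^{n+1-j}\prod_{k'\neq i}(\lambda_i-\lambda_{k'})^{-1}$) and checking $\Lambda'\Lambda=I$ directly. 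You instead keep all $n+3$ coordinates, map into $\sB_{n+2}$ via the partial-fraction form (\ref{e.p.5.1}), and diagonalize by evaluating at the $n+3$ distinct points $[1:\lambda_i]$; the two defining equations of $X$ then reappear as the condition $t^2\mid\Lambda(y)$, which identifies $\Lambda(\widehat{\mathrm{sq}(X)})$ with $t^2\sB_n$. Your route buys a more conceptual explanation of \emph{why} the map is an isomorphism (the explicit inverse in the paper is exactly the Lagrange-interpolation formula your evaluation argument encodes) and makes the codimension-two linear subspace in part $(i)$ appear naturally rather than by fiat; the paper's route is more self-contained arithmetic and hands you the inversion formula of part $(ii)$ with no further work. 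Both are complete; the only steps in yours that genuinely need the verification you flag are the computations of $\Lambda(y)$ and $\partial_t\Lambda(y)$ at $t=0$, which check out.
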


\begin{proof}
Recall (\ref{e99}) in Proposition \ref{p.5.1}, which is

\begin{equation}\label{e9}
\sum_{k=0}^n \Big( (-1)^{n-k} \big( \sum_{i=1}^{n+1}(\lambda_i-\lambda_{n+2})(\lambda_i-\lambda_{n+3}) x_i^2 \Gamma_i^k \big) s^kt^{n-k} \Big)
\end{equation}
where $\Gamma_i^0:=1$ and 
\begin{displaymath}
\Gamma_i^k:=\sum_{\tiny{
\begin{array}{c}
1\leq j_1<\cdots<j_k\leq n+1\\
i\notin\{j_1,\cdots,j_k\}
\end{array}
}}\lambda_{j_1}\cdots\lambda_{j_k}.
\end{displaymath}
When we denote by $a_k$ the coefficient of $s^kt^{n-k}$ in (\ref{e9}), we obtain an $(n+1)\times(n+1)$ matrix $\Lambda$ such that 
\begin{equation}\label{e10}
\Lambda\cdot\left[
\begin{array}{c}
x_1^2\\
x_2^2\\
\vdots\\
x_{n+1}^2\\
\end{array}\right]=\left[
\begin{array}{c}
a_0\\
a_{1}\\
\vdots\\
a_n\\
\end{array}\right],
\end{equation}
i.e., the $j$-th column of $\Lambda$ is
\begin{displaymath}
\left[
\begin{array}{c}
(-1)^{n}(\lambda_j-\lambda_{n+2})(\lambda_j-\lambda_{n+3})\Gamma_j^0\\
(-1)^{n-1}(\lambda_j-\lambda_{n+2})(\lambda_j-\lambda_{n+3})\Gamma_j^1\\
\vdots\\
(-1)^0(\lambda_j-\lambda_{n+2})(\lambda_j-\lambda_{n+3})\Gamma_j^n\\
\end{array}\right].
\end{displaymath}
Let $\Lambda'$ be an $(n+1)\times (n+1)$ matrix with $(i,j)$-entry
\begin{displaymath}
\Lambda'_{ij}=(-1)^n\lambda_i^{n+1-j}\cdot\prod_{k'\neq i}(\lambda_i-\lambda_{k'})^{-1}
\end{displaymath}
where $k'$ varies from 1 to $n+3$ except $i$. 
Note that the $(i,j)$-entry of the matrix $\Lambda'\Lambda$ is 
\begin{displaymath}
(\lambda_j-\lambda_{n+2})(\lambda_j-\lambda_{n+3})\cdot{\prod_{k''\neq j}(\lambda_i-\lambda_{k''})}\cdot{\prod_{k'\neq i}(\lambda_i-\lambda_{k'})^{-1}}
\end{displaymath}
where $k''$ varies from 1 to $n+1$ except $j$. Thus $\Lambda'\Lambda$ equals to the $(n+1)\times(n+1)$ identity matrix, and $\Lambda$ is invertible.

\smallskip
To see that (\ref{e9}) is not identically zero, suppose that all $c_j$'s are zero. Then each $x_i$ must be zero for $1\leq i\leq n+1$ since $\Lambda$ is invertible. This contradicts to the fact that $x=[x_1:x_2:\cdots:x_{n+3}]$ is a point in $X$. So the discriminant D$(II_{X,x},\varphi_1, \varphi_2)$ can not be identically zero, and the discriminant map $\theta_{(\varphi_1,\varphi_2)}$ is well-defined. 
To conclude, the assertion in $(i)$ is a direct interpretation of (\ref{e10}) and the assertion in $(ii)$ is obtained by multiplying both sides in (\ref{e10}) by $\Lambda^{-1}$.
\end{proof}

From $(ii)$ of Proposition \ref{p5}, we obtain a purely combinatorial proposition:
\begin{proposition}\label{l.1}
Let $l_1,l_2,\cdots,l_{n+3},$ $a_1,a_2,\cdots,a_{n}$ be distinct complex numbers with $n\geq3$. Then $\displaystyle \sum_{i=1}^{n+3}\frac{P_i}{Q_i}=0$ and $\displaystyle \sum_{i=1}^{n+3}l_i\frac{P_i}{Q_i}=0$ where $\displaystyle P_i={\prod_{l=1}^{n}(l_i-a_l)}$ and $\displaystyle Q_i={\prod_{j\neq i}(l_i-l_j)}$. 
\end{proposition}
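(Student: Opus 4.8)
The plan is to recognize Proposition~\ref{l.1} as a standard residue identity on $\BP^1$. I would introduce the monic polynomials
\[
P(x):=\prod_{l=1}^{n}(x-a_l),\qquad h(x):=\prod_{j=1}^{n+3}(x-l_j),
\]
so that $\deg P=n$, $\deg h=n+3$, and (since the $l_j$ are distinct) $h$ has only simple roots, with $h'(l_i)=\prod_{j\neq i}(l_i-l_j)=Q_i$. Consequently the rational function $P/h$ has a simple pole at each $l_i$ with residue $P(l_i)/h'(l_i)=P_i/Q_i$, and these are its only finite poles. (Note the hypothesis that the $a_l$ are distinct, or distinct from the $l_i$, is not actually needed — only distinctness of the $l_i$ matters.)

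For the first identity I would argue as follows. Since $\deg P=n\le n+1=\deg h-2$, we have $P(x)/h(x)=O(x^{-3})$ as $x\to\infty$, so the Laurent expansion of $P/h$ at infinity has vanishing coefficient of $x^{-1}$; equivalently, $P/h$ has no residue at $\infty$. Since the sum of all residues of a rational function over $\BP^1$ is zero, and the residue at $\infty$ is zero, we get $\sum_{i=1}^{n+3}P_i/Q_i=0$.

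For the second identity I would run the same argument with $xP(x)/h(x)$ in place of $P(x)/h(x)$. Its numerator $xP(x)$ has degree $n+1<n+3$, so there is no polynomial part and the finite poles are again exactly the $l_i$, now with residue $l_iP(l_i)/h'(l_i)=l_iP_i/Q_i$. Here $\deg(xP)=n+1=\deg h-2$, hence $xP(x)/h(x)=O(x^{-2})$ and its residue at $\infty$ is still zero; summing all residues over $\BP^1$ yields $\sum_{i=1}^{n+3}l_iP_i/Q_i=0$. This second case is the tight one in the degree count — the numerator has degree exactly $\deg h-2$ — and verifying that this is still small enough to annihilate the pole at infinity is essentially the only point that requires attention; everything else is automatic. (Equivalently, one may observe that $\sum_{i=1}^{n+3}f(l_i)/Q_i$ equals the $(n+2)$-th divided difference $[l_1,\dots,l_{n+3}]f$, which vanishes for every polynomial $f$ with $\deg f\le n+1$, and apply this to $f=P$ and to $f(x)=xP(x)$.)
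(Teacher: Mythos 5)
Your proof is correct: with $P(x)=\prod_{l}(x-a_l)$ and $h(x)=\prod_j(x-l_j)$, both sums are sums of residues of $P/h$ and $xP/h$ over $\BP^1$, and the degree counts $\deg P=n$ and $\deg(xP)=n+1\le \deg h-2$ do kill the residue at infinity, so the identities follow; the divided-difference reformulation is an equally valid packaging of the same fact. This is, however, a genuinely different route from the paper's. The paper does not argue algebraically at all: it constructs the nonsingular intersection of two quadrics $X'\subset\BP^{n+2}$ cut out by $\sum l_iz_i^2=0$ and $\sum z_i^2=0$, invokes the surjectivity of the discriminant map (Proposition \ref{p5}(i)) to produce a point $x'\in X'$ whose second fundamental form has discriminant roots $[1:a_1],\dots,[1:a_n]$, and then reads off from the explicit fiber formula of Proposition \ref{p5}(ii) that $[(x_1')^2:\cdots:(x_{n+3}')^2]=[P_1/Q_1:\cdots:P_{n+3}/Q_{n+3}]$; the two identities are then simply the statements that $x'$ satisfies the two defining equations of $X'$. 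Your argument is more elementary and self-contained (it uses none of the preceding machinery), and, as you correctly note, it needs only the distinctness of the $l_j$, not of the $a_l$ — a hypothesis the paper carries along because its proof routes through the geometry, where the $a_l$ arise as the distinct roots of a nonsingular discriminant. What the paper's approach buys is a conceptual explanation of where the identity comes from in this context, at the cost of depending on Proposition \ref{p5}.
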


\begin{proof}
Let $V$ be a complex vector space of dimension $n+3$ with coordinates $z_1, z_2, \ldots, z_{n+3}$. Consider a subvariety $X' \subset \BP V$ defined by
\begin{equation}\label{e.3.1}
\varphi_1'=l_1z_1^2+l_2z_2^2+\cdots+l_{n+3}z_{n+3}^2=0 \,\,{\rm and }\,\, \varphi_2'=z_1^2+z_2^2+\cdots+z_{n+3}^2=0,
\end{equation}
and a point $x'=[x_1':x_2':\ldots:x_{n+3}'] \in X'$ such that the roots of D$(II_{X',x'},\varphi'_1, \varphi'_2)$ are exactly $[1:a_1]$, $[1:a_2],\ldots,[1:a_n]\in\BP(\C^2)$. This is possible since the discriminant map 
\begin{center}
$\theta_{(\varphi_1',\varphi_2')} : X' \to \BP (\sB_n)$ 
\end{center}
of $X'$ is surjective as a set map, see $(i)$ of Proposition \ref{p5}. Then $(ii)$ of Proposition \ref{p5} says
\begin{equation}\label{e12}
\big[(x_1')^2:(x_2')^2:\cdots:(x_{n+3}')^2\big]=\big[\frac{P_1}{Q_1}:\frac{P_2}{Q_2}:\cdots:\frac{P_{n+3}}{Q_{n+3}}\big] \,\in\, \BP(\C^{n+3}).
\end{equation}
Combining (\ref{e.3.1}) and (\ref{e12}), we obtain $\displaystyle \sum_{i=1}^{n+3}\frac{P_i}{Q_i}=0$ and $\displaystyle \sum_{i=1}^{n+3}l_i\frac{P_i}{Q_i}=0$.
\end{proof}

\begin{proposition}\label{p.sl2change}
Let $\varphi_1'=a\varphi_1+b\varphi_2\in\widehat\Phi_X$ and $\varphi_2'=c\varphi_1+d\varphi_2\in\widehat\Phi_X$ with $ad-bc=1$, then the discriminant map $\theta_{(\varphi_1',\varphi_2')}: X \to \BP(\sB_n)$ with respect to $(\varphi_1',\varphi_2')$ sends $x\in X$ to 
\begin{displaymath}
\left[
\begin{array}{cc}
a&b\\
c&d
\end{array}\right].\,\theta_{(\varphi_1,\varphi_2)}(x)\,\in\,\BP(\sB_n)
\end{displaymath}
where the SL(2,$\C$)-action on $\BP(\sB_n)$ is as in Definition \ref{n.61}.
\end{proposition}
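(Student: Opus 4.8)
The plan is to compare the two discriminant maps using the explicit formula from Proposition \ref{p.5.1}, namely that $\theta_{(\varphi_1,\varphi_2)}(x)$ is represented by the binary form
\begin{displaymath}
\frac{1}{t^2}\sum_{i=1}^{n+3}\lambda_i^2 x_i^2\,\frac{\prod_{j=1}^{n+3}(\lambda_j s-t)}{(\lambda_i s-t)}\,\in\,\sB_n,
\end{displaymath}
together with the behaviour of the discriminant polynomial $\det(s\varphi_1-t\varphi_2)$ under a change of spanning pair, which is already recorded in Proposition \ref{p.2}(ii). The key observation is that for fixed $x\in X$, the discriminant $\mathrm D(II_{X,x},\varphi_1,\varphi_2)$ is (up to scalar) precisely $\det(s\varphi_1|_{W_x}-t\varphi_2|_{W_x})$, the discriminant of the pencil $II_{X,x}\subset\BP(\Sym^2(W_x^*))$ with respect to the pair $(\varphi_1|_{W_x},\varphi_2|_{W_x})$. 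Since restriction to $W_x$ is linear, the relations $\varphi_1'=a\varphi_1+b\varphi_2$ and $\varphi_2'=c\varphi_1+d\varphi_2$ restrict to $\varphi_1'|_{W_x}=a\,\varphi_1|_{W_x}+b\,\varphi_2|_{W_x}$ and $\varphi_2'|_{W_x}=c\,\varphi_1|_{W_x}+d\,\varphi_2|_{W_x}$.

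First I would invoke the identity established in the proof of Proposition \ref{p.2},
\begin{displaymath}
\det\!\big((ct+ds)\varphi_1|_{W_x}-(at+bs)\varphi_2|_{W_x}\big)=\det\!\big((ad-bc)(s\varphi_1|_{W_x}-t\varphi_2|_{W_x})\big)=\det\!\big(s\varphi_1|_{W_x}-t\varphi_2|_{W_x}\big),
\end{displaymath}
using $ad-bc=1$. A short bookkeeping step then rewrites $\det(s\varphi_1'|_{W_x}-t\varphi_2'|_{W_x})$ in terms of $\det(s\varphi_1|_{W_x}-t\varphi_2|_{W_x})$: writing $\det(s\varphi_1|_{W_x}-t\varphi_2|_{W_x})$ as a constant multiple of $\prod_i(\tau_i s-\sigma_i t)$, the pair $(s,t)$ for the primed pencil corresponds to the substitution $s\mapsto ds+ct$, $t\mapsto bs+at$ up to sign, so each root $[\sigma_i:\tau_i]$ transforms as in Proposition \ref{p.2}(ii), that is $[\sigma_i:\tau_i]\mapsto[c\tau_i+d\sigma_i:a\tau_i+b\sigma_i]$. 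Second, I would match this root transformation against the definition of the $\mathrm{SL}(2,\C)$-action on $\BP(\sB_n)$ in Definition \ref{n.61}: there the matrix $\left[\begin{smallmatrix}a&b\\c&d\end{smallmatrix}\right]$ sends a form with linear factors $\prod_i(\tau_i s-\sigma_i t)$ to $\prod_i\big((a\tau_i+b\sigma_i)s-(c\tau_i+d\sigma_i)t\big)$, i.e. to the form with roots $[c\tau_i+d\sigma_i:a\tau_i+b\sigma_i]$. These are exactly the same roots with the same multiplicities, so the two binary forms agree in $\BP(\sB_n)$.

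The only genuine subtlety — and the step I expect to cost the most care — is getting the linear substitution in the variables $(s,t)$ correct, including signs, so that it lines up with the convention in Definition \ref{n.61} rather than its inverse or transpose; one has to be careful that $\det(s\varphi_1'-t\varphi_2')$ as a polynomial in $(s,t)$ is obtained from $\det(s\varphi_1-t\varphi_2)$ by the substitution dual to $\varphi_i\mapsto\varphi_i'$, and then to verify that this dual substitution is precisely the one built into the action on $\BP(\sB_n)$. Once the substitution is pinned down, the identity $\det((ct+ds)\varphi_1-(at+bs)\varphi_2)=\det((ad-bc)(s\varphi_1-t\varphi_2))$ from the proof of Proposition \ref{p.2} together with $ad-bc=1$ closes the argument, since both sides then represent the same class in $\BP(\sB_n)$. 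I would phrase the conclusion pointwise in $x$, noting that the computation is identical for every $x\in X$ and that well-definedness of both maps is already guaranteed by Proposition \ref{p5}.
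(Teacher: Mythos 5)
Your proposal is correct and follows essentially the same route as the paper: both arguments reduce to the transformation law for the roots of the discriminant under a change of spanning pair (the content of Proposition \ref{p.2}(ii), resting on the identity $\det((ct+ds)\varphi_1'-(at+bs)\varphi_2')=\det((ad-bc)(s\varphi_1-t\varphi_2))$), and then match the transformed roots $[c\tau_i+d\sigma_i:a\tau_i+b\sigma_i]$ against the convention in Definition \ref{n.61}. The paper phrases the root computation via the degenerate elements $[\sigma_i\varphi_1-\tau_i\varphi_2]$ of $II_{X,x}$ rather than via the determinant substitution directly, but this is the same calculation, and the sign/convention check you flag is exactly the point the paper's one-line identity settles.
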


\begin{proof}
In Proposition \ref{p5}, if the discriminant map $\theta_{(\varphi_1,\varphi_2)}: X \to \BP(\sB_n)$ sends $x\in X$ to 
\begin{center}
$[(\tau_1s-\sigma_1t)(\tau_2s-\sigma_2t)\cdots(\tau_ns-\sigma_nt)]\in\BP(\sB_n)$, 
\end{center}
then the degenerate elements in $II_{X,x}$ are the quadrics on $T_x(X)$ induced by 
\begin{center}
$[\sigma_1\varphi_1-\tau_1\varphi_2],[\sigma_2\varphi_1-\tau_2\varphi_2], \ldots, [\sigma_n\varphi_1-\tau_n\varphi_2]\in\Phi_X$.
\end{center}
Since 
\begin{center}
$(c\tau_i+d\sigma_i)\varphi_1'-(a\tau_i+b\sigma_i)\varphi_2'=(ad-bc)(\sigma_i\varphi_1-\tau_i\varphi_2)=\sigma_i\varphi_1-\tau_i\varphi_2$, 
\end{center}
the discriminant map $\theta_{(\varphi_1',\varphi_2')}: X \to \BP(\sB_n)$ with respect to $(\varphi_1',\varphi_2')$ sends $x$ to
\begin{center}
$\big[\big((a\tau_1+b\sigma_1)s-(c\tau_1+d\sigma_1)t\big)\cdots\big((a\tau_n+b\sigma_n)s-(c\tau_n+d\sigma_n)t\big)\big]\in\BP(\sB_n)$, 
\end{center}
which is exactly
\begin{displaymath}
\left[
\begin{array}{cc}
a&b\\
c&d
\end{array}\right].\,\theta_{(\varphi_1,\varphi_2)}(x)\,\in\,\BP(\sB_n).
\end{displaymath}
\end{proof}

\begin{definition}\label{d.modulimap}
When $X\subset\BP^{n+2}$ is a nonsingular intersection of two quadric hypersurfaces, let $X^o\subset X$ be the Zariski open subset on which the second fundamental forms are nonsingular in the sense of Definition \ref{d.nonsingular}. Then we define a morphism 
\begin{center}
$\mu^X: X^o \to \sM^{\rm PQ}_n$
\end{center}
assigning the isomorphism class $[II_{X,x}]\in\sM^{\rm PQ}_n$ to $x\in X^o$ and call it the \textit{moduli map of second fundamental forms} on $X$. 
\end{definition}

\begin{proposition}\label{p.commdiag}
With notations in Definition \ref{d.modulimap}, there is a commutative diagram
\begin{displaymath}
\begin{array}{cccc}
X^o\quad& \xrightarrow{\quad\theta_{(\varphi_1,\varphi_2)}|_{X^o}} & \,\theta_{(\varphi_1,\varphi_2)}(X^o)&=\,\,\BP (\sB_{n})^o\subset\BP (\sB_{n})\\
\Bigg\downarrow\mu^X & & \quad\Bigg\downarrow q|_{\BP (\sB_{n})^o}\\
\sM^{\rm PQ}_n \,& \xrightarrow{\quad\quad\, \widetilde{\rm D} \,\quad\quad} & \sM^{\rm BF}_n\quad\\ 
\end{array}
\end{displaymath}
where $q$ is as in Definition \ref{n.6}.
\end{proposition}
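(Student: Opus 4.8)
The plan is to reduce the statement to bookkeeping with the definitions, since the diagram merely records how the discriminant interpolates between the two descriptions $\sM^{\rm PQ}_n$ and $\sM^{\rm BF}_n$ of the moduli of nonsingular pencils. I would establish two things: (a) the image $\theta_{(\varphi_1,\varphi_2)}(X^o)$ is exactly $\BP(\sB_n)^o$, so that the top horizontal arrow is well-defined; and (b) the square commutes on points, i.e. $q\circ\big(\theta_{(\varphi_1,\varphi_2)}|_{X^o}\big)=\widetilde{\rm D}\circ\mu^X$ as maps $X^o\to\sM^{\rm BF}_n$.

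For (a): by Definition \ref{d.modulimap}, $X^o$ is the locus of $x$ where the pencil $II_{X,x}$ is nonsingular in the sense of Definition \ref{d.nonsingular}. Choosing a complement $W_x\subset x^\perp$ as in Definition \ref{d.theta}, the pencil $II_{X,x}$ is spanned by $\varphi_1|_{W_x}$ and $\varphi_2|_{W_x}$, and by Definition \ref{d.theta} together with Proposition \ref{p.5.1} the value $\theta_{(\varphi_1,\varphi_2)}(x)=[\det(s\varphi_1|_{W_x}-t\varphi_2|_{W_x})]$ is precisely the discriminant of this pencil. By Proposition \ref{p5} this class is never zero, so $II_{X,x}$ is always nondegenerate; hence Proposition \ref{p.3} identifies the condition ``$II_{X,x}$ is nonsingular'' with ``$\det(s\varphi_1|_{W_x}-t\varphi_2|_{W_x})$ has no multiple root'', i.e. with ``$\theta_{(\varphi_1,\varphi_2)}(x)\in\BP(\sB_n)^o$''. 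Thus $X^o=\theta_{(\varphi_1,\varphi_2)}^{-1}(\BP(\sB_n)^o)$, and combining this with the surjectivity of $\theta_{(\varphi_1,\varphi_2)}\colon X\to\BP(\sB_n)$ from (i) of Proposition \ref{p5} yields $\theta_{(\varphi_1,\varphi_2)}(X^o)=\BP(\sB_n)^o$.

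For (b): fix $x\in X^o$. First, $q\big(\theta_{(\varphi_1,\varphi_2)}(x)\big)=q\big([\det(s\varphi_1|_{W_x}-t\varphi_2|_{W_x})]\big)$ is by Definition \ref{n.6} the ${\rm SL}(2,\C)$-orbit class $[[\det(s\varphi_1|_{W_x}-t\varphi_2|_{W_x})]]\in\sM^{\rm BF}_n$. Second, $\widetilde{\rm D}(\mu^X(x))=\widetilde{\rm D}([II_{X,x}])$, which by Theorem \ref{caniso} equals $[[\det(s\psi_1-t\psi_2)]]$ for any linearly independent $\psi_1,\psi_2$ spanning $\widehat{II_{X,x}}$; taking $\psi_i=\varphi_i|_{W_x}$ shows the two expressions coincide, so the square commutes.

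I do not expect a genuine obstacle here: the substance is the dictionary of Theorem \ref{caniso} plus the identification, from Propositions \ref{p.5.1} and \ref{p.5.1.1}, of $\theta_{(\varphi_1,\varphi_2)}(x)$ with the basis-independent discriminant of the abstract pencil $II_{X,x}$. The only mild point of care is to invoke part (i) of Proposition \ref{p.2} so that the particular choice of $W_x$ used in Theorem \ref{caniso} is immaterial; once noted, the remainder is purely formal.
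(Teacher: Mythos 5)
Your proof is correct and is exactly the definitional unwinding the paper intends: the paper states this proposition without proof precisely because it follows from Definition \ref{d.theta}, Propositions \ref{p.3}, \ref{p.5.1.1}, \ref{p5} and Theorem \ref{caniso} in the way you lay out. Both halves of your argument — identifying $X^o$ with $\theta_{(\varphi_1,\varphi_2)}^{-1}(\BP(\sB_n)^o)$ and checking commutativity pointwise via the basis-independence of the discriminant — are sound.
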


Now we are ready to prove the main result in this section, which is Theorem \ref{t.1} in the introduction.
\begin{theorem} \label{theoremA} Let $X \subset \BP^{n+2}$ be a nonsingular intersection of two quadric hypersurfaces with $n\geq3$. Then the morphism $\mu^X: X^o \to \sM^{\rm PQ}_n$ is dominant.
In particular, given any two nonsingular varieties $X, X' \subset \BP^{n+2}$ defined as the intersections of two quadric hypersurfaces, we can always find a biholomorphic map $f:M \to M'$ between some Euclidean open subsets $M \subset X$ and $M' \subset X'$ such that $\mu^M = \mu^{M'} \circ f.$ \end{theorem}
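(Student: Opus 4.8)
The plan is to reduce the claimed dominance of $\mu^X$ to the surjectivity of the discriminant map $\theta_{(\varphi_1,\varphi_2)}$ established in Proposition~\ref{p5}(i), by passing through the commutative diagram of Proposition~\ref{p.commdiag}. First I would fix nondegenerate $\varphi_1,\varphi_2\in\widehat\Phi_X$ and a standard basis as in Notation~\ref{n.7}. By Proposition~\ref{p.5.1} combined with Proposition~\ref{p5}, the binary form representing $D(II_{X,x},\varphi_1,\varphi_2)$ is never identically zero, so $\theta_{(\varphi_1,\varphi_2)}\colon X\to\BP(\sB_n)$ is a genuine morphism, and by Proposition~\ref{p5}(i) it is surjective onto $\BP(\sB_n)$. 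In particular its image meets the Zariski-open subset $\BP(\sB_n)^o$ of binary forms with no multiple root. Pulling back, $X^o=\theta_{(\varphi_1,\varphi_2)}^{-1}(\BP(\sB_n)^o)$ (using Proposition~\ref{p.3}, which says nonsingularity of the pencil $II_{X,x}$ is equivalent to its discriminant having no multiple root) is a nonempty Zariski-open subset of $X$, and $\theta_{(\varphi_1,\varphi_2)}|_{X^o}\colon X^o\to\BP(\sB_n)^o$ is still surjective as a set map.

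Next I would run the diagram in Proposition~\ref{p.commdiag}: $\widetilde{\rm D}\circ\mu^X = (q|_{\BP(\sB_n)^o})\circ(\theta_{(\varphi_1,\varphi_2)}|_{X^o})$. The orbit map $q\colon\BP(\sB_n)^o\to\sM^{\rm BF}_n$ is surjective by construction (Definition~\ref{n.6}), so the right-then-top composition is surjective onto $\sM^{\rm BF}_n$; hence $\widetilde{\rm D}\circ\mu^X$ is surjective, a fortiori dominant. Since $\widetilde{\rm D}\colon\sM^{\rm PQ}_n\to\sM^{\rm BF}_n$ is an isomorphism (Theorem~\ref{caniso}), it follows that $\mu^X\colon X^o\to\sM^{\rm PQ}_n$ is dominant. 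This proves the first assertion.

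For the second assertion, let $X,X'\subset\BP^{n+2}$ be two nonsingular intersections of two quadrics. Applying the first part to each, the images $\mu^X(X^o)$ and $\mu^{X'}((X')^o)$ are dense constructible subsets of $\sM^{\rm PQ}_n$, hence contain a common point $[\Phi]$; in fact, since both maps factor through $\theta$ and $\theta$ is surjective onto all of $\BP(\sB_n)$ with explicit fibers (Proposition~\ref{p5}(ii)), I can pick a point $[\Phi]\in\sM^{\rm PQ}_n$ lying in both images by choosing a suitable binary form with distinct roots. Pick $x\in X^o$ and $x'\in (X')^o$ with $\mu^X(x)=\mu^{X'}(x')=[\Phi]$. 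Now I would observe that both $\mu^X$ and $\mu^{X'}$ are, locally near $x$ and $x'$, compositions of a submersion-type map $\theta$ onto $\BP(\sB_n)^o$ (which is smooth of dimension $n$, with $\theta$ a finite covering onto its image near $x$ by Proposition~\ref{p5}(ii) — each fiber being the finite set described there, so $\theta$ is a local biholomorphism at a general point) followed by the orbit map $q$. Hence on suitable Euclidean neighborhoods $M\ni x$ and $M'\ni x'$, after shrinking, $\theta_{(\varphi_1,\varphi_2)}|_M\colon M\to U$ and $\theta_{(\varphi_1',\varphi_2')}|_{M'}\colon M'\to U'$ are biholomorphisms onto Euclidean open subsets $U,U'\subset\BP(\sB_n)^o$ with $[\Phi]$'s binary form in $q(U)\cap q(U')$; choosing $U,U'$ so that $q|_U$ and $q|_{U'}$ have the same image (possible since $q$ is a submersion — an $\mathrm{SL}(2,\C)$-quotient — onto the smooth $\sM^{\rm BF}_n$) we may even arrange, after a further $\mathrm{SL}(2,\C)$-adjustment of $(\varphi_1',\varphi_2')$ via Proposition~\ref{p.sl2change}, that $\theta_{(\varphi_1,\varphi_2)}(M)=\theta_{(\varphi_1',\varphi_2')}(M')$. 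Then $f:=(\theta_{(\varphi_1',\varphi_2')}|_{M'})^{-1}\circ(\theta_{(\varphi_1,\varphi_2)}|_M)\colon M\to M'$ is a biholomorphism, and from the commutative diagram $\mu^M=\widetilde{\rm D}^{-1}\circ q\circ\theta_{(\varphi_1,\varphi_2)}|_M = \widetilde{\rm D}^{-1}\circ q\circ\theta_{(\varphi_1',\varphi_2')}|_{M'}\circ f=\mu^{M'}\circ f$, as required.

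The main obstacle is the local-biholomorphism claim for $\theta$ and the bookkeeping needed to make the two local images in $\BP(\sB_n)^o$ literally coincide (rather than merely map to the same point of $\sM^{\rm BF}_n$). The first point is handled by Proposition~\ref{p5}(ii): over $\BP(\sB_n)^o$ the fibers of $\theta$ are finite and, for generic binary form with distinct roots, the $n{+}3$ values $v_i^2$ are nonzero and determine $[v_1:\dots:v_{n+3}]$ up to the finite sign ambiguity, so $\theta$ restricted to a small enough neighborhood of a general $x\in X^o$ is a biholomorphism onto its (open) image; one should note that $\theta'_{(\varphi_1,\varphi_2)}$ in Proposition~\ref{p5}(i) is a \emph{linear} isomorphism $\mathrm{sq}(X)\xrightarrow{\sim}\BP(\sB_n)$ and $\mathrm{sq}|_X$ is a local biholomorphism wherever all coordinates are nonzero, which is a dense condition. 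The second point is where Proposition~\ref{p.sl2change} is used: modifying the spanning pair $(\varphi_1',\varphi_2')$ of $\widehat\Phi_{X'}$ by an element of $\mathrm{SL}(2,\C)$ precisely translates the image of $\theta_{(\varphi_1',\varphi_2')}$ by the corresponding $\mathrm{SL}(2,\C)$-action on $\BP(\sB_n)$, so after such a modification the two local images, which already descend to the same open subset of $\sM^{\rm BF}_n$, can be made equal. Everything else is a formal chase through the two diagrams.
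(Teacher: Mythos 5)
Your proposal is correct and follows essentially the same route as the paper: dominance via the surjectivity of $\theta_{(\varphi_1,\varphi_2)}$ and the commutative diagram $\widetilde{\rm D}\circ\mu^X=q\circ\theta_{(\varphi_1,\varphi_2)}|_{X^o}$, and the second assertion via $f=(\theta_{(\varphi_1',\varphi_2')}|_{M'})^{-1}\circ(\theta_{(\varphi_1,\varphi_2)}|_M)$ over a common small open set. The only difference is that your $\mathrm{SL}(2,\C)$-adjustment of $(\varphi_1',\varphi_2')$ is unnecessary, since both discriminant maps are already surjective onto all of $\BP(\sB_n)$, so the paper simply takes the same target open set $U$ for both restrictions.
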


\begin{proof}
By Proposition \ref{p5}, the discriminant map $\theta_{(\varphi_1,\varphi_2)}$ is a dominant morphism. Composing $\theta_{(\varphi_1,\varphi_2)}|_{X^o}$ with $q|_{\BP (\sB_{n})^o}$ in the commutative diagram in Proposition \ref{p.commdiag}, the map 
\begin{center}
$q|_{\BP (\sB_{n})^o}\circ\theta_{(\varphi_1,\varphi_2)}|_{X^o} = \widetilde{\rm D}\circ\mu^X : X^o \to \sM^{\rm BF}_n$ 
\end{center}
is immediately a dominant morphism. Since $\widetilde{\rm D}$ is an isomorphism, $\mu^X$ is also a dominant morphism. For the second assertion, we can find such a biholomorphic map since $\theta_{(\varphi_1,\varphi_2)}$ is finite. Consider a small Euclidean open subset $U \subset \BP (\sB_{n}) \setminus Z$. If $U$ is small enough, there are open subsets $M \subset X$  and $M' \subset X'$ on which the restrictions $\theta_{(\varphi_1,\varphi_2)}|_M$ and $\theta_{(\varphi_1',\varphi_2')}|_{M'}$ are biholomorphic maps onto $U$. Then the composition 
\begin{center}
$f:=(\theta_{(\varphi_1',\varphi_2')}|_{M'})^{-1}\circ(\theta_{(\varphi_1,\varphi_2)}|_M) : M\to M'$ 
\end{center}
satisfies $\mu^M = \mu^{M'} \circ f$.
\end{proof}

\begin{remark}
Recall that the moduli of nonsingular intersections of two quadric hypersurfaces in $\BP^{n+2}$ is equivalent to $\sM^{\rm PQ}_{n+3}$, which is isomorphic to $\sM^{\rm BF}_{n+3}$. Since the dimension of $\sM^{\rm BF}_{n+3}$ is $n$, these moduli spaces are nontrivial for positive $n$. So, Theorem \ref{t.1} gives a negative answer to Question \ref{q.GH}.
\end{remark}

\begin{remark}
Proposition \ref{p5} does not imply that every type of pencils of quadrics on a complex vector space of dimension $n$ appear on $X$. The result only says that all types of nonsingular pencils of quadrics can appear on $X$. By the language of Segre symbols, one can show that not every singular pencil of quadrics can be realized as the second fundamental form at some point of a given $X$. 
\end{remark}
%\newpage

%%%%%%%%%%%%%%%%%%%%%%%%%%%%%%%%%%%%%%%%%%%%%%%%%%%%%%%%%% 4. planes %%%%%%
\section{Three-dimensional subspaces poised by a pencil of quadrics}\label{poised}

\begin{definition}\label{d.4.2}
Let $W$ be a complex vector space of dimension $n\geq3$. We say that a pair $(\varphi_1, \varphi_2)$ of quadratic forms on $W$ is $nonsingular$ if $\varphi_1$ and $\varphi_2$ are linearly independent nondegenerate quadratic forms generating a nonsingular pencil of quadratic forms, which is denoted by $\Phi_{(\varphi_1, \varphi_2)}\subset\BP(\Sym^2(W^*))$. For a nonsingular pencil of quadratic forms $\Phi\subset\BP(\Sym^2(W^*))$, we also say the pair $(\varphi_1, \varphi_2)$ is a $good$ $pair$ of $\Phi$ if $\Phi_{(\varphi_1, \varphi_2)}=\Phi$.
\end{definition}

\begin{definition}\label{d.ma}
Let $(\varphi_1, \varphi_2)$ be a nonsingular pair of quadratic forms on $W$. Here we define some notions induced by the pair. Firstly, we fix a standard basis $B=\{\mathbf w_1, \mathbf w_2, \ldots, \mathbf w_n\}$ of $W$ with respect to $(\varphi_1, \varphi_2)$ such that
\begin{center}
$\varphi_1(\sum z_i \mathbf w_i)=\sum \alpha_iz_i^2$ and $\varphi_2(\sum z_i \mathbf w_i)=\sum z_i^2$ 
\end{center}
where $[1:\alpha_1],[1:\alpha_2],\ldots,[1:\alpha_n]\in\BP(\C^2)$ are the $n$ distinct roots of the discriminant $\det(s\varphi_1-t\varphi_2)$. (See Proposition \ref{p.1.6} and Definition \ref{d.standard}.) 
We say a vector $\mathbf u=\sum u_i\mathbf w_i$ in $W$ is $(\varphi_1, \varphi_2)$-$general$ if $u_i\neq 0$ for all $i$ and we say $z\in \BP (W)$ is $(\varphi_1, \varphi_2)$-$general$ if $z=[\mathbf u]\in\BP (W)$ for a $(\varphi_1, \varphi_2)$-general vector $\mathbf u\in W$. In addition, let
\begin{center}
$\alpha_{(\varphi_1,\varphi_2)}:W \to W$
\end{center}
be the linear map sending $\sum z_i\mathbf w_i\in W$ to $\sum \alpha_iz_i\mathbf w_i\in W$. 
\end{definition}  

\begin{remark}\label{p.4.2.1}
The notions defined in Definition \ref{d.ma} depend on neither of the order of $\alpha_i$'s nor the choice of a standard basis $B$. Moreover, for a nonsingular pencil $\Phi\subset\BP(\Sym^2(W^*))$ and its good pairs $(\varphi_1, \varphi_2)$ and $(\varphi_1', \varphi_2')$, a vector $\mathbf u\in W$ is $(\varphi_1, \varphi_2)$-general if and only if $\mathbf u$ is $(\varphi_1', \varphi_2')$-general. 
\end{remark}

\begin{definition}\label{phigeneral}
For a nonsingular pencil $\Phi\subset\BP(\Sym^2(W^*))$, we say $z\in\BP(W)$ is $\Phi$-$general$ if $z$ is $(\varphi_1, \varphi_2)$-general for a good pair $(\varphi_1, \varphi_2)$ of $\Phi$.
\end{definition}

\begin{proposition}
Let $(\varphi_1, \varphi_2)$ be a nonsingular pair of quadratic forms on $W$ and let $\mathbf u\in W$ be a $(\varphi_1, \varphi_2)$-general vector. Then $\mathbf u$, $\alpha_{(\varphi_1,\varphi_2)}(\mathbf u)$, and $(\alpha_{(\varphi_1,\varphi_2)})^2(\mathbf u)$ are linearly independent. 
\end{proposition}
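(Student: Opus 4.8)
The plan is to compute in the standard basis $\{\mathbf w_1, \ldots, \mathbf w_n\}$ provided by Definition \ref{d.ma}, in which $\varphi_1(\sum z_i\mathbf w_i) = \sum \alpha_i z_i^2$ and $\varphi_2(\sum z_i\mathbf w_i) = \sum z_i^2$, where by Proposition \ref{p.1.6} the scalars $\alpha_1, \ldots, \alpha_n$ are pairwise distinct. In this basis $\alpha_{(\varphi_1,\varphi_2)}$ is the diagonal operator $\sum z_i\mathbf w_i \mapsto \sum \alpha_i z_i\mathbf w_i$, so if $\mathbf u = \sum_{i=1}^n u_i\mathbf w_i$ then $\alpha_{(\varphi_1,\varphi_2)}(\mathbf u) = \sum \alpha_i u_i\mathbf w_i$ and $(\alpha_{(\varphi_1,\varphi_2)})^2(\mathbf u) = \sum \alpha_i^2 u_i\mathbf w_i$. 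The hypothesis that $\mathbf u$ is $(\varphi_1,\varphi_2)$-general says exactly that $u_i \neq 0$ for every $i$.

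Next I would test a hypothetical relation $a\,\mathbf u + b\,\alpha_{(\varphi_1,\varphi_2)}(\mathbf u) + c\,(\alpha_{(\varphi_1,\varphi_2)})^2(\mathbf u) = 0$ coordinate by coordinate. Its $i$-th coordinate is $u_i(a + b\alpha_i + c\alpha_i^2)$, and since $u_i \neq 0$ this forces $a + b\alpha_i + c\alpha_i^2 = 0$ for all $i$. Thus the polynomial $p(t) = c t^2 + b t + a$ of degree at most $2$ vanishes at the $n \geq 3$ distinct points $\alpha_1, \ldots, \alpha_n$, hence $p \equiv 0$, i.e. $a = b = c = 0$. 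Equivalently, one can phrase this via a Vandermonde determinant: for any three indices $i<j<k$ the $3\times 3$ minor of the coordinate matrix on columns $i,j,k$ equals $u_i u_j u_k$ times the Vandermonde determinant of $\alpha_i, \alpha_j, \alpha_k$, which is nonzero; so the $3\times n$ coordinate matrix has rank $3$ and the three vectors are independent.

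To finish, I would invoke Remark \ref{p.4.2.1} to note that the statement is independent of all the auxiliary choices (the ordering of the $\alpha_i$ and the particular standard basis), since both $\alpha_{(\varphi_1,\varphi_2)}$ and the notion of a $(\varphi_1,\varphi_2)$-general vector are intrinsic. I do not expect any real obstacle here: the argument is entirely elementary, and the only two inputs that genuinely matter are the distinctness of the $\alpha_i$ (nonsingularity of the pair, via Proposition \ref{p.1.6}) and the non-vanishing of the coordinates of $\mathbf u$ (its $(\varphi_1,\varphi_2)$-generality).
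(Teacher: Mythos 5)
Your proof is correct and essentially coincides with the paper's: the paper also works in the standard basis, where $\alpha_{(\varphi_1,\varphi_2)}$ is diagonal, and shows the $3\times 3$ minor on the first three coordinates equals $u_1u_2u_3(\alpha_1-\alpha_2)(\alpha_2-\alpha_3)(\alpha_3-\alpha_1)\neq 0$ — exactly your Vandermonde formulation. Your polynomial-vanishing phrasing is a trivially equivalent repackaging of the same two inputs (distinctness of the $\alpha_i$ and non-vanishing of the $u_i$).
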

\begin{proof}
Consider the basis $\{\mathbf w_1, \mathbf w_2, \ldots, \mathbf w_n\}$ of $W$ in Definition \ref{d.ma}. Then $\mathbf u=\sum u_i\mathbf w_i$ for some nonzero $u_i$'s in $\C$. It is enough to show $\det(\mathbf M)\neq0$ where $\mathbf M=(\mathbf M_{kl})$ is a $3\times3$ matrix given by $\mathbf M_{kl}=u_k(\alpha_k)^{l-1}$ for $k,l\in\{1,2,3\}$. By simple computing,  
\begin{displaymath}
\det(\mathbf M)=u_1u_2u_3(\alpha_1-\alpha_2)(\alpha_2-\alpha_3)(\alpha_3-\alpha_1) 
\end{displaymath}
and this is nonzero since $\alpha_k$'s are distinct and $u_i$'s are nonzero.
\end{proof}

\begin{notation}\label{d.pu}
Given a nonsingular pair $(\varphi_1, \varphi_2)$ of quadratic forms on $W$ and a $(\varphi_1, \varphi_2)$-general vector $\mathbf u\in W$, we denote by 
\begin{center}
$\sP_{(\varphi_1, \varphi_2)}(\mathbf u)\in\Gr(3,W)$ 
\end{center}
the vector subspace generated by $\mathbf u$, $\alpha_{(\varphi_1, \varphi_2)}(\mathbf u)$, and $(\alpha_{(\varphi_1, \varphi_2)})^2(\mathbf u)$. 
%with notations in Definition \ref{d.ma}.%\end{notation}
\end{notation}

\begin{proposition}\label{p.tt}
With notations in Notation \ref{d.pu}, $\sP_{(\varphi_1, \varphi_2)}(\mathbf u)=\sP_{(c\varphi_1, c\varphi_2)}(\mathbf u)\in\Gr(3,W)$ for any $c\in \C\setminus\{0\}$.
\end{proposition}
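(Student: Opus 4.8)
The plan is to trace how the map $\alpha_{(\varphi_1,\varphi_2)}$ and the notion of ``$(\varphi_1,\varphi_2)$-general'' behave when we replace the pair $(\varphi_1,\varphi_2)$ by $(c\varphi_1,c\varphi_2)$ for a nonzero scalar $c$. The key observation is that scaling both quadratic forms by the same constant $c$ does not change the pencil $\Phi_{(\varphi_1,\varphi_2)}$, it does not change the roots $[1:\alpha_i]$ of the discriminant $\det(s\varphi_1-t\varphi_2)$ up to the overall constant $c^n$ in front (by Proposition \ref{p.2}(i), or directly since $\det(sc\varphi_1-tc\varphi_2)=c^n\det(s\varphi_1-t\varphi_2)$), and hence it does not change the eigenvalues $\alpha_i$ themselves.

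First I would fix a standard basis $B=\{\mathbf w_1,\ldots,\mathbf w_n\}$ for $(\varphi_1,\varphi_2)$ as in Definition \ref{d.ma}, so that $\varphi_1(\sum z_i\mathbf w_i)=\sum\alpha_i z_i^2$ and $\varphi_2(\sum z_i\mathbf w_i)=\sum z_i^2$ with $[1:\alpha_1],\ldots,[1:\alpha_n]$ the distinct roots of the discriminant. Then $c\varphi_1(\sum z_i\mathbf w_i)=\sum(c\alpha_i)z_i^2$ and $c\varphi_2(\sum z_i\mathbf w_i)=\sum c\,z_i^2$, so to get a standard basis for $(c\varphi_1,c\varphi_2)$ in the sense of Definition \ref{d.standard} (where $\varphi_2$ must be $\sum z_i^2$), I would rescale to $\mathbf w_i'':=\sqrt{c}\,\mathbf w_i$, giving $c\varphi_1(\sum z_i\mathbf w_i'')=\sum\alpha_i z_i^2$ and $c\varphi_2(\sum z_i\mathbf w_i'')=\sum z_i^2$. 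The crucial point is that $B''=\{\mathbf w_1'',\ldots,\mathbf w_n''\}$ is a standard basis for $(c\varphi_1,c\varphi_2)$ with exactly the \emph{same} associated scalars $\alpha_1,\ldots,\alpha_n$. By Remark \ref{p.4.2.1}, the linear map $\alpha_{(\varphi_1,\varphi_2)}$ does not depend on the choice of standard basis; computing it in the basis $B''$ shows $\alpha_{(c\varphi_1,c\varphi_2)}$ sends $\sum z_i\mathbf w_i''\mapsto\sum\alpha_i z_i\mathbf w_i''$, which is precisely the same linear endomorphism of $W$ as $\alpha_{(\varphi_1,\varphi_2)}$ (rescaling each basis vector by the common factor $\sqrt c$ does not alter a linear map that acts diagonally with the same eigenvalues on the corresponding eigenlines). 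Hence $\alpha_{(c\varphi_1,c\varphi_2)}=\alpha_{(\varphi_1,\varphi_2)}$ as maps $W\to W$.

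Next I would check that $\mathbf u$ being $(\varphi_1,\varphi_2)$-general is equivalent to $\mathbf u$ being $(c\varphi_1,c\varphi_2)$-general: writing $\mathbf u=\sum u_i\mathbf w_i=\sum(u_i/\sqrt c)\mathbf w_i''$, the coordinates in the two standard bases differ only by the common nonzero factor $1/\sqrt c$, so all coordinates are nonzero in one basis iff they are in the other. This is also immediate from Remark \ref{p.4.2.1}, since $(\varphi_1,\varphi_2)$ and $(c\varphi_1,c\varphi_2)$ are good pairs of the same pencil. Therefore $\sP_{(c\varphi_1,c\varphi_2)}(\mathbf u)$ is defined exactly when $\sP_{(\varphi_1,\varphi_2)}(\mathbf u)$ is, and by Notation \ref{d.pu} both are the span of $\mathbf u$, $\alpha(\mathbf u)$, $\alpha^2(\mathbf u)$ for the \emph{same} map $\alpha=\alpha_{(\varphi_1,\varphi_2)}=\alpha_{(c\varphi_1,c\varphi_2)}$, whence they coincide as elements of $\Gr(3,W)$.

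The only mild subtlety — not really an obstacle — is that a square root $\sqrt c$ must be chosen; since we work over $\C$ this is harmless, and in any case the final identity $\sP_{(c\varphi_1,c\varphi_2)}(\mathbf u)=\sP_{(\varphi_1,\varphi_2)}(\mathbf u)$ is independent of that choice because it is already visible at the level of the linear map $\alpha$. The proof is thus essentially bookkeeping: the content is that common scaling of the pair neither disturbs the eigenvalue data nor the eigenline decomposition of $W$, so the diagonal operator $\alpha$ is literally unchanged.
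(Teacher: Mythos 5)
Your argument is correct and is essentially the paper's own proof, which simply observes that $\det\big(s(c\varphi_1)-t(c\varphi_2)\big)=c^n\det(s\varphi_1-t\varphi_2)$ has the same roots in $\BP(\C^2)$, hence $\alpha_{(c\varphi_1,c\varphi_2)}=\alpha_{(\varphi_1,\varphi_2)}$; you have merely spelled out the bookkeeping. One trivial slip: the rescaled standard basis should be $\mathbf w_i''=\mathbf w_i/\sqrt{c}$ rather than $\sqrt{c}\,\mathbf w_i$ (so that $c\varphi_2(\mathbf w_i'')=1$), but this does not affect anything since the diagonal map $\alpha$ is insensitive to a common rescaling of the eigenbasis.
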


\begin{proof}
Since the discriminant polynomials $\det(s\varphi_1-t\varphi_2)$ and $\det\big(s(c\varphi_1)-t(c\varphi_2)\big)$ in $s$ and $t$ have the same roots in $\BP(\C^2)$, the linear maps $\alpha_{(\varphi_1,\varphi_2)}$ and $\alpha_{(c\varphi_1,c\varphi_2)}$ are same.
\end{proof}

\begin{definition} \label{d.poised} 
For a nonsingular pair $(\varphi_1, \varphi_2)$ of quadratic forms on $W$, we say $P \in \Gr(3,W)$ is \it poised by $(\varphi_1,\varphi_2)$ \rm if 
$P=\sP_{(\varphi_1, \varphi_2)}(\mathbf u)$ 
for some $(\varphi_1, \varphi_2)$-general vector $\mathbf u\in W$. Denote by 
\begin{center}
$S_{(\varphi_1, \varphi_2)}\subset \Gr(3,W)$ 
\end{center}
the set of three-dimensional subspaces poised by $(\varphi_1, \varphi_2)$. 
\end{definition}

\begin{proposition} \label{lm1}
Let $(\varphi_1, \varphi_2)$ be a nonsingular pair of quadratic forms on $W$ and let $\mathbf u,\mathbf u'\in W$ be two $(\varphi_1, \varphi_2)$-general vectors. When $\dim (W)=n\geq4$, $\sP_{(\varphi_1, \varphi_2)}(\mathbf u)=\sP_{(\varphi_1, \varphi_2)}(\mathbf u')$ if and only if $[\mathbf u]=[\mathbf u'] \in \BP(W)$. 
\end{proposition}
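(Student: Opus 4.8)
The plan is to reduce everything to elementary polynomial algebra by passing to a standard basis. First I would fix a standard basis $\{\mathbf w_1,\ldots,\mathbf w_n\}$ of $W$ for $(\varphi_1,\varphi_2)$, so that the linear map $\alpha:=\alpha_{(\varphi_1,\varphi_2)}$ becomes diagonal with the pairwise distinct eigenvalues $\alpha_1,\ldots,\alpha_n$ (the roots of the discriminant). Writing a $(\varphi_1,\varphi_2)$-general vector as $\mathbf u=\sum u_i\mathbf w_i$ with all $u_i\neq 0$, one reads off at once that
\[
\sP_{(\varphi_1,\varphi_2)}(\mathbf u)=\bigl\{\,(u_1p(\alpha_1),\ldots,u_np(\alpha_n))\ :\ p\in\C[t],\ \deg p\le 2\,\bigr\},
\]
and since $n\ge 3$ and the $\alpha_i$ are distinct, evaluation at $\{\alpha_1,\ldots,\alpha_n\}$ is injective on polynomials of degree $\le 2$, so the polynomial $p$ representing a given element of $\sP_{(\varphi_1,\varphi_2)}(\mathbf u)$ is uniquely determined. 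This is the only structural input needed.

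The implication $[\mathbf u]=[\mathbf u']\Rightarrow\sP_{(\varphi_1,\varphi_2)}(\mathbf u)=\sP_{(\varphi_1,\varphi_2)}(\mathbf u')$ is immediate, since $\mathbf u'=c\mathbf u$ gives $\alpha^k(\mathbf u')=c\,\alpha^k(\mathbf u)$ for all $k$ and hence the two spans coincide. For the converse, assume $\sP:=\sP_{(\varphi_1,\varphi_2)}(\mathbf u)=\sP_{(\varphi_1,\varphi_2)}(\mathbf u')$. Because $\mathbf u'$ is $(\varphi_1,\varphi_2)$-general, the vectors $\mathbf u',\alpha(\mathbf u'),\alpha^2(\mathbf u')$ are linearly independent and therefore span $\sP$; in particular all three lie in $\sP_{(\varphi_1,\varphi_2)}(\mathbf u)$, so by the uniqueness just noted there are polynomials $q,r_1,r_2$ of degree $\le 2$ with $u_i'=u_iq(\alpha_i)$, $\alpha_iu_i'=u_ir_1(\alpha_i)$, $\alpha_i^2u_i'=u_ir_2(\alpha_i)$ for every $i$. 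Cancelling the nonzero $u_i$ yields $\alpha_iq(\alpha_i)=r_1(\alpha_i)$ and $\alpha_i^2q(\alpha_i)=r_2(\alpha_i)$ for all $i$, and $q$ is not the zero polynomial since $u_1'\neq 0$.

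The endgame is a degree count, and this is exactly where the hypothesis $n\ge 4$ enters (and is sharp, consistent with the dimension bookkeeping in the introduction). I would run it in two steps. The polynomial $tq(t)-r_1(t)$ has degree $\le 3$ and vanishes at the $n\ge 4$ distinct points $\alpha_1,\ldots,\alpha_n$, hence is identically zero; so $tq(t)=r_1(t)$ has degree $\le 2$, forcing $\deg q\le 1$. Feeding this back, $t^2q(t)-r_2(t)$ now also has degree $\le 3$ and vanishes at the same $n\ge 4$ points, hence is identically zero; so $t^2q(t)=r_2(t)$ has degree $\le 2$, forcing $\deg q\le 0$. Thus $q$ is a nonzero constant $c$, $\mathbf u'=c\,\mathbf u$, and $[\mathbf u']=[\mathbf u]$.

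The computations are entirely routine; the one point requiring care is the order of the two steps in the last paragraph. When $n=4$ the sharpest available vanishing criterion is that a polynomial of degree $\le 3$ with four roots is zero, so one cannot afford to multiply $q$ by $t^2$ before first extracting $\deg q\le 1$ from the degree-one relation. (For $n\ge 5$ a single step already gives $q$ constant; it is precisely this bootstrap that makes the borderline case $n=4$ go through.)
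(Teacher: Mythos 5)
Your proof is correct and is essentially the paper's argument in a different notation: the paper fixes the same standard basis, proves linear independence of $\mathbf u, \alpha(\mathbf u),\ldots,\alpha^{n-1}(\mathbf u)$ via a Vandermonde determinant (your ``evaluation is injective on low-degree polynomials''), and then kills the coefficients $c_2$ and then $c_1$ of $\mathbf u'=c_0\mathbf u+c_1\alpha(\mathbf u)+c_2\alpha^2(\mathbf u)$ in exactly the two-step order of your degree bootstrap. Your remark on why the two steps must be taken in that order at the borderline case $n=4$ is a nice explicit observation, but the underlying mechanism is the same.
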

\begin{proof}
With notations in Definition \ref{d.ma}, $\mathbf u=\sum u_i \mathbf w_i$ and $\mathbf u'=\sum u_i'\mathbf w_i$ for some nonzero $u_i,u_i'\in \C$. Let $\alpha:=\alpha_{(\varphi_1,\varphi_2)}$ for simplicity. Let us consider an $n\times n$ matrix $\mathbf M=(\mathbf M_{ij})$ with $\mathbf M_{ij}=\alpha_j^{i-1}u_j$, which is close to an $n\times n$ Vandermonde matrix $\mathbf M'=(\mathbf M'_{ij})$ with $\mathbf M'_{ij}=\alpha_j^{i-1}$. Then 
\begin{center}
$\det(\mathbf M)=u_1u_2\cdots u_n\det(\mathbf M')$ 
\end{center}
and the determinant of the Vandermonde matrix $\mathbf M'$ is 
\begin{displaymath}
\prod_{1\leq i<j\leq n}(\alpha_i-\alpha_j) 
\end{displaymath}
up to sign. So $\mathbf M$ is invertible since $\alpha_i$'s are distinct and $u_i$'s are nonzero. Hence, the vectors $\mathbf u$, $\alpha(\mathbf u)$, $\alpha^2(\mathbf u)$, $\cdots$, $\alpha^{n-1}(\mathbf u)$ are linearly independent.
If $\mathbf u'$ is contained in $\sP_{(\varphi_1, \varphi_2)}(\mathbf u)$, then
\begin{center}
$\mathbf u'=c_0\mathbf u+c_1\alpha(\mathbf u)+c_2\alpha^2(\mathbf u) \in W$ 
\end{center}
for some $c_0, c_1, c_2\in\C$. Note that $\mathbf u$, $\alpha(\mathbf u)$, $\alpha^2(\mathbf u)$, and $\alpha^3(\mathbf u)$ must be linearly independent for $n\geq 4$. Thus, to make both of $\alpha(\mathbf u')=c_0\alpha(\mathbf u)+c_1\alpha^2(\mathbf u)+c_2\alpha^3(\mathbf u)$ and $\alpha^2(\mathbf u')=c_0\alpha^2(\mathbf u)+c_1\alpha^3(\mathbf u)+c_2\alpha^4(\mathbf u)$ belong to $\sP_{(\varphi_1,\varphi_2)}(\mathbf u)$, the coefficients $c_2$ and $c_1$ must be zero.
\end{proof}

Proposition \ref{lm1} says that the set $S_{(\varphi_1,\varphi_2)}$ is birational to $\BP(W)$ if dim($W)\geq4$.

\begin{notation}\label{n13}
Let $W$ be a complex vector space of dimension $n\geq4$. Given a nonsingular pair $(\varphi_1,\varphi_2)$ of quadratic forms on $W$,  
denote by 
\begin{center}
$v_{(\varphi_1,\varphi_2)}:S_{(\varphi_1,\varphi_2)}\to\BP(W)$ 
\end{center}
the birational morphism sending $\sP_{(\varphi_1,\varphi_2)}(\mathbf u)\in S_{(\varphi_1,\varphi_2)}$ to $[\mathbf u]\in\BP(W)$ where $\mathbf u\in W$ is a $(\varphi_1, \varphi_2)$-general vector. 
\end{notation}

\begin{proposition}\label{p.4.4}
Let $(\varphi_1,\varphi_2)$ and $(\varphi_1',\varphi_2')$ be two nonsingular pairs of quadratic forms on $W$ satisfying 
$\Phi_{(\varphi_1, \varphi_2)}=\Phi_{(\varphi_1', \varphi_2')}\in\Gr(1,\BP(\Sym^2(W^*)))$. 
Then  
$S_{(\varphi_1, \varphi_2)}=S_{(\varphi_1', \varphi_2')}$ in $\Gr(3,W)$.
\end{proposition}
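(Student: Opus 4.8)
The plan is to reduce the statement to the way the operator $\alpha_{(\varphi_1,\varphi_2)}$ transforms under a change of good pair. Since $\Phi_{(\varphi_1,\varphi_2)}=\Phi_{(\varphi_1',\varphi_2')}$, the two pairs span the same plane $\widehat\Phi$, so there are $a,b,c,d\in\C$ with $ad-bc\neq0$ and $\varphi_1'=a\varphi_1+b\varphi_2$, $\varphi_2'=c\varphi_1+d\varphi_2$. By Corollary \ref{p.1.6.1} the decomposition $W=W_1\oplus\cdots\oplus W_n$ into the lines $W_i=\mathrm{Sing}(\varphi_1-\alpha_i\varphi_2)$ depends only on $\Phi$. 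First I would record, using Proposition \ref{p.2}(ii), that after a harmless reordering (Remark \ref{p.4.2.1}) the roots $[1:\alpha_i']$ of the discriminant of $(\varphi_1',\varphi_2')$ are $\alpha_i'=\tfrac{a\alpha_i+b}{c\alpha_i+d}$; that $\varphi_1'-\alpha_i'\varphi_2'$ is then a nonzero scalar multiple of $\varphi_1-\alpha_i\varphi_2$, whence $W_i=\mathrm{Sing}(\varphi_1'-\alpha_i'\varphi_2')$ as well; and that all the denominators $c\alpha_i+d$ and numerators $a\alpha_i+b$ are nonzero, a consequence of $\varphi_1'$ and $\varphi_2'$ being nondegenerate (their determinants are, up to nonzero scalars, $\prod_i(a\alpha_i+b)$ and $\prod_i(c\alpha_i+d)$).

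Next I would fix a standard basis $\{\mathbf w_1,\ldots,\mathbf w_n\}$ for $(\varphi_1,\varphi_2)$ with $\mathbf w_i$ spanning $W_i$, and observe that the two linear maps $\alpha:=\alpha_{(\varphi_1,\varphi_2)}$ and $\alpha':=\alpha_{(\varphi_1',\varphi_2')}$ are simultaneously diagonalized by this basis: $\alpha(\mathbf w_i)=\alpha_i\mathbf w_i$ by construction, and $\alpha'(\mathbf w_i)=\alpha_i'\mathbf w_i$ because $\alpha'$ acts on the line $W_i$ by the scalar $\alpha_i'$. Hence $\alpha$ and $\alpha'$ commute, $c\alpha+d$ is invertible, and $\alpha'=(a\alpha+b)(c\alpha+d)^{-1}$ as an identity of commuting operators on $W$.

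The core of the argument, and the only step needing real care, is the following comparison. Fix a $(\varphi_1,\varphi_2)$-general vector $\mathbf u=\sum u_i\mathbf w_i$ (all $u_i\neq0$) and set $\mathbf u':=(c\alpha+d)^2\mathbf u=\sum_i(c\alpha_i+d)^2u_i\,\mathbf w_i$; its coordinates on a standard basis for $(\varphi_1',\varphi_2')$, which spans the same lines $W_i$, are nonzero, so $\mathbf u'$ is $(\varphi_1',\varphi_2')$-general. Writing $\sP_{(\varphi_1,\varphi_2)}(\mathbf u)=\{\,p(\alpha)\mathbf u\mid \deg p\le2\,\}$ and $\sP_{(\varphi_1',\varphi_2')}(\mathbf u')=\{\,q(\alpha')\mathbf u'\mid \deg q\le2\,\}$, I would compute, for any $q$ of degree $\le2$, $q(\alpha')\mathbf u'=q\big((a\alpha+b)(c\alpha+d)^{-1}\big)(c\alpha+d)^2\mathbf u=\widetilde q(\alpha)\mathbf u$, where $\widetilde q(y):=(cy+d)^2\,q\!\big(\tfrac{ay+b}{cy+d}\big)=q_0(cy+d)^2+q_1(ay+b)(cy+d)+q_2(ay+b)^2$ is again a polynomial of degree $\le2$. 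Since $ay+b$ and $cy+d$ are linearly independent (this is where $ad-bc\neq0$ enters), the assignment $q\mapsto\widetilde q$ is a linear automorphism of the three-dimensional space of polynomials of degree $\le2$ — its determinant in the basis $1,y,y^2$ is $(ad-bc)^3$ — so $\sP_{(\varphi_1',\varphi_2')}(\mathbf u')=\{\,\widetilde q(\alpha)\mathbf u\mid\deg\widetilde q\le2\,\}=\sP_{(\varphi_1,\varphi_2)}(\mathbf u)$. This shows $\sP_{(\varphi_1,\varphi_2)}(\mathbf u)\in S_{(\varphi_1',\varphi_2')}$, hence $S_{(\varphi_1,\varphi_2)}\subseteq S_{(\varphi_1',\varphi_2')}$; the relation between the two pairs being symmetric, the reverse inclusion follows by the same argument, and therefore $S_{(\varphi_1,\varphi_2)}=S_{(\varphi_1',\varphi_2')}$ in $\Gr(3,W)$. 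I do not expect a genuine obstacle: the proof is essentially bookkeeping around the Möbius change $\alpha_i\mapsto\alpha_i'$ of the eigenvalues together with the substitution trick for $\widetilde q$, and the only point to be careful about is keeping the indexings of the two standard bases aligned with the intrinsic lines $W_i$.
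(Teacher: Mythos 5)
Your proposal is correct and follows essentially the same route as the paper's proof: both express $\varphi_1'=a\varphi_1+b\varphi_2$, $\varphi_2'=c\varphi_1+d\varphi_2$, use the common eigenbasis $W=W_1\oplus\cdots\oplus W_n$ to diagonalize $\alpha$ and $\alpha'$ simultaneously, and relate the two generating triples through the weight-two twist by $(c\alpha+d)^{\pm 2}$ (the paper's identities $\mathbf u=c^2\alpha^2(\mathbf v)+2cd\,\alpha(\mathbf v)+d^2\mathbf v$, etc., are exactly your $q\mapsto\widetilde q$ substitution written out in the basis $1,y,y^2$). The only differences are cosmetic — you prove the inclusion in the opposite direction first and package the change of generators as a linear automorphism of the space of polynomials of degree $\le 2$, which is a clean way to see that the spans agree.
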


\begin{proof}
It is enough to show $S_{(\varphi_1', \varphi_2')}\subset S_{(\varphi_1, \varphi_2)}$. 
Let $\{\mathbf w_1, \mathbf w_2, \ldots, \mathbf w_n\}$ be a standard basis for $(\varphi_1, \varphi_2)$ as in Definition \ref{d.ma}, i.e.
\begin{displaymath}
\varphi_1(\sum z_i \mathbf w_i)=\sum \alpha_iz_i^2 
\end{displaymath}
and 
\begin{displaymath}
\varphi_2(\sum z_i \mathbf w_i)=\sum z_i^2
\end{displaymath}
for distinct $\alpha_i$'s in $\C$. Since two pairs $(\varphi_1, \varphi_2)$ and $(\varphi_1', \varphi_2')$ generate the same pencil of quadrics, $\varphi_1'=a\varphi_1+b\varphi_2$ and $\varphi_2'=c\varphi_1+d\varphi_2$ for some complex numbers $a,b,c,d$ with $ad-bc\neq 0$. Then
\begin{equation}\label{p44.1}
\varphi_1'(\sum z_i \mathbf w_i)=\sum (a\alpha_i+b)z_i^2
\end{equation}
and
\begin{equation}\label{p44.2}
  \varphi_2'(\sum z_i \mathbf w_i)=\sum (c\alpha_i+d)z_i^2.
\end{equation}
The terms $(a\alpha_i+b)$ in (\ref{p44.1}) and $(c\alpha_i+d)$ in (\ref{p44.2}) are nonzero for all $i$ by the nondegeneracy of $\varphi_1'$ and $\varphi_2'$. \\

Let 
$\displaystyle\alpha_i':=\frac{a\alpha_i+b}{c\alpha_i+d}\neq0$. 
Then the roots of $\det(s\varphi_1'-t\varphi_2')$ are exactly 
\begin{displaymath}
[1:\alpha_1'],[1:\alpha_2'],\ldots,[1:\alpha_n']\in\BP(\C^2).
\end{displaymath}
If $\{\mathbf w_1', \mathbf w_2', \ldots, \mathbf w_n'\}$ is a standard basis for $(\varphi_1', \varphi_2')$ such that
\begin{displaymath}
\varphi_1'(\sum z_i' \mathbf w_i')=\sum \alpha_i'(z_i')^2 
\end{displaymath}
and 
\begin{center}
$\displaystyle\varphi_2'(\sum z_i' \mathbf w_i')=\sum (z_i')^2$, 
\end{center}
then $[\mathbf w_i']=[\mathbf w_i]\in\BP (W)$, see Corollary \ref{p.1.6.1}. So
\begin{center}
$\alpha_{(\varphi_1,\varphi_2)}:W\to W$ 
\end{center}
sends $\sum z_i' \mathbf w_i'\in W$ to $\sum \alpha_i z_i' \mathbf w_i'\in W$ while 
\begin{center}
$\alpha_{(\varphi_1',\varphi_2')}:W \to W$ 
\end{center}
sends $\sum z_i' \mathbf w_i'\in W$ to $\sum \alpha_i' z_i' \mathbf w_i'\in W$.
For convenience, let $\alpha':=\alpha_{(\varphi_1',\varphi_2')}$ and $\alpha:=\alpha_{(\varphi_1,\varphi_2)}$. \\

Now take $P\in S_{(\varphi_1', \varphi_2')}$. In other words, $P$ is a three-dimensional vector subspace generated by $\mathbf u, \alpha'(\mathbf u), (\alpha')^2(\mathbf u)\in W$ for some $\mathbf u=\sum u_i \mathbf w_i'\in W$ with nonzero $u_i$'s. Let 
\begin{center}
$\mathbf v:=\sum (c\alpha_i+d)^{-2} u_i \mathbf w_i'\in W$, 
\end{center}
i.e. $\mathbf v=\sum v_i w_i'$ with $v_i=(c\alpha_i+d)^{-2} u_i\neq0$. Then $\mathbf v$ is $(\varphi_1',\varphi_2')$-general and $(\varphi_1,\varphi_2)$-general. We will show  
\begin{equation}\label{sps}
P=\sP_{(\varphi_1, \varphi_2)}(\mathbf v)\in S_{(\varphi_1, \varphi_2)}.
\end{equation}
Note  
\begin{equation}\label{p44.3}
\mathbf u=\sum(c\alpha_i+d)^{2} v_i w_i'=c^2\alpha^2(\mathbf v)+2cd\alpha(\mathbf v)+d^2\mathbf v, 
\end{equation}
\begin{equation}\label{p44.4}
\alpha'(\mathbf u)=\sum(a\alpha_i+b)(c\alpha_i+d) v_i w_i'=ac\alpha^2(\mathbf v)+(ad+bc)\alpha(\mathbf v)+bd\mathbf v,
\end{equation}
and
\begin{equation}\label{p44.5}
(\alpha')^2(\mathbf u)=\sum(a\alpha_i+b)^2 v_i w_i'=a^2\alpha^2(\mathbf v)+2ab\alpha(\mathbf v)+b^2\mathbf v.
\end{equation}
Then (\ref{sps}) is implied by (\ref{p44.3}), (\ref{p44.4}), and (\ref{p44.5}).
\end{proof}

\begin{definition}\label{d.sphi}
For a nonsingular pencil $\Phi\subset\BP(\Sym^2(W^*))$, let 
\begin{center}
$S_{\Phi}:=S_{(\varphi_1,\varphi_2)}\subset\Gr(3,W)$ 
\end{center}
where $(\varphi_1, \varphi_2)$ is a good pair  
of $\Phi$. 
We say $P\in \Gr(3,W)$ is \it poised by \rm $\Phi$ if $P\in S_{\Phi}$.
(Note that $S_{\Phi}$ is well-defined by Proposition \ref{p.4.4}.) 
\end{definition}

\begin{definition}\label{d.refinedm} 
Recall Definition \ref{n.4} and consider the natural PGL($W$)-action on the product space 
\begin{displaymath}
\Gr(1, \BP(\Sym^2(W^*)))\times\Gr(3,W).
\end{displaymath}
For $(\Phi,P)\in\Gr(1, \BP(\Sym^2(W^*)))\times\Gr(3,W)$ and $T\in$ PGL($W$), 
\begin{center}
$T.(\Phi,P):=(T^*(\Phi),\,T^{-1}(P)) \in \Gr(1, \BP(\Sym^2(W^*)))\times\Gr(3,W)$.
\end{center}
When  
pr$_1:\Gr(1, \BP(\Sym^2(W^*)))\times\Gr(3,W) \to \Gr(1, \BP(\Sym^2(W^*)))$  
is the projection on the first component, the inverse image
\begin{center}
$(\textrm{pr}_1)^{-1}\big(\Gr(1, \BP(\Sym^2(W^*)))^{\rm s}\big)\subset\Gr(1, \BP(\Sym^2(W^*)))\times\Gr(3,W)$
\end{center}
is contained in the stable locus of the product space. So it has the orbit map
\begin{center}
$\widetilde{q}:(\textrm{pr}_1)^{-1}\big(\Gr(1, \BP(\Sym^2(W^*)))^{\rm s}\big)\to(\textrm{pr}_1)^{-1}\big(\Gr(1, \BP(\Sym^2(W^*)))^{\rm s}\big)\big/\textrm{PGL}(W)$.
\end{center}
As in Definition \ref{n.4}, the orbit space  
\begin{center}
(pr$_1)^{-1}\big(\Gr(1, \BP(\Sym^2(W^*)))^{\rm s}\big)$\big/PGL($W$) 
\end{center}
is defined independently of the choice of the $n$-dimensional vector space $W$. Let 
\begin{center}
$\widetilde{\sM}^{PQ}_n$:=(pr$_1)^{-1}\big(\Gr(1, \BP(\Sym^2(W^*)))^{\rm s}\big)$\big/PGL($W$).
\end{center}
We call it the \it refined moduli space \rm of $\sM^{PQ}_n$ and denote by $\pi_n$ the forgetful morphism from $\widetilde{\sM}^{PQ}_n$ to $\sM^{PQ}_n$ induced by pr$_1$.
\end{definition}

\begin{proposition}\label{pp5}
Let $\Phi$ be a nonsingular pencil of quadratic forms on $W$ and let $(\varphi_1,\varphi_2)$ be a good pair of $\Phi$. Then 
\begin{equation}\label{e.key}
T^{-1}(\sP_{(\varphi_1,\varphi_2)}(\mathbf u))=\sP_{(T^*(\varphi_1),T^*(\varphi_2))}(T^{-1}(\mathbf u))
\end{equation} 
for any $(\varphi_1,\varphi_2)$-general vector $\mathbf u\in W$ and $T\in$ GL$(W)$. Therefore, 
\begin{equation}\label{e.keyy}
\widetilde{q}(\{\Phi\}\times S_{\Phi})=\widetilde{q}(\{ \Phi' \}\times S_{\Phi'}) 
\end{equation}
if $[\Phi']=[\Phi]\in\sM^{\rm PG}_n$. 
\end{proposition}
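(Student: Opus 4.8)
The plan is to reduce everything to the key identity \eqref{e.key} and then unwind the definitions of $\widetilde q$ and $\widetilde{\sM}^{\rm PQ}_n$. First I would fix a good pair $(\varphi_1,\varphi_2)$ of $\Phi$ and a standard basis $\{\mathbf w_1,\dots,\mathbf w_n\}$ as in Definition \ref{d.ma}, so that $\varphi_1(\sum z_i\mathbf w_i)=\sum\alpha_iz_i^2$ and $\varphi_2(\sum z_i\mathbf w_i)=\sum z_i^2$ with the $\alpha_i$ distinct, and $\alpha:=\alpha_{(\varphi_1,\varphi_2)}$ sends $\mathbf w_i\mapsto\alpha_i\mathbf w_i$. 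For $T\in{\rm GL}(W)$ the pulled-back pair $(T^*\varphi_1,T^*\varphi_2)$ is again a good pair of $T^*(\Phi)$, and $\{T^{-1}\mathbf w_1,\dots,T^{-1}\mathbf w_n\}$ is a standard basis for it with the same roots $[1:\alpha_i]$; consequently $\alpha_{(T^*\varphi_1,T^*\varphi_2)}$ sends $T^{-1}\mathbf w_i\mapsto\alpha_i T^{-1}\mathbf w_i$, i.e. $\alpha_{(T^*\varphi_1,T^*\varphi_2)}=T^{-1}\circ\alpha\circ T$ on $W$. This conjugation relation is the whole content of \eqref{e.key}: a $(\varphi_1,\varphi_2)$-general vector $\mathbf u=\sum u_i\mathbf w_i$ has $T^{-1}\mathbf u=\sum u_i(T^{-1}\mathbf w_i)$ which is $(T^*\varphi_1,T^*\varphi_2)$-general, and applying $T^{-1}$ to the spanning triple $\mathbf u,\alpha(\mathbf u),\alpha^2(\mathbf u)$ gives exactly $T^{-1}\mathbf u,(T^{-1}\alpha T)(T^{-1}\mathbf u),(T^{-1}\alpha T)^2(T^{-1}\mathbf u)$, which spans $\sP_{(T^*\varphi_1,T^*\varphi_2)}(T^{-1}\mathbf u)$. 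Since $T^{-1}$ is linear, it maps spans to spans, so $T^{-1}\bigl(\sP_{(\varphi_1,\varphi_2)}(\mathbf u)\bigr)=\sP_{(T^*\varphi_1,T^*\varphi_2)}(T^{-1}\mathbf u)$.

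Next, letting $\mathbf u$ range over all $(\varphi_1,\varphi_2)$-general vectors in \eqref{e.key} and using that $S_{\Phi}=S_{(\varphi_1,\varphi_2)}$ (well-defined by Proposition \ref{p.4.4}, independent of the choice of good pair) while $S_{T^*(\Phi)}=S_{(T^*\varphi_1,T^*\varphi_2)}$, we get
\begin{equation*}
T^{-1}\bigl(S_{\Phi}\bigr)=S_{T^*(\Phi)}\quad\text{in }\Gr(3,W).
\end{equation*}
Therefore $T.(\{\Phi\}\times S_{\Phi})=\{T^*(\Phi)\}\times T^{-1}(S_{\Phi})=\{T^*(\Phi)\}\times S_{T^*(\Phi)}$ as subsets of the product space, under the ${\rm PGL}(W)$-action of Definition \ref{d.refinedm}. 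In other words, the family $\{\Phi\}\times S_{\Phi}$ is carried to $\{T^*(\Phi)\}\times S_{T^*(\Phi)}$ by the group action, so both lie in a single ${\rm PGL}(W)$-orbit-closure and have the same image under the orbit map $\widetilde q$.

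Finally, to deduce \eqref{e.keyy}, suppose $[\Phi']=[\Phi]\in\sM^{\rm PQ}_n$. By definition of $\sM^{\rm PQ}_n$ this means $\Phi'$ and $\Phi$ lie in the same ${\rm PGL}(W)$-orbit, say $\Phi'=T^*(\Phi)$ for some $T\in{\rm GL}(W)$ (if $\Phi'$ lives on a different $n$-dimensional vector space, first transport it to $W$ via the canonical isomorphism of Definition \ref{n.4}, which respects all the constructions above since they are intrinsic to the pencil). Then by the previous paragraph $\{\Phi'\}\times S_{\Phi'}=\{T^*(\Phi)\}\times S_{T^*(\Phi)}=T.(\{\Phi\}\times S_{\Phi})$, and since $\widetilde q$ is constant on ${\rm PGL}(W)$-orbits we get $\widetilde q(\{\Phi'\}\times S_{\Phi'})=\widetilde q(\{\Phi\}\times S_{\Phi})$, which is \eqref{e.keyy}. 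The only genuinely substantive point is the conjugation identity $\alpha_{(T^*\varphi_1,T^*\varphi_2)}=T^{-1}\alpha_{(\varphi_1,\varphi_2)}T$, i.e. that the eigenvalue-scaling operator attached to a pencil transforms by conjugation; everything else is formal bookkeeping with the orbit map. I expect the mild annoyance there to be checking that pulling back a standard basis yields a standard basis with unchanged roots, which follows from Proposition \ref{p.1.6} and the behaviour of the discriminant under change of basis in Proposition \ref{p.2}(i).
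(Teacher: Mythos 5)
Your proposal is correct and follows essentially the same route as the paper: both pass to a standard basis $\{\mathbf w_1,\dots,\mathbf w_n\}$ for $(\varphi_1,\varphi_2)$, observe that $\{T^{-1}\mathbf w_i\}$ is a standard basis for $(T^*\varphi_1,T^*\varphi_2)$ with the same roots $\alpha_i$ (via Proposition \ref{p.2}(i)), and check that $T^{-1}$ carries the spanning triple $\mathbf u,\alpha(\mathbf u),\alpha^2(\mathbf u)$ to the corresponding triple for $T^{-1}(\mathbf u)$, then deduce \eqref{e.keyy} by writing $\Phi'=T^*(\Phi)$ and using that $\widetilde q$ is constant on ${\rm PGL}(W)$-orbits. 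Your packaging of the key step as the conjugation identity $\alpha_{(T^*\varphi_1,T^*\varphi_2)}=T^{-1}\alpha_{(\varphi_1,\varphi_2)}T$ is just a cleaner statement of the same coordinate computation the paper performs.
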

\begin{proof}

Note that $(T^*(\varphi_1),T^*(\varphi_2))$ is a good pair of $T^*(\Phi)$ since $T^*(\varphi_1)$ and $T^*(\varphi_1)$ are nondegenerate quadratic forms and they generate $T^*(\Phi)$, which is a nonsingular pencil of quadratic forms on $W$. Then, by $(i)$ of Proposition \ref{p.2}, the polynomials 
$\det(T^*(\varphi_1)-tT^*(\varphi_2))$ and $\det(\varphi_1-t\varphi_2)$ in $t$ have the same roots, say 
$\alpha_1, \alpha_2, \ldots, \alpha_n\in\C$. 
As in Definition \ref{d.ma}, consider a standard basis 
$\{\mathbf w_1, \mathbf w_2, \ldots, \mathbf w_n\}$ of $W$ with respect to $(\varphi_1,\varphi_2)$ such that
\begin{center}
$\varphi_1(\sum z_i \mathbf w_i)=\sum \alpha_iz_i^2\,$ and $\,\varphi_2(\sum z_i \mathbf w_i)=\sum z_i^2$.
\end{center}
Let 
\begin{displaymath}
\mathbf w_i':=T^{-1}(\mathbf w_i) 
\end{displaymath}
for $1\leq i \leq n$. Then $\{\mathbf w_1', \mathbf w_2', \ldots, \mathbf w_n'\}$ is a standard basis with respect to $(T^*(\varphi_1),T^*(\varphi_2))$ such that 

\begin{center} 
$T^*(\varphi_1)(\sum z_i' \mathbf w_i')=\sum \alpha_i(z_i')^2\,$ and $\,T^*(\varphi_2)(\sum z_i' \mathbf w_i')=\sum (z_i')^2$.\\ 
\end{center}

Let $\mathbf u$ be a $(\varphi_1,\varphi_2)$-general vector, i.e. $\mathbf u=\sum u_i\mathbf w_i\in W$ with nonzero $u_i$'s. Then $\sP_{(\varphi_1,\varphi_2)}(\mathbf u)\subset W$ is generated by 
\begin{center}
$\sum u_i\mathbf w_i, \sum \alpha_iu_i\mathbf w_i, \sum \alpha_i^2u_i\mathbf w_i \in W$. 
\end{center}
Hence, $T^{-1}(\sP_{(\varphi_1,\varphi_2)}(\mathbf u))$ is generated by 
$T^{-1}(\mathbf u)=\sum u_i \mathbf w_i'$, $T^{-1}(\sum \alpha_iu_i\mathbf w_i)=\sum \alpha_iu_i\mathbf w_i'$, and $T^{-1}(\sum \alpha_i^2u_i\mathbf w_i)=\sum \alpha_i^2u_i\mathbf w_i'$. 
Therefore, 
\begin{center}
$T^{-1}(\sP_{(\varphi_1,\varphi_2)}(\mathbf u))=\sP_{(T^*(\varphi_1),T^*(\varphi_2))}(T^{-1}(\mathbf u))\subset S_{(T^*(\varphi_1),T^*(\varphi_2))}$.
\end{center}

If $[\Phi]=[\Phi']\in\sM^{\rm PQ}_n$, then $\Phi'=T^*(\Phi)$ 
for some $T\in\textrm{GL}(W)$. Since
\begin{displaymath}
\widetilde{q}(\{\Phi\}\times S_{\Phi})=\widetilde{q}(\{T^*(\Phi)\}\times \{T^{-1}(P)\mid P\in S_{\Phi}\})=\widetilde{q}(\{\Phi'\}\times \{T^{-1}(P)\mid P\in S_{\Phi}\}),
\end{displaymath}
the equality (\ref{e.keyy}) is implied by (\ref{e.key}).
\end{proof}

\begin{proposition}\label{p.1.10}
Let $\Phi$ be a general nonsingular pencil of quadratic forms on $W$ with dim($W)=n\geq4$. 
Consider the decomposition 
%\begin{center}
$W=W_1\oplus W_2\oplus\cdots\oplus W_n$ 
%\end{center}
induced by $\Phi$ and a basis $\{\mathbf w_1', \mathbf w_2', \ldots, \mathbf w_n'\}$ of $W$ such that each $\mathbf w_i'$ generates $W_i\in\Gr(1,W)$. (See Corollary \ref{p.1.6.1}.) Then all projective automorphisms of $\BP(W)$ preserving $\Phi$ are given by orthogonal reflections in the vectors
$\mathbf w_1', \mathbf w_2', \ldots, \mathbf w_n'$. In particular, the number of such automorphisms is $2^{n-1}$.
\end{proposition}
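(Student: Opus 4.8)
The plan is to work in the fixed basis $\{\mathbf w_1', \ldots, \mathbf w_n'\}$ coming from the decomposition $W = W_1 \oplus \cdots \oplus W_n$ of Corollary \ref{p.1.6.1}, in which a good pair $(\varphi_1, \varphi_2)$ of $\Phi$ is diagonalized as $\varphi_1(\sum z_i \mathbf w_i') = \sum \alpha_i z_i^2$ and $\varphi_2(\sum z_i \mathbf w_i') = \sum z_i^2$, where the $\alpha_i$ are distinct. First I would observe that a projective automorphism $[T]$ of $\BP(W)$, for $T \in \mathrm{GL}(W)$, preserves $\Phi$ if and only if $T^*(\Phi) = \Phi$, i.e. the pair $(T^*\varphi_1, T^*\varphi_2)$ is again a good pair of $\Phi$. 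By Corollary \ref{p.1.6.1} the decomposition $W = \bigoplus W_i$ attached to $\Phi$ is intrinsic, so $T^{-1}$ must permute the lines $W_i$; equivalently $T$ permutes them. This gives a homomorphism from the stabilizer of $\Phi$ in $\mathrm{PGL}(W)$ to the symmetric group $\mathfrak{S}_n$.

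The next step is to pin down the kernel and the image of that homomorphism. For the kernel: if $T$ fixes every line $W_i$, then in the basis $\{\mathbf w_i'\}$ the matrix of $T$ is diagonal, say $T(\mathbf w_i') = c_i \mathbf w_i'$ with all $c_i \neq 0$. The condition $T^*(\Phi) = \Phi$ forces $T^*\varphi_2 = \sum c_i^2 z_i^2$ and $T^*\varphi_1 = \sum \alpha_i c_i^2 z_i^2$ to span the same pencil as $(\varphi_1, \varphi_2)$; comparing diagonal coefficients and using that the $\alpha_i$ are distinct, a short linear-algebra computation shows $T^*\varphi_2$ must be a scalar multiple of $\varphi_2$, hence all $c_i^2$ are equal, hence $c_i = \pm 1$ after rescaling. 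Thus the diagonal automorphisms preserving $\Phi$ are exactly the sign-change maps $\mathbf w_i' \mapsto \pm \mathbf w_i'$, and modulo the overall scalar these form a group isomorphic to $(\Z/2)^{n-1}$ of order $2^{n-1}$. Each such map is precisely an orthogonal reflection (product of reflections) with respect to the $\varphi_2$-orthogonal basis $\{\mathbf w_i'\}$, which explains the ``orthogonal reflections'' phrasing.

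For the image in $\mathfrak{S}_n$: a permutation $\sigma$ is realized by some $T$ preserving $\Phi$ only if the induced monodromy on the roots $[1:\alpha_i]$ of the discriminant is compatible with the $\mathrm{PGL}_2$-action of Proposition \ref{p.2}(ii); since $\Phi$ is \emph{general}, the $n$-point configuration $\{[1:\alpha_i]\} \subset \BP^1$ has trivial stabilizer in $\mathrm{PGL}_2$ for $n \geq 4$ (indeed $n \geq 3$), so the only permutation realized is the identity. Hence the homomorphism to $\mathfrak{S}_n$ is trivial, the stabilizer of $\Phi$ equals its kernel, and we conclude the group has order exactly $2^{n-1}$, realized by the orthogonal reflections. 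I expect the main obstacle to be the genericity input: one must argue cleanly that for a general pencil no nontrivial permutation of the $W_i$ extends to an automorphism preserving $\Phi$ — this is where ``general'' is essential, since special pencils (e.g. with extra symmetry among the $\alpha_i$ realized by a Möbius transformation) do admit larger stabilizers. The cleanest route is to phrase it via Proposition \ref{p.2}: any $T$ preserving $\Phi$ induces an element of $\mathrm{PGL}_2$ permuting the roots, and for a general choice of the $\alpha_i$ the only such element fixing the unordered root set is the identity.
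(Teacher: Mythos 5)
Your proposal is correct and follows essentially the same route as the paper: both arguments hinge on the fact that a general configuration of $n\geq 4$ points in $\BP^1$ has trivial stabilizer in ${\rm PGL}_2$, which forces any $T$ preserving $\Phi$ to fix each degenerate member (hence each line $W_i$), after which the diagonal computation yields exactly the $2^{n-1}$ sign changes modulo scalars. One small slip: your parenthetical claim that the root configuration has trivial stabilizer already for $n\geq 3$ is false (any three points of $\BP^1$ have set-stabilizer isomorphic to $S_3$ by sharp $3$-transitivity of ${\rm PGL}_2$), but this does not affect the argument since $n\geq 4$ is assumed.
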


\begin{proof}
Let $T$ be a projective automorphism of $\BP(W)$ that preserves $\Phi$. Such a morphism induces a projective automorphism of $\Phi\cong\BP^1\subset\BP(\Sym^2(W^*))$ that preserves the set of $n$ degenerate elements in $\Phi$. When $n$ points in $\BP^1$ are in general position, only the identity automorphism on $\BP^1$ can preserve the set of them as long as $n>3$. If $\Phi$ is general, then the degenerate elements in $\Phi$ are in general position, and $T$ must fix each element in $\Phi$.   
Thus, $T$ must take each $W_i$ to $W_i$ to fix each degenerate element in $\Phi_Y$. Moreover, $T$ must be given by an orthogonal reflection in the vectors $\mathbf w_1', \mathbf w_2', \ldots, \mathbf w_n'$ to fix every element in $\Phi_Y$. Therefore, the number of such automorphisms is $2^{n-1}$.
\end{proof}

\begin{notation}\label{n13sq}
For a nonsingular pair $(\varphi_1,\varphi_2)$ of quadratic forms on $W$, let $B = \{ \mathbf w_1$, $\mathbf w_2$, $\ldots, \mathbf w_n \}$ be a standard basis of $W$ with respect to $(\varphi_1,\varphi_2)$. We denote by 
\begin{center}
${\rm sq}^B:\BP(W)\to\BP(W)$ 
\end{center}
\smallskip
the finite morphism that sends $[\sum z_i\mathbf w_i]\in\BP(W)$ to $[\sum z_i^2\mathbf w_i]\in\BP(W)$. Note that ${\rm sq}^B$ depends on the choice of $B$.
\end{notation}

\begin{proposition}\label{pp3}
%With Notation \ref{n13}, 
Let $\Phi$ be a general nonsingular pencil of quadratic forms on $W$, dim$(W)=n>3$. Let $(\varphi_1,\varphi_2)$ be a good pair of $\Phi$. 
When the orbit map $\widetilde{q}$ in Definition \ref{d.refinedm} is restricted on the set 
\begin{center}
$\{\Phi\}\times S_{(\varphi_1,\varphi_2)}\subset \{\Phi\}\times\Gr(3,W)$, 
\end{center}
it becomes a finite covering map over its image and there is a unique birational morphism
\begin{center}
$\widetilde v_{(\varphi_1,\varphi_2)}^B:\widetilde{q}(\{\Phi\}\times S_{\Phi})\to\BP(W)$
\end{center}
that makes the following diagram commutative for a standard basis $B=\{\mathbf w_1,\mathbf w_2,\ldots,\mathbf w_n\}$ of $W$ with respect to $(\varphi_1,\varphi_2)$: 
\begin{displaymath}
\begin{array}{cccc}
\{\Phi\}\times S_{\Phi}(\cong S_{\Phi})& \xrightarrow{\quad\widetilde{q}|_{\{\Phi\}\times S_{\Phi}}\quad} 
& \widetilde{q}(\{\Phi\}\times S_{\Phi})&\subset\,\,\widetilde\sM^{\rm PQ}_n\\
\quad v_{(\varphi_1,\varphi_2)}\Bigg\downarrow & & \qquad\quad\Bigg\downarrow  \widetilde v_{(\varphi_1,\varphi_2)}^B\\
\qquad\qquad\BP(W) & \xrightarrow{\quad\quad\, \rm sq^{\it B} \,\quad\quad} & \BP(W).\\ 
\end{array}
\end{displaymath}
\end{proposition}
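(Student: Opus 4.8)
The plan is to recognize both horizontal arrows of the diagram as quotient maps for one and the same finite group, and then to obtain $\widetilde v^B_{(\varphi_1,\varphi_2)}$ by descent.

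I would begin by identifying the stabilizer $G\subset{\rm PGL}(W)$ of the point $\Phi\in\Gr(1,\BP(\Sym^2(W^*)))^{\rm s}$. Since $\Phi$ is general, Proposition \ref{p.1.10} describes $G$ as the group of the $2^{n-1}$ orthogonal reflections in the vectors of a standard basis $B=\{\mathbf w_1,\ldots,\mathbf w_n\}$ of $W$ for $(\varphi_1,\varphi_2)$ — these are, projectively, the generators of the lines $W_i$ of Corollary \ref{p.1.6.1} — so that $G$ is the image in ${\rm PGL}(W)$ of the sign-change maps $R_\epsilon\colon\sum z_i\mathbf w_i\mapsto\sum\epsilon_iz_i\mathbf w_i$, $\epsilon\in\{\pm1\}^n$. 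A key elementary observation is that each $R_\epsilon$ fixes $\varphi_1$ and $\varphi_2$ individually (not merely the pencil), because both forms are diagonal in $B$; hence $(R_\epsilon^*\varphi_1,R_\epsilon^*\varphi_2)=(\varphi_1,\varphi_2)$. By Definition \ref{d.refinedm}, $T.(\Phi,P)=(T^*\Phi,T^{-1}(P))$ lies over $\Phi$ exactly when $T\in G$, so the fibre of $\widetilde q$ through $(\Phi,P)$ meets $\{\Phi\}\times\Gr(3,W)$ in $\{\Phi\}\times\{T^{-1}(P):T\in G\}$; and by formula (\ref{e.key}) of Proposition \ref{pp5}, applied with $T=R_\epsilon$ and using $(R_\epsilon^*\varphi_1,R_\epsilon^*\varphi_2)=(\varphi_1,\varphi_2)$ together with the fact that $R_\epsilon$ commutes with $\alpha_{(\varphi_1,\varphi_2)}$, this action of $G$ preserves $S_\Phi$. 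Therefore $\widetilde q|_{\{\Phi\}\times S_\Phi}$ is the orbit map of $S_\Phi$ by the finite group $G$, a finite morphism which is a covering of degree $2^{n-1}$ over a dense open subset of its image.

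Next I would exhibit ${\rm sq}^B$ as the corresponding quotient on $\BP(W)$. A direct computation shows that ${\rm sq}^B$ is $G$-invariant and that it identifies two $(\varphi_1,\varphi_2)$-general points of $\BP(W)$ exactly when they differ by some $R_\epsilon$; thus ${\rm sq}^B$ is, generically, the orbit map of $\BP(W)$ by $G$ acting through $[\sum z_i\mathbf w_i]\mapsto[\sum\epsilon_iz_i\mathbf w_i]$. The bridge between the two quotients is the $G$-equivariance of the birational morphism $v_{(\varphi_1,\varphi_2)}$: applying (\ref{e.key}) with $T=R_\epsilon$ gives $v_{(\varphi_1,\varphi_2)}(R_\epsilon^{-1}(P))=R_\epsilon^{-1}\bigl(v_{(\varphi_1,\varphi_2)}(P)\bigr)$. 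Hence ${\rm sq}^B\circ v_{(\varphi_1,\varphi_2)}\colon\{\Phi\}\times S_\Phi\to\BP(W)$ is constant on $G$-orbits and therefore factors, uniquely, through the orbit map $\widetilde q|_{\{\Phi\}\times S_\Phi}$; the resulting morphism is the sought $\widetilde v^B_{(\varphi_1,\varphi_2)}$, and its uniqueness is immediate since $\widetilde q|_{\{\Phi\}\times S_\Phi}$ is surjective. It is birational by a degree count: ${\rm sq}^B\circ v_{(\varphi_1,\varphi_2)}$ is dominant of degree $2^{n-1}$ (as $v_{(\varphi_1,\varphi_2)}$ is birational by Proposition \ref{lm1} and $\deg{\rm sq}^B=2^{n-1}$), while $\widetilde q|_{\{\Phi\}\times S_\Phi}$ is finite of degree $|G|=2^{n-1}$, so $\widetilde v^B_{(\varphi_1,\varphi_2)}$ has degree one.

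The one genuinely delicate step is the first: pinning down that the stabilizer of $\Phi$ acts on the poised three-dimensional subspaces $S_\Phi$ through precisely the sign-change action by which ${\rm sq}^B$ is a quotient. This relies on the explicit description of ${\rm Stab}(\Phi)$ in Proposition \ref{p.1.10} — hence on the genericity hypothesis on $\Phi$ — together with the transport formula (\ref{e.key}); once the two $G$-actions have been matched along $v_{(\varphi_1,\varphi_2)}$, the existence, uniqueness, and birationality of $\widetilde v^B_{(\varphi_1,\varphi_2)}$ follow formally from descent for quotients by a finite group.
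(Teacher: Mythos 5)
Your proposal is correct and follows essentially the same route as the paper: both identify the stabilizer of $\Phi$ via Proposition \ref{p.1.10} as the $2^{n-1}$ sign-change reflections fixing $\varphi_1$ and $\varphi_2$, use formula (\ref{e.key}) to transport this action to $S_\Phi$ along $v_{(\varphi_1,\varphi_2)}$, and conclude birationality by matching the degree $2^{n-1}$ of $\widetilde q|_{\{\Phi\}\times S_\Phi}$ against that of ${\rm sq}^B$ via Proposition \ref{lm1}. Your packaging as descent for a finite group quotient is a slightly more conceptual phrasing of the paper's explicit fiber computation, but the content is the same.
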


\begin{proof}
Let $\sP_{(\varphi_1,\varphi_2)}(\mathbf u)\in S_{\Phi}$ for $u=\sum u_i\mathbf w_i \in W$. If such a morphism $\widetilde v_{(\varphi_1,\varphi_2)}^B$ exists, then $\widetilde v_{(\varphi_1,\varphi_2)}^B$ must send $\sP_{(\varphi_1,\varphi_2)}(\mathbf u)$ to $[\sum u_i^2 \mathbf w_i]\in \BP(W)$. So it is enough to show that $\widetilde v_{(\varphi_1,\varphi_2)}^B$ is well-defined and birational.

\smallskip
If $\Phi$ is general, then Proposition \ref{p.1.10} says that all projective automorphisms of $\BP(W)$ preserving $\Phi$ are given by orthogonal reflections in the vectors $\mathbf w_1, \mathbf w_2, \ldots, \mathbf w_n$ and the number of such automorphisms is $2^{n-1}$. Let $\Lambda\subset$ GL$(W)$ be the set of orthogonal reflections in $\mathbf w_1, \mathbf w_2, \ldots, \mathbf w_n$ and let $T\in\Lambda$. Then $T^*(\varphi_1)=\varphi_1$ and $T^*(\varphi_2)=\varphi_2$. 
By (\ref{e.key}) in Proposition \ref{pp5},
\begin{equation}
T^{-1}(\sP_{(\varphi_1,\varphi_2)}(\mathbf u))=\sP_{(T^*(\varphi_1),T^*(\varphi_2))}(T^{-1}(\mathbf u))=\sP_{(\varphi_1,\varphi_2)}(T^{-1}(\mathbf u)).
\end{equation} 
Moreover, if $T'=cT\in$ GL$(W)$ for some $c\in \C\setminus\{0\}$, i.e. $[T']=[T]\in$ PGL$(W)$, then  
\begin{equation}
(T')^{-1}(\sP_{(\varphi_1,\varphi_2)}(\mathbf u))=\sP_{(c^2\varphi_1,c^2\varphi_2)}(c^{-1}T^{-1}(\mathbf u))=\sP_{(\varphi_1,\varphi_2)}(T^{-1}(\mathbf u))
\end{equation}
by Propositions \ref{p.tt} and \ref{lm1}. Hence,
\begin{equation}\label{eset}
(\widetilde{q}|_{\{\Phi\}\times S_{\Phi}})^{-1}\big([(\Phi,\sP_{(\varphi_1,\varphi_2)}(\mathbf u)]\big)=\big\{\big(\Phi,\sP_{(\varphi_1,\varphi_2)}(T^{-1}(\mathbf u))\big)\mid T\in\Lambda\big\}.
\end{equation}
Since 
\begin{displaymath}
{\rm sq}^B\circ v_{(\varphi_1,\varphi_2)}\big( \sP_{(\varphi_1,\varphi_2)}(T^{-1}(\mathbf u)) \big)={\rm sq}^B\circ v_{(\varphi_1,\varphi_2)}\big( \sP_{(\varphi_1,\varphi_2)}(\mathbf u) \big)\in\BP(W)
\end{displaymath}
  for any $T\in \Lambda$, the morphism $\widetilde v_{(\varphi_1,\varphi_2)}^B$ is well-defined.

\smallskip
On the other hand, the number of elements in 
\begin{displaymath}
\{[T^{-1}(\mathbf u)]\in\BP(W)\mid T\in\Lambda\}\subset\BP(W)
\end{displaymath}
is exactly $2^{n-1}$, and the number of elements in (\ref{eset}) is also $2^{n-1}$ by Proposition \ref{lm1}. Therefore, $\widetilde v_{(\varphi_1,\varphi_2)}^B$ is a birational morphism since ${\rm sq}^B|_{v_{(\varphi_1,\varphi_2)}(S_{\Phi})}$ and $\widetilde{q}|_{\{\Phi\}\times S_{\Phi}}$ are $2^{n-1}$ to 1 morphisms and $v_{(\varphi_1,\varphi_2)}$ is a birational morphism.
\end{proof}

\begin{proposition}\label{p.isom}
Let $W$ and $W'$ be complex vector spaces of dimension $n$. For a good pair $(\varphi_1,\varphi_2)$ of a nonsingular pencil $\Phi\subset\BP(\Sym^2(W^*))$, let $B=\{\mathbf w_1,\mathbf w_2,\ldots,\mathbf w_n\}$ be a standard basis for $(\varphi_1,\varphi_2)$ such that $\varphi_1(\sum z_i\mathbf w_i)=\sum \alpha_iz_i^2$ and $\varphi_2(\sum z_i\mathbf w_i)=\sum z_i^2$ with distinct complex numbers $\alpha_1, \alpha_2, \ldots, \alpha_n$. Then
\begin{equation}\label{e.p.isom1}
\widetilde v_{(\varphi_1,\varphi_2)}^B\Big(\big[\big(\Phi,\sP_{(\varphi_1,\varphi_2)}(\mathbf u)\big)\big]\Big)=\Big[\sum u_i^2 \mathbf w_i\Big]\in\BP(W)
\end{equation} 
for $\mathbf u=\sum u_i \mathbf w_i\in W$ with nonzero $u_i$'s. Suppose that $(\varphi_1',\varphi_2')$ is a good pair of another nonsingular pencil $\Phi'\subset\BP(\Sym^2((W')^*))$ such that $\varphi_1'(\sum z_i'\mathbf w_i')=\sum \alpha_i(z_i')^2$ and $\varphi_2'(\sum z_i'\mathbf w_i')=\sum (z_i')^2$ for a standard basis $B'=\{\mathbf w_1', \mathbf w_2', \ldots, \mathbf w_n'\}$ for $(\varphi_1',\varphi_2')$. Then $[\Phi']=[\Phi]\in\sM^{\rm PQ}_n$ and
\begin{equation}\label{e.p.isom2}
\widetilde v_{(\varphi_1,\varphi_2)}^{B}\Big(\big[\big(\Phi',\sP_{(\varphi_1',\varphi_2')}(\mathbf u')\big)\big]\Big)=\widetilde v_{(\varphi_1,\varphi_2)}^{B}\Big(\big[\big(\Phi,\sP_{(\varphi_1,\varphi_2)}(\mathbf u)\big)\big]\Big)\in\BP(W)
\end{equation} 
where $\mathbf u'=\sum u_i \mathbf w_i'\in W'$. 

\end{proposition}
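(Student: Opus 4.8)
The plan is to read off (\ref{e.p.isom1}) directly from the commutative square in Proposition~\ref{pp3}, and then to obtain both the assertion $[\Phi']=[\Phi]$ and the identity (\ref{e.p.isom2}) by transporting everything through a single linear isomorphism $W'\to W$, using that the refined moduli space $\widetilde\sM^{\rm PQ}_n$ of Definition~\ref{d.refinedm} is built independently of the underlying $n$-dimensional vector space.

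For (\ref{e.p.isom1}): by Notation~\ref{n13} the birational morphism $v_{(\varphi_1,\varphi_2)}$ sends $\sP_{(\varphi_1,\varphi_2)}(\mathbf u)\in S_{\Phi}$ to $[\mathbf u]=[\sum u_i\mathbf w_i]$, and by Notation~\ref{n13sq} the morphism ${\rm sq}^B$ sends $[\sum u_i\mathbf w_i]$ to $[\sum u_i^2\mathbf w_i]$. Since $\widetilde q(\Phi,\sP_{(\varphi_1,\varphi_2)}(\mathbf u))=[(\Phi,\sP_{(\varphi_1,\varphi_2)}(\mathbf u))]$, chasing $\sP_{(\varphi_1,\varphi_2)}(\mathbf u)$ around the square of Proposition~\ref{pp3} gives $\widetilde v^B_{(\varphi_1,\varphi_2)}([(\Phi,\sP_{(\varphi_1,\varphi_2)}(\mathbf u))])={\rm sq}^B\circ v_{(\varphi_1,\varphi_2)}(\sP_{(\varphi_1,\varphi_2)}(\mathbf u))=[\sum u_i^2\mathbf w_i]$, which is (\ref{e.p.isom1}).

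Next I would introduce the linear isomorphism $h\colon W'\to W$ determined by $h(\mathbf w_i')=\mathbf w_i$. Because $\varphi_1,\varphi_2$ are expressed with the \emph{same} scalars $\alpha_i$ in the basis $B$ as $\varphi_1',\varphi_2'$ are in $B'$, a one-line check yields $h^*(\varphi_1)=\varphi_1'$ and $h^*(\varphi_2)=\varphi_2'$, hence $h^*(\Phi)=\Phi'$; since $\sM^{\rm PQ}_n$ does not depend on the chosen vector space (Definition~\ref{n.4}), this already proves $[\Phi']=[\Phi]\in\sM^{\rm PQ}_n$. The argument of Proposition~\ref{pp5} then goes through verbatim with $h$ in place of $T$: the roots of $\det(s\varphi_1'-t\varphi_2')$ in the basis $B'$ are $[1:\alpha_i]$, so $\alpha_{(\varphi_1',\varphi_2')}$ and $\alpha_{(\varphi_1,\varphi_2)}$ are both ``multiplication by $\alpha_i$ on the $i$-th coordinate'' in $B'$ and $B$ respectively, and therefore
\[
h^{-1}\big(\sP_{(\varphi_1,\varphi_2)}(\mathbf u)\big)=\sP_{(h^*\varphi_1,\,h^*\varphi_2)}\big(h^{-1}(\mathbf u)\big)=\sP_{(\varphi_1',\varphi_2')}(\mathbf u'),
\]
using $h^{-1}(\mathbf u)=\sum u_i\mathbf w_i'=\mathbf u'$. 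Thus $h$ carries the pair $(\Phi,\sP_{(\varphi_1,\varphi_2)}(\mathbf u))$ on $W$ to the pair $(\Phi',\sP_{(\varphi_1',\varphi_2')}(\mathbf u'))$ on $W'$, so the two determine the same point of $\widetilde\sM^{\rm PQ}_n$ under the canonical identification of the refined moduli spaces attached to $W$ and to $W'$; in particular $[(\Phi',\sP_{(\varphi_1',\varphi_2')}(\mathbf u'))]$ lies in $\widetilde q(\{\Phi\}\times S_{\Phi})$, the domain of $\widetilde v^B_{(\varphi_1,\varphi_2)}$. Applying $\widetilde v^B_{(\varphi_1,\varphi_2)}$ to this common point and invoking (\ref{e.p.isom1}) gives (\ref{e.p.isom2}), with common value $[\sum u_i^2\mathbf w_i]$.

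I expect the one genuinely delicate point to be the bookkeeping in the final step: one must be careful that $\widetilde v^B_{(\varphi_1,\varphi_2)}$ is defined only on a subset of $\widetilde\sM^{\rm PQ}_n$, that the class $[(\Phi',\sP_{(\varphi_1',\varphi_2')}(\mathbf u'))]$ is a priori formed inside the refined moduli space built from $W'$, and that $h$ intertwines \emph{all} the structures in sight --- the pencils, the linear maps $\alpha_{(-,-)}$, and hence the poised three-dimensional subspaces --- so that these two a priori different moduli points coincide after the canonical identification. Once this is recorded, no computation beyond what is already done in Propositions~\ref{pp3} and~\ref{pp5} is needed.
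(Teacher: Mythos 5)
Your proposal is correct and follows essentially the same route as the paper: equation (\ref{e.p.isom1}) is read off from the construction of $\widetilde v^B_{(\varphi_1,\varphi_2)}$ in Proposition \ref{pp3}, and the second assertion is obtained via the linear isomorphism $W'\to W$ sending $\mathbf w_i'$ to $\mathbf w_i$, together with the identity $T^{-1}(\sP_{(\varphi_1,\varphi_2)}(\mathbf u))=\sP_{(T^*\varphi_1,T^*\varphi_2)}(T^{-1}(\mathbf u))$ from Proposition \ref{pp5}. The paper's proof is the same computation (with your $h$ called $T$), including the implicit extension of Proposition \ref{pp5} from automorphisms of $W$ to isomorphisms $W'\to W$, which you correctly flag as harmless.
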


\begin{proof}
Firstly, (\ref{e.p.isom1}) is given by the definition of $\widetilde v_{(\varphi_1,\varphi_2)}^B$ in Proposition \ref{pp3}. Next, consider the isomorphism $T:W'\to W$ that sends each $\mathbf w_i'$ to $\mathbf w_i$. Then $T^*(\varphi_1)=\varphi_1'$, $T^*(\varphi_2)=\varphi_2'$, and $T^{-1}(\mathbf u)=\mathbf u'$ where $\mathbf u=\sum u_i \mathbf w_i\in W$ and $\mathbf u'=\sum u_i \mathbf w_i'\in W'$. So
\begin{equation}
T^{-1}(\sP_{(\varphi_1,\varphi_2)}(\mathbf u))=\sP_{(T^*(\varphi_1),T^*(\varphi_2))}(T^{-1}(\mathbf u))=\sP_{(\varphi_1',\varphi_2')}(\mathbf u')
\end{equation} 
by (\ref{e.key}) in Proposition \ref{pp5}. Therefore,
\begin{displaymath}
\big[\big(\Phi',\sP_{(\varphi_1',\varphi_2')}(\mathbf u')\big)\big]=\big[\big(T^*(\Phi),T^{-1}(\sP_{(\varphi_1,\varphi_2)}(\mathbf u))\big)\big]=\big[\big(\Phi,\sP_{(\varphi_1,\varphi_2)}(\mathbf u)\big)\big]\in\widetilde\sM^{\rm PQ}_n.
\end{displaymath}

\end{proof}

\begin{corollary}\label{cor4.18}
For a nonsingular pencil of quadrics $\Phi$ on a vector space $W$ of dimension $n>3$, let $S_{\Phi}$ be the set of three-dimensional subspaces poised by $\Phi$. Then there is a correspondence 
$S:=\{([\Phi],[\{\Phi\}\times S_{\Phi}])\in\sM^{\rm PQ}_n\times\widetilde\sM^{\rm PQ}_n\mid [\Phi]\in\sM^{\rm PQ}_n\}$ between $\sM^{\rm PQ}_n$ and $\widetilde\sM^{\rm PQ}_n$. Denote by $p_1$ and $p_2$ the natural projections:
 \begin{displaymath}
    \xymatrix{
      &  S \ar[dl]_{p_1} \ar[dr]^{p_2}       &   \\
        \sM^{\rm PQ}_n  &  & \widetilde\sM^{\rm PQ}_n }
\end{displaymath}
Then $p_2$ is an embedding and $p_1$ is a submersion having fibers birational to $\BP^{n-1}$.
\end{corollary}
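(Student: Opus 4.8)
The plan is to realise the correspondence $S$ as the image of the family of poised subspaces under the quotient map of Definition~\ref{d.refinedm}, and then to deduce both assertions from the fibrewise analysis of Propositions~\ref{pp3}, \ref{p.isom} and~\ref{pp5}. Write $Z:=(\mathrm{pr}_1)^{-1}\big(\Gr(1,\BP(\Sym^2(W^*)))^{\rm s}\big)$, so that $\widetilde q\colon Z\to Z/\mathrm{PGL}(W)=\widetilde\sM^{\rm PQ}_n$ is the geometric quotient, and put $\mathcal S:=\{(\Phi,P)\in Z\mid P\ \text{is poised by}\ \Phi\}$. By Proposition~\ref{pp5}, $\mathcal S$ is $\mathrm{PGL}(W)$-invariant and $p_2(S)=\widetilde q(\mathcal S)$; here the fibre of $p_1$ over $[\Phi]$ is $\{[\Phi]\}\times\widetilde q(\{\Phi\}\times S_\Phi)$, which is well defined by~\eqref{e.keyy}. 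The structural point is that $p_1=\pi_n\circ p_2$, where $\pi_n$ is the forgetful morphism: as $\pi_n$ is induced by $\mathrm{pr}_1$, it sends $\widetilde q(\Phi,P)$ to $[\Phi]$. Hence, inside $\sM^{\rm PQ}_n\times\widetilde\sM^{\rm PQ}_n$, the set $S$ is the graph of $\pi_n|_{\widetilde q(\mathcal S)}\colon\widetilde q(\mathcal S)\to\sM^{\rm PQ}_n$; in particular $p_2\colon S\to\widetilde q(\mathcal S)$ is an isomorphism and $p_1$ is identified with $\pi_n|_{\widetilde q(\mathcal S)}$. It therefore suffices to prove (i) $\widetilde q(\mathcal S)$ is a locally closed subvariety of $\widetilde\sM^{\rm PQ}_n$, so that $p_2$ is an embedding, and (ii) $\pi_n|_{\widetilde q(\mathcal S)}$ is a submersion with fibres birational to $\BP^{n-1}$.

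For (ii), let $U\subset\sM^{\rm PQ}_n$ be the dense open locus of general pencils. For $[\Phi]\in U$ the fibre of $p_1$ is $\widetilde q(\{\Phi\}\times S_\Phi)$, and Proposition~\ref{pp3} provides the birational morphism $\widetilde v_{(\varphi_1,\varphi_2)}^{B}$ onto $\BP(W)\cong\BP^{n-1}$; in fact $\widetilde v_{(\varphi_1,\varphi_2)}^{B}$ restricts, via $\mathrm{sq}^{B}\circ v_{(\varphi_1,\varphi_2)}$, to an isomorphism from the open part of the fibre coming from $\Phi$-general vectors onto the complement of the coordinate hyperplanes in $\BP^{n-1}$. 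Thus the general fibre of $p_1$ is birational to $\BP^{n-1}$. Moreover $\dim\widetilde q(\mathcal S)=\dim\sM^{\rm PQ}_n+(n-1)$, so $p_1$ is dominant with $(n-1)$-dimensional fibres; since $\sM^{\rm PQ}_n$ and $\widetilde q(\mathcal S)$ are smooth over dense open subsets, $p_1$ is a submersion over a dense open subset, which gives (ii). (One can moreover identify $\widetilde q(\mathcal S)$ over $U$ with the free $\mathrm{PGL}(W)$-quotient of the open subset $\{(\Phi,[\mathbf u])\mid\Phi\in U,\ \mathbf u\ \Phi\text{-general}\}$ of $\Gr(1,\BP(\Sym^2(W^*)))^{\rm s}\times\BP(W)$, using that a $\Phi$-general $\mathbf u$ has trivial stabiliser among the $2^{n-1}$ reflections of Proposition~\ref{p.1.10}; this makes both the submersion statement and the birational description transparent over $U$.)

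For (i) I would show $\mathcal S$ is locally closed in $Z$. Over the open locus of pencils admitting a $\Phi$-general vector, $\mathcal S$ is the image of the morphism $(\Phi,[\mathbf u])\mapsto\big(\Phi,\ \langle\mathbf u,\alpha_{(\varphi_1,\varphi_2)}(\mathbf u),\alpha_{(\varphi_1,\varphi_2)}^2(\mathbf u)\rangle\big)$, which is injective on fibres by Proposition~\ref{lm1}; its fibrewise differential is injective as well, since if $\mathbf v,\alpha\mathbf v,\alpha^2\mathbf v$ all lie in $\langle\mathbf u,\alpha\mathbf u,\alpha^2\mathbf u\rangle$ then, $\mathbf u,\alpha\mathbf u,\alpha^2\mathbf u,\alpha^3\mathbf u$ being independent for $\Phi$-general $\mathbf u$ with $n\geq4$, one is forced into $\mathbf v\in\langle\mathbf u\rangle$; hence this morphism is an immersion with locally closed image, the complementary (lower-dimensional) locus of pencils without a general vector being handled by Noetherian induction. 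Once $\mathcal S$ is known to be locally closed, write $\mathcal S=\overline{\mathcal S}\cap O$ with $\overline{\mathcal S}$ closed and $O$ open, both $\mathrm{PGL}(W)$-invariant. Because $\widetilde q$ is a geometric quotient of the stable locus by the reductive group $\mathrm{PGL}(W)$, each of its fibres is a single closed orbit, so any invariant subset of $Z$ meeting a fibre contains it; consequently $\widetilde q$ takes invariant closed (resp.\ open) sets to closed (resp.\ open) sets and $\widetilde q(\overline{\mathcal S}\cap O)=\widetilde q(\overline{\mathcal S})\cap\widetilde q(O)$. Therefore $\widetilde q(\mathcal S)$ is locally closed, which gives (i).

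The conceptual heart — the $2^{n-1}$-to-$1$ covering structure of $\widetilde q|_{\{\Phi\}\times S_\Phi}$, the fact that its quotient is $\mathrm{sq}^{B}$, and the resulting birational identification of a general fibre with $\BP^{n-1}$ — is already contained in Propositions~\ref{pp3} and~\ref{p.isom}. The main obstacle I anticipate is the globalisation: proving that $\mathcal S$, hence $\widetilde q(\mathcal S)$, is genuinely locally closed rather than merely constructible, and that the family $\mathcal S\to\Gr(1,\BP(\Sym^2(W^*)))^{\rm s}$ stays smooth outside the degenerate loci, so that ``embedding'' and ``submersion'' hold verbatim rather than only over the dense open locus of general pencils. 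With the weaker (and, I suspect, intended) reading in which those words describe the behaviour over a dense open subset, the argument above is complete once these bookkeeping points are settled.
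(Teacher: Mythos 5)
Your proposal is correct and follows the same route the paper intends: the paper states Corollary \ref{cor4.18} without an explicit proof, as a direct consequence of Proposition \ref{pp5} (well-definedness of the fibres $\widetilde q(\{\Phi\}\times S_\Phi)$), Proposition \ref{pp3} (the $2^{n-1}$-to-$1$ covering $\widetilde q|_{\{\Phi\}\times S_\Phi}$ and the birational map $\widetilde v^B_{(\varphi_1,\varphi_2)}$ onto $\BP(W)\cong\BP^{n-1}$), and Proposition \ref{p.isom}, which is exactly the skeleton you use. Your additional bookkeeping — that $S$ is the graph of $\pi_n$ over $\widetilde q(\mathcal S)$, the local closedness of $\mathcal S$, and the caveat that Proposition \ref{pp3} is only stated for general $\Phi$ so the assertions hold verbatim over a dense open locus — goes beyond what the paper records but does not change the approach.
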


%\newpage
%%%%%%%%%%%%%%%%%%%%%%%%%%%%%%%%%%%%%%%%%%%%%%%%%%%%%%%%%% 5. refined moduli map %%%%%%
%\section{Proof of Theorem \ref{t.3}}\label{thm2}
\section{Fibers of the moduli map of second fundamental forms}

\begin{definition}\label{d.regular}
With Notation \ref{n.7}, we say a point $x=[x_1:x_2:\cdots:x_{n+3}]\in X \subset \BP V$ is $regular$ in $X$ if $x$ satisfies the following conditions.\\
($i$) The point $x$ is $\Phi_X$-general, i.e., $x_1,x_2,\ldots,x_{n+3}\in\C\setminus\{0\}$.\\ 
($ii$) $II_{X,x}\subset\BP(\Sym^2(T^*_x(X)))$ is a nonsingular pencil of quadrics on $T_x(X)$.\\ 
($iii$) $II_{X,x}\subset\BP(\Sym^2(T^*_x(X)))$ is general in the sense of Proposition \ref{p.1.10}, i.e., the trivial automorphism of $II_{X,x}\cong\BP^1$ is the only projective automorphism of $II_{X,x}$ that preserves the set of degenerate elements in $II_{X,x}$.\\
($iv$) The induced quadratic forms $\varphi_1|_{T_x(X)},\varphi_2|_{T_x(X)}\in \Sym^2(T^*_x(X))$ are nondegenerate.

\smallskip
Note that all the conditions are satisfied by general points in $X$. The first three conditions are formulated in terms of projective invariants of $X$ and only the last condition depends on the choice of $\varphi_1,\varphi_2\in\widehat\Phi_X$; $(iv)$ is added for the convenience of later work.  
We denote by 
\begin{displaymath}
X^{\rm reg} \subset X
\end{displaymath}
the set of regular points in $X$.

\end{definition}
\begin{proposition}\label{p.3.1}
Let $x=[x_1:x_2:\cdots:x_{n+3}]\in X$ with Notation \ref{n.7}. Then x satisfies the conditions $(i)$, $(ii)$, and $(iv)$ in Definition \ref{d.regular} if and only if the discriminant \rm D\it$(II_{X,x},\varphi_1,\varphi_2)$ of $II_{X,x}$ (defined in Definition \ref{d.theta}) has $n$ distinct roots and they are different from $[0:1],[1:0],[1:\lambda_1], [1:\lambda_2], \ldots, [1:\lambda_{n+3}] \in \BP(\C^2)$. 
\end{proposition}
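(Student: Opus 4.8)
The plan is to unwind the definition of the discriminant $\mathrm{D}(II_{X,x},\varphi_1,\varphi_2)$ from Proposition \ref{p.5.1} and compare the two characterizations root by root. Recall that by Proposition \ref{p.5.1} the discriminant is represented by the binary form
\[
\frac{1}{t^2}\sum_{i=1}^{n+3}\lambda_i^2x_i^2\frac{\prod_{j=1}^{n+3}(\lambda_js-t)}{(\lambda_is-t)}\in\sB_n,
\]
equivalently by \eqref{t.1.1}, and that by Proposition \ref{p5} this is never identically zero on $X$. The point of $(ii)$ and $(iv)$ in Definition \ref{d.regular} is that $II_{X,x}$ is a nonsingular \emph{pair}: $\varphi_1|_{T_x(X)}$ and $\varphi_2|_{T_x(X)}$ are nondegenerate and the pencil they span meets the discriminant hypersurface in $n$ distinct points. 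By Proposition \ref{p.3} (applied to $W=T_x(X)$), the pencil $II_{X,x}$ is nonsingular exactly when its discriminant polynomial has no multiple root; and condition $(iv)$ says precisely that neither $\varphi_1|_{T_x(X)}$ nor $\varphi_2|_{T_x(X)}$ is degenerate, i.e., that neither $[1:0]$ nor $[0:1]$ is a root of $\mathrm{D}(II_{X,x},\varphi_1,\varphi_2)$. So the content of the proposition is: condition $(i)$ ($x$ is $\Phi_X$-general, i.e.\ all $x_i\neq0$) is equivalent, given $(ii)$ and $(iv)$, to the roots avoiding the further points $[1:\lambda_1],\ldots,[1:\lambda_{n+3}]$.

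First I would record the three easy equivalences. (a) By Definition \ref{d.theta} and Proposition \ref{p.3}, $(ii)$ holds iff $\mathrm{D}(II_{X,x},\varphi_1,\varphi_2)$ is a nonzero binary form of degree $n$ with $n$ distinct roots. (b) The vanishing of the coefficient of $s^n$ in \eqref{e.p.5.1}, equivalently whether $[1:0]$ is a root, measures degeneracy of $\varphi_2|_{T_x(X)}$; similarly $[0:1]$ corresponds to $\varphi_1|_{T_x(X)}$ being degenerate. Hence $(iv)$ is equivalent to $[1:0]$ and $[0:1]$ not being among the roots. (c) It remains to analyze when $[1:\lambda_k]$ is a root of the binary form \eqref{e.p.5.1}. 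The key computation: substitute $s=1$, $t=\lambda_k$ into
\[
\frac{1}{t^2}\sum_{i=1}^{n+3}\lambda_i^2x_i^2\frac{\prod_{j=1}^{n+3}(\lambda_j-\lambda_k \cdot 1)\cdots}{\cdots},
\]
being careful that the factor $(\lambda_ks-t)=(\lambda_k-\lambda_k)=0$ kills every summand except $i=k$; in the $i=k$ term this zero factor is cancelled by the denominator $(\lambda_is-t)$. So the value at $[1:\lambda_k]$ is a nonzero constant times $\lambda_k^2 x_k^2\prod_{j\neq k}(\lambda_j-\lambda_k)$. Since the $\lambda_j$ are distinct and nonzero, this vanishes iff $x_k=0$.

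Putting the pieces together: $[1:\lambda_k]$ is a root of $\mathrm{D}(II_{X,x},\varphi_1,\varphi_2)$ iff $x_k=0$. Therefore, \emph{assuming} $(ii)$ and $(iv)$ hold — so that the discriminant already has $n$ distinct roots, none equal to $[1:0]$ or $[0:1]$ — the additional requirement that none of the roots is $[1:\lambda_k]$ for $k=1,\ldots,n+3$ is exactly the requirement $x_1,\ldots,x_{n+3}\neq0$, which is condition $(i)$. Conversely, if $(i)$, $(ii)$, $(iv)$ all hold, then none of $[1:\lambda_1],\ldots,[1:\lambda_{n+3}]$ is a root and, by $(ii)$ and $(iv)$, neither are $[1:0]$ or $[0:1]$, and there are $n$ distinct roots. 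This gives both directions.

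The main obstacle, such as it is, is purely bookkeeping: verifying that the evaluation at $[1:\lambda_k]$ of the binary form in \eqref{e.p.5.1} really is a \emph{nonzero} scalar multiple of $\lambda_k^2x_k^2\prod_{j\neq k}(\lambda_j-\lambda_k)$, i.e.\ keeping track of the $1/t^2$ prefactor and checking it does not introduce a spurious zero or pole at $t=\lambda_k$ (it does not, since $\lambda_k\neq0$). One should also double-check that working with representative \eqref{e.p.5.1} rather than \eqref{t.1.1} causes no trouble at the roots $[1:\lambda_{n+2}]$ and $[1:\lambda_{n+3}]$, which are "lost" when passing to \eqref{t.1.1}; using \eqref{e.p.5.1} directly, which is symmetric in all $n+3$ indices, handles $k=n+2,n+3$ on the same footing as $k\le n+1$. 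No deeper difficulty is expected.
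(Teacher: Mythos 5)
Your proof is correct and follows essentially the same route as the paper's: the paper's two-sentence proof likewise reduces conditions $(ii)$ and $(iv)$ to the discriminant having $n$ distinct roots avoiding $[1:0]$ and $[0:1]$, and then quotes $(ii)$ of Proposition \ref{p5} for the equivalence ``$x_k=0$ iff $[1:\lambda_k]$ is a root,'' which is exactly the evaluation of (\ref{e.p.5.1}) at $[1:\lambda_k]$ that you carry out directly. One harmless slip: since $\det(s\varphi_1|_{T_x(X)}-t\varphi_2|_{T_x(X)})$ evaluated at $[1:0]$ equals $\det(\varphi_1|_{T_x(X)})$, the root $[1:0]$ detects degeneracy of $\varphi_1|_{T_x(X)}$ and $[0:1]$ that of $\varphi_2|_{T_x(X)}$ --- you have these two swapped, but because condition $(iv)$ requires both restrictions to be nondegenerate the argument is unaffected.
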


\begin{proof}
By $(ii)$ and $(iv)$ of Definition \ref{d.regular}, the discriminant D$(II_{X,x},\varphi_1,\varphi_2)$ of $II_{X,x}$ has $n$ distinct roots different from $[0:1],[1:0]\in \BP(\C^2)$. 
Moreover, by $(ii)$ of Proposition \ref{p5}, $x_i$ vanishes if and only if $[1:\lambda_i]$ is a root of D$(II_{X,x},\varphi_1,\varphi_2)$. 
\end{proof}

\begin{proposition}\label{p.fiber}
Let $x\in X^{\rm reg}$. Then the fiber $X^{\mu}_x:=(\mu^X)^{-1}(\mu^X(x))$ of the moduli map $\mu^X:X\to\sM^{\rm PQ}_n$ at $x$ is of pure dimension three and smooth at $x$. In particular, $\Ker(\rd_x\mu^X)\subset T_x(X)$ has dimension three.
\end{proposition}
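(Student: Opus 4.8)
The plan is to analyze the moduli map $\mu^X$ by factoring it through the discriminant map $\theta_{(\varphi_1,\varphi_2)}$, using the commutative diagram of Proposition~\ref{p.commdiag}. Since $\widetilde{\rm D}\colon\sM^{\rm PQ}_n\to\sM^{\rm BF}_n$ is an isomorphism and $q\colon\BP(\sB_n)^o\to\sM^{\rm BF}_n$ is the orbit map for the free $\mathrm{SL}(2,\C)$-action (the action is free on the stable locus $\BP(\sB_n)^o$ of binary forms with no multiple root, so all its orbits are three-dimensional), the fiber of $\widetilde{\rm D}\circ\mu^X$ through $x$ is the preimage under $\theta_{(\varphi_1,\varphi_2)}$ of a single three-dimensional $\mathrm{SL}(2,\C)$-orbit in $\BP(\sB_n)^o$; and the fiber $X^\mu_x$ of $\mu^X$ itself is a union of such preimages over the finitely many binary forms representing $\mu^X(x)$.

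First I would show $\theta_{(\varphi_1,\varphi_2)}$ is quasi-finite on $X^{\rm reg}$: by part~(ii) of Proposition~\ref{p5} every fiber of $\theta_{(\varphi_1,\varphi_2)}$ is finite, so $\theta_{(\varphi_1,\varphi_2)}$ is a finite morphism and in particular has finite fibers and is of relative dimension zero. Consequently $X^\mu_x = \theta_{(\varphi_1,\varphi_2)}^{-1}\big(q^{-1}(q(\theta_{(\varphi_1,\varphi_2)}(x)))\big)$, and since $q^{-1}(q(\theta_{(\varphi_1,\varphi_2)}(x)))$ is a disjoint union of finitely many three-dimensional $\mathrm{SL}(2,\C)$-orbits (one for each of the finitely many points of the fiber of $\widetilde{\rm D}$ composed with the component structure), $X^\mu_x$ is three-dimensional, of pure dimension three. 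For purity, note each $\mathrm{SL}(2,\C)$-orbit is smooth of dimension three and $\theta_{(\varphi_1,\varphi_2)}$ is finite, hence $X^\mu_x$ is equidimensional of dimension three.

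Next I would establish smoothness of $X^\mu_x$ at $x$. The key point is that $x\in X^{\rm reg}$, so by Proposition~\ref{p.3.1} the binary form $\mathrm{D}(II_{X,x},\varphi_1,\varphi_2)$ has $n$ distinct roots avoiding $[0:1],[1:0]$ and all $[1:\lambda_i]$; combined with the description of $\theta_{(\varphi_1,\varphi_2)}$ as $\theta'_{(\varphi_1,\varphi_2)}\circ\mathrm{sq}|_X$ from part~(i) of Proposition~\ref{p5}, where $\theta'_{(\varphi_1,\varphi_2)}$ is a linear isomorphism and $\mathrm{sq}|_X\colon X\to\mathrm{sq}(X)$ is the restriction of the squaring map, one sees that $\mathrm{sq}$ is unramified at $x$ exactly because all coordinates $x_i$ are nonzero (condition~(i) of regularity). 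Hence $\theta_{(\varphi_1,\varphi_2)}$ is étale at $x$, so it identifies a neighborhood of $x$ in $X$ with a neighborhood of $\theta_{(\varphi_1,\varphi_2)}(x)$ in $\BP(\sB_n)^o$, carrying $X^\mu_x$ locally isomorphically onto the $\mathrm{SL}(2,\C)$-orbit through that point; since the orbit is smooth of dimension three, $X^\mu_x$ is smooth of dimension three at $x$. The last sentence then follows: $\Ker(\rd_x\mu^X) = T_x(X^\mu_x)$ because $x$ is a smooth point of the fiber $X^\mu_x$, and this tangent space has dimension three.

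The main obstacle I anticipate is the precise verification that $\theta_{(\varphi_1,\varphi_2)}$ is étale (not merely quasi-finite) at $x$, i.e. that the differential $\rd_x\theta_{(\varphi_1,\varphi_2)}$ is an isomorphism onto $T_{\theta(x)}\BP(\sB_n)$. This requires checking that $\mathrm{sq}\colon\BP V\to\BP V$ is unramified at points with all coordinates nonzero and that $x$ is a smooth point of $X$ with $T_x X$ mapping isomorphically onto $T_{\mathrm{sq}(x)}\mathrm{sq}(X)$ — which in turn hinges on condition~(i) of Definition~\ref{d.regular} and the fact, from part~(i) of Proposition~\ref{p5}, that $\mathrm{sq}(X)$ is a linear subspace of $\BP V$ of the expected dimension $n$, so that $\mathrm{sq}|_X$ is a degree-$2^{n+2}$ covering étale away from the coordinate hyperplanes. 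Once this local étale statement is in hand, everything else is a direct transport of the orbit geometry of the $\mathrm{SL}(2,\C)$-action on $\BP(\sB_n)^o$ through an étale map.
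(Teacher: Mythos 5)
Your proposal is correct and follows essentially the same route as the paper: factor $\theta_{(\varphi_1,\varphi_2)}$ as $\theta'_{(\varphi_1,\varphi_2)}\circ{\rm sq}|_X$, use condition $(i)$ of regularity to see that $\rd_x\theta_{(\varphi_1,\varphi_2)}$ is an isomorphism, and identify $X^{\mu}_x$ with the preimage of the three-dimensional ${\rm SL}(2,\C)$-orbit through $\theta_{(\varphi_1,\varphi_2)}(x)$. One minor correction: the ${\rm SL}(2,\C)$-action on $\BP(\sB_n)^o$ is not free (e.g.\ $-I$ acts trivially), but stability gives finite stabilizers, which is all that is needed for the orbits to be three-dimensional.
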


\begin{proof}
We will use the decomposition $\theta'_{(\varphi_1,\varphi_2)}\circ{\rm sq}|_X$ of $\theta_{(\varphi_1,\varphi_2)}$ in Proposition \ref{p5}.
Recall that the morphism $\theta'_{(\varphi_1,\varphi_2)}$ is an isomorphism between projective spaces of dimension $n$ and ${\rm sq}|_X$ is a finite morphism from $X$ onto a projective space of dimension $n$. By $(i)$ in Definition \ref{d.regular}, $x$ is not a critical point of ${\rm sq}|_X$. Hence, the derivative 
\begin{center}
$\rd_x\theta_{(\varphi_1,\varphi_2)}=\rd_x\theta'_{(\varphi_1,\varphi_2)}\circ\rd_x({\rm sq}|_X)$ 
\end{center}
of $\theta_{(\varphi_1,\varphi_2)}$ at $x$ is an isomorphism between tangent spaces. 

On the other hand, by $(ii)$ in Definition \ref{d.regular}, $\theta_{(\varphi_1,\varphi_2)}(x)\in\BP(\sB_n)$ is stable with respect to the SL(2,$\C$)-action on $\BP(\sB_n)$ (defined in Definition \ref{n.61}), and the orbit 
\begin{center}
$q^{-1}(\mu^X(x))\subset\BP(\sB_n)$ 
\end{center}
is irreducible and three-dimensional. Thus the fiber 
\begin{center}
$(\mu^X)^{-1}(\mu^X(x))=\theta_{(\varphi_1,\varphi_2)}^{-1}(q^{-1}(\mu^X(x)))\subset X$ 
\end{center}
is also of pure dimension three. 
Therefore, since the fiber $(\mu^X)^{-1}(\mu^X(x))$ is of pure dimension three and smooth at $x$, $\Ker(\rd_x\mu^X)$ is a three-dimensional vector subspace in $T_x(X)$.
\end{proof}

%------------------------------------------------------------------------------------------
%\subsection{The plane $\mathcal{P}_a$ in $\mathbb{P}T_a(X)$}

\begin{definition}
\label{d.plane}
Let $X\subset\BP^{n+2}$ be a nonsingular intersection of two quadric hypersurfaces with $n\geq3$. Given a regular point $x \in X$, denote by $\sP_x$ the kernel $\Ker(\rd_x\mu^X) \subset T_x(X)$. When we denote by $X^{\mu}_x$  the fiber $(\mu^X)^{-1}(\mu^X(x))$ of $\mu^X$ at $\mu^X(x)$, 
\begin{center}
$\sP_x=\Ker(\rd_x\mu^X)=T_x(X^{\mu}_x) \subset T_x(X)$.
\end{center}

\end{definition}

%%%%%%%%%%%%%%%%%%%%%%%%%%%%%%%%%%%%%%%%%%%%%%%%%%%%%%%%%%%%%%%%%%%%%%%%%%%%%%%%%%%%%%%%%%

The goal of this section is to prove that $\sP_x\subset T_x(X)$ is poised by $II_{X,x}$ at every $x\in X^{\rm reg}$. To achieve this, we will describe $\sP_x$ at $x$ explicitly.

\begin{assumption}\label{ass}
 From Lemma \ref{l.2} to Lemma \ref{l.4}, we work with Notation \ref{n.7} and fix a point $x=[x_1:x_2:\cdots:x_{n+3}]\in X$ that satisfies the conditions $(i)$, $(ii)$, and $(iv)$ in Definition \ref{d.regular}; we don't need to assume the condition $(iii)$ here. As in Lemma \ref{p55}, take $W_x=x^{\perp}\cap H$ with its basis $\{\mathbf e_1', \mathbf e_2', \ldots, \mathbf e_n'\}$ and regard the second fundamental form $II_{X,x}$ as the linear system of quadratic forms on $W_x$ generated by $\varphi_1|_{W_x}$ and $\varphi_2|_{W_x}$. Let 
 \begin{center}
 $\mathbf e_{i,x}:=\mathbf e_i'\in W_x\subset V$ 
 \end{center}
 for $1\leq i\leq n$.
  Then the discriminant polynomial
$\det(s\varphi_1|_{W_x}-t\varphi_1|_{W_x})\in\sB_n$
 has $n$ distinct roots 
\begin{center}
 $[1:\alpha_1], [1:\alpha_2], \ldots, [1:\alpha_n] \in \BP(\C^2)$ 
 \end{center}
different from $[1:0],[1:\lambda_1], [1:\lambda_2], \ldots, [1:\lambda_{n+3}] \in \BP(\C^2)$, see Proposition \ref{p.3.1}. Then, by $(ii)$ of Proposition \ref{p5}, we assume
\begin{displaymath}
x_i^2=\frac{(\lambda_i-\alpha_1)(\lambda_i-\alpha_2)\cdots(\lambda_i-\alpha_n)}{(\lambda_i-\lambda_1)(\lambda_i-\lambda_2)\cdots(\lambda_i-\lambda_{\check i})\cdots(\lambda_i-\lambda_{n+3})}
\end{displaymath}
for $1 \leq i \leq n+3$ where the notation $\check{i}$ means that $i$ is excluded from the index set. 
\end{assumption}

In the following lemma, we diagonalize the pair of two quadratic forms $\varphi_1|_{W_x}$ and $\varphi_2|_{W_x}$.
\begin{lemma}\label{l.2}
With Assumption \ref{ass}, let $\{\mathbf e_{1,x}', \mathbf e'_{2,x}, \ldots, \mathbf e'_{n,x}\}$ be a standard basis of $W_x$ with respect to $(\varphi_1|_{W_x}, \varphi_2|_{W_x})$ such that 
\begin{displaymath}
\varphi_1|_{W_x}\big(\sum Z_i\mathbf e_{i,x}'\big)=\alpha_1Z_1^2+\alpha_2Z_2^2+\cdots+\alpha_{n}Z_{n}^2
\end{displaymath}
and 
\begin{displaymath}
\varphi_2|_{W_x}\big(\sum Z_i\mathbf e_{i,x}'\big)=Z_1^2+Z_2^2+\cdots+Z_{n}^2.
\end{displaymath}
Then each $Z_i=Z_i(\sum z_j \mathbf e_{j,x})$ satisfies $Z_i^2=-c_iF_i^2$ where
\begin{displaymath}
c_i=\frac{1}{\prod_{j \neq i}(\alpha_i-\alpha_j)}\cdot\frac{\prod_{k=1}^{n+1}(\alpha_i-\lambda_{k})}{(\alpha_i-\lambda_{n+2})(\alpha_i-\lambda_{n+3})}\in\C
\end{displaymath}
and
\begin{displaymath}
F_i\big(\sum_{j=1}^n z_j \mathbf e_{j,x}\big)=\sum_{j=1}^{n}\Big(\frac{(\lambda_{n+2}-\lambda_j)(\lambda_{n+3}-\lambda_j)}{(\alpha_i-\lambda_j)}x_jz_j\Big).
 \end{displaymath}
%(Such a basis exists by $(iii)$ of Proposition \ref{p.1.6}.)
\end{lemma}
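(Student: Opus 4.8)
The plan is to produce an explicit diagonalizing change of coordinates for the pair $(\varphi_1|_{W_x},\varphi_2|_{W_x})$ and then match the resulting diagonal entries against the prescribed roots $\alpha_1,\ldots,\alpha_n$ using the formula for $x_i^2$ fixed in Assumption \ref{ass}. Concretely, I would take the candidate linear forms
\begin{displaymath}
F_i\big(\textstyle\sum_{j=1}^n z_j\mathbf e_{j,x}\big)=\sum_{j=1}^n\frac{(\lambda_{n+2}-\lambda_j)(\lambda_{n+3}-\lambda_j)}{(\alpha_i-\lambda_j)}x_jz_j
\end{displaymath}
and verify directly that, after rescaling by $\sqrt{-c_i}$, they form a standard basis in the dual sense. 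The key identities to establish are two: first that $\varphi_2|_{W_x}(\sum z_j\mathbf e_{j,x})$ equals $\sum_i(-c_i)F_i^2$, and second that $\varphi_1|_{W_x}(\sum z_j\mathbf e_{j,x})$ equals $\sum_i\alpha_i(-c_i)F_i^2$. Expanding $\sum_i(-c_i)F_i^2$ gives a quadratic form in the $z_j$ whose $(j,k)$-coefficient is $-\big(\sum_i c_i/((\alpha_i-\lambda_j)(\alpha_i-\lambda_k))\big)$ times $(\lambda_{n+2}-\lambda_j)(\lambda_{n+3}-\lambda_j)(\lambda_{n+2}-\lambda_k)(\lambda_{n+3}-\lambda_k)x_jx_k$; matching this against the explicit entries of $\varphi_1|_{W_x}$ and $\varphi_2|_{W_x}$ from Lemma \ref{p55} reduces everything to partial-fraction identities in the $\alpha_i$ and $\lambda_k$.

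The main computational engine will be the Lagrange-interpolation / partial-fraction identity: for the rational function $\prod_{k=1}^{n+1}(\alpha-\lambda_k)\big/\big((\alpha-\lambda_{n+2})(\alpha-\lambda_{n+3})\big)$, whose only simple poles among $\alpha=\alpha_1,\dots,\alpha_n$ contribute residues proportional to $c_i$, one gets sums like $\sum_i c_i/(\alpha_i-\lambda_j)^{-1}$, $\sum_i c_i\alpha_i^m$, etc., in closed form. In particular $\sum_i \frac{c_i}{(\alpha_i-\lambda_j)(\alpha_i-\lambda_k)}$ for $j\neq k$ can be evaluated by a residue count on the $n$-point set $\{\alpha_i\}$, together with the residues at $\lambda_j,\lambda_k$ of the auxiliary function, and this is where the precise value of $x_j^2$ from Assumption \ref{ass} enters to make the off-diagonal terms collapse to exactly the coefficients appearing in \eqref{e1} and \eqref{e2}. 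The diagonal case $j=k$ is handled the same way, picking up the extra ``$+1$'' (resp. ``$+\lambda_j$'') term in \eqref{e1}/\eqref{e2} from the residue of the interpolation identity at $\alpha=\lambda_j$.

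The hard part will be bookkeeping: one has to be careful that the $F_i$ really are linearly independent (so that they define a genuine change of coordinates on the $n$-dimensional $W_x$), that each $c_i$ is nonzero and that $-c_i$ has a well-defined square root chosen consistently, and that the $z$-variables used to express $W_x$ via $\{\mathbf e_{i,x}\}$ match the $Z$-variables of the standard basis correctly. Linear independence of the $F_i$ follows from nonvanishing of a Cauchy-type determinant $\det\big((\alpha_i-\lambda_j)^{-1}\big)$ times $\prod_j(\lambda_{n+2}-\lambda_j)(\lambda_{n+3}-\lambda_j)x_j$, all of whose factors are nonzero by Assumption \ref{ass} (the $x_j$ are nonzero by condition $(i)$, and the $\alpha_i$ are distinct from all $\lambda_j$ by Proposition \ref{p.3.1}); $c_i\neq 0$ follows from the same distinctness. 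Once independence and the two quadratic identities are in hand, the statement $Z_i^2=-c_iF_i^2$ is just the observation that $\{\sqrt{-c_i}\,F_i\}$ diagonalizes both forms to the required normal shape, hence agrees with the standard basis coordinates $Z_i$ up to sign, and the lemma follows.
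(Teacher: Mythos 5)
Your plan is correct, and the identities you need do come out right, but your route is genuinely different from the paper's. You propose a direct coefficient-by-coefficient verification: expand $\sum_i(-c_i)F_i^2$ and $\sum_i\alpha_i(-c_i)F_i^2$, and match each $(j,k)$-entry against the explicit formulas \eqref{e1} and \eqref{e2} via a sum-of-residues argument. Concretely, for $j\neq k$ (both $\le n$) the rational function
\begin{displaymath}
H(\alpha)=\frac{\prod_{m=1}^{n+1}(\alpha-\lambda_m)}{\prod_{i=1}^{n}(\alpha-\alpha_i)\,(\alpha-\lambda_{n+2})(\alpha-\lambda_{n+3})(\alpha-\lambda_j)(\alpha-\lambda_k)}
\end{displaymath}
vanishes to order $\ge 2$ at infinity, its residues at the $\alpha_i$ are exactly $c_i/((\alpha_i-\lambda_j)(\alpha_i-\lambda_k))$, and its residues at $\lambda_{n+2},\lambda_{n+3}$ reproduce, via the formula for $x_{n+2}^2,x_{n+3}^2$ in Assumption \ref{ass}, precisely the two off-diagonal terms of \eqref{e2}; multiplying by $\alpha$ handles \eqref{e1}, and for $j=k$ the extra simple pole at $\lambda_j$ yields the ``$+1$'' (resp. ``$+\lambda_j$'') term. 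I checked these residue computations and they close up. The paper instead argues by induction on $n$: it verifies $n=1$ by hand, forms the difference matrix $\mathbf M$ of $(s\varphi_1|_{W_x}-t\varphi_2|_{W_x})-\sum_i(\alpha_is-t)c_iF_i^2$, and performs row operations that identify the $(i,j)$-entries for $i,j\ge 2$ with the corresponding matrix for the hyperplane section $X\cap\{z_1=0\}$ (zero by induction); running the elimination with $\alpha_1$ and again with $\alpha_2$ gives two incompatible proportionality relations that force all remaining entries to vanish. Your approach is more self-contained and avoids the induction entirely; the paper's exploits the geometric fact that hyperplane sections of $X$ are again smooth intersections of two quadrics. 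Two minor points: your expression ``$\sum_i c_i/(\alpha_i-\lambda_j)^{-1}$'' is presumably a typo for $\sum_i c_i(\alpha_i-\lambda_j)^{-1}$; and your separate Cauchy-determinant check of linear independence of the $F_i$ is harmless but unnecessary --- as the paper notes, nondegeneracy of $\varphi_1|_{W_x}$ (condition $(iv)$ of Definition \ref{d.regular}) already forces it once the two quadratic identities are established.
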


\begin{proof}
It suffices to show  
\begin{center}
$\varphi_1|_{W_x}=-\alpha_1c_1F_1^2-\alpha_2c_2F_2^2-\cdots-\alpha_nc_nF_n^2$
\end{center}
and
\begin{center}
$\varphi_2|_{W_x}=-c_1F_1^2-c_2F_2^2-\cdots-c_nF_n^2$.
\end{center}
Since $\varphi_1|_{W_x}$ is nondegenerate for $x\in X^{\rm reg}$, we don't need to show the linear independency of $F_i$'s. 
For the case $n=1$, we directly verify $\varphi_1|_{W_x}=-\alpha_1c_1F_1^2$ and $\varphi_2|_{W_x}=-c_1F_1^2$. And higher dimensional cases are proven inductively.
Let $z_i':=x_iz_i$ for simplicity. \\ 

Recall the formulas (\ref{e1}) and (\ref{e2}) in Lemma \ref{p55}. For $n=1$, 
\begin{displaymath}
\varphi_1|_{W_x}\big(\sum x_i^{-1}z_i'\mathbf e_{i,x}\big)=\Big(\lambda_3\frac{(\lambda_{4}-\lambda_1)^2}{(\lambda_{4}-\lambda_{3})^2x_{3}^2}+\lambda_4\frac{(\lambda_{3}-\lambda_1)^2}{(\lambda_{4}-\lambda_{3})^2x_{4}^2}+\lambda_1\frac{1}{x_1^2}\Big)(z_1')^2
\end{displaymath}
and
\begin{displaymath}
\varphi_2|_{W_x}\big(\sum x_i^{-1}z_i'\mathbf e_{i,x}\big)=\Big(\frac{(\lambda_{4}-\lambda_1)^2}{(\lambda_{4}-\lambda_{3})^2x_{3}^2}+\frac{(\lambda_{3}-\lambda_1)^2}{(\lambda_{4}-\lambda_{3})^2x_{4}^2}+\frac{1}{x_1^2}\Big)(z_1')^2.
\end{displaymath}
By Assumption \ref{ass},
\begin{displaymath}
x_3^2=\frac{(\lambda_3-\alpha_1)}{(\lambda_3-\lambda_1)(\lambda_3-\lambda_2)(\lambda_3-\lambda_4)},
\end{displaymath}
\begin{displaymath}
x_4^2=\frac{(\lambda_4-\alpha_1)}{(\lambda_4-\lambda_1)(\lambda_4-\lambda_2)(\lambda_4-\lambda_3)},
\end{displaymath}
and
\begin{displaymath}
x_1^2=\frac{(\lambda_1-\alpha_1)}{(\lambda_1-\lambda_2)(\lambda_1-\lambda_3)(\lambda_1-\lambda_4)}.
\end{displaymath}
Then direct calculation shows $\varphi_2|_{W_x}=-c_1F_1^2$ and $\varphi_1|_{W_x}=-\alpha_1c_1F_1^2$. 
More precisely,
\begin{center}
$\displaystyle \varphi_2|_{W_x}\big(\sum x_i^{-1}z_i'\mathbf e_{i,x}\big)=
\Big[\frac{(\lambda_{3}-\lambda_1)(\lambda_4-\lambda_1)}{(\lambda_4-\lambda_3)}\Big\{-\frac{(\lambda_{4}-\lambda_1)(\lambda_3-\lambda_2)}{(\lambda_3-\alpha_1)}+\frac{(\lambda_{3}-\lambda_1)(\lambda_4-\lambda_2)}{(\lambda_4-\alpha_1)}\Big\}$\\
$\displaystyle+\frac{(\lambda_1-\lambda_2)(\lambda_1-\lambda_3)(\lambda_1-\lambda_4)}{(\lambda_1-\alpha_1)}\Big](z_1')^2$\\
$\displaystyle =\Big[\frac{(\lambda_{3}-\lambda_1)(\lambda_4-\lambda_1)}{(\lambda_3-\alpha_1)(\lambda_4-\alpha_1)}\Big\{-\lambda_1\lambda_2+\lambda_1\alpha_1-\lambda_2\alpha_1
-\lambda_3\lambda_4+\lambda_2(\lambda_3+\lambda_4)\Big\}$\\
$\displaystyle+\frac{(\lambda_1-\lambda_2)(\lambda_1-\lambda_3)(\lambda_1-\lambda_4)}{(\lambda_1-\alpha_1)}\Big](z_1')^2$\\
$\displaystyle =\frac{(\lambda_{3}-\lambda_1)(\lambda_4-\lambda_1)}{(\lambda_3-\alpha_1)(\lambda_4-\alpha_1)(\lambda_1-\alpha_1)}\Big\{\alpha_1(\lambda_1-\lambda_3)(\lambda_1-\lambda_4)-\lambda_2(\lambda_1-\lambda_3)(\lambda_1-\lambda_4)\Big\}(z_1')^2$\\
$\displaystyle =-\frac{(\alpha_1-\lambda_2)}{(\alpha_1-\lambda_3)(\alpha_1-\lambda_4)}\frac{(\lambda_{3}-\lambda_1)^2(\lambda_4-\lambda_1)^2}{(\alpha_1-\lambda_1)}(z_1')^2=-c_1F_1^2$.
\end{center}
Similarly
\begin{center}
$\displaystyle \varphi_1|_{W_x}\big(\sum x_i^{-1}z_i'\mathbf e_{i,x}\big)=\Big[\frac{(\lambda_{3}-\lambda_1)(\lambda_4-\lambda_1)}{(\lambda_4-\lambda_3)}\Big\{-\lambda_3\frac{(\lambda_{4}-\lambda_1)(\lambda_3-\lambda_2)}{(\lambda_3-\alpha_1)}$
\end{center}
\begin{flushright}
$\displaystyle+\lambda_4\frac{(\lambda_{3}-\lambda_1)(\lambda_4-\lambda_2)}{(\lambda_4-\alpha_1)}\Big\}+\lambda_1\frac{(\lambda_1-\lambda_2)(\lambda_1-\lambda_3)(\lambda_1-\lambda_4)}{(\lambda_1-\alpha_1)}\Big](z_1')^2$
\end{flushright}
\begin{center}
$\displaystyle =\Big[\frac{(\lambda_{3}-\lambda_1)(\lambda_4-\lambda_1)}{(\lambda_3-\alpha_1)(\lambda_4-\alpha_1)}\Big\{-\alpha_1\lambda_1\lambda_2+\lambda_1\alpha_1(\lambda_3+\lambda_4)
-\alpha_1\lambda_3\lambda_4-\lambda_1\lambda_3\lambda_4+\lambda_2\lambda_3\lambda_4)\Big\}$\\
$\displaystyle+\lambda_1\frac{(\lambda_1-\lambda_2)(\lambda_1-\lambda_3)(\lambda_1-\lambda_4)}{(\lambda_1-\alpha_1)}\Big](z_1')^2$\\
$\displaystyle =\frac{(\lambda_{3}-\lambda_1)(\lambda_4-\lambda_1)}{(\lambda_3-\alpha_1)(\lambda_4-\alpha_1)(\lambda_1-\alpha_1)}\Big\{\alpha_1^2(\lambda_1-\lambda_3)(\lambda_1-\lambda_4)-\lambda_2\alpha_1(\lambda_1-\lambda_3)(\lambda_1-\lambda_4)\Big\}(z_1')^2$\\
$\displaystyle =-\alpha_1\frac{(\alpha_1-\lambda_2)}{(\alpha_1-\lambda_3)(\alpha_1-\lambda_4)}\frac{(\lambda_{3}-\lambda_1)^2(\lambda_4-\lambda_1)^2}{(\alpha_1-\lambda_1)}(z_1')^2=-\alpha_1c_1F_1^2$.
\end{center}

For $n\geq 2$, let  
\begin{displaymath}
\varphi:=(s\varphi_1|_{W_x}-t\varphi_2|_{W_x})-\sum_{i=1}^{n}(\alpha_is-t)c_iF_i^2.
\end{displaymath}
Then $\varphi$ is a homogeneous polynomial of degree two in variables $z_1', z_2', \ldots, z_n'$. Let $\mathbf M$ be the symmetric matrix corresponding to $\varphi$, i.e. $\mathbf M_{ij}:=\varphi(x_i^{-1}\mathbf e_{i,x}',x_j^{-1}\mathbf e_{j,x}')$. To prove this lemma, it suffices to show that $\mathbf M$ is identically zero for any $s$ and $t$. \\

From (\ref{e1}) and (\ref{e2}),
\begin{center}
$\mathbf M_{ij}=A_{ij}+B_{ij}+C_{ij}+\sum_{k=1}^{n} D_{ij}^k$
\end{center}
where
\begin{displaymath}
A_{ij}:=(\lambda_{n+2}s-t)\frac{(\lambda_{n+3}-\lambda_i)(\lambda_{n+3}-\lambda_j)}{(\lambda_{n+3}-\lambda_{n+2})^2x_{n+2}^2},
\end{displaymath}
\begin{displaymath}
B_{ij}:=(\lambda_{n+3}s-t)\frac{(\lambda_{n+2}-\lambda_i)(\lambda_{n+2}-\lambda_j)}{(\lambda_{n+3}-\lambda_{n+2})^2x_{n+3}^2},
\end{displaymath}
\begin{center}
$\displaystyle C_{ii}:=(\lambda_is-t)\frac{1}{x_i^2}$ \,\, ($C_{ij}=0$ if $i\neq j$),
\end{center}
and 
\begin{center}
$\displaystyle D_{ij}^k:=(\alpha_ks-t)\cdot c_k\cdot\frac{(\lambda_{n+2}-\lambda_i)(\lambda_{n+3}-\lambda_i)(\lambda_{n+2}-\lambda_j)(\lambda_{n+3}-\lambda_j)}{(\alpha_k-\lambda_i)(\alpha_k-\lambda_j)}$.
\end{center}
To show $\mathbf M=0$, we apply elementary row operations to $\mathbf M$. Let $\mathbf r_i$ be the $i$-th row of $\mathbf M$. 
We firstly remove $D_{ij}^1$ (related to $\alpha_1$) for $i,j\geq2$ by subtracting \begin{displaymath}
\frac{(\lambda_{n+2}-\lambda_i)(\lambda_{n+3}-\lambda_i)(\alpha_1-\lambda_1)}{(\lambda_{n+2}-\lambda_1)(\lambda_{n+3}-\lambda_1)(\alpha_1-\lambda_i)}\mathbf r_1 
\end{displaymath}
from $\mathbf r_i$ for $i\geq2$. Then 
\begin{center}
$\displaystyle \mathbf M_{ij}-\frac{(\lambda_{n+2}-\lambda_i)(\lambda_{n+3}-\lambda_i)(\alpha_1-\lambda_1)}{(\lambda_{n+2}-\lambda_1)(\lambda_{n+3}-\lambda_1)(\alpha_1-\lambda_i)}\mathbf M_{1j}=$\\
\smallskip
$\displaystyle A_{ij}\Big(1-\frac{(\lambda_{n+2}-\lambda_i)(\alpha_1-\lambda_1)}{(\lambda_{n+2}-\lambda_1)(\alpha_1-\lambda_i)}\Big)+
B_{ij}\Big(1-\frac{(\lambda_{n+3}-\lambda_i)(\alpha_1-\lambda_1)}{(\lambda_{n+3}-\lambda_1)(\alpha_1-\lambda_i)}\Big)+ C_{ij}$\\
\smallskip
$\displaystyle +D_{ij}^2\Big(1-\frac{(\alpha_2-\lambda_i)(\alpha_1-\lambda_1)}{(\alpha_2-\lambda_1)(\alpha_1-\lambda_i)}\Big)+\cdots
+D_{ij}^{n}\Big(1-\frac{(\alpha_{n}-\lambda_i)(\alpha_1-\lambda_1)}{(\alpha_{n}-\lambda_1)(\alpha_1-\lambda_i)}\Big)$\\
\smallskip
$\displaystyle =A_{ij}\frac{(\lambda_{n+2}-\alpha_1)(\lambda_1-\lambda_i)}{(\lambda_{n+2}-\lambda_1)(\alpha_1-\lambda_i)}+
B_{ij}\frac{(\lambda_{n+3}-\alpha_1)(\lambda_1-\lambda_i)}{(\lambda_{n+3}-\lambda_1)(\alpha_1-\lambda_i)}+C_{ij}$\\
\smallskip
$\displaystyle +D_{ij}^2\frac{(\alpha_2-\alpha_1)(\lambda_1-\lambda_i)}{(\alpha_2-\lambda_1)(\alpha_1-\lambda_i)}+\cdots
+D_{ij}^{n}\frac{(\alpha_{n}-\alpha_1)(\lambda_1-\lambda_i)}{(\alpha_{n}-\lambda_1)(\alpha_1-\lambda_i)}$\\
\smallskip
$\displaystyle =\frac{(\lambda_1-\lambda_i)}{(\alpha_1-\lambda_i)}\Big(A_{ij}\frac{(\lambda_{n+2}-\alpha_1)}{(\lambda_{n+2}-\lambda_1)}+
B_{ij}\frac{(\lambda_{n+3}-\alpha_1)}{(\lambda_{n+3}-\lambda_1)}+C_{ij}\frac{(\alpha_1-\lambda_i)}{(\lambda_1-\lambda_i)}$\\
\smallskip

$\displaystyle +D_{ij}^2\frac{(\alpha_2-\alpha_1)}{(\alpha_2-\lambda_1)}+\cdots
+D_{ij}^{n}\frac{(\alpha_{n}-\alpha_1)}{(\alpha_{n}-\lambda_1)}\Big)$\\
\end{center}
for $i,j\geq2$. Put 
\begin{center}
$\displaystyle N_{ij}:=A_{ij}\frac{(\lambda_{n+2}-\alpha_1)}{(\lambda_{n+2}-\lambda_1)}+
B_{ij}\frac{(\lambda_{n+3}-\alpha_1)}{(\lambda_{n+3}-\lambda_1)}+C_{ij}\frac{(\alpha_1-\lambda_i)}{(\lambda_1-\lambda_i)}$\\
$\displaystyle +D_{ij}^2\frac{(\alpha_2-\alpha_1)}{(\alpha_2-\lambda_1)}+\cdots
+D_{ij}^{n}\frac{(\alpha_{n}-\alpha_1)}{(\alpha_{n}-\lambda_1)}$ 
\end{center}
for $i,j\geq2$, then
\begin{equation}\label{e15}
\mathbf M_{ij}-\frac{(\lambda_{n+2}-\lambda_i)(\lambda_{n+3}-\lambda_i)(\alpha_1-\lambda_1)}{(\lambda_{n+2}-\lambda_1)(\lambda_{n+3}-\lambda_1)(\alpha_1-\lambda_i)}\mathbf M_{1j}=\frac{(\lambda_1-\lambda_i)}{(\alpha_1-\lambda_i)}N_{ij}.
  \end{equation} 

Consider $X'= X \cap H \subset \BP V$ where $H \subset \BP V$ is the hyperplane defined by $z_1=0$. Then $X' \subset H (\cong \BP^{n+1})$ is also a  nonsingular intersection of two quadrics in $H$, which is defined by 
\begin{displaymath}
\varphi_1|_H=\lambda_2z_2^2+\lambda_3z_3^2+\cdots+\lambda_{n+3}z_{n+3}^2=0
\end{displaymath}
and 
\begin{displaymath}
\varphi_2|_H=z_2^2+z_3^2+\cdots+z_{n+3}^2=0. 
\end{displaymath}
Let $[x_2':x_3':\cdots:x_{n+3}'] \in X'$ be a point in $X'$ such that 
\begin{displaymath}
x'\in(\theta_{\varphi_1|_H,\varphi_1|_H})^{-1}([(\alpha_2s-t)(\alpha_3s-t)\cdots(\alpha_ns-t)]).
\end{displaymath}
Then $x'$ satisfies the conditions $(i)$, $(ii)$, and $(iv)$ in Definition \ref{d.regular} by Proposition \ref{p.3.1}. So we can apply induction hypothesis to $x'\in X'$. For $i,j\geq2$,
\begin{center}
$\displaystyle A_{ij}\frac{(\lambda_{n+2}-\alpha_1)}{(\lambda_{n+2}-\lambda_1)}
=(\lambda_{n+2}s-t)\frac{(\lambda_{n+3}-\lambda_i)(\lambda_{n+3}-\lambda_j)}{(\lambda_{n+3}-\lambda_{n+2})^2}\cdot\frac{1}{x_{n+2}^2}\cdot\frac{(\lambda_{n+2}-\alpha_1)}{(\lambda_{n+2}-\lambda_1)}$\\
$\displaystyle=(\lambda_{n+2}s-t)\frac{(\lambda_{n+3}-\lambda_i)(\lambda_{n+3}-\lambda_j)}{(\lambda_{n+3}-\lambda_{n+2})^2}\cdot\frac{1}{(x_{n+2}')^2}$
\end{center}
and this is $A_{ij}'$ of $X'$. Similarly 
\begin{center}
$N_{ij}=A_{ij}'+B_{ij}'+C_{ij}'+(D_{ij}^{2})'+\cdots+(D_{ij}^{n-3})'=:\mathbf M_{ij}'$ 
\end{center}
for $i,j\geq2$ and $\mathbf M_{ij}'$ is zero by induction hypothesis. Hence, by (\ref{e15}) above,
\begin{center}
$\displaystyle \mathbf M_{ij}-\frac{(\alpha_1-\lambda_1)}{(\alpha_1-\lambda_i)}\cdot\frac{(\lambda_{n+2}-\lambda_i)(\lambda_{n+3}-\lambda_i)}{(\lambda_{n+2}-\lambda_1)(\lambda_{n+3}-\lambda_1)}\cdot\mathbf M_{1j}=0$,
\end{center}
that is, 
\begin{equation}\label{e23}
\frac{(\alpha_1-\lambda_i)}{(\alpha_1-\lambda_1)}\cdot \mathbf M_{ij}=\frac{(\lambda_{n+2}-\lambda_i)(\lambda_{n+3}-\lambda_i)}{(\lambda_{n+2}-\lambda_1)(\lambda_{n+3}-\lambda_1)}\cdot\mathbf M_{1j}.
\end{equation}
If this process was to remove $D_{ij}^2$ instead of $D_{ij}^1$ (by subtracting constant multiple of the first row $\mathbf r_1$ of $\mathbf M$ from other rows) at the beginning of this process, we obtain
\begin{equation}\label{e24}
 \frac{(\alpha_2-\lambda_i)}{(\alpha_2-\lambda_1)}\cdot \mathbf M_{ij}=\frac{(\lambda_{n+2}-\lambda_i)(\lambda_{n+3}-\lambda_i)}{(\lambda_{n+2}-\lambda_1)(\lambda_{n+3}-\lambda_1)}\cdot\mathbf M_{1j}
\end{equation}
instead of (\ref{e23}). 
Since
\begin{displaymath}
\frac{(\alpha_1-\lambda_i)}{(\alpha_1-\lambda_1)}-\frac{(\alpha_2-\lambda_i)}{(\alpha_2-\lambda_1)}=\frac{(\alpha_1-\alpha_2)(\lambda_i-\lambda_1)}{(\alpha_1-\lambda_1)(\alpha_2-\lambda_1)} \neq 0
\end{displaymath}
for $i\geq 2$, $\mathbf M_{ij}$ vanishes for $i\geq1$ and $j\geq2$ by (\ref{e23}) and (\ref{e24}). Furthermore, if we use another row (instead of the first row) to remove $D_{ij}^1$ or $D_{ij}^2$, the first column of $\mathbf M$ have to vanish, too. 
\end{proof}

From now on, we will compute generators for $\sP_x=\Ker({\rm d}_x\mu^X)\subset W_x$. 
We firstly do this with the basis $\{\mathbf e_{1,x},\mathbf e_{2,x},\ldots,\mathbf e_{n,x}\}$ of $W_x$ defined in Assumption \ref{ass} and later we will consider a standard basis $\{\mathbf e_{1,x}',\mathbf e_{2,x}',\ldots,\mathbf e_{n,x}'\}$ for $(\varphi_1|_{W_x},\varphi_2|_{W_x})$ as in Lemma \ref{l.2}.
\begin{lemma}
\label{l.3}
With the basis $\{\mathbf e_{1,x},\mathbf e_{2,x},\ldots,\mathbf e_{n,x}\}$ of $W_x$ as in Assumption \ref{ass}, $\sP_x\subset W_x$ is generated by three vectors $\mathbf u^{(0)}=\sum_{i=1}^n u_i^{(0)} \mathbf e_{i,x}$, $\mathbf u^{(1)}=\sum_{i=1}^n u^{(1)}_i \mathbf e_{i,x}$, and $\mathbf u^{(2)}=\sum_{i=1}^n u^{(2)}_i \mathbf e_{i,x}$ in $W_x$ where $\displaystyle u^{(l)}_i=\sum_{j=1}^{n}\frac{(\lambda_{n+1}-\lambda_i)}{(\lambda_i-\alpha_j)(\lambda_{n+1}-\alpha_j)}(\alpha_j)^{l}x_i$ for $i\in\{1,2,\cdots,n\}$, $l\in\{0,1,2\}$.
\end{lemma}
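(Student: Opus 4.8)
\textbf{Proof proposal for Lemma \ref{l.3}.}

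The plan is to compute the fiber $X^\mu_x$ explicitly near $x$ and then differentiate, using the factorization $\theta_{(\varphi_1,\varphi_2)} = \theta'_{(\varphi_1,\varphi_2)}\circ \mathrm{sq}|_X$ from Proposition \ref{p5}. Since $\theta'_{(\varphi_1,\varphi_2)}$ is a linear isomorphism, the fiber $X^\mu_x = \theta_{(\varphi_1,\varphi_2)}^{-1}(q^{-1}(\mu^X(x)))$ is the preimage under $\mathrm{sq}|_X$ of the three-dimensional $\mathrm{SL}(2,\C)$-orbit of the point $\mathrm{sq}(x)\in\mathrm{sq}(X)\cong\BP(\sB_n)$; concretely, if $\theta_{(\varphi_1,\varphi_2)}(x)$ has roots $[1:\alpha_1],\dots,[1:\alpha_n]$, the orbit consists of binary forms $\prod_j\big((c\alpha_j+d)s-(a\alpha_j+b)t\big)$ for $\begin{bmatrix}a&b\\c&d\end{bmatrix}\in\mathrm{SL}(2,\C)$. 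Pulling this back through $\mathrm{sq}$ and the coordinate description of $\theta'_{(\varphi_1,\varphi_2)}$ in $(ii)$ of Proposition \ref{p5}, a point $[v_1:\cdots:v_{n+3}]$ lies in $X^\mu_x$ exactly when $v_i^2$ is proportional (with an index-independent constant) to $\tfrac{\prod_{j=1}^n(\lambda_i - \beta_j)}{\prod_{k\neq i}(\lambda_i-\lambda_k)}$, where $[1:\beta_j]$ runs over the transformed roots. So a local parametrization of $X^\mu_x$ through $x$ is obtained by letting $(\beta_1,\dots,\beta_n)$ vary in the $3$-dimensional $\mathrm{PGL}(2)$-orbit of $(\alpha_1,\dots,\alpha_n)$ and taking the appropriate square roots of the above expressions (the branch through $x$ is fixed by the sign of $x_i$, using condition $(i)$ that $x_i\neq 0$).

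Next I would differentiate this parametrization. Writing $v_i = v_i(\beta_1,\dots,\beta_n)$ with $v_i^2 = \lambda(\beta)\cdot\tfrac{\prod_{j}(\lambda_i-\beta_j)}{\prod_{k\neq i}(\lambda_i-\lambda_k)}$, logarithmic differentiation gives $\tfrac{2\,dv_i}{v_i} = \tfrac{d\lambda}{\lambda} - \sum_{j}\tfrac{d\beta_j}{\lambda_i - \beta_j}$, so $dv_i$ is a linear combination of $v_i$ itself and of the vectors $\big(\tfrac{x_i}{\lambda_i-\alpha_j}\big)_i$ for $j=1,\dots,n$ (evaluating at $\beta=\alpha$, $v_i=x_i$ up to scale). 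Since the $\mathrm{PGL}(2)$-orbit is $3$-dimensional, the tangent directions $d\beta_j$ at $\alpha$ span a $3$-dimensional space: infinitesimal translations $d\beta_j = \varepsilon$ (constant), infinitesimal dilations $d\beta_j = \varepsilon\alpha_j$, and infinitesimal inversions $d\beta_j = \varepsilon\alpha_j^2$ (these are the vector fields $\partial$, $t\partial$, $t^2\partial$ of $\fsl_2$). Feeding these three families into the formula for $dv_i$, and quotienting out the scaling direction $v_i$ (which corresponds to the center of $V$, i.e. moving in $\widehat x$ rather than in $T_x(X)$), produces exactly three tangent vectors to $X^\mu_x$ at $x$ with $i$-th component proportional to $\sum_{j=1}^n \tfrac{(\alpha_j)^l}{\lambda_i-\alpha_j}\,x_i$ for $l=0,1,2$. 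Matching this against the claimed $u^{(l)}_i$ requires recognizing that the partial-fraction combinations $\sum_j\tfrac{(\lambda_{n+1}-\lambda_i)}{(\lambda_i-\alpha_j)(\lambda_{n+1}-\alpha_j)}(\alpha_j)^l$ and $\sum_j\tfrac{(\alpha_j)^l}{\lambda_i-\alpha_j}$ differ by a vector in the span of lower-order $u$'s together with the scaling vector $(x_i)_i$; this is a purely algebraic identity coming from $\tfrac{1}{(\lambda_i-\alpha_j)(\lambda_{n+1}-\alpha_j)} = \tfrac{1}{\lambda_i - \lambda_{n+1}}\big(\tfrac{1}{\lambda_i - \alpha_j} - \tfrac{1}{\lambda_{n+1}-\alpha_j}\big)$, so the $\lambda_{n+1}$-normalization just replaces the "scaling" ambiguity by a fixed representative (and explains why $\lambda_{n+1}$ appears — it is the index we projected away when choosing $W_x = x^\perp\cap H$, cf.\ Lemma \ref{p55}). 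I must also check the three vectors are linearly independent: this follows because the Vandermonde-type matrix $(\alpha_j^l)_{0\le l\le 2,\,1\le j\le n}$ together with the constraint that $x$ is $\Phi_X$-general forces independence, just as in Proposition \ref{lm1}.

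The main obstacle will be setting up the local parametrization of $X^\mu_x$ rigorously and justifying the passage from $dv_i$ (a cotangent/tangent computation in the ambient $V$) down to $T_x(X)\cong W_x$ — that is, verifying that the scaling direction $(x_i)_i \leftrightarrow \widehat x$ is precisely the piece to be discarded, and that nothing of the genuine three-dimensional fiber tangent space is lost in the quotient. Concretely I would argue: $\widehat{X^\mu_x}\subset V$ is $4$-dimensional (cone over a $3$-fold), its tangent space at $\mathbf x$ contains $\mathbf x$, and $T_x(X^\mu_x) = T_{\mathbf x}(\widehat{X^\mu_x})/\widehat x \subset x^\perp/\widehat x \cong W_x$; the three families of infinitesimal $\fsl_2$-motions give four vectors in $T_{\mathbf x}(\widehat{X^\mu_x})$ (including the scaling one), which map to the claimed three generators in $W_x$. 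Once this bookkeeping is in place, the rest is the partial-fraction algebra sketched above, which is routine. The remaining small point is that the basis $\{\mathbf e_{1,x},\dots,\mathbf e_{n,x}\}$ of $W_x = x^\perp\cap H$ from Lemma \ref{p55} is adapted so that the $i$-th coordinate of a tangent vector is read off from the $i$-th ambient coordinate for $1\le i\le n$, which is exactly what makes the formula for $u^{(l)}_i$ come out in terms of $x_i$ and the $\lambda_i,\alpha_j$ with $1\le i\le n$.
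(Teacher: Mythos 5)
Your proposal is correct and follows essentially the same route as the paper: the paper likewise parametrizes the fiber $X^{\mu}_x$ by letting the roots $\alpha_j$ move along the three infinitesimal generators $\alpha_j\mapsto\alpha_j+\epsilon$, $\alpha_j+\alpha_j\epsilon$, $\alpha_j+\alpha_j^2\epsilon$ of the ${\rm SL}(2,\C)$-action, differentiates the explicit expression for $x_i^2$ from $(ii)$ of Proposition \ref{p5} to get $\Delta^{(l)}=\big(x_i\sum_j\alpha_j^l/(\lambda_i-\alpha_j)\big)_i\in x^{\perp}$, and then projects onto $W_x=x^{\perp}\cap H$ along $\widehat x$. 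Your partial-fraction identity is exactly this last projection step (the subtracted term $x_i\sum_j\alpha_j^l/(\lambda_{n+1}-\alpha_j)$ is the $\widehat x$-component), so the two arguments coincide.
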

\begin{proof}
The three vectors for $\sP_x$ are computed infinitesimally from the SL$(2,\C)$-orbit of  the discriminant 
\begin{center}
D$(II_{X,x},\varphi_1,\varphi_2)=[(\alpha_1s-t)(\alpha_2s-t)\cdots(\alpha_ns-t)] \in \BP (\sB_n)$.  
\end{center}
(See Definition \ref{n.61} and Definition \ref{d.theta}.) From $(ii)$ of Proposition \ref{p.2}, an element 
\begin{displaymath}
\left[
\begin{array}{cc}
a&b\\
c&d
\end{array}\right] \in {\rm SL}(2,\C)
\end{displaymath}
sends each root $[1:\alpha_i]\in\BP(\C^2)$ to 
\begin{center}
$\displaystyle\Big[1:\frac{a\alpha_i+b}{c\alpha_i+d}\Big]\in\BP(\C^2)$.
\end{center}
Let
\begin{displaymath}
S:=\Big\{\left[
\begin{array}{cc}
1&p\\
0&1
\end{array}\right],\,
\left[
\begin{array}{cc}
q&0\\
0&q^{-1}
\end{array}\right],\,
\left[
\begin{array}{cc}
1&0\\
r&1
\end{array}\right] \in {\rm SL}(2,\C)\mid p,r\in\C, q\in\C\setminus\{0\} \Big\}.
\end{displaymath}
Then ${\rm SL}(2,\C)$ is generated by S. Note that 
\begin{displaymath}
\left[
\begin{array}{cc}
a&b\\
c&d
\end{array}\right]=
\left[
\begin{array}{cc}
d^{-1}&0\\
0&d
\end{array}\right]
\left[
\begin{array}{cc}
1&bd\\
0&1
\end{array}\right]
\left[
\begin{array}{cc}
1&0\\
d^{-1}c&1
\end{array}\right]\in {\rm SL}(2,\C) 
\end{displaymath}
if $d\neq 0$. If $d=0$ and $bc=-1$, then
\begin{displaymath}
\left[
\begin{array}{cc}
a&b\\
c&0
\end{array}\right]=
\left[
\begin{array}{cc}
-b&0\\
0&c
\end{array}\right]
\left[
\begin{array}{cc}
1&-\frac{a+b}{b}\\
0&1
\end{array}\right]
\left[
\begin{array}{cc}
1&0\\
1&1
\end{array}\right]
\left[
\begin{array}{cc}
1&-1\\
0&1
\end{array}\right]\in {\rm SL}(2,\C). 
\end{displaymath}
So there are three natural infinitesimal generators of the SL($2,\C$)-action:
\begin{center}
$\alpha_i \to \alpha_i+\epsilon_0=:\alpha_i^{(0)}$,\\ 
$\alpha_i \to \alpha_i+\alpha_i\epsilon_1=:\alpha_i^{(1)}$,
\end{center}
and 
\begin{center}
$\displaystyle \alpha_i \to \frac{\alpha_i}{1-\alpha_i\epsilon_2}\approx\alpha_i+\alpha_i^2\epsilon_2=:\alpha_i^{(2)}$.
\end{center}
These generators define three linearly independent vectors in $W_x$ and they generate $\mathcal{P}_x$.

\smallskip
To be more precise, we firstly compute the infinitesimal difference 
\begin{center}
$(x_1^{(l)}-x_1, x_2^{(l)}-x_2,\ldots,x_{n+3}^{(l)}-x_{n+3}) \in V$ 
\end{center}
for $l\in\{0,1,2\}$ where  
\begin{displaymath}
(x_i)^2=\frac{(\lambda_i-\alpha_1)(\lambda_i-\alpha_2)\cdots(\lambda_i-\alpha_n)}{(\lambda_i-\lambda_1)(\lambda_i-\lambda_2)\cdots(\lambda_i-\lambda_{\check i})\cdots(\lambda_i-\lambda_{n+3})}
\end{displaymath}
and
\begin{displaymath}
(x_i^{(l)})^2=\frac{(\lambda_i-\alpha_1^{(l)})(\lambda_i-\alpha_2^{(l)})\cdots(\lambda_i-\alpha_n^{(l)})}{(\lambda_i-\lambda_1)(\lambda_i-\lambda_2)\cdots(\lambda_i-\lambda_{\check i})\cdots(\lambda_i-\lambda_{n+3})}.
\end{displaymath}
In case of $l=0$,
\begin{center}
$\displaystyle (x_i^{(0)})^2=\frac{(\lambda_i-\alpha_1-\epsilon_0)(\lambda_i-\alpha_2-\epsilon_0)\cdots(\lambda_i-\alpha_{n}-\epsilon_0)}{(\lambda_i-\lambda_1)(\lambda_i-\lambda_2)\cdots(\lambda_i-\lambda_{\check i})\cdots(\lambda_i-\lambda_{n+3})}$
\end{center}
\begin{center}
$\displaystyle\qquad=(x_i)^2\cdot\big((1-\frac{\epsilon_0}{\lambda_i-\alpha_1})(1-\frac{\epsilon_0}{\lambda_i-\alpha_2})\cdots(1-\frac{\epsilon_0}{\lambda_i-\alpha_{n}})\big)$
\end{center}
\begin{center}
$\displaystyle\approx(x_i)^2\cdot\big(1-\epsilon_0\sum_{l=1}^{n}\frac{1}{\lambda_i-\alpha_l}\big).\qquad$
\end{center}
So
\begin{center}
$\displaystyle x_i^{(0)}-x_i=\frac{(x_i^{(0)})^2-(x_i)^2}{x_i^{(0)}+x_i}\approx\frac{x_i}{2}\cdot\frac{(x_i^{(0)})^2-(x_i)^2}{(x_i)^2}
\approx-\frac{1}{2}\big(x_i\sum_{l=1}^{n}\frac{1}{\lambda_i-\alpha_l}\big)\epsilon_1$
\end{center}
and 
\begin{center}
$\displaystyle \Delta^{(0)}:=\big(x_1\sum_{l=1}^{n}\frac{1}{\lambda_1-\alpha_l},x_2\sum_{l=1}^{n}\frac{1}{\lambda_2-\alpha_l},\ldots,x_{n+3}\sum_{l=1}^{n}\frac{1}{\lambda_n-\alpha_l}\big) \in x^{\perp} \subset V$.
\end{center}
If we project $\Delta^{(0)} \in x^{\perp}$ onto $W_x(=x^{\perp}\cap H)$, we obtain the vector $\mathbf u^{(0)} \in W_x$. Other two vectors $\mathbf u^{(1)}$ and $\mathbf u^{(2)}$ are also obtained similarly.
\end{proof}

\begin{lemma}\label{l.4}
With a standard basis $\{\mathbf e_{1,x}',\mathbf e_{2,x}',\ldots,\mathbf e_{n,x}'\}$ for $(\varphi_1|_{W_x},\varphi_2|_{W_x}) $ as in Lemma \ref{l.2}, each vector $\mathbf u^{(l)}=\sum_{i=1}^n u^{(l)}_i \mathbf e_{i,x}$ in Lemma \ref{l.3} is expressed in $\sum_{i=1}^n v^{(l)}_i \mathbf e_{i,x}'$ with $v^{(l)}_i=Z_i(\mathbf u^{(l)})\in\C$. Then
$v_i^{(1)}=\alpha_i v_i^{(0)}$, $v_i^{(2)}=\alpha_i^2 v_i^{(0)}$, and
\begin{center}
$\displaystyle (v_i^{(0)})^2=-\frac{(\alpha_i-\alpha_1)(\alpha_i-\alpha_2)\cdots(\alpha_i-\alpha_{\check i})\cdots(\alpha_i-\alpha_{n})}{(\alpha_i-\lambda_1)(\alpha_i-\lambda_2)\cdots(\alpha_i-\lambda_{n+3})}$
\end{center}
for $i\in\{1,2,\cdots,n\}$. Therefore, $\sP_x=\sP_{(\varphi_1|_{W_x},\varphi_2|_{W_x})}(\mathbf u^{(0)})\subset W_x$.
\end{lemma}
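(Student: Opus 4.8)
The plan is to reduce everything to evaluating the linear forms $F_i$ of Lemma \ref{l.2} on the three vectors $\mathbf u^{(0)},\mathbf u^{(1)},\mathbf u^{(2)}$ of Lemma \ref{l.3}. By Lemma \ref{l.2}, in the coordinate $Z_i$ attached to the standard basis $\{\mathbf e_{1,x}',\dots,\mathbf e_{n,x}'\}$ one has $Z_i^2=-c_iF_i^2$ as quadratic forms on $W_x$; since $c_i\neq0$, this means $Z_i=\eta_iF_i$ for a single nonzero scalar $\eta_i$ with $\eta_i^2=-c_i$, and the same $\eta_i$ governs $Z_i$ on all of $W_x$. Hence $v_i^{(l)}=Z_i(\mathbf u^{(l)})=\eta_iF_i(\mathbf u^{(l)})$, and it suffices to prove
\[
F_i(\mathbf u^{(l)})=-\frac{\alpha_i^{\,l}\prod_{k\neq i}(\alpha_i-\alpha_k)}{(\lambda_{n+1}-\alpha_i)\prod_{k=1}^{n}(\alpha_i-\lambda_k)}\qquad(l=0,1,2).
\]
From $v_i^{(l)}=\eta_iF_i(\mathbf u^{(l)})$ and $F_i(\mathbf u^{(l)})=\alpha_i^{\,l}F_i(\mathbf u^{(0)})$ this yields $v_i^{(1)}=\alpha_iv_i^{(0)}$ and $v_i^{(2)}=\alpha_i^2v_i^{(0)}$ at once, while the formula for $(v_i^{(0)})^2$ will come out by inserting $F_i(\mathbf u^{(0)})^2$ into $(v_i^{(0)})^2=-c_iF_i(\mathbf u^{(0)})^2$ and cancelling.

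For the displayed identity I would substitute the explicit $u_j^{(l)}$ of Lemma \ref{l.3} into $F_i(\mathbf u^{(l)})=\sum_{j=1}^{n}\tfrac{(\lambda_{n+2}-\lambda_j)(\lambda_{n+3}-\lambda_j)}{\alpha_i-\lambda_j}\,x_j\,u_j^{(l)}$, interchange the two summations, and pull the factor $\alpha_m^{\,l}/(\lambda_{n+1}-\alpha_m)$ outside; this leaves, for each index $m$, an inner sum over $j$ of $\tfrac{(\lambda_{n+1}-\lambda_j)(\lambda_{n+2}-\lambda_j)(\lambda_{n+3}-\lambda_j)\,x_j^2}{(\alpha_i-\lambda_j)(\lambda_j-\alpha_m)}$. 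Feeding in the value of $x_j^2$ from Assumption \ref{ass}, the three linear factors $(\lambda_{n+1}-\lambda_j)(\lambda_{n+2}-\lambda_j)(\lambda_{n+3}-\lambda_j)$ cancel, up to one overall sign, the factors $(\lambda_j-\lambda_{n+1})(\lambda_j-\lambda_{n+2})(\lambda_j-\lambda_{n+3})$ in the denominator of $x_j^2$, so the inner sum becomes $-\sum_{j=1}^{n}\tfrac{\prod_{k=1}^{n}(\lambda_j-\alpha_k)}{(\alpha_i-\lambda_j)(\lambda_j-\alpha_m)\prod_{k\le n,\,k\neq j}(\lambda_j-\lambda_k)}$. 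For $m\neq i$ the numerator is divisible by $(\lambda_j-\alpha_i)(\lambda_j-\alpha_m)$, so after cancellation one is summing $\sum_j Q_m(\lambda_j)/\prod_{k\neq j}(\lambda_j-\lambda_k)$ with $\deg Q_m=n-2$, which vanishes by the elementary Lagrange identity $\sum_{j=1}^{n}Q(\lambda_j)/\prod_{k\neq j}(\lambda_j-\lambda_k)=0$ valid for $\deg Q\le n-2$. Thus only the diagonal term $m=i$ survives, which is exactly why the factor $\alpha_i^{\,l}$ appears; there the inner sum is $\sum_{j}\tfrac{\prod_{k\neq i}(\lambda_j-\alpha_k)}{(\lambda_j-\alpha_i)\prod_{k\neq j}(\lambda_j-\lambda_k)}$, which I would evaluate by recognising it as minus the residue at $t=\alpha_i$ of $\tfrac{\prod_{k\neq i}(t-\alpha_k)}{(t-\alpha_i)\prod_{k=1}^{n}(t-\lambda_k)}$: this rational function is $O(t^{-2})$ at infinity with simple poles only at $\alpha_i$ and the $\lambda_k$, so the sum of all its residues is zero, giving the value $-\prod_{k\neq i}(\alpha_i-\alpha_k)/\prod_{k=1}^{n}(\alpha_i-\lambda_k)$. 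Reassembling the outside factors $\alpha_i^{\,l}/(\lambda_{n+1}-\alpha_i)$ yields the displayed formula. I expect this block — keeping the signs straight through the cancellations, and the degree count that kills the off-diagonal inner sums — to be the only delicate part; everything else is formal.

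Finally, squaring gives $(v_i^{(0)})^2=\eta_i^2F_i(\mathbf u^{(0)})^2=-c_iF_i(\mathbf u^{(0)})^2$, and inserting $c_i=\tfrac{1}{\prod_{k\neq i}(\alpha_i-\alpha_k)}\cdot\tfrac{\prod_{m=1}^{n+1}(\alpha_i-\lambda_m)}{(\alpha_i-\lambda_{n+2})(\alpha_i-\lambda_{n+3})}$ from Lemma \ref{l.2} together with the value of $F_i(\mathbf u^{(0)})^2$, the factors $\prod_{k=1}^{n}(\alpha_i-\lambda_k)$, $(\alpha_i-\lambda_{n+1})$ and $\prod_{k\neq i}(\alpha_i-\alpha_k)$ collapse and leave $(v_i^{(0)})^2=-\prod_{k\neq i}(\alpha_i-\alpha_k)/\prod_{k=1}^{n+3}(\alpha_i-\lambda_k)$, as asserted. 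Since the $\alpha_k$ are pairwise distinct and distinct from every $\lambda_k$ (Proposition \ref{p.3.1}, Assumption \ref{ass}), each $(v_i^{(0)})^2\neq0$, so all $v_i^{(0)}\neq0$ and $\mathbf u^{(0)}=\sum_i v_i^{(0)}\mathbf e_{i,x}'$ is a $(\varphi_1|_{W_x},\varphi_2|_{W_x})$-general vector; moreover, by $v_i^{(l)}=\alpha_i^{\,l}v_i^{(0)}$ and Definition \ref{d.ma}, $\mathbf u^{(1)}=\alpha_{(\varphi_1|_{W_x},\varphi_2|_{W_x})}(\mathbf u^{(0)})$ and $\mathbf u^{(2)}=(\alpha_{(\varphi_1|_{W_x},\varphi_2|_{W_x})})^2(\mathbf u^{(0)})$. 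Since $\sP_x$ is spanned by $\mathbf u^{(0)},\mathbf u^{(1)},\mathbf u^{(2)}$ by Lemma \ref{l.3}, Notation \ref{d.pu} gives $\sP_x=\sP_{(\varphi_1|_{W_x},\varphi_2|_{W_x})}(\mathbf u^{(0)})$, which in particular is three-dimensional, in agreement with Proposition \ref{p.fiber}.
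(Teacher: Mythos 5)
Your proof is correct, and its skeleton is the same as the paper's: identify $Z_i=\eta_iF_i$ with $\eta_i^2=-c_i$ from Lemma \ref{l.2}, reduce everything to the single evaluation $F_i(\mathbf u^{(l)})=\alpha_i^l\,\prod_{k\neq i}(\alpha_i-\alpha_k)\big/\prod_{k=1}^{n+1}(\alpha_i-\lambda_k)$ (your expression with the prefactor $-1/(\lambda_{n+1}-\alpha_i)$ is the same quantity), interchange the double sum, and split into the diagonal term $m=i$ and the off-diagonal terms. The only genuine divergence is how the resulting rational-function sums are killed or evaluated. The paper invokes its Proposition \ref{l.1} --- an identity it derives rather indirectly from the surjectivity of the discriminant map on an auxiliary intersection of quadrics --- applying it once with shifted indices to annihilate each $S_k$ with $k\neq i$, and once more with $\alpha_i$ adjoined as an extra node to evaluate $S_i$. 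You instead use the standard Lagrange leading-coefficient identity $\sum_j Q(\lambda_j)/\prod_{k\neq j}(\lambda_j-\lambda_k)=0$ for $\deg Q\le n-2$ on the off-diagonal terms, and a sum-of-residues argument for the diagonal one; these are exactly the identities Proposition \ref{l.1} encodes, but your route is self-contained and arguably cleaner, since it avoids the detour through the geometry of $\theta_{(\varphi_1',\varphi_2')}$. Your sign bookkeeping is consistent with the statement of Lemma \ref{l.2} (the paper's own proof writes $(v_i^{(l)})^2=c_iF_i(\mathbf u^{(l)})^2$, dropping the minus sign of Lemma \ref{l.2}; your version with $-c_i$ is the one that reproduces the sign in the lemma), and your closing deduction that $\mathbf u^{(0)}$ is $(\varphi_1|_{W_x},\varphi_2|_{W_x})$-general and that $\mathbf u^{(1)},\mathbf u^{(2)}$ are its images under $\alpha_{(\varphi_1|_{W_x},\varphi_2|_{W_x})}$ and its square is exactly what is needed to conclude $\sP_x=\sP_{(\varphi_1|_{W_x},\varphi_2|_{W_x})}(\mathbf u^{(0)})$.
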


\begin{proof}
By Lemma \ref{l.2}, 
\begin{displaymath}
(v_i^{(l)})^2=\big(Z_i(\mathbf u^{(l)})\big)^2=c_i\big(F_i(\mathbf u^{(l)})\big)^2
\end{displaymath}
where
\begin{displaymath}
c_i=\frac{1}{\prod_{j \neq i}(\alpha_i-\alpha_j)}\cdot\frac{\prod_{k=1}^{n+1}(\alpha_i-\lambda_{k})}{(\alpha_i-\lambda_{n+2})(\alpha_i-\lambda_{n+3})}\in\C.
\end{displaymath}
So it suffices to show
\begin{equation}\label{e110} 
F_i(\mathbf u^{(l)})=\frac{(\alpha_i-\alpha_1)(\alpha_i-\alpha_2)\cdots(\alpha_i-\alpha_{\check i})\cdots(\alpha_i-\alpha_{n})}{(\alpha_i-\lambda_1)(\alpha_i-\lambda_2)\cdots(\alpha_i-\lambda_{n+1})}\cdot(\alpha_i)^{l}.
\end{equation}
Firstly, recall the definitions
\begin{displaymath} 
F_i\big(\sum_{j=1}^n z_j \mathbf e_{j,x}\big)=-\sum_{j=1}^{n}\frac{(\lambda_{n+2}-\lambda_j)(\lambda_{n+3}-\lambda_j)}{(\lambda_j-\alpha_i)}x_jz_j
\end{displaymath}
and 
\begin{displaymath}
 z_j(\mathbf u^{(l)})=u^{(l)}_j=x_j\sum_{k=1}^{n}\frac{(\lambda_{n+1}-\lambda_j)(\alpha_k)^{l}}{(\lambda_j-\alpha_k)(\lambda_{n+1}-\alpha_k)}
\end{displaymath}
in Lemma \ref{l.2} and \ref{l.3}. For simplicity, let $i=1$. Then
\begin{center}
$\displaystyle F_1(\mathbf u^{(l)})=-\sum_{j=4}^{n}\Big(\frac{(\lambda_{n+2}-\lambda_j)(\lambda_{n+3}-\lambda_j)}{(\lambda_j-\alpha_1)}x_j\cdot z_j(\mathbf u^{(l)})\Big)\qquad\qquad$\\
$\displaystyle\qquad\qquad=-\sum_{j=1}^{n}\Big(\frac{(\lambda_{n+2}-\lambda_j)(\lambda_{n+3}-\lambda_j)}{(\lambda_j-\alpha_1)}x_j\big(x_j\sum_{k=1}^{n}\frac{(\lambda_{n+1}-\lambda_j)(\alpha_k)^{l}}{(\lambda_j-\alpha_k)(\lambda_{n+1}-\alpha_k)} \big)\Big)$,
\end{center}
so
\begin{equation}\label{e25}
F_1(\mathbf u^{(l)})=-\sum_{k=1}^{n}\Big( (\alpha_k)^{l} \sum_{j=1}^{n}\big(\frac{(\lambda_{n+1}-\lambda_j)(\lambda_{n+2}-\lambda_j)(\lambda_{n+3}-\lambda_j)}{(\lambda_j-\alpha_1)(\lambda_j-\alpha_k)(\lambda_{n+1}-\alpha_k)}x_j^2\big)\Big).
\end{equation}
Let $S_k$ be the sum $\displaystyle\sum_{j=1}^{n}\frac{(\lambda_{n+1}-\lambda_j)(\lambda_{n+2}-\lambda_j)(\lambda_{n+3}-\lambda_j)}{(\lambda_j-\alpha_1)(\lambda_j-\alpha_k)(\lambda_{n+1}-\alpha_k)}x_j^2$ in (\ref{e25}), i.e.
\begin{equation}\label{e26}
F_1(\mathbf u^{(l)})=-\sum_{k=1}^{n}\big( (\alpha_k)^{l} S_k\big).
\end{equation}
We claim that $S_k$ vanishes  $k\neq 1$ (i.e. $k\neq i$) and $S_1$ gives (\ref{e110}). For $k=2$,
\begin{center}
$\displaystyle S_2:=\sum_{j=1}^{n}\frac{(\lambda_{n+1}-\lambda_j)(\lambda_{n+2}-\lambda_j)(\lambda_{n+3}-\lambda_j)}{(\lambda_j-\alpha_1)(\lambda_j-\alpha_2)(\lambda_{n+1}-\alpha_2)}x_j^2\qquad$\\
$=\displaystyle\sum_{j=1}^{n+1}\Big(\frac{(\lambda_{n+1}-\lambda_j)}{(\lambda_{n+1}-\alpha_2)}\cdot\frac{(\lambda_j-\lambda_{n+2})(\lambda_j-\lambda_{n+3})}{(\lambda_j-\alpha_1)(\lambda_j-\alpha_2)}x_j^2\Big)$\\
$=\displaystyle\sum_{j=1}^{n+1}\Big(\frac{(\lambda_{n+1}-\lambda_j)}{(\lambda_{n+1}-\alpha_2)}\cdot\frac{(\lambda_j-\lambda_{n+2})(\lambda_j-\lambda_{n+3})}{(\lambda_j-\alpha_1)(\lambda_j-\alpha_2)}\cdot\frac{(\lambda_j-\alpha_1)(\lambda_j-\alpha_2)\cdots(\lambda_j-\alpha_{n})}{(\lambda_j-\lambda_1)\cdots(\lambda_j-\lambda_{\check j})\cdots(\lambda_j-\lambda_{n+3})}\Big)$\\
$=\displaystyle\sum_{j=1}^{n+1}\Big(\frac{(\lambda_{n+1}-\lambda_j)}{(\lambda_{n+1}-\alpha_2)}\cdot\frac{(\lambda_j-\alpha_3)(\lambda_j-\alpha_4)\cdots(\lambda_j-\alpha_{n})}{(\lambda_j-\lambda_1)(\lambda_j-\lambda_2)\cdots(\lambda_j-\lambda_{\check j})\cdots(\lambda_j-\lambda_{n+1})}\Big)$\\
$=\displaystyle\sum_{j=1}^{n+1}\Big(\big(\frac{\lambda_{n+1}}{(\lambda_{n+1}-\alpha_2)}-\frac{\lambda_j}{(\lambda_{n+1}-\alpha_2)}\big)\cdot\frac{(\lambda_j-\alpha_3)(\lambda_j-\alpha_4)\cdots(\lambda_j-\alpha_{n})}{(\lambda_j-\lambda_1)(\lambda_j-\lambda_2)\cdots(\lambda_j-\lambda_{\check j})\cdots(\lambda_j-\lambda_{n+1})}\Big)$.
\end{center}
So
\begin{equation}\label{e1000}
S_2=\frac{\lambda_{n+1}}{(\lambda_{n+1}-\alpha_2)}S_2'-\frac{1}{(\lambda_{n+1}-\alpha_2)}S_2''
\end{equation}
where
\begin{center}
$S_2':=\displaystyle\sum_{j=1}^{n+1}\frac{(\lambda_j-\alpha_3)(\lambda_j-\alpha_4)\cdots(\lambda_j-\alpha_{n})}{(\lambda_j-\lambda_1)(\lambda_j-\lambda_2)\cdots(\lambda_j-\lambda_{\check j})\cdots(\lambda_j-\lambda_{n+1})}$
\end{center}
and
\begin{center}
$S_2'':=\displaystyle\sum_{j=1}^{n+1}\Big(\lambda_j\cdot\frac{(\lambda_j-\alpha_3)(\lambda_j-\alpha_4)\cdots(\lambda_j-\alpha_{n})}{(\lambda_j-\lambda_1)(\lambda_j-\lambda_2)\cdots(\lambda_j-\lambda_{\check j})\cdots(\lambda_j-\lambda_{n+1})}\Big)$.
\end{center}
Let $n':=n-2$, $\lambda_j':=\lambda_{j}$, and $\alpha_i':=\alpha_{i+2}$ for $1\leq j \leq n'+3$ and $1\leq i\leq n'$. Then $S_2'=0=S_2''$ by Proposition \ref{l.1}, and the sum $S_2$ in (\ref{e1000}) is also zero. Similarly $S_k$ is zero for $k\neq 1$, and (\ref{e26}) turns into 
\begin{equation}\label{e27}
F_1(\mathbf u^{(l)})=-(\alpha_k)^{l} S_1.
\end{equation}

On the other hand,
\begin{center}
$\displaystyle S_1:=\sum_{j=1}^{n}\frac{(\lambda_{n+1}-\lambda_j)(\lambda_{n+2}-\lambda_j)(\lambda_{n+3}-\lambda_j)}{(\lambda_j-\alpha_1)(\lambda_j-\alpha_1)(\lambda_{n+1}-\alpha_1)}x_j^2\qquad$\\
$=\displaystyle\sum_{j=1}^{n+1}\Big(\frac{(\lambda_{n+1}-\lambda_j)}{(\lambda_{n+1}-\alpha_1)}\cdot\frac{(\lambda_j-\lambda_{n+2})(\lambda_j-\lambda_{n+3})}{(\lambda_j-\alpha_1)(\lambda_j-\alpha_1)}x_j^2\Big)$\\
$=\displaystyle\sum_{j=1}^{n+1}\Big(\frac{(\lambda_{n+1}-\lambda_j)}{(\lambda_{n+1}-\alpha_1)}\cdot\frac{(\lambda_j-\lambda_{n+2})(\lambda_j-\lambda_{n+3})}{(\lambda_j-\alpha_1)(\lambda_j-\alpha_1)}\cdot\frac{(\lambda_j-\alpha_1)(\lambda_j-\alpha_2)\cdots(\lambda_j-\alpha_{n})}{(\lambda_j-\lambda_1)\cdots(\lambda_j-\lambda_{\check j})\cdots(\lambda_j-\lambda_{n+3})}\Big)$\\
$=\displaystyle\sum_{j=1}^{n+1}\Big(\frac{(\lambda_{n+1}-\lambda_j)}{(\lambda_{n+1}-\alpha_1)}\cdot\frac{(\lambda_j-\alpha_2)(\lambda_j-\alpha_3)\cdots(\lambda_j-\alpha_{n})}{(\lambda_j-\alpha_1)\cdot(\lambda_j-\lambda_1)\cdots(\lambda_j-\lambda_{\check j})\cdots(\lambda_j-\lambda_{n+1})}\Big)$\\
$=\displaystyle\sum_{j=1}^{n+1}\Big(\big(\frac{\lambda_{n+1}}{(\lambda_{n+1}-\alpha_1)}-\frac{\lambda_j}{(\lambda_{n+1}-\alpha_1)}\big)\cdot\frac{(\lambda_j-\alpha_2)(\lambda_j-\alpha_3)\cdots(\lambda_j-\alpha_{n})}{(\lambda_j-\alpha_1)\cdot(\lambda_j-\lambda_1)\cdots(\lambda_j-\lambda_{\check j})\cdots(\lambda_j-\lambda_{n+1})}\Big)$.
\end{center}
So
\begin{equation}\label{e28}
S_1=\frac{\lambda_{n+1}}{(\lambda_{n+1}-\alpha_1)}S_1'-\frac{1}{(\lambda_{n+1}-\alpha_1)}S_1''
\end{equation}
where
\begin{center}
$S_1':=\displaystyle\sum_{j=1}^{n+1}\frac{(\lambda_j-\alpha_2)(\lambda_j-\alpha_3)\cdots(\lambda_j-\alpha_{n})}{(\lambda_j-\alpha_1)\cdot(\lambda_j-\lambda_1)\cdots(\lambda_j-\lambda_{\check j})\cdots(\lambda_j-\lambda_{n+1})}$
\end{center}
and
\begin{center}
$S_1'':=\displaystyle\sum_{j=1}^{n+1}\Big(\lambda_j\cdot\frac{(\lambda_j-\alpha_2)(\lambda_j-\alpha_3)\cdots(\lambda_j-\alpha_{n})}{(\lambda_j-\alpha_1)\cdot(\lambda_j-\lambda_1)\cdots(\lambda_j-\lambda_{\check j})\cdots(\lambda_j-\lambda_{n+1})}\Big)$.
\end{center}

Let $n'':=n-1$, $\lambda_1'':=\alpha_1$, $\lambda_j'':=\lambda_{j-1}$ for $2\leq j\leq n''+3$, and $\alpha_i'':=\alpha_{i+1}$ for $1\leq i\leq n''$. Then
\begin{center}
$S_1'=\displaystyle\sum_{j=2}^{n'+3}\frac{(\lambda_j''-\alpha_1'')(\lambda_j''-\alpha_2'')\cdots(\lambda_j''-\alpha_{n''}'')}{(\lambda_j''-\lambda_1'')(\lambda_j''-\lambda_2'')\cdots(\lambda_j''-\lambda_{\check j}'')\cdots(\lambda_j''-\lambda_{n''+3}'')}$
\end{center}
and
\begin{center}
$S_1''=\displaystyle\sum_{j=2}^{n'+3}\Big(\lambda_j''\cdot\frac{(\lambda_j''-\alpha_1'')(\lambda_j''-\alpha_2'')\cdots(\lambda_j''-\alpha_{n''}'')}{(\lambda_j''-\lambda_1'')(\lambda_j''-\lambda_2'')\cdots(\lambda_j''-\lambda_{\check j}'')\cdots(\lambda_j''-\lambda_{n''+3}'')}\Big)$.
\end{center}
By Proposition \ref{l.1} again, 
\begin{center}
$S_1'=\displaystyle-\frac{(\lambda_1''-\alpha_1'')(\lambda_1''-\alpha_2'')\cdots(\lambda_1''-\alpha_{n''}'')}{(\lambda_1''-\lambda_2'')(\lambda_1''-\lambda_3'')\cdots(\lambda_1''-\lambda_{n''+3}'')}$
\end{center}
\begin{center}
$=\displaystyle-\frac{(\alpha_1-\alpha_2)(\alpha_1-\alpha_3)\cdots(\alpha_1-\alpha_{n})}{(\alpha_1-\lambda_1)(\alpha_1-\lambda_1)\cdots(\alpha_1-\lambda_{n+1})}$
\end{center}
and
\begin{center}
$S_1''=\displaystyle-\lambda_1''\frac{(\lambda_1''-\alpha_1'')(\lambda_1''-\alpha_2'')\cdots(\lambda_1''-\alpha_{n''}'')}{(\lambda_1''-\lambda_2'')(\lambda_1''-\lambda_3'')\cdots(\lambda_1''-\lambda_{n''+3}'')}\qquad\quad$
\end{center}
\begin{center}
$\qquad=\displaystyle-\alpha_1\frac{(\alpha_1-\alpha_2)(\alpha_1-\alpha_3)\cdots(\alpha_1-\alpha_{n})}{(\alpha_1-\lambda_1)(\alpha_1-\lambda_2)\cdots(\alpha_1-\lambda_{n+1})}\,(\,=\alpha_1S_1')\,$.
\end{center}
Hence, (\ref{e28}) implies
$S_1=\displaystyle\frac{\lambda_{n+1}}{(\lambda_{n+1}-\alpha_1)}S_1'-\frac{1}{(\lambda_{n+1}-\alpha_1)}(\alpha_1S_1')=S_1'$. Then, by (\ref{e26}),   
\begin{center} 
$\displaystyle F_i(\mathbf u^{(l)})=-(\alpha_1)^{l}S_1'=\frac{(\alpha_i-\alpha_1)(\alpha_i-\alpha_2)\cdots(\alpha_i-\alpha_{\check i})\cdots(\alpha_i-\alpha_{n})}{(\alpha_i-\lambda_1)(\alpha_i-\lambda_2)\cdots(\alpha_i-\lambda_{n+1})}\cdot(\alpha_i)^{l}$
\end{center}
for $i=1$. In the same way, we obtain (\ref{e110}) for any $i$.\\
\end{proof}

\begin{theorem}\label{t.71}
Let $X \subset \BP^{n+2}$ be a nonsingular intersection of two quadric hypersurfaces with $n\geq3$. Let $\mu^X: X^o \to \sM^{\rm PQ}_n$ be the moduli map of second fundamental forms on $X$. Then $\sP_x=\Ker(\rd_x\mu^X)\subset T_x(X)$ at $x\in X^{\rm reg}$ is a three-dimensional vector subspace poised by the second fundamental form $II_{X,x}$.  
\end{theorem}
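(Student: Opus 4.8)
\textit{Proof strategy.} The plan is to assemble the statement directly from the explicit description of $\sP_x$ built up in Lemmas \ref{l.2}--\ref{l.4}, combined with the dimension count of Proposition \ref{p.fiber} and the invariance results of Section \ref{poised}; the substantive computations have all been carried out there, and Theorem \ref{t.71} is essentially the bookkeeping that packages them. First I would put $x\in X^{\rm reg}$ in the normal form of Notation \ref{n.7} and work under Assumption \ref{ass}. A regular point satisfies conditions $(i)$, $(ii)$, $(iv)$ of Definition \ref{d.regular} (and also $(iii)$, which is not needed for this theorem), so Lemmas \ref{l.2}, \ref{l.3}, \ref{l.4} all apply to $x$. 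Under the identification of $T_x(X)$ with $W_x=x^{\perp}\cap H$ provided by Lemma \ref{p55}, the second fundamental form $II_{X,x}$ is the pencil $\Phi_{(\varphi_1|_{W_x},\varphi_2|_{W_x})}$ on $W_x$; by condition $(ii)$ this pencil is nonsingular, so $(\varphi_1|_{W_x},\varphi_2|_{W_x})$ is a good pair of $II_{X,x}$ in the sense of Definition \ref{d.4.2}.

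Next I would invoke Lemma \ref{l.4}. Writing $\alpha:=\alpha_{(\varphi_1|_{W_x},\varphi_2|_{W_x})}$ and expressing the three generators $\mathbf u^{(0)},\mathbf u^{(1)},\mathbf u^{(2)}$ of $\sP_x$ from Lemma \ref{l.3} in the standard basis $\{\mathbf e_{1,x}',\dots,\mathbf e_{n,x}'\}$ for $(\varphi_1|_{W_x},\varphi_2|_{W_x})$, Lemma \ref{l.4} gives $v_i^{(1)}=\alpha_i v_i^{(0)}$ and $v_i^{(2)}=\alpha_i^2 v_i^{(0)}$, which by the definition of $\alpha$ in Definition \ref{d.ma} is precisely the assertion $\mathbf u^{(1)}=\alpha(\mathbf u^{(0)})$ and $\mathbf u^{(2)}=\alpha^2(\mathbf u^{(0)})$. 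Moreover Lemma \ref{l.4} gives $(v_i^{(0)})^2=-\prod_{j\neq i}(\alpha_i-\alpha_j)/\prod_{k=1}^{n+3}(\alpha_i-\lambda_k)$, whose numerator is nonzero since the $\alpha_j$ are distinct and whose denominator is nonzero since, by Proposition \ref{p.3.1}, $\alpha_i\neq\lambda_k$ for all $i,k$ once $x\in X^{\rm reg}$; hence every $v_i^{(0)}\neq0$, i.e. $\mathbf u^{(0)}$ is $(\varphi_1|_{W_x},\varphi_2|_{W_x})$-general. Therefore $\sP_x=\langle\mathbf u^{(0)},\alpha(\mathbf u^{(0)}),\alpha^2(\mathbf u^{(0)})\rangle=\sP_{(\varphi_1|_{W_x},\varphi_2|_{W_x})}(\mathbf u^{(0)})$ in the sense of Notation \ref{d.pu}. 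This subspace is poised by $(\varphi_1|_{W_x},\varphi_2|_{W_x})$ by Definition \ref{d.poised}, hence poised by $II_{X,x}$ by Definition \ref{d.sphi} (this last passage being well defined by Proposition \ref{p.4.4}). It is three-dimensional because $\mathbf u^{(0)}$ is general, which also recovers the statement $\dim\Ker(\rd_x\mu^X)=3$ of Proposition \ref{p.fiber}. Since $\sP_x=\Ker(\rd_x\mu^X)$ by Definition \ref{d.plane}, this completes the argument.

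\textit{Where the difficulty lies.} The hard part is not this final packaging but everything upstream of it: the explicit simultaneous diagonalization $\varphi_1|_{W_x}=-\sum_i\alpha_i c_iF_i^2$, $\varphi_2|_{W_x}=-\sum_i c_iF_i^2$ in Lemma \ref{l.2}, and then the verification in Lemma \ref{l.4}, via the combinatorial identities of Proposition \ref{l.1}, that the infinitesimal $\mathrm{SL}(2,\C)$-generators of the discriminant $\mathrm{D}(II_{X,x},\varphi_1,\varphi_2)$ produce exactly the vectors $\mathbf u^{(l)}$ with $F_i(\mathbf u^{(l)})=(\alpha_i)^{l}\prod_{j\neq i}(\alpha_i-\alpha_j)/\prod_{k=1}^{n+1}(\alpha_i-\lambda_k)$. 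Once those are in hand the theorem is immediate, but one should take care that the identification $T_x(X)\cong W_x$ used in Lemmas \ref{l.2}--\ref{l.4} is the same one implicit in the definitions of $II_{X,x}$ and of $\sP_x$, so that being ``poised'', a $\mathrm{PGL}$-invariant notion, genuinely transfers from $W_x$ to $T_x(X)$; this is guaranteed because all the maps involved ($\mu^X$, the second fundamental form, the projection $x^{\perp}\to x^{\perp}/\widehat x\cong W_x$) are intrinsic to $X$ up to projective transformation.
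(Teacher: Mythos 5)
Your proposal is correct and follows the same route as the paper: the paper's own proof is a two-line citation of Proposition \ref{p.fiber} (for $\dim\sP_x=3$) and Lemma \ref{l.4} (for $\sP_x=\sP_{(\varphi_1|_{W_x},\varphi_2|_{W_x})}(\mathbf u^{(0)})$, hence poised), and you supply exactly the bookkeeping those citations compress. Your extra check that $\mathbf u^{(0)}$ is $(\varphi_1|_{W_x},\varphi_2|_{W_x})$-general (numerator nonzero by distinctness of the $\alpha_j$, denominator nonzero since $\alpha_i\neq\lambda_k$ by Proposition \ref{p.3.1}) is the right detail to make Definition \ref{d.poised} applicable, and your assessment that the real work lives in Lemmas \ref{l.2}--\ref{l.4} is accurate.
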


\begin{proof}
Proposition \ref{p.fiber} says that $\sP_x=\Ker(\rd_x\mu^X)\subset T_x(X)$ is a three-dimensional vector subspace at any $x\in X^{\rm reg}$ and Lemma \ref{l.4} implies that $\sP_x$ is poised by $II_{X,x}$.  
\end{proof}

%%%%%%%%%%%%%%%%%%%%%%%%%%%%%%%%%%%%%%%%%%%%%%%%%%%%%%%%%%%%%%%%%%%% Refined moduli map

\section{Refined moduli map of second fundamental forms}\label{mainthm}

The final goal of this section is to prove Theorem \ref{t.3}. We define the refined moduli map of second fundamental forms first. 

\begin{definition}\label{d.refmu}
When $X\in \BP^{n+2}$ is an intersection of two quadric hypersurfaces, define the \textit{refined moduli map of second fundamental forms} 
\begin{center}
$\widetilde\mu^X:X^{\rm reg} \to \widetilde{\sM}^{PQ}_n$ 
\end{center}
as a map assigning the isomorphism class of the pair 
\begin{center}
$({II}_{X,x},\sP_x) \in \Gr(1, \BP(\Sym^2(W^*)))\times\Gr(3,W)$ 
\end{center}
for $W=T_x(X)$ to each $x\in X^{\rm reg}$.
\end{definition}

\begin{definition}\label{d.tangentx}
Let $X\subset\BP^{n+2}$ be a nonsingular intersection of two quadric hypersurfaces with $n>3$. Recall Definition \ref{d.plane}. For $x\in X^{\rm reg}$,  we denote by $\sT_x$ the image of 
\begin{center}
$\sP_x=\Ker(\rd_x\mu^X)\subset T_x(X)$ 
\end{center}
through the derivative of $\widetilde\mu^X$ at $x$, i.e.,
\begin{center}
$\sT_x:=\rd_{x}\widetilde\mu^X(\sP_x)\subset T_{\widetilde\mu^X(x)}(\widetilde\sM^{\rm PQ}_n)$.
\end{center} 
\end{definition}

\begin{remark}
Later we will show ${\rm dim}(\sT_x)={\rm dim}(\sP_x)=3$ at general $x\in X$. Then the derivative of $\,\widetilde\mu^X$ at general $x\in X$ is injective, see Theorem \ref{t.inj}. 
\end{remark}

Now we verify a few lemmas to prove Theorem \ref{t.inj}, which is an essential ingredient for the proof of Theorem \ref{theoremB}(=Theorem \ref{t.3}). 

\begin{assumption}\label{asss}
From Lemma \ref{l.7} to Lemma \ref{l.9.1}, we work with Notation \ref{n.7} and assume ${\rm dim}(X)=n>3$. Moreover, we fix a point $x=[x_1:x_2:\cdots:x_{n+3}]\in X^{\rm reg}$.
As in Assumption \ref{ass}, take $W_x=x^{\perp}\cap H$ with its basis $\{\mathbf e_1', \mathbf e_2', \ldots, \mathbf e_n'\}$ and identify $W_x$ with $T_x(X)$. Let 
 \begin{center}
 $\mathbf e_{i,x}:=\mathbf e_i'\in W_x\subset V$ 
 \end{center}
 for $1\leq i\leq n$.
  Then the discriminant polynomial
$\det(s\varphi_1|_{W_x}-t\varphi_1|_{W_x})\in\sB_n$
 has $n$ distinct roots 
\begin{center}
 $[1:\alpha_1], [1:\alpha_2], \ldots, [1:\alpha_n] \in \BP(\C^2)$ 
 \end{center}
different from $[1:0], [1:\lambda_1], [1:\lambda_2], \ldots, [1:\lambda_{n+3}] \in \BP(\C^2)$. 
Moreover we fix a standard basis $B=\{\mathbf e_{1,x}',\mathbf e_{1,x}', \ldots, \mathbf e_{1,x}'\}$ of $W_x$ with respect to the pair $(\varphi_1|_{W_x},\varphi_2|_{W_x})$ such that \begin{displaymath}
\varphi_1|_{W_x}\big(\sum Z_i\mathbf e_{i,x}'\big)=\alpha_1Z_1^2+\alpha_2Z_2^2+\cdots+\alpha_{n}Z_{n}^2
\end{displaymath}
and 
\begin{displaymath}
\varphi_2|_{W_x}\big(\sum Z_i\mathbf e_{i,x}'\big)=Z_1^2+Z_2^2+\cdots+Z_{n}^2.
\end{displaymath}
as in Lemma \ref{l.2}. 
\end{assumption}

\begin{notation}\label{p.5.9}
Recall the birational morphism
\begin{displaymath}
v_{(\varphi_1|_{W_x},\varphi_2|_{W_x})}^{B}:\widetilde{q}(\{II_{X,x}\}\times S_{II_{X,x}})\subset\widetilde\sM^{\rm PQ}_n\to\BP(W_x) 
\end{displaymath}
in Proposition \ref{pp3} for $W=W_x$. When we denote by $X^{\mu}_x$ the fiber $(\mu^X)^{-1}(\mu^X(x))$ of the moduli map $\mu^X:X^o \to \sM^{\rm PQ}_n$, the composition of $v_{(\varphi_1|_{W_x},\varphi_2|_{W_x})}^{B}$ with the restriction of $\,\widetilde\mu^X:X\to\widetilde\sM^{\rm PQ}_n$ on ${X^{\mu}_x\cap X^{\rm reg}}$ gives a morphism 
\begin{center}
$v^{B}_{x}:X^{\mu}_x \cap X^{\rm reg} \to \BP(W_x)$. 
\end{center}
Note that $v^B_x$ depends on the choice of $\varphi_1, \varphi_2\in\widehat\Phi_X$ and the choice of a standard basis $B$. Furthermore, we regard $\sT_x$ in Definition \ref{d.tangentx} as 
\begin{displaymath}
\rd_{x}v^{B}_{x}(\sP_x)=\rd_{x}v^{B}_{x}(T_{x}(X^{\mu}_x))\subset T_{v^{B}_{x}(x)}(\BP (W_x))
\end{displaymath}
since $v_{(\varphi_1|_{W_x},\varphi_2|_{W_x})}^{B}$ is a birational morphism. 
\end{notation}

\begin{lemma}\label{l.7}
Let $n>3$. For $x'\in X^{\mu}_x \cap X^{\rm reg}$, the image $v^B_x(x')$ is expressed as follows. Let $[1:\alpha_1'], [1:\alpha_2'], \ldots, [1:\alpha_n']\in\BP(\C^2)$ be the roots of $\theta_{(\varphi_1,\varphi_2)}(x')$, i.e., 
\begin{displaymath}
\alpha_i'=\frac{a\alpha_i+b}{c\alpha_i+d} \rm{\quad for\,\,some\,\,}
\left[
\begin{array}{cc}
a&b\\
c&d
\end{array}\right]\in\rm{SL}(2,\C).
\end{displaymath}
Put 
$\displaystyle\lambda_j':={\frac{d\lambda_j-b}{-c\lambda_j+a}}$ for $1\leq j\leq n+3$. 
Then $v^B_x(x')=[\sum z^{x'}_i \mathbf e_{i,x}']\in\BP(W_x)$ where 
\begin{equation}\label{e.700}
z^{x'}_i=-\frac{(\alpha_i-\alpha_1)(\alpha_i-\alpha_2)\cdots(\alpha_i-\alpha_{\check i})\cdots(\alpha_i-\alpha_{n})}{(\alpha_i-\lambda_1')(\alpha_i-\lambda_2')\cdots(\alpha_i-\lambda_{n+3}')}\in\C.
\end{equation}
\end{lemma}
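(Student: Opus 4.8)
The plan is to track the effect of a general element of $\mathrm{SL}(2,\C)$ on the explicit formulas already established, and then feed the result through the birational morphism $v^B_{(\varphi_1|_{W_x},\varphi_2|_{W_x})}$ whose value we computed in Proposition \ref{pp3} and Lemma \ref{l.4}. Concretely, by Proposition \ref{p.sl2change} the point $x'\in X^\mu_x$ satisfies $\theta_{(\varphi_1,\varphi_2)}(x') = \begin{bmatrix} a & b \\ c & d \end{bmatrix}.\,\theta_{(\varphi_1,\varphi_2)}(x)$, so its roots are $[1:\alpha_i']$ with $\alpha_i' = \tfrac{a\alpha_i+b}{c\alpha_i+d}$; this is exactly the hypothesis on $x'$ in the statement. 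The first step is to apply the formula in (ii) of Proposition \ref{p5} (or the reformulation in Assumption \ref{ass}) to $x'$: its homogeneous coordinates satisfy $(x_i')^2 = \frac{\prod_{k=1}^n(\lambda_i - \alpha_k')}{\prod_{j\neq i}(\lambda_i-\lambda_j)}$. I would then substitute $\alpha_k' = \tfrac{a\alpha_k+b}{c\alpha_k+d}$ and clear denominators, obtaining $\lambda_i - \alpha_k' = \tfrac{(c\lambda_i + d)^{-1}\cdot((c\lambda_i+d)\lambda_i\cdot 0\ \text{--- rather}\ )}{\,}$; more precisely $(c\lambda_i+d)\alpha_k' = a\alpha_k + b$ gives $\lambda_i - \alpha_k' = \tfrac{(-c\lambda_i + a)\alpha_k - (d\lambda_i - b)}{c\alpha_k + d} = \tfrac{(-c\lambda_i+a)(\alpha_k - \lambda_i')}{c\alpha_k + d}$ where $\lambda_i' := \tfrac{d\lambda_i - b}{-c\lambda_i + a}$ is exactly the quantity introduced in the statement.

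The second step is to recognize that the data of the nonsingular pair $(\varphi_1|_{W_{x'}}, \varphi_2|_{W_{x'}})$ at $x'$, together with $\sP_{x'}$, is governed by precisely the same formulas as at $x$ but with the substitution $\lambda_j \rightsquigarrow \lambda_j'$ and $\alpha_i$ unchanged (since $x'$ lies in the same fiber $X^\mu_x$, its second fundamental form has the same roots $\alpha_1,\dots,\alpha_n$ up to the $\mathrm{SL}_2$-action, and $v^B_x$ is defined precisely so as to record the position of $\sP_{x'}$ inside the fixed standard basis $B$ for $(\varphi_1|_{W_x},\varphi_2|_{W_x})$). Here the key input is Lemma \ref{l.4}, which computes, in the standard basis $B$, that $\sP_{x'} = \sP_{(\varphi_1|_{W_{x'}},\varphi_2|_{W_{x'}})}(\mathbf u^{(0)})$ with the coordinates of $\mathbf u^{(0)}$ given by the square-root formula $(v_i^{(0)})^2 = -\tfrac{\prod_{k\neq i}(\alpha_i - \alpha_k)}{\prod_{j=1}^{n+3}(\alpha_i - \lambda_j)}$. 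Running that computation for $x'$ — i.e. replacing each $\lambda_j$ by $\lambda_j'$ throughout — and applying the description of $\widetilde v^B_{(\varphi_1|_{W_x},\varphi_2|_{W_x})}$ in Proposition \ref{pp3} (which sends $[\,(\Phi, \sP_{(\varphi_1,\varphi_2)}(\mathbf u))\,]$ to $[\sum u_i^2\mathbf w_i]$) yields $v^B_x(x') = [\sum z^{x'}_i\mathbf e'_{i,x}]$ with $(z^{x'}_i)^2 = -\tfrac{\prod_{k\neq i}(\alpha_i-\alpha_k)}{\prod_{j=1}^{n+3}(\alpha_i-\lambda_j')}$, and the sign/branch bookkeeping of Lemma \ref{l.4} fixes the particular square root in (\ref{e.700}).

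The step I expect to be the main obstacle is the compatibility of the standard basis: $v^B_x$ uses the \emph{fixed} standard basis $B = \{\mathbf e'_{1,x},\dots,\mathbf e'_{n,x}\}$ attached to the pair $(\varphi_1|_{W_x},\varphi_2|_{W_x})$ at the base point $x$, whereas the natural computation of $\sP_{x'}$ produces a vector expressed in a standard basis for $(\varphi_1|_{W_{x'}}, \varphi_2|_{W_{x'}})$ on the different space $W_{x'} = T_{x'}(X)$. One must check that under the canonical identification built into the definition of $v^B_x$ (via $\widetilde\mu^X$ followed by $v^B_{(\varphi_1|_{W_x},\varphi_2|_{W_x})}$) the roots $\alpha_i$ of the second fundamental form \emph{do not change} along the fiber — which is the content of $x'$ lying in $X^\mu_x$, i.e. $\mu^X(x') = \mu^X(x)$, together with the fact that in $\widetilde{\sM}^{\rm PQ}_n$ the pencil class is recorded independently of the three-plane via $\pi_n$. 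Granting that, the identity (\ref{e.700}) follows by substituting $\lambda_j \rightsquigarrow \lambda_j'$ into the formula of Lemma \ref{l.4}, and the proof reduces to the algebraic identities $\lambda_i - \alpha'_k = \tfrac{(-c\lambda_i + a)(\alpha_k - \lambda_i')}{c\alpha_k + d}$ noted above, which are routine to verify. I would therefore organize the write-up as: (1) compute $\lambda_j'$ and the coordinate transformation; (2) invoke Lemmas \ref{l.2}, \ref{l.3}, \ref{l.4} for $x'$ with $\lambda_j$ replaced by $\lambda_j'$; (3) apply Proposition \ref{pp3} to read off $v^B_x(x')$, checking the sign via the branch choice in Lemma \ref{l.4}.
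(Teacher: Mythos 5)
Your proposal follows essentially the same route as the paper: realize the data at $x'$ as the data at $x$ with $\lambda_j$ replaced by $\lambda_j'$, rerun Lemma \ref{l.4}, and use Proposition \ref{pp3}/Proposition \ref{p.isom} to read off $v^B_x(x')$ in the fixed basis $B$. The one point you should make explicit — and where your write-up is literally incorrect as stated — is the mechanism behind the substitution $\lambda_j\rightsquigarrow\lambda_j'$ "with $\alpha_i$ unchanged." For the pair $(\varphi_1|_{W_{x'}},\varphi_2|_{W_{x'}})$ that you name, the discriminant roots at $x'$ are $\alpha_i'$ and the ambient eigenvalues are still $\lambda_j$, so Lemma \ref{l.4} applied to that pair gives a formula in $\alpha_i'$ and $\lambda_j$, not the one you want. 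The paper's move is to replace the good pair of $\Phi_{X}$ itself: set $\varphi_1'=d\varphi_1-b\varphi_2$ and $\varphi_2'=-c\varphi_1+a\varphi_2$, so that by Proposition \ref{p.sl2change} one has $\theta_{(\varphi_1',\varphi_2')}(x')=\theta_{(\varphi_1,\varphi_2)}(x)=[\prod_i(\alpha_is-t)]$ while $[\det(s\varphi_1'-t\varphi_2')]=[\prod_j(\lambda_j's-t)]$; it is for this new pair that Lemma \ref{l.4} yields $(z_i)^2=-\prod_{k\neq i}(\alpha_i-\alpha_k)\big/\prod_j(\alpha_i-\lambda_j')$, and Proposition \ref{p.isom} then transports the answer into the basis $B$ at $x$. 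Your Möbius identity $\lambda_i-\alpha_k'=\frac{(-c\lambda_i+a)(\alpha_k-\lambda_i')}{c\alpha_k+d}$ is the algebra underlying this, and your detour through the coordinates $(x_i')^2$ of the point $x'$ is not needed; once the change of good pair is stated, the argument closes exactly as you outline.
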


\begin{proof}
By Lemma \ref{l.4}, $v^B_x(x)=\big[\sum (v^{(0)}_i)^2 \,\mathbf e_{i,x}'\big]\in\BP(W_x)$ where 
\begin{displaymath}%\label{e.7007}
(v^{(0)}_i)^2=-\frac{(\alpha_i-\alpha_1)(\alpha_i-\alpha_2)\cdots(\alpha_i-\alpha_{\check i})\cdots(\alpha_i-\alpha_{n})}{(\alpha_i-\lambda_1)(\alpha_i-\lambda_2)\cdots(\alpha_i-\lambda_{n+3})}\in\C.
\end{displaymath}
If $x'\in X^{\mu}_x \cap X^{\rm reg}$, then
\begin{equation}\label{equ3}
\theta_{(\varphi_1,\varphi_2)}(x')=\left[
\begin{array}{cc}
a&b\\
c&d
\end{array}\right].\,\theta_{(\varphi_1,\varphi_2)}(x)\in\BP(\sB_n)
\end{equation}
for some $a,b,c,d\in\C$ satisfying $ad-bc=1$. So let $\alpha_i'$ and $\lambda_j'$ be as in the statement.
On the other hand, by Proposition \ref{p.sl2change},
\begin{equation}\label{equ4}
\theta_{(\varphi_1',\varphi_2')}(x')
=\left[
\begin{array}{cc}
d&-b\\
-c&a
\end{array}\right].\,\theta_{(\varphi_1,\varphi_2)}(x')
\in\BP(\sB_n)
\end{equation}
where $\varphi_1'=d\varphi_1-b\varphi_2$ and $\varphi_2'=-c\varphi_1+a\varphi_2$. 
By (\ref{equ4}) and (\ref{equ3}),
\begin{equation}\label{equ1}
\theta_{(\varphi_1',\varphi_2')}(x')
=\theta_{(\varphi_1,\varphi_2)}(x)=\big[\prod_{i=1}^n (\alpha_is-t)\big]\in\BP(\sB_n).
\end{equation}
Moreover, 
\begin{equation}\label{equ2}
\big[\det(s\varphi_1'-t\varphi_2')\big]=\big[\prod_{j=1}^{n+3}(\lambda_j's-t)\big]\in\BP(\sB_{n+3}).
\end{equation}
By (\ref{equ1}) and (\ref{equ2}), if we apply Lemma \ref{l.4} to $x'$ with $(\varphi_1',\varphi_2')$, then 
\begin{center}
$\sP_{x'}=\sP_{(\varphi_1'|_{W_{x'}},\varphi_2'|_{W_{x'}})}(\sum z_i\mathbf w_i')$ 
\end{center}
for $\sum z_i\mathbf w_i'\in W_x'$ such that
\begin{displaymath}
(z_i)^2=-\frac{(\alpha_i-\alpha_1)(\alpha_i-\alpha_2)\cdots(\alpha_i-\alpha_{\check i})\cdots(\alpha_i-\alpha_{n})}{(\alpha_i-\lambda_1')(\alpha_i-\lambda_2')\cdots(\alpha_i-\lambda_{n+3}')}\in\C
\end{displaymath}
where $B'=\{\mathbf w_1', \mathbf w_2', \ldots, \mathbf w_n'\}$ is a standard basis for $(\varphi_1'|_{W_{x'}},\varphi_2'|_{W_{x'}})$. Then, by Proposition \ref{p.isom},  
\begin{displaymath}
v_{(\varphi_1|_{W_x},\varphi_2|_{W_x})}^B:\widetilde{q}(\{II_{X,x}\}\times S_{II_{X,x}})\subset\widetilde\sM^{\rm PQ}_n\to\BP(W_x)
\end{displaymath}
sends $[(II_{X,x'},\sP_{x'})]$ 
to $[\sum (z_i)^2 \mathbf e_{i,x}']=[\sum z^{x'}_i \mathbf e_{i,x}']\in\BP(W_x)$. 
\end{proof}

\begin{notation}\label{l.71}
In Proposition \ref{l.7}, $v^B_x(x)=[\sum z^{x}_i \mathbf e_{i,x}']\in\BP(W_x)$ where 
\begin{displaymath}
z^{x}_i=-\frac{(\alpha_i-\alpha_1)(\alpha_i-\alpha_2)\cdots(\alpha_i-\alpha_{\check i})\cdots(\alpha_i-\alpha_{n})}{(\alpha_i-\lambda_1)(\alpha_i-\lambda_2)\cdots(\alpha_i-\lambda_{n+3})}\in\C.
\end{displaymath}
We denote the vector $\sum z^{x}_i \mathbf e_{i,x}'\in W_x$ by $\mathbf v^x$, i.e.,
\begin{displaymath}
  [\mathbf v^x]=v^B_x(x)\in \BP(W_x).
  \end{displaymath} 
\end{notation}

\begin{lemma}\label{l.6}
When $\mathbf v^x=\sum z^{x}_i \mathbf e_{i,x}'\in W_x$ is as in Notation \ref{l.71}, consider 
three vectors $\mathbf w^{(0)}=\sum w^{(0)}_i \mathbf e_{i,x}'$, $\mathbf w^{(1)}=\sum w^{(1)}_i \mathbf e_{i,x}'$, $\mathbf w^{(2)}=\sum w^{(2)}_i \mathbf e_{i,x}'\in W_x$ with
\begin{equation}\label{e.72}
w^{(l)}_i=v^{x}_i\sum_{j=1}^{n+3}\lambda_j^l(\alpha_i-\lambda_j)^{-1}.
\end{equation}
Then $\sT_x$ corresponds to a vector subspace $\sT'_x \subset W_x$ generated by $\mathbf w^{(0)}$, $\mathbf w^{(1)}$, $\mathbf w^{(2)}$, and $\mathbf v^{x}$. 
\end{lemma}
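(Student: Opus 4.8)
The plan is to compute the derivative $\rd_x v_x^B$ directly along the fiber $X_x^\mu$, using the explicit formula for $v_x^B(x')$ given by Lemma \ref{l.7}. By Notation \ref{p.5.9}, $\sT_x = \rd_x v_x^B(\sP_x) = \rd_x v_x^B(T_x(X_x^\mu))$, so it suffices to differentiate the parametrization $x' \mapsto v_x^B(x') = [\sum_i z_i^{x'}\mathbf e_{i,x}']$ where, by \eqref{e.700},
\[
z_i^{x'} = -\frac{\prod_{k\neq i}(\alpha_i-\alpha_k)}{\prod_{j=1}^{n+3}(\alpha_i-\lambda_j')}, \qquad \lambda_j' = \frac{d\lambda_j - b}{-c\lambda_j + a},
\]
with $\begin{bmatrix} a & b \\ c & d\end{bmatrix}\in{\rm SL}(2,\C)$ the element carrying $\theta_{(\varphi_1,\varphi_2)}(x)$ to $\theta_{(\varphi_1,\varphi_2)}(x')$. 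The tangent space to $X_x^\mu$ at $x$ corresponds, under the identification with the $\mathrm{SL}(2,\C)$-orbit of $\theta_{(\varphi_1,\varphi_2)}(x)$ in $\BP(\sB_n)$, to the infinitesimal action of $\fsl(2,\C)$; as in the proof of Lemma \ref{l.3}, three generators of this action are the translations and the scaling, i.e.\ at the identity $\begin{bmatrix} a&b\\c&d\end{bmatrix} = \begin{bmatrix}1&0\\0&1\end{bmatrix}$ we may take the one-parameter families corresponding to $b = \epsilon$, $c = \epsilon$, and $a = 1+\epsilon, d = 1-\epsilon$ (suitably normalized). Differentiating $\lambda_j'$ in each of these three directions gives, respectively, $\dot\lambda_j = 1$, $\dot\lambda_j = -\lambda_j^2$, and $\dot\lambda_j = -2\lambda_j$ (up to an overall constant), i.e.\ the three directions produce $\dot\lambda_j \propto \lambda_j^l$ for $l = 0,1,2$.

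Next I would propagate these through the formula for $z_i^{x'}$. Since only the denominator $\prod_{j}(\alpha_i - \lambda_j')$ depends on $x'$ (the $\alpha_i$ are fixed along the fiber — this is the defining property of $X_x^\mu$, that the moduli class $[II_{X,x}]$, equivalently the $\mathrm{SL}(2,\C)$-class of the roots, is constant), logarithmic differentiation gives
\[
\frac{d}{d\epsilon}\Big|_{\epsilon=0} z_i^{x'} = z_i^x \cdot \sum_{j=1}^{n+3} \frac{\dot\lambda_j}{\alpha_i - \lambda_j} = z_i^x \sum_{j=1}^{n+3} \frac{\lambda_j^l}{\alpha_i - \lambda_j},
\]
for each of the three choices $l = 0,1,2$. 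Comparing with \eqref{e.72}, this is exactly $w_i^{(l)}$. Hence the image of the three tangent directions spanning $T_x(X_x^\mu)$ under $\rd_x v_x^B$ is spanned by $\mathbf w^{(0)}, \mathbf w^{(1)}, \mathbf w^{(2)}$ modulo the scaling ambiguity of projective coordinates, which contributes the extra summand $\mathbf v^x$ (the point $[\mathbf v^x] = v_x^B(x)$ itself, coming from the fact that in $\BP(W_x)$ the tangent space at $[\mathbf v^x]$ is $W_x/\langle \mathbf v^x\rangle$, so a lift of $\sT_x$ to $W_x$ is the span of the $\mathbf w^{(l)}$ together with $\mathbf v^x$). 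This yields $\sT_x' = \langle \mathbf w^{(0)}, \mathbf w^{(1)}, \mathbf w^{(2)}, \mathbf v^x\rangle$ as claimed.

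The main obstacle I anticipate is bookkeeping rather than conceptual: one must check carefully that the three explicit one-parameter subgroups of $\mathrm{SL}(2,\C)$ chosen really do span $\fsl(2,\C)$ and hence that their images span all of $T_x(X_x^\mu)$ (using that $\rd_x v_x^B$ is the composition of $\rd_x\widetilde\mu^X$ with the derivative of the birational — in particular generically étale — morphism $v_{(\varphi_1|_{W_x},\varphi_2|_{W_x})}^B$, so no rank is lost at the regular point $x$), and that the normalization constants absorbed into ``$\propto$'' do not disturb the span. A secondary point requiring care is the passage from affine coordinates on $\widehat{W_x}$ to homogeneous coordinates on $\BP(W_x)$: the derivative of $x'\mapsto [\sum z_i^{x'}\mathbf e_{i,x}']$ is only well-defined modulo rescaling of the representative vector, which is precisely why $\mathbf v^x$ must be adjoined; I would make this explicit by choosing an affine chart where one coordinate is normalized and tracking the correction term. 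Everything else is the logarithmic-derivative computation above, which is routine once the setup is in place.
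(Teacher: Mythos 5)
Your plan is correct and is essentially the paper's own argument: the paper also differentiates the formula of Lemma \ref{l.7} along the three infinitesimal generators of the ${\rm SL}(2,\C)$-action on the $\lambda_j$'s (namely $\lambda_j\mapsto\lambda_j+\epsilon_0$, $\lambda_j\mapsto\lambda_j+\lambda_j\epsilon_1$, $\lambda_j\mapsto\lambda_j+\lambda_j^2\epsilon_2$), obtaining $w^{(l)}_i=v^x_i\sum_j\lambda_j^l(\alpha_i-\lambda_j)^{-1}$ by the same logarithmic differentiation, with $\mathbf v^x$ adjoined because the tangent space of $\BP(W_x)$ at $[\mathbf v^x]$ is $W_x/\langle\mathbf v^x\rangle$. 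The sign and normalization differences in your choice of $\fsl(2,\C)$ generators do not affect the span, so nothing further is needed.
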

\begin{proof}
Let $x'\in X^{\mu}_x \cap X^{\rm reg}$.
By Lemma \ref{l.7},  
$v^B_x(x')=[\sum z^{x'}_i \mathbf e_{i,x}']\in \BP(W_x)$ with
\begin{displaymath}%\label{e.3.2}
z_i^{x'}=-\frac{(\alpha_i-\alpha_1)(\alpha_i-\alpha_2)\cdots(\alpha_i-\alpha_{\check i})\cdots(\alpha_i-\alpha_{n})}{(\alpha_i-\lambda_1')(\alpha_i-\lambda_2')\cdots(\alpha_i-\lambda_{n+3}')}\in\C
\end{displaymath}
where
\begin{displaymath}
\lambda_i'=\frac{a\lambda_i+b}{c\lambda_i+d} \rm{\quad for\,\,some\,\,}
\left[
\begin{array}{cc}
a&b\\
c&d
\end{array}\right]\in\rm{SL}(2,\C).
\end{displaymath}
%(See the proof of Lemma \ref{l.7}.) 
Then, as in the proof of Lemma \ref{l.3}, there are natural infinitesimal generators of SL($2,\mathbb{C}$)-action on $\lambda_i$'s: $\lambda_i\rightarrow\lambda_i+\epsilon_0$, $\lambda_i\rightarrow\lambda_i+\lambda_i\epsilon_1$, and $\lambda_i\rightarrow\frac{\lambda_i}{1-\lambda_i\epsilon_3}\approx\lambda_i+\lambda_i^2\epsilon_2$.

\smallskip
In case of $\lambda_i'=\lambda_i+\epsilon_0$, 
\begin{displaymath}
z^{x'}_i=-\frac{(\alpha_i-\alpha_1)(\alpha_i-\alpha_2)\cdots(\alpha_i-\alpha_{\check i})\cdots(\alpha_i-\alpha_{n})}{(\alpha_i-\lambda_1-\epsilon_0)(\alpha_i-\lambda_2-\epsilon_0)\cdots(\alpha_i-\lambda_{n+3}-\epsilon_0)}\approx v^{x}_i\Big(1+\big(\sum_{j=1}^{n+3}\frac{1}{\alpha_{i}-\lambda_j}\big)\epsilon_0\Big).
\end{displaymath}
Then 
\begin{equation}\label{e.71}
z_i^{x'}-z_i^{x}\approx
\Big(v^{x}_i\sum_{j=1}^{n+3}\frac{1}{\alpha_{i}-\lambda_j}\Big)\epsilon_0,
\end{equation}
and (\ref{e.71}) gives (\ref{e.72}) for $l=0$. Other two vectors $\mathbf w^{(1)}$, $\mathbf w^{(2)}$ are also obtained similarly.
\end{proof}

\begin{notation}\label{n.alpha}
Let $\alpha:W_x\to W_x$ be the linear map sending each vector $\sum z_i \mathbf e_{i,x}'\in W_x$ to $\sum \alpha_i z_i \mathbf e_{i,x}'\in W_x$. 
\end{notation}

\begin{lemma}\label{l.8}
In Lemma \ref{l.6}, consider a bigger subspace $\sT_x''\subset W_x$ generated by five vectors $\mathbf w^{(0)}$, $\mathbf w^{(1)}$, $\mathbf w^{(2)}$, $\mathbf v^{x}$, $\alpha(\mathbf v^{x}) \in W_x$. Then $\sT_x''$ is generated by $\mathbf w^{(0)}$, $\alpha(\mathbf w^{(0)})$, $\alpha^2(\mathbf w^{(0)})$, $\mathbf v^{x}$, $\alpha(\mathbf v^{x}) \in W_x$.
\end{lemma}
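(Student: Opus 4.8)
The plan is to exhibit an explicit linear relation that allows one to trade the three vectors $\mathbf w^{(0)},\mathbf w^{(1)},\mathbf w^{(2)}$ for $\mathbf w^{(0)},\alpha(\mathbf w^{(0)}),\alpha^2(\mathbf w^{(0)})$ modulo the span of $\mathbf v^x$ and $\alpha(\mathbf v^x)$. The key observation is contained in formula (\ref{e.72}): writing $c_i:=\sum_{j=1}^{n+3}(\alpha_i-\lambda_j)^{-1}$ and $d_i:=\sum_{j=1}^{n+3}\lambda_j(\alpha_i-\lambda_j)^{-1}$ and $e_i:=\sum_{j=1}^{n+3}\lambda_j^2(\alpha_i-\lambda_j)^{-1}$, we have $\mathbf w^{(0)}=\sum v^x_i c_i \mathbf e_{i,x}'$, $\mathbf w^{(1)}=\sum v^x_i d_i \mathbf e_{i,x}'$, $\mathbf w^{(2)}=\sum v^x_i e_i \mathbf e_{i,x}'$. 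Now observe the telescoping identities $\lambda_j(\alpha_i-\lambda_j)^{-1}=\alpha_i(\alpha_i-\lambda_j)^{-1}-1$ and $\lambda_j^2(\alpha_i-\lambda_j)^{-1}=\alpha_i^2(\alpha_i-\lambda_j)^{-1}-\alpha_i-\lambda_j$. Summing over $j$ from $1$ to $n+3$ gives $d_i=\alpha_i c_i-(n+3)$ and $e_i=\alpha_i^2 c_i-(n+3)\alpha_i-\sigma$, where $\sigma:=\sum_{j=1}^{n+3}\lambda_j$ is a constant independent of $i$.

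Translating these scalar identities back to vectors: $\mathbf w^{(1)}=\sum v^x_i(\alpha_i c_i-(n+3))\mathbf e_{i,x}'=\alpha(\mathbf w^{(0)})-(n+3)\,\mathbf v^x$, since $\alpha$ multiplies the $i$-th coordinate by $\alpha_i$ and $\mathbf v^x=\sum v^x_i\mathbf e_{i,x}'$. Similarly $\mathbf w^{(2)}=\sum v^x_i(\alpha_i^2 c_i-(n+3)\alpha_i-\sigma)\mathbf e_{i,x}'=\alpha^2(\mathbf w^{(0)})-(n+3)\,\alpha(\mathbf v^x)-\sigma\,\mathbf v^x$. Hence in the span of the five vectors $\mathbf w^{(0)},\mathbf w^{(1)},\mathbf w^{(2)},\mathbf v^x,\alpha(\mathbf v^x)$ one can replace $\mathbf w^{(1)}$ by $\alpha(\mathbf w^{(0)})$ and $\mathbf w^{(2)}$ by $\alpha^2(\mathbf w^{(0)})$ without changing the span: the change-of-basis matrix among $\{\mathbf w^{(1)},\mathbf w^{(2)}\}$ and $\{\alpha(\mathbf w^{(0)}),\alpha^2(\mathbf w^{(0)})\}$ relative to the fixed vectors $\mathbf v^x,\alpha(\mathbf v^x)$ is upper triangular with $1$'s on the diagonal, hence invertible. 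Therefore $\sT_x''=\langle \mathbf w^{(0)},\mathbf w^{(1)},\mathbf w^{(2)},\mathbf v^x,\alpha(\mathbf v^x)\rangle=\langle \mathbf w^{(0)},\alpha(\mathbf w^{(0)}),\alpha^2(\mathbf w^{(0)}),\mathbf v^x,\alpha(\mathbf v^x)\rangle$, which is the claim.

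I expect the only mildly delicate point to be bookkeeping the constant $\sigma$: one must make sure it is genuinely independent of the index $i$ (which it is, being $\sum_j\lambda_j$) so that the term $\sigma\,\mathbf v^x$ really is a multiple of $\mathbf v^x$ and not of some other vector. Beyond that, the argument is a short exact computation using the partial-fraction-style identities above together with Notation~\ref{n.alpha}; no genericity or nonsingularity hypothesis is needed for this particular step (those were already used to define $\mathbf v^x$ and the $\alpha_i,\lambda_j$ in Assumption~\ref{asss}). So the proof is essentially: record the three telescoping identities, read them as vector identities via the definition of $\alpha$, and conclude that the spanning set can be re-expressed as stated.
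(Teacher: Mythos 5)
Your proof is correct and is essentially the paper's own argument: the paper simply records the two relations $\alpha(\mathbf w^{(0)})-\mathbf w^{(1)}=(n+3)\mathbf v^{x}$ and $\alpha(\mathbf w^{(1)})-\mathbf w^{(2)}=\bigl(\sum_{j}\lambda_j\bigr)\mathbf v^{x}$, which are exactly your telescoping identities $d_i=\alpha_i c_i-(n+3)$ and $e_i=\alpha_i^2c_i-(n+3)\alpha_i-\sigma$ read in vector form. Your version just makes the partial-fraction computation and the invertibility of the resulting change of spanning set explicit.
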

\begin{proof}
This lemma follows from two relations 
\begin{center}
$\alpha(\mathbf w^{(0)})-\mathbf w^{(1)}=(n+3)\mathbf v^{x}$
and
$\alpha(\mathbf w^{(1)})-\mathbf w^{(2)}=(\sum_{j=1}^{n+3}\lambda_j)\mathbf v^{x}$.
\end{center}
\end{proof}

\begin{notation}\label{tu}
Given a vector $\mathbf u=\sum u_i \mathbf e_{i,x}' \in W_x$, denote by $\sT(\mathbf u)$ the vector subspace in $W_x$ generated by 5 vectors $\mathbf I:=\sum \mathbf e_{i,x}'$, $\alpha(\mathbf I)$, $\mathbf u$, $\alpha(\mathbf u)$, $\alpha^2(\mathbf u)\in W_x$.
\end{notation}

\begin{lemma}\label{l.9}
If $n>4$, then $\sT(\mathbf u)\in\Gr(5,W_x)$ for general $\mathbf u=\sum u_i \mathbf e_{i,x}' \in W_x$.
\end{lemma}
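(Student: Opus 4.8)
The claim is that for $n>4$ and a generic $\mathbf u=\sum u_i\mathbf e_{i,x}'$, the five vectors $\mathbf I$, $\alpha(\mathbf I)$, $\mathbf u$, $\alpha(\mathbf u)$, $\alpha^2(\mathbf u)$ are linearly independent in $W_x$. Since linear independence is an open condition, it suffices to exhibit a single $\mathbf u$ for which independence holds, or equivalently to show that a suitable $5\times 5$ minor of the coordinate matrix is not identically zero as a polynomial in $u_1,\dots,u_n$ and the $\alpha_i$ (recalling that the $\alpha_i$ are distinct). The plan is to write the $5\times n$ matrix whose rows are the coordinate vectors of these five elements in the basis $\{\mathbf e_{i,x}'\}$: the $i$-th column is $(1,\ \alpha_i,\ u_i,\ \alpha_i u_i,\ \alpha_i^2 u_i)^t$. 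I would then pick five indices, say $i\in\{1,2,3,4,5\}$ (here is where $n>4$ is used — we need at least five columns), and compute the determinant of the resulting $5\times5$ submatrix as a polynomial in $u_1,\dots,u_5$.

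The key step is to show this determinant is not the zero polynomial. Expanding along the rows built from $\mathbf u$, the determinant is multilinear of degree one in each of $u_1,\dots,u_5$, and the coefficient of a chosen monomial $u_{j_1}u_{j_2}u_{j_3}$ (after placing the remaining two columns into the rows $\mathbf I,\alpha(\mathbf I)$) is, up to sign, a product of differences $\alpha_a-\alpha_b$ coming from Vandermonde-type $3\times3$ determinants $\det(\alpha_{j}^{\,k-1})_{1\le k\le3}$ together with a $2\times2$ Vandermonde factor $\alpha_a-\alpha_b$ from the two remaining columns. Because the $\alpha_i$ are pairwise distinct, at least one such coefficient is nonzero, so the determinant is a nonzero polynomial in the $u_i$; hence it is nonzero for generic $\mathbf u$, giving $\dim\sT(\mathbf u)=5$. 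This is essentially the same Vandermonde computation used in Proposition \ref{lm1} and in the proof preceding Notation \ref{d.pu}, merely enlarged by the two extra rows $\mathbf I$ and $\alpha(\mathbf I)$.

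I do not expect a genuine obstacle here; the only mild care needed is bookkeeping in the cofactor expansion to identify one surviving monomial whose coefficient is a nonzero product of $(\alpha_a-\alpha_b)$'s. A clean way to organize it: choose $\mathbf u$ itself cleverly rather than keeping it generic — e.g. take $u_i=\beta_i$ for new indeterminates $\beta_i$, or even specialize to $u_i=1$ for $i\le 3$ and $u_i=\alpha_i$-independent values for $i=4,5$, and check directly that the $5\times5$ determinant $\det\bigl[(1,\alpha_i,u_i,\alpha_i u_i,\alpha_i^2 u_i)^t\bigr]_{i=1}^{5}$ is nonzero; openness of the non-vanishing locus then yields the statement for general $\mathbf u$. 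The hypothesis $n>4$ enters exactly once, to guarantee there are five columns available so that a $5\times5$ minor exists; for $n\le 4$ the five vectors live in a space of dimension $\le4$ and the statement is vacuous or false.
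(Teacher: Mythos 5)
Your proposal is correct and matches the paper's proof: the paper considers exactly the same $5\times5$ matrix with columns $(1,\alpha_i,u_i,\alpha_iu_i,\alpha_i^2u_i)^t$ for $i=1,\dots,5$ and observes that the coefficient of $u_1u_2u_3$ in its determinant is nonzero. Your Laplace-expansion identification of that coefficient as $\pm(\alpha_1-\alpha_2)(\alpha_1-\alpha_3)(\alpha_2-\alpha_3)(\alpha_4-\alpha_5)$ just makes the paper's "for example" explicit.
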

\begin{proof}
Consider the following $5\times5$ matrix
\begin{displaymath}
\left[
\begin{array}{ccccc}
1&1&1&1&1\\
\alpha_1&\alpha_2&\alpha_3&\alpha_4&\alpha_5\\
u_1&u_2&u_3&u_4&u_5\\
u_1\alpha_1&u_2\alpha_2&u_3\alpha_3&u_4\alpha_4&u_5\alpha_5\\
u_1(\alpha_1)^2&u_2(\alpha_2)^2&u_3(\alpha_3)^2&u_4(\alpha_4)^2&u_5(\alpha_5)^2\\
\end{array}\right].
\end{displaymath}
The determinant of this matrix is a nonzero polynomial in $u_i$'s. For example, the determinant polynomial contains the term $u_1u_2u_3$ with nonzero coefficient. So the matrix has rank five for general $\mathbf u\in W_x$. 
\end{proof} 

\begin{notation}\label{n.w}
In Lemma \ref{l.6}, we denote by $\mathbf w^{x}$ the vector $\sum w^{x}_i \mathbf e_{i,x}' \in W_x$ with
\begin{displaymath}
w^x_i=(w_i^{(0)})/(v_i^x)=\sum_{j=1}^{n+3}(\alpha_i-\lambda_j)^{-1}\in\C.
\end{displaymath}
Then the dimensions of $\sT(\mathbf w^{x})$ and $\sT''_x$ are equal since $v_1^x, v_2^x, \ldots, v_n^x$ are all nonzero for $x\in X^{\rm reg}$.
\end{notation}

\begin{lemma}\label{l.9.1}
Let $x$ be a general point in $X^{\rm reg}$ with $n>4$. Then $\sT''_x\in\Gr(5,W_x)$. 
\end{lemma}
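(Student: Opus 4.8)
The plan is to reduce the statement, via the identification already recorded in Notation~\ref{n.w}, to the linear independence of five explicit rational functions of one variable. By Notation~\ref{n.w} we have $\dim\sT''_x=\dim\sT(\mathbf w^x)$, where $\mathbf w^x=\sum_i w^x_i\,\mathbf e_{i,x}'$ with $w^x_i=g(\alpha_i)$ and $g(t):=\sum_{j=1}^{n+3}(t-\lambda_j)^{-1}$; so it suffices to prove $\dim\sT(\mathbf w^x)=5$ for general $x\in X^{\rm reg}$. In the standard basis $\{\mathbf e_{1,x}',\dots,\mathbf e_{n,x}'\}$ the five generators $\mathbf I,\alpha(\mathbf I),\mathbf w^x,\alpha(\mathbf w^x),\alpha^2(\mathbf w^x)$ of $\sT(\mathbf w^x)$ (Notation~\ref{tu}, \ref{n.alpha}) are the rows of a $5\times n$ matrix $N$ whose $i$-th column is $\big(1,\ \alpha_i,\ g(\alpha_i),\ \alpha_i g(\alpha_i),\ \alpha_i^2 g(\alpha_i)\big)$, so $\dim\sT(\mathbf w^x)=\mathrm{rank}\,N$, which is automatically $\le 5$. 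Since $N$ has five rows, $\mathrm{rank}\,N=5$ as soon as one $5\times5$ minor is nonzero, and the minor on the first five columns is exactly $\det M$ with $M$ the matrix of Lemma~\ref{l.9} specialized at $u_i=g(\alpha_i)$. Here $n>4$ is used precisely to guarantee that five columns are available.

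Next I would pass from "general $x\in X^{\rm reg}$" to "general root tuple". The discriminant map $\theta_{(\varphi_1,\varphi_2)}\colon X\to\BP(\sB_n)$ is finite and surjective (Proposition~\ref{p5}) and sends $x$ to the class of $\prod_i(\alpha_i s-t)$, and by Proposition~\ref{p.3.1} a general fibre of $\theta_{(\varphi_1,\varphi_2)}$ consists of regular points (condition $(iii)$ of Definition~\ref{d.regular} also holds generically). Hence the locus in $X^{\rm reg}$ where $\mathrm{rank}\,N<5$ is the $\theta_{(\varphi_1,\varphi_2)}$-preimage of the closed subset of $\BP(\sB_n)$ cut out by the vanishing of all $5\times5$ minors of $N$, and this preimage is a proper closed subset of $X$ precisely when $\det M$ does not vanish identically as a function of $\alpha_1,\dots,\alpha_5$. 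Thus it is enough to exhibit one admissible choice of distinct $\alpha_1,\dots,\alpha_5\in\C\setminus\{\lambda_1,\dots,\lambda_{n+3}\}$ with $\det M\ne0$, equivalently, to show that the five rational functions $1,\ t,\ g(t),\ tg(t),\ t^2g(t)$ are $\C$-linearly independent; linear independence of functions then produces a point where the corresponding evaluation matrix $\big(f_k(\alpha_i^\circ)\big)=M$ is nonsingular.

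The linear independence is the only substantive point, and it is short. Suppose $a+bt+(c+dt+et^2)g(t)\equiv0$ for constants $a,b,c,d,e$. Since $g(t)=\sum_{j=1}^{n+3}(t-\lambda_j)^{-1}$ has a simple pole of residue $1$ at each of the $n+3\ (\ge 8)$ distinct points $\lambda_j$, the function $(c+dt+et^2)g(t)$ has residue $c+d\lambda_j+e\lambda_j^2$ at $\lambda_j$; for the whole expression to be a polynomial, every such residue must vanish, and a nonzero polynomial of degree $\le 2$ cannot vanish at $n+3>2$ distinct points, so $c=d=e=0$ and then $a=b=0$. This is the step I expect to be the crux: one cannot simply quote Lemma~\ref{l.9}, because $\mathbf w^x$ is \emph{not} a general vector of $W_x$ — its $i$-th coordinate $g(\alpha_i)$ depends on $\alpha_i$ alone — and the pole count is exactly what compensates for this constraint. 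The remaining work is only the bookkeeping of the two layers of genericity and the (straightforward) verification that the bad locus is genuinely closed so that a single good point suffices, which completes the proof that $\sT''_x\in\Gr(5,W_x)$ for general $x\in X^{\rm reg}$.
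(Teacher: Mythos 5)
Your proof is correct, and its skeleton is the paper's: reduce to showing $\dim\sT(\mathbf w^{x})=5$ via Notation \ref{n.w}, then verify that the $5\times 5$ determinant of Lemma \ref{l.9}, specialized at $u_i=g(\alpha_i)$ with $g(t)=\sum_{j=1}^{n+3}(t-\lambda_j)^{-1}$, is nonzero for a general root tuple; this suffices because $\theta_{(\varphi_1,\varphi_2)}$ is finite and surjective and the rank-drop locus is closed. Where you genuinely diverge is at the crux. The paper disposes of it in one sentence --- each $w^{x}_i$ is a nonzero rational function of $\alpha_i$, hence $\mathbf w^{x}$ is ``general in the sense of Lemma \ref{l.9}'' --- and, as you correctly point out, that inference is not automatic: $\mathbf w^{x}$ lies on the constrained locus $u_i=g(\alpha_i)$, and having nonzero coordinates does not by itself keep a vector off the vanishing locus of the determinant (if $g$ were a nonzero constant, for instance, the rank would be $3$ identically). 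Your residue argument supplies exactly the missing fact: the residues $c+d\lambda_j+e\lambda_j^2$ of $(c+dt+et^2)g(t)$ at the $n+3>2$ distinct simple poles of $g$ must all vanish, so $1,\,t,\,g,\,tg,\,t^2g$ are linearly independent, and the standard alternant argument then yields distinct $\alpha_1^{\circ},\dots,\alpha_5^{\circ}$ avoiding the $\lambda_j$ with $\det M\neq 0$. In effect your route proves the statement the paper's one-liner presupposes (namely that $g$, being $p'/p$ for the squarefree $p=\prod_j(t-\lambda_j)$, has too many poles to satisfy a relation $a+bt+(c+dt+et^2)g\equiv 0$), so it is the more complete argument at essentially no extra cost.
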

\begin{proof}
It is enough to show $\sT(\mathbf w^{x})\in\Gr(5,W_x)$ for general $x\in X^{\rm reg}$. (See Notation \ref{n.w}.)
Let $\sigma_{0}\lambda^{n+3}+\sigma_{1}\lambda^{n+2}+\cdots+\sigma_{n+2}\lambda+\sigma_{n+3}$  be a polynomial in $\lambda$ of which the roots are $\lambda_1, \lambda_2, \ldots, \lambda_{n+3}\in\C$. Then 
\begin{center}
$\displaystyle w^x_i=\sum_{j=1}^{n+3}\frac{1}{\alpha_i-\lambda_j}=\frac{(n+3)\sigma_0\alpha_i^{n+2}+(n+2)\sigma_1\alpha_i^{n+1}+\cdots+\sigma_{n+2}}{\sigma_0\alpha_i^{n+3}+\sigma_1\alpha_i^{n+2}+\cdots+\sigma_{n+2}\alpha_i+\sigma_{n+3}}$.
\end{center}
Since $\lambda_j$'s are distinct complex numbers, each $w^x_i$ is a nonzero rational function in $\alpha_i$. Hence, for general $\alpha_i$'s in $\C$, the vector $\mathbf w_x\in W_x$ is general in the sense of Lemma \ref{l.9}. In other words, for general $x\in X^{\rm reg}$, the vector subspace $\sT(\mathbf w^x)$ has dimension five if $n>4$.
\end{proof}

By the definition of $\sT''_x$, Lemma \ref{l.9.1} gives the following theorem.

\begin{theorem}\label{t.inj}
Let $X \subset \BP^{n+2}$ be a nonsingular intersection of two quadric hypersurfaces with $n>4$. Let $\widetilde{\mu}^X: X^{\rm reg} \to \widetilde{\sM}_n^{\rm PQ}$ be the refined moduli map of second fundamental forms on $X$. Denote by $X^{\rm good}$ the subset $\{x\in X^{\rm reg} \mid \Ker(d_x\widetilde{\mu}^X)=0\}\subset X^{\rm reg}$. Then $X^{\rm good}$ is nonempty. 
\end{theorem}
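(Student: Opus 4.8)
The plan is to show that the differential $d_x\widetilde\mu^X$ is injective at a general point $x\in X^{\rm reg}$, which is exactly the statement that $X^{\rm good}$ is nonempty. The key observation is that the kernel of $d_x\mu^X$ is the three-dimensional space $\sP_x = T_x(X^\mu_x)$, and $\widetilde\mu^X$ already separates the moduli class $[II_{X,x}]$ transversally to $X^\mu_x$ (since $\pi_n\circ\widetilde\mu^X = \mu^X$), so the only way $d_x\widetilde\mu^X$ can fail to be injective is if it kills a nonzero vector tangent to the fiber $X^\mu_x$. Concretely, $\Ker(d_x\widetilde\mu^X) = \Ker\big(d_x\widetilde\mu^X|_{\sP_x}\big) = \Ker(d_x v^B_x)$, where $v^B_x\colon X^\mu_x\cap X^{\rm reg}\to\BP(W_x)$ is the composition introduced in Notation \ref{p.5.9}. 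Thus it suffices to prove that $d_x v^B_x$ is injective at general $x$, i.e. that $\dim \sT_x = 3$, where $\sT_x = d_x v^B_x(\sP_x)$.

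First I would recall from Lemma \ref{l.6} that $\sT_x$ corresponds to the subspace $\sT'_x\subset W_x$ spanned by $\mathbf v^x$ together with the three derivative vectors $\mathbf w^{(0)},\mathbf w^{(1)},\mathbf w^{(2)}$ coming from the three infinitesimal $\mathrm{SL}(2,\C)$-directions on the $\lambda_j$'s. Since $\sP_x$ is three-dimensional and one of its directions is the tautological scaling direction that $v^B_x$ contracts (the $\BP(W_x)$-point $[\mathbf v^x]$ being fixed along it), what must be shown is that the three vectors $\mathbf v^x,\mathbf w^{(0)}, \mathbf w^{(1)}$ — or equivalently, by the linear-algebra reductions in Lemmas \ref{l.8} and \ref{l.9}, that the five vectors $\mathbf I, \alpha(\mathbf I), \mathbf w^x, \alpha(\mathbf w^x), \alpha^2(\mathbf w^x)$ — are linearly independent in $W_x$ for general $x$. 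This last statement is precisely Lemma \ref{l.9.1}. So the proof is simply to assemble the chain: Theorem \ref{t.71} (so $\sP_x$ is poised, hence of the form $\sP_{(\varphi_1|_{W_x},\varphi_2|_{W_x})}(\mathbf u^{(0)})$ with $\mathbf u^{(0)}$ general), Proposition \ref{pp3}/Proposition \ref{p.isom} (so $v^B_x$ is well-defined and birational and computed by $\mathrm{sq}^B$), Lemma \ref{l.6} (description of $\sT_x$), Lemma \ref{l.8} (passing from $\{\mathbf w^{(i)},\mathbf v^x\}$ to $\{\mathbf w^{(0)},\alpha(\mathbf w^{(0)}),\alpha^2(\mathbf w^{(0)}),\mathbf v^x,\alpha(\mathbf v^x)\}$), Notation \ref{n.w} (replacing $\mathbf w^{(0)}$ by $\mathbf w^x$ up to the nonzero scalars $v_i^x$), Lemma \ref{l.9} (genericity of the $5\times 5$ Vandermonde-type determinant), and Lemma \ref{l.9.1} (each $w_i^x$ is a nonvanishing rational function of $\alpha_i$, so generic $\alpha_i$ produce a generic $\mathbf w^x$). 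Conclude that for general $x\in X^{\rm reg}$ one has $\dim\sT''_x = 5$, hence $\dim\sT_x = 3 = \dim\sP_x$, hence $d_x\widetilde\mu^X$ is injective and $x\in X^{\rm good}$.

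The main obstacle in this argument — already absorbed into Lemma \ref{l.9.1} — is making sure the genericity passes correctly through the two layers of parametrization: the $5\times 5$ determinant in Lemma \ref{l.9} is generically nonzero as a polynomial in the abstract entries $u_1,\dots,u_5$, but the entries we actually feed it are $u_i = w_i^x$, which are constrained to be a specific rational function of $\alpha_i$ (namely the logarithmic derivative of the degree-$(n+3)$ polynomial with roots $\lambda_j$, evaluated at $\alpha_i$); one must check this map $\alpha_i\mapsto w_i^x$ is nonconstant and that the $\alpha_i$ range over a Zariski-dense set as $x$ ranges over $X^{\rm reg}$ (which follows from surjectivity of the discriminant map, Proposition \ref{p5}(i)), so that the composed determinant is not identically zero on $X^{\rm reg}$. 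The hypothesis $n>4$ enters exactly here: for $n=4$ there are only four $\alpha_i$'s, the $5\times 5$ matrix degenerates, and $\dim X = \dim\widetilde\sM_n^{\rm PQ}$ leaves no room, so the $5\times 5$ rank-five claim genuinely requires $n\geq 5$. Once Lemma \ref{l.9.1} is in hand the theorem is immediate, so I would present the proof as the short two-line deduction indicated above.
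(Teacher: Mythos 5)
Your proposal is correct and follows the paper's own proof essentially verbatim: reduce to $\Ker(\rd_x\widetilde\mu^X)\subset\Ker(\rd_x\mu^X)=\sP_x$ via $\mu^X=\pi_n\circ\widetilde\mu^X$, then chain Theorem \ref{t.71}, Proposition \ref{pp3}, Lemmas \ref{l.6}, \ref{l.8}, \ref{l.9} and \ref{l.9.1} to get $\dim\sT''_x=5$ at general $x$, hence $\dim\sT_x=3=\dim\sP_x$ and injectivity of $\rd_x\widetilde\mu^X$. One intermediate remark is off, though: $v^B_x$ does not contract a ``tautological scaling direction'' of $\sP_x$ (if it did, one would have $\dim\sT_x\le 2$ and the theorem would fail), and linear independence of only the three vectors $\mathbf v^x,\mathbf w^{(0)},\mathbf w^{(1)}$ would not suffice --- what is needed, and what the five-vector independence of Lemma \ref{l.9.1} actually delivers, is that all four of $\mathbf v^x,\mathbf w^{(0)},\mathbf w^{(1)},\mathbf w^{(2)}$ are independent, i.e.\ $\dim\sT'_x=4$, so that their images span a three-dimensional subspace of $T_{[\mathbf v^x]}\BP(W_x)$.
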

\begin{proof}
Note that $\mu^X|_{X^{\rm reg}}=\pi_n\circ\widetilde\mu^X:X^{\rm reg}\to\sM^{\rm PQ}_n$ where $\pi_n$ is the forgetful morphism from $\widetilde{\sM}_n^{\rm PQ}$ to ${\sM}_n^{\rm PQ}$. Hence, at $x\in X^{\rm reg}$, 
\begin{center}
$\Ker(\rd_x\widetilde\mu^X)\subset\Ker(\rd_x\mu^X)=\sP_x$, 
\end{center}
and $\Ker(d_x\widetilde{\mu}^X)=0$ if and only if 
${\rm dim}(\rd_x\widetilde\mu^X(\sP_x))={\rm dim}(\sP_x)=3$.
For general $x\in X^{\rm reg}$, the subspace $\sT''_x\subset W_x$ has dimension five by Lemma \ref{l.9.1}. Moreover, by the definitions of $\sT''_x$ and $\sT'_x$ in Lemma \ref{l.6} and \ref{l.8}, the subspace $\sT'_x\subset W_x$ has dimension four and $\sT_x=\rd_x\widetilde\mu^X(\sP_x)$ has dimension three. Therefore, $d_x\widetilde\mu^X$ at general $x\in X$ is injective. 
%since $v_{(\varphi_1|_{T_x(X)},\varphi_2|_{T_x(X)})}$ in Proposition \ref{p.5.9} is a birational morphism. 
\end{proof}

%-------------------------------------------------------------Section5 Proof of Thm2-----
%\section{Proof of Theorem \ref{t.3}}\label{pf_thm2}

Now we are ready to prove our main result, which is Theorem \ref{t.3} in the introduction:

\begin{theorem}\label{theoremB}
Let $X, X'$ be two nonsingular varieties in $\BP^{n+2}$ ($n>4$), each of them defined as an intersection of two quadric hypersurfaces. Let $\widetilde{\mu}^X: X^{\rm reg} \to \widetilde{\sM}_n^{\rm PQ}$ and $\widetilde{\mu}^{X'}: (X')^{\rm reg} \to \widetilde{\sM}_n^{\rm PQ}$ be their refined moduli maps of second fundamental forms.  Suppose there exists a biholomorphic map $f:M \to M'$ between connected Euclidean open subsets $M \subset X^{\rm reg}$ and $M' \subset (X')^{\rm reg}$ such that $\widetilde{\mu}^X|_M = \widetilde{\mu}^{X'}|_{M'} \circ f$. Then $f$ comes from a projective automorphism of $\BP^{n+2}$.
\end{theorem}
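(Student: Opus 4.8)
\textbf{Proof proposal for Theorem \ref{theoremB}.}

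The plan is to reduce the statement to the Cartan--Fubini type extension theorem (Theorem \ref{t.CF}) by showing that the hypothesis $\widetilde\mu^X|_M = \widetilde\mu^{X'}|_{M'}\circ f$ forces the derivative $\rd_xf$ at each $x \in M$ to carry the VMRT $\sC_x \subset \BP T_x(X)$ to $\sC_{f(x)} \subset \BP T_{f(x)}(X')$. Recall that by Proposition \ref{p.4}, $\sC_x$ is precisely the base locus of $II_{X,x}$, so it suffices to show that $\rd_xf$ carries the pencil $II_{X,x}$ to $II_{X',f(x)}$ (as a pencil of quadrics, up to scalar on each member), not merely up to the PGL-action. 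The hypothesis only gives that the pairs $(II_{X,x},\sP_x)$ and $(II_{X',f(x)},\sP_{f(x)})$ are projectively equivalent, i.e.\ there exists \emph{some} isomorphism $T_x : T_x(X) \to T_{f(x)}(X')$ with $T_x^*(II_{X',f(x)}) = II_{X,x}$ and $T_x^{-1}(\sP_x) = \sP_{f(x)}$; the point is to upgrade this to saying $\rd_xf$ itself does the job.

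The key step is a rigidity argument using the derivative of $\widetilde\mu^X$. First I would pass to the open subset $M \cap X^{\rm good}$ (nonempty and dense by Theorem \ref{t.inj}), on which $\rd_x\widetilde\mu^X$ and $\rd_{f(x)}\widetilde\mu^{X'}$ are both injective; shrinking $M$, assume $M \subset X^{\rm good}$ and $M' \subset (X')^{\rm good}$. Differentiating the relation $\widetilde\mu^X|_M = \widetilde\mu^{X'}|_{M'}\circ f$ at $x$ gives the commuting square $\rd_x\widetilde\mu^X = \rd_{f(x)}\widetilde\mu^{X'}\circ \rd_xf$, so $\rd_xf$ maps $\sP_x = \Ker(\rd_x\mu^X)$ onto $\sP_{f(x)} = \Ker(\rd_{f(x)}\mu^{X'})$ and induces a linear isomorphism on the quotient tangent spaces of the moduli space at $\widetilde\mu^X(x)=\widetilde\mu^{X'}(f(x))$ compatible with the two derivative maps. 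Now I want to show that the PGL-ambiguity in ``$(II_{X,x},\sP_x) \sim (II_{X',f(x)},\sP_{f(x)})$'' is killed. Here I invoke Theorem \ref{t.71}: $\sP_x$ is poised by $II_{X,x}$, i.e.\ $\sP_x \in S_{II_{X,x}}$; combined with Corollary \ref{cor4.18}, the pair $(II_{X,x},\sP_x)$ lands in the correspondence $S$ on which $p_2$ is an \emph{embedding}. Thus the point $\widetilde\mu^X(x) \in \widetilde\sM^{\rm PQ}_n$ determines $[II_{X,x}]$ together with the poised subspace, and more importantly the infinitesimal structure: tracking $\sP_{x'}$ as $x'$ varies over the fiber $X_x^\mu$ via the birational map $v^B_x$ of Proposition \ref{pp3} identifies the $3$-dimensional image $\rd_x v^B_x(\sP_x)$ with a concrete subspace of $\BP(W_x)$ (the vectors $\mathbf w^{(0)},\mathbf w^{(1)},\mathbf w^{(2)},\mathbf v^x$ from Lemma \ref{l.6}, whose span $\sT'_x$ is computed explicitly). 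The compatibility of $\rd_xf$ with these explicit descriptions on both sides then pins down $\rd_xf$ up to the finite group of symmetries of $II_{X,x}$ preserving the degenerate members; by condition $(iii)$ of Definition \ref{d.regular} ($II_{X,x}$ is general), that group is trivial on $\BP^1$, and the residual reflections $\Lambda$ of Proposition \ref{p.1.10} act transitively on the sheets of $v^B_x$, so after composing $f$ with such a reflection (locally, then checking it is consistent) we conclude $\rd_xf$ maps $II_{X,x}$ to $II_{X',f(x)}$ as an honest pencil of quadrics.

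Once $\rd_xf(\sC_x) = \sC_{f(x)}$ is established for all $x$ in a connected Euclidean open set $M$, Theorem \ref{t.CF} applies verbatim (its hypothesis $n>3$ is implied by $n>4$) and yields that $f$ is the restriction of a projective automorphism of $\BP^{n+2}$. The main obstacle, and the heart of the argument, is the middle step: showing the bare projective-equivalence data packaged in $\widetilde\mu^X$ rigidifies to a statement about $\rd_xf$ rather than an abstract isomorphism $T_x$. This is where ``poisedness'' is essential --- without it, $\sP_x$ would be an arbitrary $3$-plane and the correspondence $S$ would not be graph-like over $\widetilde\sM^{\rm PQ}_n$ --- and where the explicit fiberwise computations of Section \ref{mainthm} (Lemmas \ref{l.7}--\ref{l.9.1}) do the real work, by exhibiting $\rd_x\widetilde\mu^X$ concretely enough that an isomorphism intertwining it on both sides must agree with $\rd_xf$ up to the finite, and ultimately trivial, automorphism group.
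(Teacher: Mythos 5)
Your endgame is sound: if one knew that $\rd_xf(\sC_x)=\sC_{f(x)}$ for all $x$ in a connected open set, Theorem \ref{t.CF} would indeed finish the proof. But the middle step --- upgrading the abstract projective equivalence $T_x$ of the pairs $(II_{X,x},\sP_x)$ and $(II_{X',f(x)},\sP_{f(x)})$ to the statement that $\rd_xf$ itself carries $II_{X,x}$ to $II_{X',f(x)})$ --- is the entire difficulty, and your sketch does not establish it. What you actually have is: (a) $\rd_xf(\sP_x)=\sP_{f(x)}$, a constraint on only a $3$-dimensional subspace of $T_x(X)$; and (b) the intertwining $\rd_x\widetilde\mu^X=\rd_{f(x)}\widetilde\mu^{X'}\circ\rd_xf$, which on $X^{\rm good}$ determines $\rd_xf$ uniquely. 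To conclude $\rd_xf=g_x$ (the unique pointwise equivalence) you would need to show that $g_x$ \emph{also} intertwines the derivatives of the refined moduli maps, i.e.\ that the pointwise data controls the full first-order variation of $(II,\sP)$ in all $n$ directions. The computations you cite (Lemmas \ref{l.7}--\ref{l.9.1}) compute $\rd_x\widetilde\mu^X$ only on $\sP_x$ --- they describe $\sT_x=\rd_x\widetilde\mu^X(\sP_x)$ --- and say nothing about the remaining $n-3$ directions, so "compatibility with the explicit descriptions" cannot pin down $\rd_xf$ on all of $T_x(X)$. Recall that Theorem \ref{t.1} shows the unrefined pointwise data places no constraint at all on the variation; that the refined data does so is precisely what must be proved, and your argument assumes it. (The aside about "composing $f$ with a reflection" is also not meaningful: the reflections of Proposition \ref{p.1.10} are automorphisms of the fiber $(T_x(X),II_{X,x})$, not of $X$.)

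The paper's proof avoids this local rigidification entirely and is global-algebraic. Writing $X=\{\sum\lambda_iz_i^2=\sum z_i^2=0\}$, it uses $\mu^X=\mu^{X'}\circ f$ to produce, for a suitable fixed root $\alpha_1$ and a fixed pair $(\varphi_1',\varphi_2')\in\widehat\Phi_{X'}$, a \emph{positive-dimensional} family of points $x$ with $\theta_{(\varphi_1',\varphi_2')}(f(x))=\theta_{(\varphi_1,\varphi_2)}(x)$ (this is where $n>4$ enters, via a dimension count on a correspondence). At each such $x$, Theorem \ref{t.71} and the explicit formula of Lemma \ref{l.7} show that the class of the poised subspace $\sP_x$ recorded by $\widetilde\mu^X(x)$ encodes the values of the polynomial $\prod_j(\alpha-\lambda_j)$ at the discriminant roots $\alpha_i$, projectively; the hypothesis then forces $\prod_j(\alpha-\lambda_j)$ and $\prod_j(\alpha-\lambda_j')$ to be proportional at those $n$ points. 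Letting $x$ vary in the family supplies $n+4$ distinct evaluation points, and a Vandermonde argument recovers $\{\lambda_j'\}=\{\lambda_j\}$, i.e.\ the defining equations of $X'$ agree with those of $X$. So while you correctly identified poisedness and the fiberwise formulas as the key ingredients, the route you propose leaves its central claim unproven, whereas the paper substitutes for it a global interpolation argument.
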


 \begin{proof} 
By Theorem 5, the derivative $\rd_x\widetilde{\mu}^X$ at general $x\in X^{\rm reg}$ is injective and 
\begin{center}
$X^{\rm good}=\{x\in X^{\rm reg} \mid \Ker(d_x\widetilde{\mu}^X)=0\}\subset X^{\rm reg}$ 
\end{center}
is a dense open subset in $X^{\rm reg}$. Hence, suppose that the restrictions $\widetilde{\mu}^X|_{M}$ and $\widetilde{\mu}^{X'}|_{M'}$ are injective and $\widetilde{\mu}^X(M) = \widetilde{\mu}^{X'}(M')$.
Then 
\begin{equation}\label{e.mt0}
f=(\widetilde{\mu}^{X'}|_{M'})^{-1}\circ\widetilde{\mu}^X|_{M} : M \to M'.
\end{equation}
%\smallskip

Let $V$ be a complex vector space of dimension $n+3$ with a basis $\{\mathbf e_1, \mathbf e_2, \ldots, \mathbf e_{n+3}\}$ and regard $X$ and $X'$ as subvarieties in $\BP V$. Up to projective transformations on $\BP V$, we assume that $X\subset \BP V$ is given as the intersection of two quadrics  $\varphi_1(\sum z_i\mathbf e_i)=\sum \lambda_iz_i^2=0$ and $\varphi_2(\sum z_i\mathbf e_i)=\sum z_i^2=0$ 
with distinct complex numbers $\lambda_1, \lambda_2, \ldots, \lambda_{n+3}$.

\smallskip
By (\ref{e.mt0}), 
\begin{equation}\label{e.mt00}
\mu^X(x)=\mu^{X'}(f(x)) 
\end{equation}
for each $x\in M$ since $\mu^{X}=\pi_n\circ\widetilde\mu^{X}$ and $\mu^{X'}=\pi_n\circ\widetilde\mu^{X'}$ 
where $\pi_n$ is the natural forgetful morphism from $\widetilde{\sM}^{PQ}_n$ to $\sM^{PQ}_n$. (See Definition \ref{d.refinedm}.) 
Then by Proposition \ref{p.sl2change} there exist $\varphi'_{1,x},\varphi'_{2,x}\in\widehat\Phi_{X'}$ such that they are linearly independent and the discriminant map 
\begin{center}
$\theta_{(\varphi'_{1,x},\varphi'_{2,x})}: X' \to \BP(\sB_n)$ 
\end{center}
sends $f(x)\in X'$ to $\theta_{(\varphi_{1},\varphi_{2})}(x)$. So there is a correspondence 
\begin{center}
$C:=\{(x,\varphi'_{1,x},\varphi'_{2,x})\in M\times\widehat\Phi_{X'}\times \widehat\Phi_{X'} \mid \theta_{(\varphi'_{1,x},\varphi'_{2,x})}(f(x))=\theta_{(\varphi_{1},\varphi_{2})}(x) \}  $ 
\end{center}
between $M$ and $E:=\widehat\Phi_{X'}\times \widehat\Phi_{X'}$:
 \begin{displaymath}
    \xymatrix{
      &  C \ar[dl]_{p_1} \ar[dr]^{p_2}       &   \\
        M  &  &  \quad E \cong \C^4. }
\end{displaymath}
When we denote by $p_1$ and $p_2$ the natural projections, Proposition \ref{p.sl2change} implies that $p_1$ is dominant and has fibers of positive dimension since $\theta_{(c\varphi'_{1,x},c\varphi'_{2,x})}=\theta_{(\varphi'_{1,x},\varphi'_{2,x})}$ for every nonzero $c\in\C$. Hence, $C$ has dimension at least $n+1$, and general fibers of $p_2$ have dimension at least $n-3$.

\smallskip
For $\alpha\in\C$, consider a hyperplane $H_{\alpha}\subset\BP(\sB_n)$ consisting of binary forms that vanish at $[1:\alpha]\in\BP(\C^2)$. Since the discriminant map $\theta_{(\varphi_{1},\varphi_{2})}:X\to \BP(\sB_n)$ is dominant by Proposition \ref{p5}, we can take $\alpha_1\in\C$ so that the dimension of $M_{\alpha_1}:=M \cap (\theta_{(\varphi_{1},\varphi_{2})})^{-1}(H_{\alpha_1})$ is $n-1$. Then the inverse image $(p_1)^{-1}(M_{\alpha_1})$ has dimension at least $n$ and general fibers of $p_2|_{(p_1)^{-1}(M_{\alpha_1})}$ have dimension at least $n-4$($>0$ as long as $n>4$). When we denote ${(p_1)^{-1}(M_{\alpha_1})}\subset C$ by ${C_{\alpha_1}}$, there is a pair of linearly independent quadratic forms $\varphi_1',\varphi_2'\in\widehat\Phi_{X'}$ such that the dimension of the fiber $(p_2|_{C_{\alpha_1}})^{-1}((\varphi_1',\varphi_2'))$ is positive. Then its image
\begin{equation}\label{e.mt00}
M_{\alpha_1}^{(\varphi'_{1},\varphi'_{2})}:=p_1\big( (p_2|_{C_{\alpha_1}})^{-1}((\varphi_1',\varphi_2')) \big) = \{x \in M_{\alpha_1} \mid \theta_{(\varphi'_{1},\varphi'_{2})}(f(x))=\theta_{(\varphi_{1},\varphi_{2})}(x)\} 
\end{equation}
through $p_1$ also has positive dimension because $p_1$ is injective on each fiber of $p_2$. Let $x$ be a point in the set. Then
\begin{equation}\label{e.mt1}
\theta_{(\varphi'_{1},\varphi'_{2})}(f(x))=\theta_{(\varphi_{1},\varphi_{2})}(x) 
=[(\alpha_1s-t)(\alpha_2s-t)\cdots(\alpha_ns-t)]\in\BP(\sB_n)
\end{equation}
for some distinct complex numbers $\alpha_2, \alpha_3, \ldots, \alpha_{n}$ different from $\alpha_1$. (Here we need the properties $(ii)$ and $(iv)$ of $X^{\rm reg}$ in Definition \ref{d.regular}.) Since $x$ is in $M$, 
\begin{equation}\label{e.mt6}
{\widetilde{\mu}^X(x)}={\widetilde{\mu}^{X'}(f(x))}\end{equation}
by (\ref{e.mt0}).

\smallskip
On the other hand, let $\{\mathbf e_1', \mathbf e_2', \ldots, \mathbf e_{n+3}'\}$ be another basis of $V$ that is standard with respect to the pair $(\varphi_1',\varphi_2')$, i.e., $X'$ is given as the intersection of two quadrics $\varphi_1'(\sum z_i'\mathbf e_i')=\sum \lambda_i'(z_i')^2=0$ and $\varphi_2(\sum z_i'\mathbf e_i')=\sum (z_i')^2=0$ 
with distinct complex numbers $\lambda_1', \lambda_2', \ldots, \lambda_{n+3}'$. If 
\begin{equation}\label{e.mt3}
\{\lambda_1', \lambda_2', \ldots, \lambda_{n+3}'\}=\{\lambda_1, \lambda_2, \ldots, \lambda_{n+3}\},
\end{equation}
then $X$ and $X'$ must be biregular to each other. 
Let $\sigma_{0}\lambda^{n+3}+\sigma_{1}\lambda^{n+2}+\cdots+\sigma_{n+2}\lambda+\sigma_{n+3}$ (resp. $\sigma_{0}'\lambda^{n+3}+\sigma_{1}'\lambda^{n+2}+\cdots+\sigma_{n+2}'\lambda+\sigma_{n+3}'$) be a polynomial of which roots are $\lambda_1, \lambda_2, \ldots, \lambda_{n+3}$ (resp. $\lambda_1', \lambda_2', \ldots, \lambda_{n+3}'$). Then 
\begin{equation}\label{e.mt4}
[\sigma_0:\sigma_1:\cdots:\sigma_{n+3}] = [\sigma_0':\sigma_1':\cdots:\sigma_{n+3}'] \in\BP (\C^{n+4}) 
\end{equation}
is equivalent to (\ref{e.mt3}). So we will finally show (\ref{e.mt4}) to conclude that $X$ and $X'$ are biregular to each other. 

\smallskip
Recall Proposition \ref{pp3}, Notation \ref{p.5.9}, and Notation \ref{l.71}. 
Then, since $x\in X^{\rm reg}$, (\ref{e.mt1}) and (\ref{e.mt6}) imply 
\begin{equation}\label{e.mt5}
\big[v^{x}_1:v^{x}_2:\cdots:v^{x}_n\big]=\big[v^{f(x)}_1:v^{f(x)}_2:\cdots:v^{f(x)}_n\big]\in\BP(\C^n)
\end{equation}
where
\begin{equation}\label{e.mt8}
v^{x}_i=-\frac{(\alpha_{i}-\alpha_1)(\alpha_{i}-\alpha_2)\cdots(\alpha_{i}-\alpha_{\check i})\cdots(\alpha_{i}-\alpha_n)}{(\alpha_{i}-\lambda_1)(\alpha_{i}-\lambda_2)\cdots(\alpha_{i}-\lambda_{n+3})}
\end{equation}
and
\begin{equation}\label{e.mt9}
v^{f(x)}_i=-\frac{(\alpha_{i}-\alpha_1)(\alpha_{i}-\alpha_2)\cdots(\alpha_{i}-\alpha_{\check i})\cdots(\alpha_{i}-\alpha_n)}{(\alpha_{i}-\lambda_1')(\alpha_{i}-\lambda_2')\cdots(\alpha_{i}-\lambda_{n+3}')}.
\end{equation}
\smallskip
To be more precise, consider the subspace $\sP_{x}=\Ker(\rd_{x}\mu^X)\subset T_{x}(X)$, which is poised by $II_{X,{x}}$ since $x\in X^{\rm reg}$. (See Theorem \ref{t.71}.) With a standard basis $B=\{\mathbf w_{1}^{x},\mathbf w_{2}^{x},\ldots,\mathbf w_{n}^{x}\}$ of 
$T_x(X)$ with respect to the pair $(\varphi_{1}^{x}:=\varphi_1|_{T_x(X)},\varphi_{2}^{x}:=\varphi_2|_{T_x(X)})$ (that is a nonsingular pair of quadratic forms on $T_{x}(X)$ by the properties $(ii)$ and $(iv)$ of $X^{\rm reg}$ in Definition \ref{d.regular}), 
\begin{displaymath}%\label{e.mt20}
\sP_{x}=\sP_{(\varphi_1^{x},\varphi_2^{x})}(\mathbf v)\subset T_{x}(X)
\end{displaymath}
for some vector $\mathbf v=\sum v_i\mathbf w_{i}^{x}\in T_{x}{X}$. As mentioned in Proposition \ref{pp3}, for $W=T_{x}(X)$, the morphism 
\begin{displaymath}%\label{e.mt21}
{\rm sq}^{B}\circ v_{(\varphi_1^{x},\varphi_2^{x})}:S_{(\varphi_1^{x},\varphi_2^{x})}\to\BP(W)
\end{displaymath}
sends 
\begin{displaymath}
\sP_{x}=\sP_{(\varphi_1^{x},\varphi_2^{x})}(\mathbf v)\in S_{(\varphi_1^{x},\varphi_2^{x})}
\end{displaymath}
to $[\sum v_i^2 \mathbf w_i^{x}]\in\BP(W)$. (To apply Proposition \ref{pp3}, we need the generality of $II_{X,x}$, which is the property $(iii)$ of $X^{\rm reg}$ in Definition \ref{d.regular}). Then Notation \ref{p.5.9} and \ref{l.71} say
\begin{displaymath}
\big[v_1^2:v_2^2:\cdots:v_n^2\big]=\big[v^{x}_1:v^{x}_2:\cdots:v^{x}_n\big]\in\BP(\C^n)
\end{displaymath}
where
\begin{displaymath}
v^{x}_i=-\frac{(\alpha_{i}-\alpha_1)(\alpha_{i}-\alpha_2)\cdots(\alpha_{i}-\alpha_{\check i})\cdots(\alpha_{i}-\alpha_n)}{(\alpha_{i}-\lambda_1)(\alpha_{i}-\lambda_2)\cdots(\alpha_{i}-\lambda_{n+3})}
\end{displaymath}
with $\alpha_i$'s in (\ref{e.mt1}). 
Note that (\ref{e.mt6}) says
\begin{displaymath}
{\widetilde{\mu}^X(x)}={\widetilde{\mu}^{X'}(f(x))}\in\widetilde{q}(\{II_{X,x}\}\times S_{(\varphi_1^{x},\varphi_2^{x})})\subset\,\widetilde\sM^{\rm PQ}_n\end{displaymath}
where $\widetilde{q}$ is the orbit map in Definition \ref{d.refinedm}. Recall Proposition \ref{p.isom}. Then (\ref{e.mt1}) and (\ref{e.mt6}) give (\ref{e.mt5}). 

\smallskip
If we translate (\ref{e.mt8}) and (\ref{e.mt9}) in terms of $\sigma_j$ and $\sigma_j'$, then 
\begin{displaymath}%\label{e.mt10}
v^{x}_i=-\frac{(\alpha_{i}-\alpha_1)(\alpha_{i}-\alpha_2)\cdots(\alpha_{i}-\alpha_{\check i})\cdots(\alpha_{i}-\alpha_n)}{\sigma_0\alpha_i^{n+3}+\sigma_1\alpha_i^{n+2}+\cdots+\sigma_{n+2}\alpha_i+\sigma_{n+3}}
\end{displaymath}
and
\begin{displaymath}%\label{e.mt11}
v^{f(x)}_i=-\frac{(\alpha_{i}-\alpha_1)(\alpha_{i}-\alpha_2)\cdots(\alpha_{i}-\alpha_{\check i})\cdots(\alpha_{i}-\alpha_n)}{\sigma_0'\alpha_i^{n+3}+\sigma_1'\alpha_i^{n+2}+\cdots+\sigma_{n+2}'\alpha_i+\sigma_{n+3}'}.
\end{displaymath}
Here, $\alpha_i$'s are constants, so (\ref{e.mt5}) is equivalent to
\begin{equation}\label{e.mt12}
\big[u^{x}_1:u^{x}_2:\cdots:u^{x}_n\big]=\big[u^{f(x)}_1:u^{f(x)}_2:\cdots:u^{f(x)}_n\big]
\in\BP(\C^n)
\end{equation}
where
\begin{displaymath}%\label{e.mt13}
u^{x}_i=\sigma_0\alpha_i^{n+3}+\sigma_1\alpha_i^{n+2}+\cdots+\sigma_{n+3}\end{displaymath}
and
\begin{displaymath}%\label{e.mt14}
u^{f(x)}_i=\sigma_0'\alpha_i^{n+3}+\sigma_1'\alpha_i^{n+2}+\cdots+\sigma_{n+3}'.
\end{displaymath}

\smallskip
Since $M^{(\varphi_1',\varphi_2')}_{\alpha_1}$ in (\ref{e.mt00}) has positive dimension and the discriminant map $\theta_{(\varphi_{1},\varphi_{2})}$ is finite,
there is $y\in M^{(\varphi_1',\varphi_2')}_{\alpha_1}$ satisfying $\theta_{(\varphi_{1},\varphi_{2})}(y)\neq \theta_{(\varphi_{1},\varphi_{2})}(x)\in \BP(\sB_n)$. Then 
\begin{displaymath}%\label{e.mt2}
\theta_{(\varphi'_{1},\varphi'_{2})}(f(y))=\theta_{(\varphi_{1},\varphi_{2})}(y)
=[(\alpha_1s-t)(\beta_2s-t)\cdots(\beta_ns-t)]\in\BP(\sB_n)
\end{displaymath}
for some distinct complex numbers $\{\beta_2, \beta_3, \ldots, \beta_n\}\neq\{\alpha_2, \alpha_3, \ldots, \alpha_n\}\subset\C$, and
\begin{equation}\label{e.mt15}
\big[u^{y}_1:u^{y}_2:\cdots:u^{y}_n\big]=\big[u^{f(y)}_1:u^{f(y)}_2:\cdots:u^{f(y)}_n\big]
\in\BP(\C^n)
\end{equation}
where 
\begin{equation}\label{e.mt18}
u^{y}_1=u^{x}_1, \quad u^{f(y)}_1=u^{f(x)}_1,
\end{equation}
\begin{displaymath}%\label{e.mt16}
u^{y}_{i>1}=\sigma_0\beta_i^{n+3}+\sigma_1\beta_i^{n+2}+\cdots+\sigma_{n+3},\end{displaymath}
and
\begin{displaymath}%\label{e.mt17}
u^{f(y)}_{i>1}=\sigma_0'\beta_i^{n+3}+\sigma_1'\beta_i^{n+2}+\cdots+\sigma_{n+3}'.
\end{displaymath}
Then 
\begin{equation}\label{e.mt19}
\big[u^{x}_1:\cdots:u^{x}_n:u^{y}_2:\cdots:u^{y}_n\big]=\big[u^{f(x)}_1:\cdots:u^{f(x)}_n:u^{f(y)}_2:\cdots:u^{f(y)}_n\big]
\in\BP(\C^{2n-1})
\end{equation}
by (\ref{e.mt12}), (\ref{e.mt15}), and (\ref{e.mt18}). Suppose $\alpha_{n+i-1}:=\beta_i\notin\{\alpha_2,\alpha_3,\ldots,\alpha_n\}$ for $i\in\{2,3,4,5\}$. (This is possible by considering a few more points in $M^{(\varphi_1',\varphi_2')}_{\alpha_1}$ if necessary.) 
Let $\mathbf M$ be the following $(n+4)\times(n+4)$ matrix:
\begin{displaymath}
\left[
\begin{array}{cccccccc}
(\alpha_1)^{n+3}&(\alpha_1)^{n+2}&\cdots&\alpha_1&1\\
(\alpha_{2})^{n+3}&(\alpha_{2})^{n+2}&\cdots&\alpha_{2}&1\\
\vdots&\vdots&&\vdots&\vdots\\
(\alpha_{n})^{n+3}&(\alpha_{n})^{n+2}&\cdots&\alpha_n&1\\
(\alpha_{n+1})^{n+3}&(\alpha_{n+1})^{n+2}&\cdots&\alpha_{n+1}&1\\
\vdots&\vdots&&\vdots&\vdots\\
(\alpha_{n+4})^{n+3}&(\alpha_{n+4})^{n+2}&\cdots&\alpha_{n+4}&1\\
\end{array}\right].
\end{displaymath}
Note that $\mathbf M$ is a Vandermonde matrix since $\alpha_1, \alpha_2, \ldots, \alpha_{n+4}$ are distinct complex numbers. Hence, $\mathbf M$ is invertible, and (\ref{e.mt19}) implies (\ref{e.mt4}). Therefore, $X$ and $X'$ are biregular to each other.
         \end{proof}

%%%%%%%%%%%%%%%%%%%%%%%%%%%%%%%%%%%%%%%%%%%%%%%%%%%%%%%%%%% Further Remarks %%%%%%%%%%%%%%%

%%%%%%%%%%%%%%%%%%%%%%%%%%%%%%%%%%%%%%%%%%%%%%%%%%%%%%%%%%% References %%%%%%%%%%%%%%%%%%%%            
 
 \end{document}